\documentclass[11pt,reqno]{amsart}
\usepackage[
  a4paper,
  asymmetric,
  left=20.4mm,
  right=19.6mm,
  top=33.4mm,
  bottom=23.6mm,
  headheight=12pt,
  headsep=25pt,
  footskip=30pt
]{geometry}

\usepackage{amsmath,amssymb,mathtools,bm}
\usepackage{xcolor}
\usepackage{microtype}
\definecolor{equationrefred}{rgb}{1,0,0}
\definecolor{citationrefgreen}{rgb}{0,0.78,0}
\usepackage{hyperref}
\usepackage[nameinlink,capitalize]{cleveref}

\hypersetup{
  colorlinks=true,
  linkcolor=equationrefred,
  citecolor=citationrefgreen,
  urlcolor=black,
  filecolor=black,
  pdftitle={Limiting Pointwise Decay for the Viscous p-System},
  pdfsubject={Space--time pointwise remainder analysis for the one-dimensional viscous p-system},
  pdfkeywords={viscous p-system, diffusion wave, pointwise estimate, Cole--Hopf transformation, Green function}
}

\numberwithin{equation}{section}

\newtheorem{theorem}{Theorem}[section]
\newtheorem{proposition}[theorem]{Proposition}
\newtheorem{lemma}[theorem]{Lemma}

\theoremstyle{definition}

\theoremstyle{remark}
\newtheorem{remark}[theorem]{Remark}

\allowdisplaybreaks
\author{Rui Li}
\address{Department of Mathematics, The Chinese University of Hong Kong,
Shatin, Hong Kong}
\email{ruili001@cuhk.edu.hk}

\author{Guanghui Wang}
\address{School of Mathematics, Guangxi University, Nanning, China}
\email{guanghui06@foxmail.com}

\author{Xu Song}
\address{School of Mathematical Sciences, South China Normal University,
Guangzhou, China}
\email{songxu@m.scnu.edu.cn}

\author{Lingda Xu}
\address{The Chinese University of Hong Kong, Shatin, Hong Kong}
\email{lingdaxu@cuhk.edu.hk}
\begin{document}

\title[A pointwise endpoint for the compressible isentropic Navier-Stokes equations]
{Limiting Pointwise Decay for the compressible isentropic Navier-Stokes equations.}

\begin{abstract}
We study the long-time pointwise behavior of small localized perturbations of a constant state for the one-dimensional compressible isentropic Navier-Stokes equations. After subtracting the two Burgers diffusion waves, the convergent sum of all higher-order diffusion waves, and the cross-family viscous corrections, we prove a cone-preserving pointwise estimate for the exact physical remainder and, in particular,
\[
 |R_i(x,t)|\leq C E_N\log(2+t)\Psi_i(x,t),
 \qquad
 \sup_{x\in\mathbb R}\Psi_i(x,t)\leq C(1+t)^{-1}.
\]
Here \(E_N\) measures the size of the initial data and
\(\Psi_i\) is the cone-resolved weight; both are defined precisely in
the main theorem below.
Thus $\|R_i(t)\|_{L^\infty}\leq C E_N(1+t)^{-1}\log(2+t)$.
The key new idea is to apply a familywise Cole--Hopf transformation to the
spatial antiderivative of the remainder, which exactly eliminates the critical
same-family first-order feedback.  We further construct an approximate Green
function adapted to the two characteristic families and combine it with
Gaussian-mode extraction and a Kawashima-type energy argument.  This yields a
cone-preserving estimate at the limiting decay rate, up to a logarithmic loss.
\end{abstract}

\keywords{compressible Navier-Stokes equations, pointwise estimate, Cole--Hopf
transformation, Green function, diffusion wave}

\maketitle
\section{Introduction}

We study the long-time behavior of small, spatially localized perturbations of a
constant state for the one-dimensional viscous \(p\)-system
\begin{equation}
 v_t-u_x=0,
 \qquad
 u_t+p(v)_x=\nu\left(\frac{u_x}{v}\right)_x,
 \qquad x\in\mathbb R,\quad t>0.
 \label{eq:p-system}
\end{equation}
Here \(v\) is the specific volume, \(u\) is the velocity, and \(x\) is the
Lagrangian mass coordinate. We work near \((v,u)=(1,0)\) under
\[
 p'(1)<0,\qquad p''(1)\ne0,\qquad \nu>0;
\]
the precise local assumptions on \(p\) are stated in
Theorem~\ref{thm:main-endpoint}. The system is a basic model of barotropic
viscous gas dynamics and, at the same time, a prototype of a partially
dissipative hyperbolic--parabolic conservation law. Only the velocity equation
is directly parabolic, while decay of the specific volume is transmitted
through the hyperbolic coupling. The general mechanism behind this indirect
dissipation was developed in the Shizuta--Kawashima and Kawashima theories
\cite{ShizutaKawashima1985,Kawashima1987}.

The asymptotic problem contains substantially more structure than global decay
alone. The linearized system has two sound speeds, and a localized perturbation
separates into two diffusive packets traveling in opposite characteristic
directions. Since \(p''(1)\ne0\), the quadratic self-interaction in each
genuinely nonlinear family survives on the diffusive scale and changes the
corresponding linear heat packet into a convected Burgers wave. The mass
carried by each characteristic component is conserved and determines the
associated Burgers profile. Thus the diffusion waves are not merely convenient
approximations; they describe the nonlinear effective dynamics selected by the
low-frequency part of the system.

Pointwise information is indispensable for resolving this asymptotic
structure. A Burgers diffusion wave is Gaussian in a strip of width
\(O(\sqrt t)\) around its own characteristic line, but is exponentially small
near the spatial origin and throughout most of the region between the two
outgoing lines. Cross-family interactions, by contrast, leave algebraically
decaying signals in precisely these regions. A global \(L^p\) estimate may
identify the dominant mass-carrying waves while concealing such interior
tails. A space--time pointwise description must therefore distinguish the
receiving characteristic cone, the opposite cone, the region between them, and
the exterior region.

The diffusion-wave theory for viscous conservation laws was initiated by
Chern and Liu \cite{ChernLiu1987}, who proved convergence toward
superpositions of heat and Burgers diffusion waves in \(L^p\). Their
correction \cite{ChernLiu1989} revealed the important fact that cumulative
cross-family interactions can change, and even determine, the optimal decay
rate. Zeng \cite{Zeng1994} established the corresponding \(L^1\) asymptotics
for one-dimensional isentropic viscous flow. Liu and Zeng subsequently
developed a systematic pointwise Green-function theory for general
quasilinear hyperbolic--parabolic systems, resolving the characteristic
families, nonlinear diffusion waves, and their secondary interactions
\cite{LiuZeng1997,LiuZeng2009}. Liu and Wang obtained related pointwise
diffusion-wave results for Navier--Stokes flow in odd spatial dimensions
\cite{LiuWang1998}.

Liu and Xin \cite{LiuXin1997} made another foundational contribution in the
setting of viscous contact waves, where they identified the shifted contact
profile and transverse diffusion waves and controlled the higher-order
deviation through an approximate fundamental solution and a detailed
wave-interaction analysis. Xin and Zeng extended this framework to general
perturbations \cite{XinZeng2010}. Liu and Yu separated the short-time singular
waves from the long-time diffusive waves for the one-dimensional Boltzmann
equation and obtained a pointwise description of convergence to equilibrium
\cite{LiuYu2004}. Deng and Yu subsequently developed a regular--singular
Green-function construction for the compressible Navier--Stokes equations
\cite{DengYu2017}. Most directly relevant to the present work, Liu and Yu
established a detailed Green-function theory for one-dimensional gas dynamics,
including propagation of singularities, pointwise large-time behavior,
continuous dependence, and weak-solution well-posedness
\cite{LiuYu2022}. Their regular--singular Green-function estimates provide one
of the principal analytic foundations of our pointwise argument.

A higher-order description of the two-component viscous \(p\)-system was
developed by van Baalen, Popovi\'c, and Wayne
\cite{vanBaalenPopovicWayne2008}. They constructed arbitrarily high-order
asymptotic corrections in an \(L^2\)-based framework and showed that
hyperbolic--parabolic coupling produces long algebraic tails. Building on this
perspective, Koike \cite{Koike2023} introduced a recursive hierarchy of
higher-order diffusion waves and proved pointwise remainder estimates after
every fixed finite number of corrections. These estimates capture the
power-law asymptotics in regions where the leading Burgers waves are
exponentially small and, after integration, yield expansions in every \(L^p\),
including \(L^1\). The cross-family viscous derivative corrections in Koike's
expansion are also essential near the opposite characteristic cone.

These results lead naturally to a pointwise endpoint problem. If \(n\) denotes
the number of retained higher-order waves, the exponent
\[
 \alpha_n=2-2^{-(n+1)}
\]
in Koike's pointwise weight approaches the limiting value \(2\). Equivalently,
the power-law exponent in the global \(L^\infty\) estimate approaches
\((1+t)^{-1}\). However, the estimates in \cite{Koike2023} are formulated for
each fixed finite order, with constants allowed to depend on that order, and
therefore do not justify a direct passage \(n\to\infty\). Van Baalen,
Popovi\'c, and Wayne \cite{vanBaalenPopovicWayne2008} identified a
logarithmically weighted infinite-order limit in their \(L^2\) framework,
whereas Koike formulated the corresponding pointwise endpoint as a
conjectural estimate in \cite[Remark~2.5]{Koike2023}. The purpose of the
present paper is to estimate the physical remainder directly after subtracting
the entire convergent hierarchy together with the cross-family viscous
corrections.

Let
\[
 \Xi_i=\sum_{n\ge1}\xi_i^{(n)}
\]
denote the convergent sum of the higher-order diffusion waves, and let
\(R_i\), defined in \eqref{eq:physical-remainder}, be the physical remainder
after the two Burgers waves, the resummed hierarchy, and the cross-family
viscous corrections have been removed. Theorem~\ref{thm:main-endpoint} proves
that
\[
 |R_i(x,t)|
 \leq C E_N\log(2+t)\Psi_i(x,t),
 \qquad
 \|R_i(t)\|_{L^\infty}
 \leq C E_N(1+t)^{-1}\log(2+t),
\]
where \(\Psi_i\) is defined in \eqref{eq:endpoint-weight}. We also obtain a
differentiated pointwise estimate and a uniform \(H^N\) bound for the physical
remainder.

We call the pointwise bound above cone-preserving because its weight
\(\Psi_i\) retains both convected spatial variables
\[
 x-\lambda_i(1+t)
 \qquad\hbox{and}\qquad
 x-\lambda_{3-i}(1+t),
\]
rather than replacing them by a single time-dependent \(L^\infty\) envelope.
Its different components describe the receiving and opposite tubes
\[
 |x-\lambda_i(1+t)|=O(\sqrt{1+t}),
 \qquad
 |x-\lambda_{3-i}(1+t)|=O(\sqrt{1+t}),
\]
the region between the two outgoing characteristic tubes, and the exterior
region beyond them. A signal emitted by one characteristic family and received
by the other therefore remains localized relative to the correct pair of
characteristic cones. The global \(L^\infty\) estimate follows by taking the
spatial supremum, but this emitting--receiving geometry is then lost.

The resulting weight is weaker than the full conjectural pointwise weight in
\cite[Remark~2.5]{Koike2023}, and we do not claim every component of that
conjecture. Nevertheless, it reaches the limiting global algebraic exponent,
up to a single logarithmic loss, for the exact remainder after subtraction of
the entire convergent hierarchy. This is fundamentally different from taking
an arbitrarily high but fixed finite-order expansion.

The proof is based on two successive structural decompositions. First, the zero
mass of \(R_i\) permits the introduction of the spatial antiderivative
\[
 W_i(x,t)=\int_{-\infty}^{x}R_i(y,t)\,dy.
\]
Its equation contains the critical same-family first-order feedback
\[
 \theta_i W_{i,x},
\]
where \(\theta_i\) is the Burgers diffusion wave in the \(i\)-th family. The
coefficient \(\theta_i\) decays only at the nonintegrable rate
\(O((1+t)^{-1/2})\), so a direct perturbative treatment of this term is
incompatible with the endpoint estimate. We therefore introduce the
Cole--Hopf weight \(\phi_i\) associated with \(\theta_i\) and set
\[
 Z_i=\phi_iW_i.
\]
The Cole--Hopf identity eliminates the entire term
\(\theta_iW_{i,x}\) exactly, rather than estimating it. Thus the Cole--Hopf
transformation is used here not merely to represent the Burgers profile, but
as a familywise conjugation that removes the principal variable-coefficient
obstruction from the coupled remainder equations.

The Cole--Hopf cancellation alone is not sufficient at the endpoint. The
Duhamel representation of \(Z_i\) contains principal Gaussian outputs whose
coefficients are only borderline integrable in time. Our second structural
step is therefore the time-dependent decomposition
\[
 Z_i(x,t)=a_i(t)\Gamma_i(x,1+t)+Z_i^{\rm rem}(x,t),
\]
where \(\Gamma_i\) is the convected heat kernel in the \(i\)-th family. The
coefficient \(a_i(t)\) is not, in general, the spatial mass
\(\int_{\mathbb R}Z_i(x,t)\,dx\). Instead, it is defined directly from the
Duhamel sources by extracting all principal same-family Gaussian coefficients
together with the cone- and time-localized opposite-family Gaussian
coefficients generated by the ordinary nonlinear terms, the exceptional
feedback, and the explicit profile sources. With this definition, no
potentially logarithmic principal Gaussian remains in
\(Z_i^{\rm rem}\).

After the nonlinear bootstrap is closed, the modal coefficient and the
remainder satisfy
\[
 |a_i(t)|\leq C E_N\log(2+t),
 \qquad
 |a_i'(t)|\leq C E_N(1+t)^{-1},
\]
and, for \(0\leq k\leq2\),
\[
 \|\partial_x^kZ_i^{\rm rem}(t)\|_{L^\infty}
 \leq C E_N(1+t)^{-(k+1)/2}.
\]
Thus the logarithmic loss is confined to the extracted Gaussian mode, while
\(Z_i^{\rm rem}\) satisfies the log-free estimates displayed above.  The
derivative structure of the recovery formula below then converts these bounds
into the endpoint \((1+t)^{-1}\log(2+t)\) decay of the physical remainder.
Writing \(d=\nu/2\), the physical remainder is recovered exactly through
\[
 R_i
 =\phi_i^{-1}\left(Z_{i,x}+\frac{\theta_i}{2d}Z_i\right),
\]
and hence
\[
 R_i
 =\phi_i^{-1}
 \left[
  a_i(t)\left(
   \Gamma_{i,x}+\frac{\theta_i}{2d}\Gamma_i
  \right)
  +\partial_xZ_i^{\rm rem}
  +\frac{\theta_i}{2d}Z_i^{\rm rem}
 \right].
\]
The leading endpoint contribution to \(R_i\) is therefore the derivative of an
explicit Gaussian mode, whereas the remaining terms are controlled by
cone-resolved zero-mass and integrable-source estimates.

This decomposition is indispensable for the exceptional opposite-family
feedback. A coarse estimate gives
\[
 \|\theta_jR_j(\tau)\|_{L^1}
 \lesssim E_N\frac{\log(2+\tau)}{1+\tau},
\]
which would lead to
\[
 (1+t)^{-1}
 \int_0^{t/2}
 \frac{\log(2+\tau)}{1+\tau}\,d\tau
 \sim
 (1+t)^{-1}\log^2(2+t)
\]
and hence lose an additional logarithm. Substituting the exact recovery formula
for \(R_j\) and using
\[
 Z_j=a_j\Gamma_j+Z_j^{\rm rem},
\]
we split the exceptional feedback into an opposite-speed divergence, a full
Gaussian mass already included in \(a_i\), and two zero-mass contributions.
The divergence gains a derivative of the Green kernel, the Gaussian mass is
removed by the definition of \(a_i\), and the zero-mass terms are estimated
through their spatial primitives. This avoids the spurious
\(\log^2(2+t)\) loss without introducing a coupled Volterra system for the
modal coefficients.

The two decompositions are also compatible with the cone-preserving objective.
The Cole--Hopf conjugation is scalar and is performed separately in each
characteristic family; it neither changes the transport speeds nor mixes the
two propagation directions. The subsequent Gaussian extraction distinguishes
full same-family masses from cone-localized opposite-family masses. As a
result, every Duhamel contribution retains its emitting and receiving
characteristic families. The explicit mode \(a_i\Gamma_i\) remains attached to
the \(i\)-th receiving cone, while the nonmodal terms are treated separately
in the receiving tube, the opposite tube, the region between the two tubes,
and the exterior region. This combination of the Cole--Hopf cancellation and
the Gaussian-mode decomposition is the mechanism that simultaneously yields
the endpoint decay and preserves the two-cone spatial geometry.

For the linear propagation, we construct an approximate Green function for the
conjugated system by inserting the familywise Cole--Hopf weights into the
constant-state Green matrix:
\[
 \widetilde G
 =
 \begin{pmatrix}
  G^0_{11}&\dfrac{\phi_1}{\phi_2}G^0_{12}\\[2mm]
  \dfrac{\phi_2}{\phi_1}G^0_{21}&G^0_{22}
 \end{pmatrix},
 \qquad
 G^0=G^r+G^s,
 \qquad
 G^r=\mathcal G+\mathcal R^r.
\]
Here \(\mathcal G\) is the explicit convected-Gaussian approximation determined
by the low-frequency characteristic branches and their spectral projectors.
This construction is adapted simultaneously to the two propagation directions
and the partially dissipative viscous coupling, and its commutator with the
conjugated operator has the divergence and zero-mass structure needed for the
endpoint argument.  The estimates for \(\mathcal R^r\) and \(G^s\) follow from
the regular--singular Green-function theory of Liu and Yu
\cite{LiuYu2022}, in the normalization recorded by Wang and Zhang
\cite{WangZhang2023}.  Finally, a Kawashima compensator combined with a
Gaussian-weighted energy estimate closes the uniform high-order bound.  The
cone weight is imposed only on the zero-order physical remainder; the
differentiated pointwise estimates and the \(H^N\) estimate remain spatially
unweighted.

The remainder of the paper is organized as follows.
Section~\ref{sec:main-results} defines the complete asymptotic profile,
introduces the cone-resolved weight, and states the main theorem.
Section~\ref{sec:exact-remainder} derives the exact physical remainder
equations, introduces the spatial antiderivatives, and performs the familywise
Cole--Hopf conjugation. Section~\ref{sec:green} develops the Green-function representation
for the conjugated system. Section~\ref{sec:pointwise} extracts the critical
Gaussian modes, estimates the modal remainders, and proves the cone-resolved
pointwise mapping. Section~\ref{sec:energy} establishes the uniform high-order energy estimate
and completes the continuation argument. The endpoint convolution estimates
used in the pointwise analysis are proved in the Appendix \ref{app:five-regions}.

\section{Main results}
\label{sec:main-results}

Recall \(c,d,\lambda_1,\lambda_2\), and introduce \(p_2\), as follows:
\[
 c=\sqrt{-p'(1)},\qquad d=\frac{\nu}{2},\qquad p_2=p''(1),
 \qquad \lambda_1=c,\quad \lambda_2=-c.
\]
Throughout the paper $i\in\{1,2\}$, and $3-i$ denotes the other
characteristic family.
Introduce the characteristic variables
\begin{equation}
 u_1=\frac{p_2}{4c}\left[-(v-1)+\frac{u}{c}\right],
 \qquad
 u_2=\frac{p_2}{4c}\left[(v-1)+\frac{u}{c}\right].
 \label{eq:characteristic-variables}
\end{equation}
The transformation is invertible, with
\begin{equation}
 v-1=\frac{2c}{p_2}(u_2-u_1),
 \qquad
 u=\frac{2c^2}{p_2}(u_1+u_2).
 \label{eq:inverse-characteristic-variables}
\end{equation}

Section~\ref{sec:exact-remainder} derives the exact characteristic balance
\eqref{eq:exact-characteristic}.  Its right-hand side is a complete spatial
derivative; hence, for the decaying solutions considered here,
\[
 \frac{d}{dt}\int_{\mathbb R}u_i(x,t)\,dx=0.
\]
Assume $u_i(\cdot,0)\in L^1(\mathbb R)$.  For $i=1,2$, define the conserved
mass and its total size by
\begin{equation*}
 M_i=\int_{\mathbb R}u_i(x,t)\,dx
    =\int_{\mathbb R}u_i(x,0)\,dx,
 \qquad \epsilon=|M_1|+|M_2|.
\end{equation*}
For $s>0$ define the unit-mass convected heat kernel
\begin{equation}
 \Gamma_i(x,s)=\frac{1}{\sqrt{4\pi ds}}
 \exp\left[-\frac{(x-\lambda_i s)^2}{4ds}\right],
 \qquad i=1,2.
 \label{eq:receiving-gaussian}
\end{equation}
Let $\delta_0$ denote the Dirac mass at $x=0$.  The centered Burgers wave is
not uniquely determined by its equation and mass.  We
choose the canonical solution issued from the point mass at time $t=-1$:
\begin{equation}
 \theta_i(x,t)=
 \frac{2d\bigl(1-e^{-M_i/(2d)}\bigr)\Gamma_i(x,1+t)}
 {1+\bigl(e^{-M_i/(2d)}-1\bigr)
       \displaystyle\int_{-\infty}^x\Gamma_i(y,1+t)\,dy}.
 \label{eq:explicit-burgers-wave}
\end{equation}
The Cole--Hopf formula and its standard estimates
\cite[(3)--(4)]{Koike2023} give
\begin{equation}
 \theta_{i,t}+\lambda_i\theta_{i,x}+\theta_i\theta_{i,x}
   =d\theta_{i,xx},
 \qquad
 \int_{\mathbb R}\theta_i(x,t)\,dx=M_i,
 \qquad
 \theta_i(\cdot,t)\rightharpoonup M_i\delta_0
 \quad\text{as }t\downarrow-1.
 \label{eq:burgers-wave}
\end{equation}
The backward trace at $t=-1$ fixes the translation of the self-similar profile;
the physical solution is
considered only for $t\geq0$.
It satisfies, for every integer $k\geq0$,
\begin{equation}
 |\partial_x^k\theta_i(x,t)|
 \leq C_k\epsilon(1+t)^{-(k+1)/2}
 \exp\left[-\frac{[x-\lambda_i(1+t)]^2}{C(1+t)}\right].
 \label{eq:burgers-pointwise-bound}
\end{equation}

Here \(n\geq0\) denotes the order in the higher-wave hierarchy.
Set
\begin{equation*}
 \xi_i^{(0)}=\frac12\theta_i,
 \qquad i=1,2,
\end{equation*}
and, for every $n\geq1$, define $\xi_i^{(n)}$ recursively by
\begin{equation*}
 \left\{
 \begin{aligned}
 &\partial_t\xi_i^{(n)}+\lambda_i\partial_x\xi_i^{(n)}
 +\partial_x(\theta_i\xi_i^{(n)})
 +\partial_x(\theta_{3-i}\xi_{3-i}^{(n-1)})
 =d\partial_x^2\xi_i^{(n)},\\
 &\xi_i^{(n)}(x,0)=0.
 \end{aligned}
 \right.
 \qquad i=1,2.
\end{equation*}
This is Koike's recursion \cite[(7)]{Koike2023}, with seed
\(\xi_i^{(0)}=\theta_i/2\).  By
\cite[(24)--(25), Remark~3.1 and Lemma~3.3]{Koike2023},
for sufficiently small masses the series
\begin{equation*}
 \Xi_i(x,t):=\sum_{n=1}^{\infty}\xi_i^{(n)}(x,t)
\end{equation*}
converges, together with every fixed finite number of spatial derivatives used
below, in Koike's pointwise decay class, and its sum is the classical solution
of
\begin{equation}
 \left\{
 \begin{aligned}
 &\Xi_{i,t}+\lambda_i\Xi_{i,x}+\partial_x(\theta_i\Xi_i)
  +\partial_x\left[\theta_{3-i}
       \left(\frac12\theta_{3-i}+\Xi_{3-i}\right)\right]
   =d\Xi_{i,xx},\\
 &\Xi_i(x,0)=0.
 \end{aligned}
 \right.
 \label{eq:resummed-profile}
\end{equation}
The same cited results give, for each fixed nonnegative integer \(k\),
\begin{align}
 |\partial_x^k\Xi_i(x,t)|
 &\leq \frac{C_k\epsilon^2(1+t)^{-k/2}}
 {\big([x-\lambda_i(1+t)]^2+1+t\big)^{3/4}}
 +\frac{C_k\epsilon^2(1+t)^{-k/2}}
 {|x-\lambda_{3-i}(1+t)|^{3/2}+1+t},
 \label{eq:resummed-pointwise-bound}\\
 \|\partial_x^k\Xi_i(t)\|_{L^q}
 &\leq C_{k,q}\epsilon^2
 (1+t)^{-3/4+1/(2q)-k/2},
 \qquad 1\leq q\leq\infty.
 \label{eq:resummed-Lq-bound}
\end{align}
In \cite[Lemma~3.3]{Koike2023}, \(n=0\) denotes the summed tail
\(\Xi_i=\sum_{m\geq1}\xi_i^{(m)}\), rather than the seed used above.
The bounds justify integration of \eqref{eq:resummed-profile} over
\(\mathbb R\).  Every term except
$\Xi_{i,t}$ is a spatial derivative, so
\begin{equation}
 \frac{d}{dt}\int_{\mathbb R}\Xi_i(x,t)\,dx=0,
 \qquad
 \int_{\mathbb R}\Xi_i(x,t)\,dx=0
 \label{eq:resummed-zero-mass}
\end{equation}
by the zero initial condition.

With
\begin{equation*}
 \gamma_i=(-1)^i\frac{\nu}{4c},\qquad i=1,2,
\end{equation*}
define the approximate profile and the physical remainder by
\begin{equation}
 u_i=u_i^{\mathrm{app}}+R_i,
 \qquad
 u_i^{\mathrm{app}}
 :=\theta_i+\Xi_i+\gamma_{3-i}\partial_x(\theta_{3-i}+\Xi_{3-i}).
 \label{eq:physical-remainder}
\end{equation}
The correction \(u_i^{\mathrm{app}}-\theta_i\) has zero spatial mass
by \eqref{eq:resummed-zero-mass}; the zero-mass property of \(R_i\), needed for
its antiderivative, is proved at \eqref{eq:remainder-zero-mass}.  The immediate
\(L^q\) bounds for \(u_i^{\mathrm{app}}-\theta_i\) follow from
\eqref{eq:burgers-pointwise-bound} and \eqref{eq:resummed-Lq-bound}.

To state the pointwise estimate, fix a sufficiently large $K$ and choose
$0\leq\chi_K\leq1$ so that
\begin{align*}
 \chi_K(x,t)&=1
 &&\text{if }\lambda_2(1+t)+2K\sqrt{1+t}
 \leq x\leq\lambda_1(1+t)-2K\sqrt{1+t},\\
 \chi_K(x,t)&=0
 &&\text{if }x\leq\lambda_2(1+t)+K\sqrt{1+t}
 \text{ or }x\geq\lambda_1(1+t)-K\sqrt{1+t}.
\end{align*}
Any fixed smooth interpolation in the two intervening strips may be used.  Set
\begin{align}
 \Psi_i(x,t)=&
 \frac1{[x-\lambda_i(1+t)]^2+1+t}
 +\frac{1+\log(2+t)}{[x-\lambda_{3-i}(1+t)]^2+(1+t)^{3/2}}
 \notag\\
 &+\chi_K(x,t)
 \frac1{\big(1+[x-\lambda_i(1+t)]^2\big)^{{1/2}}
          \big(1+[x-\lambda_{3-i}(1+t)]^2\big)^{1/4}}
 \notag\\
 &+(1+t)^{-1/2}
 \left\{
 \frac1{\big([x-\lambda_i(1+t)]^2+1+t\big)^{3/4}}
 +\frac1{|x-\lambda_{3-i}(1+t)|^{3/2}+1+t}
 \right\}.
 \label{eq:endpoint-weight}
\end{align}
Directly from \eqref{eq:endpoint-weight},
\begin{equation*}
 \sup_{x\in\mathbb R}\Psi_i(x,t)\leq C(1+t)^{-1},
 \qquad
 \|\Psi_i(t)\|_{L^1}\leq C(1+t)^{-1/2}{\log(2+t)}.
\end{equation*}

For an integer $N\geq5$, let
\begin{equation}
\begin{aligned}
 E_N=&|M_1|+|M_2|+\|(v(\cdot,0)-1,u(\cdot,0))\|_{H^{N+2}}\\
 &+\sum_{i=1}^2\left[
 \|\langle x\rangle^2R_i(\cdot,0)\|_{H^{N+1}}
 +\int_{\mathbb R}\langle x\rangle^2|R_i(x,0)|\,dx\right],
 \qquad \langle x\rangle=(1+x^2)^{1/2}.
 \label{eq:initial-size}
\end{aligned}
\end{equation}
Here the initial remainder is explicit because \(\Xi_i(\cdot,0)=0\):
\begin{equation}
 R_i(x,0)=u_i(x,0)-\theta_i(x,0)
 -\gamma_{3-i}\partial_x\theta_{3-i}(x,0).
 \label{eq:initial-remainder-explicit}
\end{equation}
By \eqref{eq:initial-size}, $\epsilon\leq E_N$; hence smallness of $E_N$ also supplies the
small-mass assumption used in the Burgers and Koike profiles.

Fix an open interval $\mathcal I\subset(0,\infty)$ containing $1$.  Choose
$\rho>0$ so that the two compact intervals
\begin{equation}
 I_{\mathrm{in}}=[1-\rho,1+\rho]
 \subset \operatorname{int}I_0,
 \qquad
 I_0=[1-2\rho,1+2\rho]\subset\mathcal I,
 \qquad \operatorname{dist}(I_0,\partial\mathcal I)>0.          \label{eq:fixed-state-intervals}
\end{equation}
are fixed once and for all.  The outer interval $I_0$ is used in every
coefficient and energy constant below; $I_{\mathrm{in}}$ leaves a fixed buffer
for the continuation argument.

\begin{lemma}
\label{lem:local-continuation}
Let $s\geq3$, let $p\in C^{s+2}(\mathcal I)$, and suppose
\begin{equation}
 \inf_{v\in I_0}[-p'(v)]>0.                                  \label{eq:fixed-hyperbolicity}
\end{equation}
If $(v_0-1,u_0)\in H^s(\mathbb R)$ and
$v_0(\mathbb R)\subset I_{\mathrm{in}}$, then there is a unique strong
solution on a nontrivial interval $[0,T]$ such that
\begin{equation*}
 (v-1,u)\in C([0,T];H^s),
 \qquad u_x\in L^2(0,T;H^s),
 \qquad v(\mathbb R\times[0,T])\subset\operatorname{int}I_0.
\end{equation*}
Let $[0,T_{\max})$ be its maximal interval relative to the last range
condition.  If $T_{\max}<\infty$, then at least one of
\begin{align}
 &\liminf_{t\uparrow T_{\max}}
   \inf_{x\in\mathbb R}\operatorname{dist}
       (v(x,t),\partial I_0)=0,                               \label{eq:continuation-range}\\
 &\sup_{0\leq t<T_{\max}}\|(v-1,u)(t)\|_{H^s}=\infty
 \quad\text{or}\quad
 \int_0^{T_{\max}}\|u_x(t)\|_{H^s}^2\,dt=\infty.              \label{eq:continuation-sobolev}
\end{align}
must occur.

For the data measured by $E_N$, take $s=N$ in the preceding construction and
let $T_{\max}$ denote this $H^N$ lifespan.  The additional regularity in
\eqref{eq:main-data-class} and the polynomially weighted Sobolev norm in
\eqref{eq:initial-size} persist on every compact subinterval of
$[0,T_{\max})$: for every $T<T_{\max}$,
\begin{equation}
 \sup_{0\leq t\leq T}\left\{
 \|(v-1,u)(t)\|_{H^{N+2}}
 +\sum_{i=1}^2\|\langle x\rangle^2R_i(t)\|_{H^{N+1}}\right\}
 +\int_0^T\|u_x(t)\|_{H^{N+2}}^2\,dt<\infty .              \label{eq:local-weighted-persistence}
\end{equation}
The separately listed weighted $L^1$ moment is used only at the initial time.
In particular, all entries of $X_N(T)$, defined later in
\eqref{eq:pointwise-continuity-norm}, are finite.  Moreover, if at some
$T<T_{\max}$ the range of $v$ has positive distance from $\partial I_0$ and
the bounds defining $X_N(T)$ are strict, then the same bounds, with an
arbitrarily small enlargement, persist on $[0,T+\delta_T]$ for some
$\delta_T>0$.
\end{lemma}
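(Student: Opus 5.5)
The plan is to treat \eqref{eq:p-system} as a symmetrizable hyperbolic--parabolic system in the unknowns $(w,u)=(v-1,u)$ and to follow the standard route: linearized iteration, uniform bounds on a short interval, contraction in a lower norm, and a blow-up alternative. Uniqueness and the continuation criteria then come from the same energy inequalities.

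\textbf{Step 1: linearized problem.} Given an iterate $(\bar w,\bar u)$ with $\bar v=1+\bar w$ taking values in $\operatorname{int}I_0$, solve
\[
 w_t-u_x=0,\qquad u_t+p'(\bar v)w_x=\nu\bigl(u_x/\bar v\bigr)_x-\nu\bigl(\bar u_x\bar v_x/\bar v^2\bigr)\ \text{(or the equivalent quasilinear form)}.
\]
The first equation is a transport identity, and the second is uniformly parabolic because $\bar v\in I_0$ is bounded away from $0$. Multiply the $\partial_x^k$-differentiated equations by the symmetrizer $\operatorname{diag}(-p'(\bar v),1)$. This is positive on $I_0$ by \eqref{eq:fixed-hyperbolicity}, and the bound uses $p\in C^{s+2}$ so that $p'(\bar v)$ has $s+1$ derivatives. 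The result is
\[
 \frac{d}{dt}\mathcal E_s+c_0\|u_x\|_{H^s}^2\le C(\|(\bar w,\bar u)\|_{H^s},\|\bar u_x\|_{H^s})\,\mathcal E_s .
\]
The top-order term $\partial_x^{s}w$ in $p'(\bar v)\partial_x^{s+1}w$ is handled by the symmetrization, with commutators bounded by Moser estimates since $s\ge3$. Because $w$ has no dissipation, the term $\nu(\partial_x^s u_x\,\partial_x^s(1/\bar v))$ must be absorbed using the parabolic gain $\|u_x\|_{H^s}^2$. This makes the iteration map a bounded self-map of a ball in $C([0,T];H^s)\cap\{u_x\in L^2H^s\}$ for small $T$. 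The range condition $v\in\operatorname{int}I_0$ is preserved for small $T$ because $\|w(t)-w_0\|_{L^\infty}\le C\|w(t)-w_0\|_{H^1}\le CT^{1/2}$ and $v_0\in I_{\rm in}$ has the buffer $\rho$. Contraction holds in $C([0,T];L^2)$, and a weak-star limit together with time continuity (Bona--Smith or energy equality) gives the strong solution. Uniqueness follows from the same $L^2$ difference estimate.

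\textbf{Step 2: continuation.} Define $T_{\max}$ relative to the range condition. If neither \eqref{eq:continuation-range} nor \eqref{eq:continuation-sobolev} occurs, then $v$ stays a fixed distance from $\partial I_0$ and the norms stay bounded. Since the local existence time depends only on these quantities, one can restart from $T_{\max}-\eta$ and extend past $T_{\max}$, a contradiction.

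\textbf{Step 3: persistence \eqref{eq:local-weighted-persistence}.} The $H^{N+2}$ bound comes from the linear-in-top-order energy inequality at level $N+2$ with coefficients controlled by lower norms, which are bounded on $[0,T]$; Gronwall then keeps it finite. This requires $p\in C^{N+4}$, which I assume is part of the main theorem's data class \eqref{eq:main-data-class}. For the weighted norm, note that the profile $u^{\rm app}_i$ is smooth and Gaussian/algebraically decaying. So it suffices to propagate $\|\langle x\rangle^2\partial_x^k(w,u)\|_{L^2}$ up to a polynomially weighted part of the explicit profile.

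I expect this weighted step to be the main obstacle. I would use the bounded weight $\langle x\rangle^2(1+\eta x^2)^{-1}$ with $\eta\downarrow0$ to avoid unjustified integrations. The commutators are of the form $\partial_x\langle x\rangle^2\cdot$(lower order), and these are bounded by $\langle x\rangle\le\langle x\rangle^2$, so Gronwall gives bounds uniform in $\eta$ on $[0,T]$. The profile terms cause difficulty only where \eqref{eq:resummed-pointwise-bound} decays slowly: $\Xi_i$ decays like $|x|^{-3/2}$, so $\langle x\rangle^2\Xi_i\notin L^2$. The key point is that $R_i(\cdot,0)\in\langle x\rangle^{-2}H^{N+1}$ is a hypothesis, so finiteness must come from the equation for $R_i$ itself. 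Its source is built from $\theta$-interaction terms that are Gaussian-localized, and on a finite time interval the source in the $\Xi$ equation \eqref{eq:resummed-profile} is Gaussian times bounded. I would therefore first verify that $\langle x\rangle^2\partial_x^k\Xi_i(t)\in L^2$ for finite $t$, using the parabolic equation with zero data and Gaussian-decaying forcing. Then the $R_i$ weighted estimate closes by Gronwall.

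\textbf{Step 4: final claim.} The last claim, that strict bounds persist on $[0,T+\delta_T]$, follows from continuity in time of each entry of $X_N$, which is given by the $C([0,T];H^s)$ regularity and Step 3, together with the continuation alternative.
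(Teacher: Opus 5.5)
Your Steps 1--3 essentially follow the paper. The paper cites the standard local theory (Serre's theorem: symmetrizer from $-p'>0$ on $I_0$, constant-rank viscosity) rather than sketching the iteration. It then propagates $H^{N+2}$ by the differentiated symmetrized estimate and Gronwall. It propagates the weight $w=\langle x\rangle^2$ through the commutators $w_x$, $2w_x\partial_x+w_{xx}$, using $|w_x|,|w_{xx}|\le 2w$, with finite weighted norms of the background on bounded time intervals. Your concern that $\langle x\rangle^2\Xi_i\notin L^2$ comes only from the time-uniform envelope \eqref{eq:resummed-pointwise-bound}. As you note, it disappears at finite $t$, because the zero-data system \eqref{eq:resummed-profile} with Gaussian forcing propagates every polynomial weight.

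The gap is in Step 4, which is where this lemma goes beyond standard local theory. It is not true that every entry of $X_N$ is continuous in time. By \eqref{eq:ai-definition}, $a_i\equiv0$ on $[0,1]$. For $t>1$, $a_i'(t)$ is the integrand evaluated at $\tau=t-1$ plus the term coming from $\partial_t\chi_{\mathrm{time}}((1+\tau)/\sqrt{1+t})$. Hence $a_i'(1^+)$ equals the integrand at $\tau=0$, which contains $\int\phi_i\theta_jR_i(y,0)\,dy$ and the cubic profile masses and is generically nonzero. So $(1+t)|a_i'(t)|$ jumps at $t=1$.

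More basically, you never show that $a_i'$ exists almost everywhere and is locally bounded. That is needed already for the assertion that all entries of $X_N(T)$ are finite. The missing step is the one the paper supplies:
\begin{itemize}
\item differentiate \eqref{eq:ai-definition} under the integral sign;
\item bound the endpoint term and the cutoff-derivative term by the weighted norms in \eqref{eq:local-weighted-persistence};
\item conclude that $a_i$ is locally Lipschitz, and read every supremum involving $a_i'$ as an essential supremum.
\end{itemize}

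The other modal entries are also not controlled by $C([0,T];H^s)$ alone. The quantity $\|\partial_x^kZ_i^{\mathrm{rem}}\|_\infty$ involves $W_i$, and an antiderivative of an $H^s$ function need not be bounded. You need $R_i\in L^1$ with $\int R_i=0$. The paper gets this from \eqref{eq:remainder-zero-mass} together with Cauchy--Schwarz and Fubini: $\|R_i\|_1+\|W_i\|_1\le C\|\langle x\rangle^2R_i\|_2$. Similarly, $\sup_x|R_i|/\Psi_i$ is controlled only through the weight, since $\Psi_i(\cdot,t)\ge C_t^{-1}\langle x\rangle^{-2}$ at each fixed $t$.

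Your Step 3 gives only time-uniform bounds on these weighted norms, not their continuity. So any appeal to continuity needs a further argument, such as a weighted energy identity.

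Finally, the continuation alternative you invoke in Step 4 plays no role in this short-time stability of the $X_N$ bounds.
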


\begin{proof}
The unweighted existence, uniqueness, and continuation assertions follow from
the standard local theory for viscous systems of conservation laws
\cite[Theorem~1.2]{Serre2010}: on the fixed positive range $I_0$, the
condition $-p'(v)>0$ supplies the symmetrizer and the viscosity matrix has
constant rank.  We use that theorem without reproducing its Friedrichs
construction.

We record the persistence step because it is used in the first-exit argument.
Fix $T<T_{\max}$.  The standard differentiated symmetrized energy estimate,
first at orders $N+1$ and $N+2$ and then by regularization, has coefficients
controlled on $[0,T]$ by the $H^N$ norm and the fixed distance of the range of
$v$ from $\partial I_0$.  Gronwall's inequality therefore propagates
$H^{N+2}$ on $[0,T]$; uniqueness identifies this higher-regularity solution
with the $H^N$ solution.  For the weighted assertion, write
$w(x)=\langle x\rangle^2$.  Commuting the exact remainder equations with
$w\partial_x^k$, $0\leq k\leq N+1$, produces only the commutators
$[\partial_x,w]=w_x$ and
$[\partial_x^2,w]=2w_x\partial_x+w_{xx}$.  Since
$|w_x|\leq2w$ and $|w_{xx}|\leq2w$, these terms contain no higher weight and
no higher derivative than the principal weighted energy.  The background
forcing is smooth and has finite weighted Sobolev norms on every bounded time
interval: \(\theta_i\) is Gaussian, and the zero-initial-data system
\eqref{eq:resummed-profile} propagates every fixed polynomial weight by the
same commutator estimate.  Substitution in the exact background residual then
has the same property.  The symmetrized estimate and Gronwall's inequality give
\eqref{eq:local-weighted-persistence}.

By \eqref{eq:remainder-zero-mass} below, $\int R_i=0$; hence
Cauchy--Schwarz and Fubini give
\[
 \|R_i(t)\|_1+\|W_i(t)\|_1
 \leq C\|\langle x\rangle^2R_i(t)\|_2,
\]
so the definitions of $W_i$, $Z_i$, and $Z_i^{\mathrm{rem}}$ give local persistence
of the entries of $X_N$.  Finally, differentiation of
\eqref{eq:ai-definition} under the integral sign gives an endpoint term and a
term containing the derivative of its fixed smooth cutoff.  On every compact
time interval both are dominated by the weighted norms just proved.  Thus
$a_i$ is locally Lipschitz, the local essential supremum of $a_i'$ is stable
under a short continuation, and suprema involving $a_i'$ are understood as
essential suprema.  This proves the last assertion as well.
\end{proof}

\begin{theorem}
\label{thm:main-endpoint}
Let $N\geq5$, let $p\in C^{N+5}(\mathcal I)$ satisfy
$p''(1)\ne0$ and \eqref{eq:fixed-hyperbolicity}, and let the initial data
satisfy
\begin{equation}
 (v_0-1,u_0)\in H^{N+2}(\mathbb R),\qquad
 v_0(\mathbb R)\subset I_{\mathrm{in}},\qquad
 u_i(\cdot,0)\in L^1(\mathbb R),\qquad E_N<\infty.           \label{eq:main-data-class}
\end{equation}
Let $(v,u)$ be the maximal $H^N$ strong solution supplied by
Lemma~\ref{lem:local-continuation}; the additional two derivatives in
\eqref{eq:main-data-class} are used in the pointwise initial estimates.
There is an $\epsilon_0>0$, depending only on $N,\nu,I_0$, the fixed buffer
in \eqref{eq:fixed-state-intervals}, and upper bounds for
$\|p\|_{C^{N+5}(I_0)}$, $\|(-p')^{-1}\|_{L^\infty(I_0)}$, and
$|p''(1)|^{-1}$, such that, if
$E_N\leq\epsilon_0$, then $T_{\max}=\infty$,
$v(x,t)\in I_{\mathrm{in}}$ for all $(x,t)$, and the decomposition
\eqref{eq:physical-remainder} satisfies, for $i=1,2$,
\begin{equation}
 |R_i(x,t)|\leq C E_N\log(2+t)\Psi_i(x,t),
 \qquad x\in\mathbb R,\quad t\geq0.
 \label{eq:main-pointwise-estimate}
\end{equation}
Moreover,
\begin{equation}
 \|R_i(t)\|_{L^\infty}
 \leq C E_N(1+t)^{-1}\log(2+t),
 \qquad
 \|R_{i,x}(t)\|_{L^\infty}
 \leq C E_N(1+t)^{-3/2}\log(2+t),
 \label{eq:main-uniform-estimates}
\end{equation}
and
\begin{equation}
 \sup_{t\geq0}\|(R_1,R_2)(t)\|_{H^N}\leq C E_N.
 \label{eq:main-energy-estimate}
\end{equation}
The original variables are recovered exactly by
\begin{align}
 v-1&=\frac{2c}{p_2}
 \big[(u_2^{\mathrm{app}}-u_1^{\mathrm{app}})+(R_2-R_1)\big],
 \label{eq:recover-v} \qquad
 u=\frac{2c^2}{p_2}
 \big[(u_1^{\mathrm{app}}+u_2^{\mathrm{app}})+(R_1+R_2)\big].
\end{align}
\end{theorem}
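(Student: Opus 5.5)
The proof is a continuation (bootstrap) argument built on Lemma~\ref{lem:local-continuation}. On a maximal interval $[0,T]$ with $T<T_{\max}$, we define a norm $X_N(T)$ that collects five quantities:
\begin{itemize}
\item the weighted pointwise quantity $\sup|R_i|/(\log(2+t)\Psi_i)$;
\item the modal bounds $\log^{-1}(2+t)|a_i(t)|$ and $(1+t)|a_i'(t)|$;
\item the log-free remainder bounds $(1+t)^{(k+1)/2}\|\partial_x^kZ_i^{\rm rem}(t)\|_{L^\infty}$ for $0\le k\le2$;
\item the derivative bound $(1+t)^{3/2}\log^{-1}(2+t)\|R_{i,x}\|_{L^\infty}$;
\item $\sup\|(R_1,R_2)\|_{H^N}$.
\end{itemize}
The goal is to show that $X_N(T)\le 2CE_N$ implies $X_N(T)\le CE_N+C(E_N+X_N(T))X_N(T)$, which for $E_N\le\epsilon_0$ improves to $X_N(T)\le\tfrac32 CE_N$. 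The Sobolev embedding of the $H^N$ bound, together with the explicit smallness of $u_i^{\rm app}$ from \eqref{eq:burgers-pointwise-bound} and \eqref{eq:resummed-pointwise-bound}, keeps $v$ inside $I_{\rm in}$ through \eqref{eq:inverse-characteristic-variables}. Continuation then excludes both \eqref{eq:continuation-range} and \eqref{eq:continuation-sobolev}, which gives $T_{\max}=\infty$. The recovery formulas \eqref{eq:recover-v} are simply \eqref{eq:inverse-characteristic-variables} combined with \eqref{eq:physical-remainder}, and the $L^\infty$ bound in \eqref{eq:main-uniform-estimates} follows from \eqref{eq:main-pointwise-estimate} and $\sup_x\Psi_i\le C(1+t)^{-1}$.

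The structural steps come first. We write the characteristic system, subtract \eqref{eq:burgers-wave} and \eqref{eq:resummed-profile} together with the viscous corrections $\gamma_{3-i}\partial_x(\cdot)$, and obtain a remainder equation in conservation form. This form gives $\int R_i=0$, which is the zero-mass identity \eqref{eq:remainder-zero-mass}. We then pass to $W_i=\int_{-\infty}^xR_i$ and conjugate by the Cole--Hopf weight, $Z_i=\phi_iW_i$, where $\phi_{i,x}/\phi_i=\theta_i/(2d)$. This removes the term $\theta_iW_{i,x}$ exactly. What remains is a coupled linear system for $(Z_1,Z_2)$, whose coupling coefficients are $\phi_i/\phi_{3-i}$ times constant-state terms, driven by nonlinear sources (quadratic in $R$, products $\theta R$ and $\Xi R$), by the cross-family feedback, and by the profile residual from truncating the nonlinearity at quadratic order. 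That residual is cubic in $\theta,\Xi$ or contains higher derivatives of them.

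Next we write Duhamel's formula with the approximate Green function $\widetilde G$. Liu--Yu bounds on $\mathcal R^r$ and $G^s$ control the non-Gaussian parts. The commutator $\widetilde L\widetilde G-\partial_t\widetilde G$ carries $\phi$-derivatives, hence factors of $\theta$, in divergence or zero-mass form, so it is treated as one more source. We define $a_i(t)$ as the sum of the Gaussian coefficients projected from all sources in the receiving family, together with the cone-localized opposite-family coefficients, and set $Z_i^{\rm rem}=Z_i-a_i\Gamma_i$. Each source has $L^1$ norm of order $(1+\tau)^{-1}$ times the bootstrap norm, which gives $|a_i'|\lesssim(1+t)^{-1}$ and hence $|a_i|\lesssim\log(2+t)$.

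For $Z_i^{\rm rem}$, every remaining source either has zero mass, so we integrate by parts onto $\partial_y$ of the kernel, or is integrable with an extra $(1+\tau)^{-1/2-\delta}$. The convolution lemmas of Appendix~\ref{app:five-regions} then give log-free $(1+t)^{-(k+1)/2}$ bounds. The same lemmas, applied region by region (receiving tube, opposite tube, region between via $\chi_K$, exterior), yield the cone-resolved weight. Substituting into the recovery formula $R_i=\phi_i^{-1}(Z_{i,x}+\theta_iZ_i/(2d))$ then gives \eqref{eq:main-pointwise-estimate} and the derivative estimate, since the explicit mode $a_i(\Gamma_{i,x}+\theta_i\Gamma_i/2d)$ is bounded by $\log(2+t)(1+t)^{-1}$ times a Gaussian in the $i$-th tube.

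The main obstacle is the exceptional feedback $\theta_{3-i}R_{3-i}$. A crude treatment of this term produces $\log^2(2+t)$. The plan is to insert the recovery formula for $R_j$, $j=3-i$, together with $Z_j=a_j\Gamma_j+Z_j^{\rm rem}$. The resulting terms are then handled separately:
\begin{itemize}
\item the piece $\theta_j\phi_j^{-1}Z_{j,x}$ is rewritten as a divergence moving at the opposite speed, plus a full Gaussian mass;
\item that Gaussian mass is absorbed into $a_i$;
\item the divergence gains $\partial_y$ on the Green kernel, which gives an extra $(t-\tau)^{-1/2}$ factor;
\item the leftover zero-mass pieces are estimated through their primitives, which are log-free because they involve only $Z_j^{\rm rem}$ or $a_j'$.
\end{itemize}

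Finally, the uniform $H^N$ bound comes from the symmetrized energy estimate on the $R$-system plus the Kawashima compensator $\int \partial_x^{k}(v-1)\partial_x^{k+1}u$ type cross term, which recovers dissipation of the $v$-component. The terms linear in $R$ with coefficients $\theta,\Xi$, whose decay is only $(1+t)^{-1/2}$ and which are not integrable, are handled by a Gaussian-weighted energy (weight $e^{\psi}$ with $\psi$ built from $\int\Gamma_i$). Lower-order norms are bounded using the pointwise bootstrap, and $(1+t)^{-3/2}\log$ is integrable. Gronwall then closes $\|R\|_{H^N}\le CE_N$.
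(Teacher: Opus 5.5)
Your outline follows the paper's architecture: Cole--Hopf conjugation of $W_i$, the conjugated kernel $\widetilde G$, the modal split $Z_i=a_i\Gamma_i+Z_i^{\rm rem}$, the splitting of $\phi_i\theta_jR_j$, a Kawashima energy estimate, and a bootstrap. The energy step is fine. Once the system is symmetrized with the variable coefficient $A_R$, the only coefficient term not absorbed by the dissipation is $\frac12\int(A_R)_tD^2$ with $D=R_2-R_1$. Your pointwise bootstrap already bounds it by $C\epsilon X_N^2\log^2(2+t)(1+t)^{-5/2}$, so the $e^{\psi}$ weight is not needed; the paper uses the Huang--Li--Matsumura inequality here. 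Two smaller corrections:
\begin{itemize}
\item The Cole--Hopf weight must satisfy $\phi_{i,x}/\phi_i=-\theta_i/(2d)$. With your sign, $(\partial_t+\lambda_i\partial_x-d\partial_x^2)\phi_i=-\phi_i\theta_i^2/(2d)\neq0$, and $\theta_iW_{i,x}$ is not absorbed.
\item The exceptional source does not have $L^1$ norm $O((1+\tau)^{-1})$; it is of size $\epsilon|a_j|(1+\tau)^{-1}\sim\log(2+\tau)(1+\tau)^{-1}$. Only its mass is $O((1+\tau)^{-1})$, because $\phi_i\theta_j\theta_{j,y}=\frac12\partial_y(\phi_i\theta_j^2)+\frac1{4d}\phi_i\theta_i\theta_j^2$ and the last term is exponentially small. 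That identity, not an $L^1$ bound, keeps $a_i$ at one logarithm.
\end{itemize}

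The genuine gap is that your estimates for $Z_i^{\rm rem}$ use only $L^1$--$L^\infty$ kernel bounds and never the transversality of the two families. Without it, the log-free entry $(1+t)^{1/2}\|Z_i^{\rm rem}\|_\infty$ of your bootstrap norm cannot be closed. There are two places where this fails.
\begin{itemize}
\item[(i)] Take $I_1=\partial_yP_j$ with $P_j=a_j\phi_i\theta_j^2/(4d(1-e^{-M_j/(2d)}))$. Then $\|P_j(\tau)\|_1\sim\epsilon|a_j(\tau)|(1+\tau)^{-1/2}$, and $a_j$ genuinely grows like a logarithm (e.g.\ through the same-family cubic mass $\int\phi_j\theta_j^3$). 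The extra $(t-\tau)^{-1/2}$ from the derivative only gives, up to $\epsilon X_N$, $(1+t)^{-1}\int_0^{t/2}\log(2+\tau)(1+\tau)^{-1/2}d\tau\sim\log(2+t)(1+t)^{-1/2}$.
\item[(ii)] Your dichotomy ``zero mass, or integrable with extra decay'' omits the opposite-family Gaussian masses with $1+\tau\gtrsim\sqrt{1+t}$. These are deliberately left out of $a_i$; the paper localizes in time, not space. They have nonzero mass and $L^1$ size $(1+\tau)^{-1}$, so the kernel bound gives $(1+t)^{-1/2}\int_{\sqrt t}^{t/2}(1+\tau)^{-1}d\tau\sim(1+t)^{-1/2}\log(2+t)$.
\end{itemize}
Either logarithm in $Z_j^{\rm rem}$ re-enters $\|\theta_iR_j\|_1$ and the mass of $\phi_i\theta_jR_j$. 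Then $a_i$ picks up $\log^2$ and the loop does not close.

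Both are rescued by the fact that a $j$-packet crosses the $i$-kernel at relative speed $2c$.
\begin{itemize}
\item For (i), the paper uses the material identity \eqref{eq:material-identity}. It trades $(\lambda_j-\lambda_i)\partial_x[g_i*P_j]$ for $g_i*\mathcal L_jP_j$ plus boundary terms, with $\|\mathcal L_jP_j\|_1\lesssim\epsilon\{|a_j'|(1+\tau)^{-1/2}+|a_j|(1+\tau)^{-3/2}\}$, which is integrable. This is why $(1+t)|a_j'|$ must sit in $X_N$.
\item For (ii), the paper uses the rescaled cross-Gaussian computation \eqref{eq:app-cross-gaussian}. There $b_\alpha/u$ is integrated against a Gaussian moving in $u=(1+\tau)/\sqrt{1+t}$, which also dictates the cutoff at $u\sim1$.
\end{itemize}

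The same issue reappears in the cone-resolved bound. Take an opposite-family zero-mass primitive with only $|Q_{\alpha,y}|\lesssim((y-\lambda_\alpha(1+\tau))^2+1+\tau)^{-3/2}$. Putting one derivative on the kernel and one on $Q_\alpha$ in the late interval gives $(1+t)^{-5/4}$ in the source tube, against the admissible $\log^2(2+t)(1+t)^{-3/2}$. The paper closes this with material bounds on $Q_\alpha$. For the exceptional term these require the estimate on $\frac{d}{d\tau}[(1+\tau)\int\phi_i\theta_jR_j\,dy]$, computed from the complete $j$-equation.
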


\begin{remark}
The estimate is asserted for the physical remainder $R_i$, not for the
Cole--Hopf proof variable.  No cone-weighted estimate for higher derivatives is
claimed.  The second term in \eqref{eq:endpoint-weight} permits the faster
opposite-family response to carry one additional logarithm, but its spatial
supremum is still $O((1+t)^{-1})$ after multiplication by the single logarithm
in \eqref{eq:main-pointwise-estimate}.
\end{remark}

\section{Exact equations for the remainder}
\label{sec:exact-remainder}

\subsection{The characteristic equations}

In the characteristic variables introduced in \eqref{eq:characteristic-variables},
the two nonlinear functions which occur throughout the proof are
\begin{align}
 F(z)
 &:=-\frac{p_2}{4c^2}
 \left[p\left(1+\frac{2c}{p_2}z\right)-p(1)+\frac{2c^3}{p_2}z\right],\qquad
 B(z)
 :=d\left[\left(1+\frac{2c}{p_2}z\right)^{-1}-1\right].
 \notag
\end{align}
{Using \eqref{eq:inverse-characteristic-variables}, the pressure
part is}
\begin{equation}
 \partial_xF(u_2-u_1),                                      \label{eq:pressure-substitution}
\end{equation}
whereas the viscous part is
\begin{align}
 \frac{p_2\nu}{4c^2}\partial_x\left(\frac{u_x}{v}\right)
 % &=d\partial_x\!\left[
 % \frac{u_{1,x}+u_{2,x}}{1+\frac{2c}{p_2}(u_2-u_1)}\right]\notag\\
 % &
 =d(u_{1,xx}+u_{2,xx})
 +\partial_x\left\{B(u_2-u_1)(u_{1,x}+u_{2,x})\right\}.
 \label{eq:viscosity-substitution}
\end{align}
Combining \eqref{eq:pressure-substitution}--
\eqref{eq:viscosity-substitution} gives
\begin{equation}\label{eq:exact-characteristic}
 u_{i,t}+\lambda_i u_{i,x}-d(u_{1,xx}+u_{2,xx})
 =\partial_x \left\{
 F(u_2-u_1)+B(u_2-u_1)(u_{1,x}+u_{2,x})
 \right\},
 \quad i=1,2.
\end{equation}
The condition $1+\frac{2c}{p_2}(u_2-u_1)>0$ is exactly $v>0$.
Taylor's formula is used below only in the following exact form:
\begin{align}
 F(z)&=-\frac12z^2-\frac{cp'''(1)}{3p_2^2}z^3+F_4(z),
 \label{eq:F-exact-expansion}\\
 F_4(z)&=-\frac{2c^2z^4}{3p_2^3}
 \int_0^1(1-\zeta)^3
 p^{(4)}\left(1+\frac{2c}{p_2}\zeta z\right)d\zeta,
 \label{eq:F4}\\
 B(z)&=-\frac{\nu c}{p_2}z+B_2(z),
 \qquad
 B_2(z)=\frac{4dc^2z^2}{p_2^2\left(1+\frac{2c}{p_2}z\right)}.
 \label{eq:B-exact-expansion}
\end{align}
Thus $F_4$ and $B_2$ are explicit.

\subsection{The background and its exact residual}

Equivalently, the two components of the approximate profile are
\begin{equation}\label{eq:background-pair}
 u_1^{\mathrm{app}}=\theta_1+\Xi_1+\frac{\nu}{4c}\partial_x(\theta_2+\Xi_2),
 \qquad
 u_2^{\mathrm{app}}=\theta_2+\Xi_2-\frac{\nu}{4c}\partial_x(\theta_1+\Xi_1).
\end{equation}
The profile bounds imply
\begin{equation*}
 \sup_{x\in\mathbb R,\ t\geq0}|u_2^{\mathrm{app}}(x,t)-u_1^{\mathrm{app}}(x,t)|
 \leq C(\epsilon+\epsilon^2).
\end{equation*}
After reducing the mass threshold if necessary,
\begin{equation*}
 v^{\mathrm{app}}(x,t):=1+\frac{2c}{p_2}(u_2^{\mathrm{app}}-u_1^{\mathrm{app}})
 \in I_{\mathrm{in}}
 \qquad (x\in\mathbb R,\ t\geq0).
\end{equation*}
In particular, \(v^{\mathrm{app}}\geq1/2\).  Thus
\(B(u_2^{\mathrm{app}}-u_1^{\mathrm{app}})\) and \(B_2(u_2^{\mathrm{app}}-u_1^{\mathrm{app}})\) are uniformly
well defined.  Since \(I_{\mathrm{in}}\) is an interval containing \(1\), every
argument of \(p\) in \eqref{eq:F4} also stays in \(I_{\mathrm{in}}\).
{Define the primitive residual by}
\begin{align}
 \mathcal E_i^{\mathrm{app}}(x,t)
 :=\int_{-\infty}^{x}&\Bigg[
 u_{i,t}^{\mathrm{app}}+\lambda_iu_{i,y}^{\mathrm{app}}
 -d(u_{1,yy}^{\mathrm{app}}+u_{2,yy}^{\mathrm{app}})
 \notag\\[-2mm]
 &-\partial_y\left\{
 F(u_2^{\mathrm{app}}-u_1^{\mathrm{app}})
 +B(u_2^{\mathrm{app}}-u_1^{\mathrm{app}})(u_{1,y}^{\mathrm{app}}+u_{2,y}^{\mathrm{app}})
 \right\}\Bigg](y,t)\,dy.
 \label{eq:primitive-background-residual}
\end{align}
By \eqref{eq:burgers-wave}, \eqref{eq:resummed-zero-mass}, and
\eqref{eq:background-pair},
\(\int_{\mathbb R}u_i^{\mathrm{app}}(x,t)\,dx=M_i\).  All terms in the integrand of
\eqref{eq:primitive-background-residual} are integrable: first use
\eqref{eq:burgers-wave} and \eqref{eq:resummed-profile} to replace the time
derivatives, and then apply \eqref{eq:burgers-pointwise-bound} and
\eqref{eq:resummed-pointwise-bound}.  Hence
\[
 \int_{\mathbb R}(\text{residual integrand})(x,t)\,dx
 =\frac d{dt}\int_{\mathbb R}u_i^{\mathrm{app}}(x,t)\,dx=0.
\]
Consequently \(\mathcal E_i^{\mathrm{app}}\) vanishes at both spatial infinities.

The weight required in the residual decomposition and in the remainder equation is
\begin{equation}\label{eq:def-phi}
 \phi_i(x,t):=\exp\left[-\frac1{2d}
 \int_{-\infty}^x\theta_i(y,t)\,dy\right].
\end{equation}
{Equation \eqref{eq:explicit-burgers-wave} gives}
\begin{align}
 &\phi_i(x,t)
 =1+\bigl(e^{-M_i/(2d)}-1\bigr)
       \int_{-\infty}^x\Gamma_i(y,1+t)\,dy,\notag\\
 &\phi_{i,x}
 =\bigl(e^{-M_i/(2d)}-1\bigr)\Gamma_i(x,1+t), \qquad
 \phi_i\theta_i
 =2d\bigl(1-e^{-M_i/(2d)}\bigr)\Gamma_i(x,1+t).
 \label{eq:phi-explicit-identities}
\end{align}
Writing \(F_i(x,t)=\int_{-\infty}^x\Gamma_i(y,1+t)\,dy\), we have
\[
 \phi_i=(1-F_i)+e^{-M_i/(2d)}F_i,
 \qquad 0\leq F_i\leq1.
\]
Therefore
\begin{equation*}
 \min\{1,e^{-M_i/(2d)}\}\leq\phi_i(x,t)
 \leq\max\{1,e^{-M_i/(2d)}\},
 \qquad
 \|\phi_i\|_\infty+\|\phi_i^{-1}\|_\infty\leq2e^{|M_i|/(2d)}.
\end{equation*}
Moreover, \eqref{eq:def-phi} is the unique positive solution of
\(2d\phi_{i,x}+\theta_i\phi_i=0\) normalized by
\(\lim_{x\to-\infty}\phi_i(x,t)=1\), and for small \(|M_i|\),
\begin{equation*}
 \|\phi_i-1\|_\infty+\|\phi_i^{-1}-1\|_\infty\leq C|M_i|.
\end{equation*}
Combining the convected heat equation for $\Gamma_i$ with
\eqref{eq:def-phi}, and using the Burgers equation for $\theta_i$, gives
\begin{equation}\label{eq:phi-identities}
 (\partial_t+\lambda_i\partial_x-d\partial_x^2)\phi_i=0,
 \qquad 2d\phi_{i,x}+\theta_i\phi_i=0.
\end{equation}
Whenever a later formula contains \(1-e^{-M_i/(2d)}\) in a denominator, it is
understood by continuous extension at \(M_i=0\), using the exact identity
\begin{equation*}
 \frac{\theta_i}{1-e^{-M_i/(2d)}}=\frac{2d\,\Gamma_i}{\phi_i}.
\end{equation*}
This convention also covers products with higher powers of \(\theta_i\).

For later estimates the primitive residual must be decomposed before absolute values
are taken.  Put
\begin{equation*}
 \mu_{\mathrm{cub}}:=\frac{cp'''(1)}{3p_2^2}+\frac{3}{16c}+\frac{c}{2p_2}
\end{equation*}
and
\begin{align}
 V_i^{\mathrm{bg}}
 :=&\;(-1)^id\phi_i
 \left[\frac{d}{2c}\theta_{i,x}
 +\left(\frac{1}{8c}-\frac{c}{p_2}\right)\theta_i^2\right]
 +(-1)^{i+1}d\left(\frac{1}{2c}-\frac{c}{p_2}\right)
 \phi_i\theta_{3-i}^2.
 \label{eq:V-bg}
\end{align}
The algebra needed below can be recorded at its first use.  Adding
\eqref{eq:burgers-wave} and \eqref{eq:resummed-profile} gives
\begin{equation*}
 (\theta_i+\Xi_i)_t+\lambda_i(\theta_i+\Xi_i)_x
 -d(\theta_i+\Xi_i)_{xx}
 =-\partial_x\left[
 \frac12(\theta_1^2+\theta_2^2)+\theta_1\Xi_1+\theta_2\Xi_2\right].
\end{equation*}
After one integration in \(x\), substitution of \eqref{eq:background-pair}
into \eqref{eq:primitive-background-residual} reduces the quadratic terms by
the exact cancellation
\begin{align}
 &\frac12(\theta_1^2+\theta_2^2)+\theta_1\Xi_1+\theta_2\Xi_2
 -\frac12(\theta_2+\Xi_2-\theta_1-\Xi_1)^2=(\theta_1+\Xi_1)(\theta_2+\Xi_2)
 -\frac12(\Xi_1^2+\Xi_2^2).
 \label{eq:key-quadratic-cancellation}
\end{align}
The two weighted derivative identities
\begin{align*}
 &\phi_i\theta_\alpha\theta_{\alpha,x}
 =\frac12\partial_x(\phi_i\theta_\alpha^2)
 +\frac{\theta_i}{4d}\phi_i\theta_\alpha^2,\\
 &\phi_i\theta_{i,xx}
 =\partial_x\left(\phi_i\theta_{i,x}
 +\frac{1}{4d}\phi_i\theta_i^2\right)
 +\frac{1}{8d^2}\phi_i\theta_i^3
\end{align*}
place every pure-family derivative in \(\partial_xV_i^{\mathrm{bg}}\).
The terms left outside that divergence are defined exactly by
\begin{align}
 \mathcal R_i^{\mathrm{int}}=&
 \phi_i(\theta_1\theta_2+\Xi_1\Xi_2)
 +(-1)^{i+1}\left(\frac{1}{4c}-\frac{c}{2p_2}\right)
 \phi_i\theta_i\theta_{3-i}^2
 \notag\\
 &+\phi_i\Bigg\{
 (-1)^{i+1}\frac{\nu}{4c}(\theta_1\Xi_1+\theta_2\Xi_2)_x
 +(-1)^i\frac{d\nu}{4c}\Xi_{i,xx}-\frac{d^2}{8c^2}
 \left[\partial_x(\theta_1+\Xi_1+\theta_2+\Xi_2)\right]^2
 \notag\\
 &\qquad\quad+\frac{d}{2c}\Big[
 (\theta_2+\Xi_2-\theta_1-\Xi_1)
 \partial_x(\theta_1+\Xi_1+\theta_2+\Xi_2)
 +\theta_1\theta_{1,x}-\theta_2\theta_{2,x}\Big]
 \notag\\
 &\qquad\quad-\frac{cp'''(1)}{3p_2^2}\Big[
 (u_2^{\mathrm{app}}-u_1^{\mathrm{app}})^3
 +\theta_1^3-\theta_2^3\Big]
 +F_4(u_2^{\mathrm{app}}-u_1^{\mathrm{app}})
 \notag\\
 &\qquad\quad-\frac{\nu c}{p_2}\Big[
 (u_2^{\mathrm{app}}-u_1^{\mathrm{app}})
 (u_{1,x}^{\mathrm{app}}+u_{2,x}^{\mathrm{app}})
 +\theta_1\theta_{1,x}-\theta_2\theta_{2,x}\Big]
 \notag\\
 &\qquad\quad+B_2(u_2^{\mathrm{app}}-u_1^{\mathrm{app}})
 (u_{1,x}^{\mathrm{app}}+u_{2,x}^{\mathrm{app}})
 \Bigg\}.
 \label{eq:complete-integrable-remainder}
\end{align}
Substitution of \eqref{eq:F-exact-expansion}--\eqref{eq:B-exact-expansion}
and \eqref{eq:key-quadratic-cancellation} gives the exact identity
\begin{align}
 -\phi_i\mathcal E_i^{\mathrm{app}}
 =&(-1)^{i+1}\mu_{\mathrm{cub}}\phi_i\theta_i^3
 +(-1)^i\left(\frac{cp'''(1)}{3p_2^2}+\frac{1}{4c}\right)
 \phi_i\theta_{3-i}^3
 \notag\\
 &+\phi_i\left[
 \theta_1\left(\Xi_2-\frac{\theta_1^2}{4c}\right)
 +\theta_2\left(\Xi_1+\frac{\theta_2^2}{4c}\right)
 \right]
 -\frac12\phi_i(\Xi_1^2+\Xi_2^2)
+\partial_xV_i^{\mathrm{bg}}+\mathcal R_i^{\mathrm{int}}.
 \label{eq:exact-background-decomposition}
\end{align}

The apparently critical mixed expression in
\eqref{eq:exact-background-decomposition} is controlled by
\cite[Lemma~3.3, with \(n=0\)]{Koike2023}.  In Koike's tail notation, index
\(-1\) corresponds to \(\theta_i/2+\Xi_i\), while index \(0\) corresponds to
\(\Xi_i\).  Thus, for every fixed \(k\geq0\),
\begin{align}
 \left|\partial_x^k\left[
 \Xi_2-\frac{\theta_1}{2c}\left(\frac12\theta_1+\Xi_1\right)\right]\right|
 &\leq C_k\epsilon^2(1+t)^{-k/2}
 \bigl([x-\lambda_2(1+t)]^2+1+t\bigr)^{-3/4},\notag\\
 \left|\partial_x^k\left[
 \Xi_1+\frac{\theta_2}{2c}\left(\frac12\theta_2+\Xi_2\right)\right]\right|
 &\leq C_k\epsilon^2(1+t)^{-k/2}
 \bigl([x-\lambda_1(1+t)]^2+1+t\bigr)^{-3/4}.
 \notag
\end{align}
Rewriting the two mixed products with these combinations and using
\eqref{eq:burgers-pointwise-bound}--\eqref{eq:resummed-pointwise-bound} yields
\begin{align}
 &\left\|\partial_x^k\left[
 \theta_1\left(\Xi_2-\frac{\theta_1^2}{4c}\right)
 +\theta_2\left(\Xi_1+\frac{\theta_2^2}{4c}\right)
 \right](t)\right\|_{L^1}
 \leq C_k\epsilon^3(1+t)^{-5/4-k/2},\notag\\
 &\left\|\partial_x^k\left[
 \theta_1\left(\Xi_2-\frac{\theta_1^2}{4c}\right)
 +\theta_2\left(\Xi_1+\frac{\theta_2^2}{4c}\right)
 \right](t)\right\|_{L^\infty}
 \leq C_k\epsilon^3(1+t)^{-7/4-k/2}.
 \label{eq:refined-product-bounds}
\end{align}
In particular, this mixed profile is time integrable in \(L^1\) and does not
produce the endpoint logarithm.

\subsection{The remainder and its antiderivative}

By \eqref{eq:physical-remainder} and \eqref{eq:background-pair},
\(R_i=u_i-u_i^{\mathrm{app}}\).  The exact nonlinear remainder is
\begin{align}
 N(R)=&(R_2-R_1)^2\int_0^1(1-\zeta)
 F''\left(u_2^{\mathrm{app}}-u_1^{\mathrm{app}}+\zeta(R_2-R_1)\right)d\zeta
 \notag\\
 &+(R_2-R_1)(R_{1,x}+R_{2,x})\int_0^1
 B'\left(u_2^{\mathrm{app}}-u_1^{\mathrm{app}}+\zeta(R_2-R_1)\right)d\zeta
 \notag\\
 &+(R_2-R_1)^2(u_{1,x}^{\mathrm{app}}+u_{2,x}^{\mathrm{app}})\int_0^1(1-\zeta)
 B''\left(u_2^{\mathrm{app}}-u_1^{\mathrm{app}}+\zeta(R_2-R_1)\right)d\zeta.
 \label{eq:exact-N}
\end{align}
{Set}
\begin{equation}\label{eq:def-eta}
\eta:=F'(u_2^{\mathrm{app}}-u_1^{\mathrm{app}})
 +B'(u_2^{\mathrm{app}}-u_1^{\mathrm{app}})(u_{1,x}^{\mathrm{app}}+u_{2,x}^{\mathrm{app}})
 -(\theta_1-\theta_2).
\end{equation}
{From \eqref{eq:background-pair},} one has
\begin{align}
 u_2^{\mathrm{app}}-u_1^{\mathrm{app}}
 =&\theta_2+\Xi_2-\theta_1-\Xi_1
 -\frac{d}{2c}\partial_x(\theta_1+\Xi_1+\theta_2+\Xi_2),\notag\\
 u_{1,x}^{\mathrm{app}}+u_{2,x}^{\mathrm{app}}
 =&\partial_x(\theta_1+\Xi_1+\theta_2+\Xi_2)
 +\frac{d}{2c}\partial_x^2(\theta_2+\Xi_2-\theta_1-\Xi_1).
 \notag
\end{align}
Using $F'(z)=-z+[F'(z)+z]$ and
$B'(z)=-\nu c/p_2+[B'(z)+\nu c/p_2]$ therefore gives the exact identity
\begin{align}
 \eta=&\Xi_1-\Xi_2
 +\left(\frac{d}{2c}-\frac{\nu c}{p_2}\right)
   \partial_x(\theta_1+\Xi_1+\theta_2+\Xi_2)\notag\\
 &-\frac{\nu d}{2p_2}
   \partial_x^2(\theta_2+\Xi_2-\theta_1-\Xi_1)\notag\\
 &+F'(u_2^{\mathrm{app}}-u_1^{\mathrm{app}})+(u_2^{\mathrm{app}}-u_1^{\mathrm{app}})\notag\\
 &+\left[B'(u_2^{\mathrm{app}}-u_1^{\mathrm{app}})+\frac{\nu c}{p_2}\right]
   (u_{1,x}^{\mathrm{app}}+u_{2,x}^{\mathrm{app}}).
 \label{eq:eta-explicit-profile}
\end{align}
Likewise, \eqref{eq:B-exact-expansion} gives
\begin{align}
 B(u_2^{\mathrm{app}}-u_1^{\mathrm{app}})
 =&\frac{\nu c}{p_2}(\theta_1-\theta_2)
 -\frac{\nu c}{p_2}(\Xi_2-\Xi_1)
 +\frac{\nu d}{2p_2}\partial_x(\theta_1+\Xi_1+\theta_2+\Xi_2)+B_2(u_2^{\mathrm{app}}-u_1^{\mathrm{app}}).
 \label{eq:B-explicit-profile}
\end{align}
Taylor's theorem and \eqref{eq:exact-N} give the exact linear and nonlinear
flux decomposition.  Therefore \eqref{eq:exact-characteristic} and
\eqref{eq:primitive-background-residual} give, before spatial integration,
\begin{align}
 R_{i,t}+\lambda_iR_{i,x}-d(R_{1,xx}+R_{2,xx})
 =\partial_x\Big\{&[\theta_1-\theta_2+\eta](R_2-R_1)\notag\\
 &+B(u_2^{\mathrm{app}}-u_1^{\mathrm{app}})(R_{1,x}+R_{2,x})
 +N(R)-\mathcal E_i^{\mathrm{app}}\Big\}.
 \label{eq:exact-R-equation}
\end{align}
The mass of $u_i$ is conserved by \eqref{eq:exact-characteristic}.  Moreover,
$\int\theta_i=M_i$, $\int\Xi_i=0$, and the derivative in
\eqref{eq:background-pair} has zero integral.  Hence, we have
\begin{equation}
 \int_{\mathbb R}R_i(x,t)\,dx
 =\int_{\mathbb R}u_i(x,t)\,dx-\int_{\mathbb R}u_i^{\mathrm{app}}(x,t)\,dx
 =M_i-M_i=0.
 \label{eq:remainder-zero-mass}
\end{equation}
The weighted persistence in Lemma~\ref{lem:local-continuation} implies
\(\langle x\rangle R_i(\cdot,t)\in L^2\) on every compact local-existence
interval.  Since \(\langle x\rangle^{-1}\in L^2(\mathbb R)\),
Cauchy--Schwarz gives \(R_i(\cdot,t)\in L^1\).  Together with
\eqref{eq:remainder-zero-mass}, this shows that the primitive below vanishes at
both spatial infinities.
We may therefore set
\begin{equation}\label{eq:def-W}
 W_i(x,t):=\int_{-\infty}^xR_i(y,t)\,dy,
 \qquad W_{i,x}=R_i.
\end{equation}
Integrating \eqref{eq:exact-R-equation} once in \(x\), setting \(j=3-i\) and $i=1,2$,
and moving the same-family term to the left gives
\begin{align*}
 (\partial_t+\lambda_i\partial_x-d\partial_x^2)W_i+\theta_iW_{i,x}
 =&dW_{j,xx}+(\theta_i-\theta_j)W_{j,x}+\theta_jW_{i,x}+N(R)-\mathcal E_i^{\mathrm{app}}\\
 &+\eta(W_{2,x}-W_{1,x})
 +B(u_2^{\mathrm{app}}-u_1^{\mathrm{app}})(W_{1,xx}+W_{2,xx}).
\end{align*}

\subsection{Cole--Hopf conjugation}\label{Cole--Hopf conjugation}

For every smooth \(f\), the identities \eqref{eq:phi-identities} and the product
rule give the single exact calculation
\begin{align}
 (\partial_t+\lambda_i\partial_x-d\partial_x^2)(\phi_if)
 &=\phi_i(\partial_t+\lambda_i\partial_x-d\partial_x^2)f
 +f(\partial_t+\lambda_i\partial_x-d\partial_x^2)\phi_i
 -2d\phi_{i,x}f_x
 \notag\\
 &=\phi_i\left[(\partial_t+\lambda_i\partial_x-d\partial_x^2)f
 +\theta_if_x\right].
 \label{eq:conjugation-identity}
\end{align}
{Set}
\begin{equation*}
 Z_i:=\phi_iW_i.
\end{equation*}
Then \eqref{eq:conjugation-identity} absorbs the same-family term.

The recovery formulas are
\begin{align}
 R_i&=\phi_i^{-1}\left(Z_{i,x}+\frac{\theta_i}{2d}Z_i\right),
 \label{eq:recover-R}\\
 R_{i,x}&=\phi_i^{-1}\left[
 Z_{i,xx}+\frac{\theta_i}{d}Z_{i,x}
 +\left(\frac{\theta_{i,x}}{2d}+\frac{\theta_i^2}{4d^2}\right)Z_i
 \right].
 \label{eq:recover-Rx}
\end{align}
The variable-viscosity contribution also has the exact divergence form
\begin{align}
 &B(u_2^{\mathrm{app}}-u_1^{\mathrm{app}})\phi_i(R_{1,x}+R_{2,x})
 \notag\\
 &=\partial_x\left[B(u_2^{\mathrm{app}}-u_1^{\mathrm{app}})\phi_i(R_1+R_2)\right]
 \notag\\
 &\quad\;\;-\left[B'(u_2^{\mathrm{app}}-u_1^{\mathrm{app}})(u_{2,x}^{\mathrm{app}}-u_{1,x}^{\mathrm{app}})\phi_i
 +B(u_2^{\mathrm{app}}-u_1^{\mathrm{app}})\phi_{i,x}\right](R_1+R_2).
 \label{eq:variable-viscosity-divergence}
\end{align}
Let \(j=3-i\).  Substitution of \eqref{eq:recover-R}--\eqref{eq:recover-Rx}
gives
\begin{align*}
 d\phi_iW_{j,xx}+(\theta_i-\theta_j)\phi_iW_{j,x}
 =&d\frac{\phi_i}{\phi_j}Z_{j,xx}
 +\frac{\phi_i}{\phi_j}\theta_iZ_{j,x}+\frac{\phi_i}{\phi_j}
 \left(\frac12\theta_{j,x}-\frac{\theta_j^2}{4d}
 +\frac{\theta_i\theta_j}{2d}\right)Z_j,
\end{align*}
while
\(\phi_i\theta_jW_{i,x}
=\theta_jZ_{i,x}+\theta_i\theta_jZ_i/(2d)\).
Hence the complete conjugated system is
\begin{align}
 &Z_{i,t}+\lambda_iZ_{i,x}-dZ_{i,xx}
 -d\frac{\phi_i}{\phi_j}Z_{j,xx} \notag\\
 =&\theta_jZ_{i,x}+\frac{\phi_i}{\phi_j}\theta_iZ_{j,x}
 +\frac{\theta_i\theta_j}{2d}Z_i+\frac{\phi_i}{\phi_j}
 \left(\frac12\theta_{j,x}-\frac{\theta_j^2}{4d}
 +\frac{\theta_i\theta_j}{2d}\right)Z_j\notag\\
 &+\phi_i\eta(R_2-R_1)
 +\phi_iB(u_2^{\mathrm{app}}-u_1^{\mathrm{app}})(R_{1,x}+R_{2,x})+\phi_iN(R)-\phi_i\mathcal E_i^{\mathrm{app}},
 \qquad i=1,2.
 \label{eq:complete-Z-system}
\end{align}

The diffusion matrix already present on the left-hand side is
\[
 \begin{pmatrix}
  1&\dfrac{\phi_1}{\phi_2}\\[2mm]
  \dfrac{\phi_2}{\phi_1}&1
 \end{pmatrix}.
\]
Since
\((\phi_1/\phi_2)(\phi_2/\phi_1)=1\), its determinant is zero: this is
the rank-one physical viscosity matrix, not a uniformly parabolic
\(2\times2\) diffusion matrix.

\section{Green function and the conjugated commutator}
\label{sec:green}

\subsection{The constant-coefficient kernel}

Let \(G^0(y,\tau;x,t)\) be the backward Green kernel of the constant-state
linearization:
\begin{equation}\label{eq:G0-terminal}
 G^0_\tau+G^0_y
 \begin{pmatrix}c&0\\0&-c\end{pmatrix}
 +d\partial_y^2\left[
 G^0\begin{pmatrix}1&1\\1&1\end{pmatrix}\right]=0,
 \qquad
 G^0(y,t;x,t)=\delta(x-y)I_2.
\end{equation}
With $z=y-x$ and the convention
\(\widehat f(\xi)=\int_{\mathbb R}e^{-i\xi z}f(z)\,dz\), one has
$\partial_y\mapsto i\xi$ and $\partial_y^2\mapsto-\xi^2$.  Hence
\begin{equation}
 \partial_{(t-\tau)}\widehat G^0
 =\widehat G^0
 \begin{pmatrix}
 ic\xi-d\xi^2&-d\xi^2\\
 -d\xi^2&-ic\xi-d\xi^2
 \end{pmatrix},
 \qquad \widehat G^0(\xi,0)=I_2.
 \label{eq:G0-Fourier-ODE}
\end{equation}
Solving this matrix ODE gives
\begin{equation}\label{eq:G0-Fourier}
 \widehat G^0(\xi,t-\tau)
 =\exp\left[(t-\tau)\left\{
 i\xi\begin{pmatrix}c&0\\0&-c\end{pmatrix}
 -d\xi^2\begin{pmatrix}1&1\\1&1\end{pmatrix}
 \right\}\right].
\end{equation}
Denote the matrix in \eqref{eq:G0-Fourier-ODE} by $L(\xi)$.  The exact inverse
Fourier representation is
\begin{equation*}
 G^0(y,\tau;x,t)
 =\mathcal F_{\xi\to y-x}^{-1}
 \left[e^{(t-\tau)L(\xi)}\right].
\end{equation*}
The inverse transform is taken in \(\mathcal S'(\mathbb R)\), since the exact
kernel contains a damped Dirac mass.  The eigenvalues are
\(\mu_\pm=-d\xi^2\pm\sqrt{d^2\xi^4-c^2\xi^2}\).  Direct evaluation of the
matrix exponential gives the analytic identity
\begin{align}
 \widehat G^0(\xi,t-\tau)
 =e^{-d\xi^2(t-\tau)}\Bigg[&
 \cosh\left((t-\tau)\sqrt{d^2\xi^4-c^2\xi^2}\right)I_2
 \notag\\
 &+\frac{\sinh\left((t-\tau)\sqrt{d^2\xi^4-c^2\xi^2}\right)}
 {\sqrt{d^2\xi^4-c^2\xi^2}}
 \begin{pmatrix}ic\xi&-d\xi^2\\-d\xi^2&-ic\xi\end{pmatrix}
 \Bigg].
 \label{eq:green-analytic-multiplier}
\end{align}
At the three points where
\(d^2\xi^4-c^2\xi^2=0\), the quotient in
\eqref{eq:green-analytic-multiplier} is understood by analytic continuation
and is assigned the exact value \(t-\tau\).  Thus the analytic formula remains
regular at \(\xi=0,\pm c/d\).

For \(i=1,2\), let
\begin{equation*}
 g_i(y,\tau;x,t)
 =\frac{1}{\sqrt{4\pi d(t-\tau)}}
 \exp\left[-\frac{(x-y-\lambda_i(t-\tau))^2}{4d(t-\tau)}\right].
\end{equation*}
The identities used below are
\begin{align}
 &(\partial_\tau+c\partial_y+d\partial_y^2)g_1=0,
 \qquad
 (\partial_\tau-c\partial_y+d\partial_y^2)g_2=0,\notag\\
 &(\partial_\tau-c\partial_y+d\partial_y^2)g_1=-2c\,g_{1,y},
 \qquad
 (\partial_\tau+c\partial_y+d\partial_y^2)g_2=2c\,g_{2,y},
 \notag\\
 &(\partial_\tau+\lambda_i\partial_y+d\partial_y^2)
 \bigl[(t-\tau)\partial_y^kg_i\bigr]
 =-\partial_y^kg_i,
 \qquad i=1,2,\quad k\geq0.
 \notag
\end{align}
Fix an even $\chi\in C_c^\infty(\mathbb R)$ such that $\chi=1$ on
$|\xi|\leq R_\chi$, $\chi=0$ on $|\xi|\geq2R_\chi$, and
$R_\chi>2c/d$.  For $t-\tau\geq1$, let $\mathcal G$ denote the explicit
Gaussian matrix
\begin{align}
 \mathcal G=
 \begin{pmatrix}
 g_1+\dfrac{d^2}{2c}(t-\tau)g_{1,yyy}
 &\dfrac{d}{2c}(g_{1,y}-g_{2,y})\\[2mm]
 \dfrac{d}{2c}(g_{1,y}-g_{2,y})
 &g_2-\dfrac{d^2}{2c}(t-\tau)g_{2,yyy}
 \end{pmatrix}.
 \label{eq:explicit-gaussian-matrix}
\end{align}
The coefficients in \eqref{eq:explicit-gaussian-matrix} are fixed by the
low-frequency branches and their spectral projectors.  Near \(\xi=0\), we have
\begin{align*}
 \mu_1&=ic\xi-d\xi^2-\frac{id^2}{2c}\xi^3+O(\xi^5),
 &P_1&=\begin{pmatrix}1&0\\0&0\end{pmatrix}
 +\frac{id\xi}{2c}\begin{pmatrix}0&1\\1&0\end{pmatrix}+O(\xi^2),\\
 \mu_2&=-ic\xi-d\xi^2+\frac{id^2}{2c}\xi^3+O(\xi^5),
 &P_2&=\begin{pmatrix}0&0\\0&1\end{pmatrix}
 -\frac{id\xi}{2c}\begin{pmatrix}0&1\\1&0\end{pmatrix}+O(\xi^2).
\end{align*}
Thus the diagonal heat kernels, the first off-diagonal projector correction,
and the cubic phase corrections are precisely the three parts displayed in
\(\mathcal G\).  More explicitly, for an even cutoff \(\chi_0\) supported
near the origin, some \(\omega_0>0\), \(s\geq1\), and
\(0\leq k\leq3\), one may write
\begin{equation*}
 (i\xi)^k\chi_0(\xi)
 \bigl(e^{sL(\xi)}-\widehat{\mathcal G}(\xi,s)\bigr)
 =\sum_{\ell=1}^2e^{i\lambda_\ell s\xi}m_{k\ell}(\xi,s),
\end{equation*}
where, for \(0\leq q\leq2\),
\begin{equation*}
 |\partial_\xi^qm_{k\ell}(\xi,s)|
 \leq Cs^{-(k+2-q)/2}(1+|\sqrt{s}\xi|)^{k+8}
 e^{-\omega_0s\xi^2}.
\end{equation*}
The \(L^1_\xi\) bound and two integrations by parts give the two-cone
pointwise remainder below; on the fixed intermediate annulus, the spectral
gap and \eqref{eq:green-analytic-multiplier} give the exponentially localized
part.
The regular and singular pieces are defined in Fourier variables by
\begin{align}
 \widehat G^r(\xi,t-\tau)
 &=\chi(\xi)\widehat G^0(\xi,t-\tau)
 +(1-\chi(\xi))\widehat{\mathcal G}(\xi,t-\tau),
 \label{eq:exact-regular-cutoff}\\
 \widehat G^s(\xi,t-\tau)
 &=(1-\chi(\xi))
 \left[\widehat G^0(\xi,t-\tau)-\widehat{\mathcal G}(\xi,t-\tau)\right].
 \label{eq:exact-singular-cutoff}
\end{align}
Thus \eqref{eq:exact-regular-cutoff}--\eqref{eq:exact-singular-cutoff} give the
exact identity
\begin{equation*}
 G^0=G^r+G^s.
\end{equation*}
The global regular--singular Green estimates completing this normalization
calculation are those of \cite[Theorems~2.6, 2.7 and 2.9]{LiuYu2022} and
\cite[Lemma~2.6, (2.15)--(2.20)]{WangZhang2023}, after the constant
characteristic transformation \eqref{eq:characteristic-variables}.  In the
present variables, one has
\begin{equation}\label{eq:regular-green-expansion}
 G^r=\mathcal G+\mathcal R^r.
\end{equation}
For \(0\le k\le3\) and some fixed
\(0<\kappa_0<c^2/(2d)\),
\begin{align}
 &|\partial_y^k\mathcal R^r(y,\tau;x,t)|
 \le C(t-\tau)^{-(k+3)/2}
 \sum_{i=1}^2\left(1+
 \frac{|x-y-\lambda_i(t-\tau)|^2}{t-\tau}\right)^{-1}
 \notag\\
 &\qquad\qquad\qquad\qquad\quad+Ce^{-\kappa_0(t-\tau)}(1+|x-y|)^{-2},
 \label{eq:Rr-pointwise}\\
 &\|\partial_y^k\mathcal R^r(t-\tau)\|_{1}
 \le C(t-\tau)^{-1-k/2},
 \qquad
 \|\partial_y^k\mathcal R^r(t-\tau)\|_{\infty}
 \le C(t-\tau)^{-3/2-k/2}.
 \label{eq:Rr-norms}
\end{align}
For the same cutoff $\chi$, define
\begin{equation}\label{eq:def-J}
 \widehat J(\xi)=\frac{i(1-\chi(\xi))}{\xi},
 \qquad
 P_-=\frac12\begin{pmatrix}1&-1\\-1&1\end{pmatrix}.
\end{equation}
On \(\operatorname{supp}(1-\chi)\), the slow high-frequency branch satisfies
\begin{equation*}
 \mu_{\mathrm{slow}}(\xi)=-\frac{c^2}{2d}+\rho_\mu(\xi),
 \qquad
 P_{\mathrm{slow}}(\xi)=P_-+\frac{ic}{2d\xi}
 \begin{pmatrix}1&0\\0&-1\end{pmatrix}+\rho_P(\xi),
\end{equation*}
where
\( |\partial_\xi^q\rho_\mu(\xi)|+|\partial_\xi^q\rho_P(\xi)|
\leq C_q|\xi|^{-2-q}\) for \(0\leq q\leq6\).
Together with \eqref{eq:def-J}, this fixes both the sign and the coefficient
of the singular part:
\begin{equation}\label{eq:singular-green}
 G^s(y,\tau;x,t)
 =e^{-\frac{c^2}{2d}(t-\tau)}
 \left[
 \delta(x-y)P_-+\frac{c}{2d}J(y-x)
 \begin{pmatrix}1&0\\0&-1\end{pmatrix}
 \right]+\mathcal R^s.
\end{equation}
Here \(\mathcal M\) denotes the space of finite matrix-valued Radon measures,
with total-variation norm.  Since
\(\check\chi:=\mathcal F^{-1}\chi\) is Schwartz and
\(J'=\check\chi-\delta_0\), the cutoff localization and the cited Green
estimates give, for \(t-\tau\geq1\),
\begin{align}
 &\int_{\mathbb R}(1+|z|)^4|J(z)|\,dz
 +\int_{\mathbb R}(1+|z|)^4\,d|J'|(z)\leq C,\notag\\
 &\int_{\mathbb R}(1+|z|)^4
 d|\partial_z^k\mathcal R^s|(z,t-\tau)
 \leq Ce^{-\kappa_0(t-\tau)},
 \qquad 0\leq k\leq2.
 \label{eq:singular-remainder-measure-estimate}
\end{align}
From \eqref{eq:singular-green}--
\eqref{eq:singular-remainder-measure-estimate},
\begin{equation}\label{eq:singular-action}
 \|\partial_x^k(G^s(t-\tau)*f)\|_{L^q}
 \le Ce^{-\kappa_0(t-\tau)}\|\partial_x^k f\|_{L^q},
 \qquad 1\le q\le\infty,\quad 0\le k\le2.
\end{equation}
For \(0<t-\tau\le1\), the exact kernel is used as a finite measure and satisfies
\begin{equation}\label{eq:short-time-green-measure}
 \int_{\mathbb R}(1+|y-x|)^4\,d|G^0|(y-x,t-\tau)\le C,
\end{equation}
and
\begin{equation}\label{eq:short-time-green-derivative}
 \int_{\mathbb R}(1+|y-x|)^4\,
 d\left|\partial_y\left[G^0(t-\tau)\binom11\right]\right|(y-x)
 \le C(t-\tau)^{-1/2}.
\end{equation}
By \eqref{eq:short-time-green-measure}--
\eqref{eq:short-time-green-derivative},
\begin{equation*}
 \|G^0(t-\tau)\|_{\mathcal M}\le C,
 \qquad
 \left\|\partial_y\left[G^0(t-\tau)\binom11\right]\right\|_{\mathcal M}
 \le C(t-\tau)^{-1/2},
 \qquad 0<t-\tau\le1.
\end{equation*}
These measure estimates follow from the Gaussian, Dirac, and exponentially
localized terms in \cite[Lemma~2.6, (2.15)--(2.20)]{WangZhang2023}; the
preceding low- and high-frequency expansions record the only coefficients
specific to the present normalization.

\subsection{The exact conjugated commutator}

Conjugate the exact kernel entry by entry:
\begin{equation}\label{eq:conjugated-kernel}
 \widetilde G(y,\tau;x,t)
 :=\begin{pmatrix}
 G^0_{11}&\dfrac{\phi_1}{\phi_2}G^0_{12}\\[2mm]
 \dfrac{\phi_2}{\phi_1}G^0_{21}&G^0_{22}
 \end{pmatrix}(y,\tau;x,t).
\end{equation}
We use \(\widetilde G\) as a cone-adapted approximate Green function for the
conjugated system.  Its familywise weights preserve the emitting and receiving
characteristic directions, while its defect is the exact commutator computed
below.
For \(t-\tau\geq1\), the exact regular--singular split after conjugation is
\begin{equation*}
 \widetilde G
 =
 \begin{pmatrix}
 G^r_{11}&\dfrac{\phi_1}{\phi_2}G^r_{12}\\[2mm]
 \dfrac{\phi_2}{\phi_1}G^r_{21}&G^r_{22}
 \end{pmatrix}
 +
 \begin{pmatrix}
 G^s_{11}&\dfrac{\phi_1}{\phi_2}G^s_{12}\\[2mm]
 \dfrac{\phi_2}{\phi_1}G^s_{21}&G^s_{22}
 \end{pmatrix}.
\end{equation*}
Here and below the weights and their derivatives in a kernel formula are evaluated at
\((y,\tau)\).  The matrix product which enters the diffusion term is
\begin{equation*}
 \widetilde G
 \begin{pmatrix}
 1&\dfrac{\phi_1}{\phi_2}\\[2mm]
 \dfrac{\phi_2}{\phi_1}&1
 \end{pmatrix}
 =\begin{pmatrix}
 G^0_{11}+G^0_{12}&\dfrac{\phi_1}{\phi_2}(G^0_{11}+G^0_{12})\\[2mm]
 \dfrac{\phi_2}{\phi_1}(G^0_{21}+G^0_{22})&G^0_{21}+G^0_{22}
 \end{pmatrix}.
\end{equation*}
Define the \(2\times2\) defect matrix \(E=(E_{ij})_{i,j=1}^2\) by applying
the backward operator corresponding to the left-hand side of
\eqref{eq:complete-Z-system}:
\begin{align}\label{eq:conjugated-backward-operator}
 E(y,\tau;x,t):=
 \widetilde G_\tau+\widetilde G_y
 \begin{pmatrix}c&0\\0&-c\end{pmatrix}
 +d\partial_y^2 \left[
 \widetilde G
 \begin{pmatrix}
 1&\dfrac{\phi_1}{\phi_2}\\[2mm]
 \dfrac{\phi_2}{\phi_1}&1
 \end{pmatrix}\right].
\end{align}
Since \(G^0\) solves \eqref{eq:G0-terminal} exactly, the diagonal entries vanish and
the two off-diagonal entries are
\begin{align}
 \begin{aligned}
 E_{12}=&
 \left(\partial_\tau-c\partial_y\right) \left(\frac{\phi_1}{\phi_2}\right)G^0_{12}
 +2d\partial_y\!\left(\frac{\phi_1}{\phi_2}\right)
 \partial_y(G^0_{11}+G^0_{12})+d\partial_y^2 \left(\frac{\phi_1}{\phi_2}\right)
 (G^0_{11}+G^0_{12}),
 \end{aligned}
 \label{eq:exact-commutator-12}\\
 \begin{aligned}
 E_{21}=&
 \left(\partial_\tau+c\partial_y\right)\left(\frac{\phi_2}{\phi_1}\right)G^0_{21}
 +2d\partial_y\!\left(\frac{\phi_2}{\phi_1}\right)
 \partial_y(G^0_{21}+G^0_{22})+d\partial_y^2\left(\frac{\phi_2}{\phi_1}\right)
 (G^0_{21}+G^0_{22}).
 \end{aligned}
 \notag
\end{align}
Thus, explicitly,
\begin{equation*}
 E(y,\tau;x,t)=
 \begin{pmatrix}
 0&E_{12}(y,\tau;x,t)\\
 E_{21}(y,\tau;x,t)&0
 \end{pmatrix}.
\end{equation*}
The constant-coefficient terms cancel by \eqref{eq:G0-terminal}; hence the
defect contains only derivatives of the Cole--Hopf ratios.

For reference, the coefficient derivatives are
\begin{align}
 &\partial_y\left(\frac{\phi_1}{\phi_2}\right)
 =-\frac{\theta_1-\theta_2}{2d}\frac{\phi_1}{\phi_2},
 \qquad\qquad
 \partial_y^2\left(\frac{\phi_1}{\phi_2}\right)
 =\frac{\phi_1}{\phi_2}
 \left[-\frac{\theta_{1,y}-\theta_{2,y}}{2d}
 +\frac{(\theta_1-\theta_2)^2}{4d^2}\right],
 \label{eq:ratio12-derivatives}\\
 &(\partial_\tau-c\partial_y)\left(\frac{\phi_1}{\phi_2}\right)
 =\frac{\phi_1}{\phi_2}
 \left[\frac{c}{d}\theta_1-\frac12(\theta_{1,y}-\theta_{2,y})
 +\frac{\theta_1^2-\theta_2^2}{4d}\right],
 \label{eq:ratio12-material}\\
 &\partial_y\left(\frac{\phi_2}{\phi_1}\right)
 =\frac{\theta_1-\theta_2}{2d}\frac{\phi_2}{\phi_1},
 \qquad\qquad\;
 \partial_y^2\left(\frac{\phi_2}{\phi_1}\right)
 =\frac{\phi_2}{\phi_1}
 \left[\frac{\theta_{1,y}-\theta_{2,y}}{2d}
 +\frac{(\theta_1-\theta_2)^2}{4d^2}\right],
 \label{eq:ratio21-derivatives}\\
 &(\partial_\tau+c\partial_y)\left(\frac{\phi_2}{\phi_1}\right)
 =\frac{\phi_2}{\phi_1}
 \left[-\frac{c}{d}\theta_2+\frac12(\theta_{1,y}-\theta_{2,y})
 +\frac{\theta_2^2-\theta_1^2}{4d}\right].
 \notag
\end{align}

The regular contribution is defined by
\begin{align}
 E^r_{12}:=&
 (\partial_\tau-c\partial_y)\left(\frac{\phi_1}{\phi_2}\right)G^r_{12}
 +2d\partial_y\left(\frac{\phi_1}{\phi_2}\right)
 \partial_y(G^r_{11}+G^r_{12})
 +d\partial_y^2\left(\frac{\phi_1}{\phi_2}\right)
 (G^r_{11}+G^r_{12}),
 \notag\\
 E^r_{21}:=&
 (\partial_\tau+c\partial_y)\left(\frac{\phi_2}{\phi_1}\right)G^r_{21}
 +2d\partial_y\left(\frac{\phi_2}{\phi_1}\right)
 \partial_y(G^r_{21}+G^r_{22})
 +d\partial_y^2\left(\frac{\phi_2}{\phi_1}\right)
 (G^r_{21}+G^r_{22}).
 \notag
\end{align}
To use the endpoint cancellation one must
insert the complete Gaussian matrix in \eqref{eq:regular-green-expansion}, not
merely its two diagonal entries.  Define its two regular-remainder contributions
by
\begin{align}
 E_{12}^{r,\mathrm{rem}}:=&
 (\partial_\tau-c\partial_y)\left(\frac{\phi_1}{\phi_2}\right)\mathcal R^r_{12}
 +2d\partial_y\left(\frac{\phi_1}{\phi_2}\right)
 \partial_y(\mathcal R^r_{11}+\mathcal R^r_{12})
 +d\partial_y^2\left(\frac{\phi_1}{\phi_2}\right)
 (\mathcal R^r_{11}+\mathcal R^r_{12}),
 \notag\\
 E_{21}^{r,\mathrm{rem}}:=&
 (\partial_\tau+c\partial_y)\left(\frac{\phi_2}{\phi_1}\right)\mathcal R^r_{21}
 +2d\partial_y\left(\frac{\phi_2}{\phi_1}\right)
 \partial_y(\mathcal R^r_{21}+\mathcal R^r_{22})
 +d\partial_y^2\left(\frac{\phi_2}{\phi_1}\right)
 (\mathcal R^r_{21}+\mathcal R^r_{22}).
 \notag
\end{align}

Substitution of \eqref{eq:explicit-gaussian-matrix} and
\eqref{eq:ratio12-derivatives}--\eqref{eq:ratio21-derivatives} into the exact
commutator, followed by collection of the two divergence terms, gives the
identities used in the Duhamel estimates:
\begin{align}
 E^r_{12}=&-\frac12\partial_y\left(\frac{\phi_1}{\phi_2}\theta_1g_1\right)
 -\frac{d}{2c}\partial_y\left[
 \frac{\phi_1}{\phi_2}(\theta_1-\theta_2)(g_{1,y}-g_{2,y})\right]-\frac{d^2}{2c}\frac{\phi_1}{\phi_2}(t-\tau)
 (\theta_1-\theta_2)g_{1,yyyy}
 \notag\\
 &+\frac{\phi_1}{\phi_2}
 \left(\theta_2g_{1,y}+\frac12\theta_{2,y}g_1-\frac12\theta_1g_{2,y}\right)
 +\frac{\phi_1}{\phi_2}\theta_2(\theta_1-\theta_2)
 \left(\frac{g_{1,y}-g_{2,y}}{4c}-\frac{g_1}{4d}\right)
 \notag\\
 &+\frac{\phi_1}{\phi_2}(t-\tau)
 \left[-\frac{d^2}{4c}(\theta_{1,y}-\theta_{2,y})
 +\frac{d}{8c}(\theta_1-\theta_2)^2\right]g_{1,yyy}
 +E_{12}^{r,\mathrm{rem}},
 \label{eq:expanded-commutator-12}\\
 E^r_{21}=&-\frac12\partial_y\left(\frac{\phi_2}{\phi_1}\theta_2g_2\right)
 +\frac{d}{2c}\partial_y\left[
 \frac{\phi_2}{\phi_1}(\theta_1-\theta_2)(g_{1,y}-g_{2,y})\right]-\frac{d^2}{2c}\frac{\phi_2}{\phi_1}(t-\tau)
 (\theta_1-\theta_2)g_{2,yyyy}
 \notag\\
 &+\frac{\phi_2}{\phi_1}
 \left(\theta_1g_{2,y}+\frac12\theta_{1,y}g_2-\frac12\theta_2g_{1,y}\right)
 +\frac{\phi_2}{\phi_1}\theta_1(\theta_1-\theta_2)
 \left(\frac{g_2}{4d}-\frac{g_{1,y}-g_{2,y}}{4c}\right)
 \notag\\
 &-\frac{\phi_2}{\phi_1}(t-\tau)
 \left[\frac{d^2}{4c}(\theta_{1,y}-\theta_{2,y})
 +\frac{d}{8c}(\theta_1-\theta_2)^2\right]g_{2,yyy}
 +E_{21}^{r,\mathrm{rem}}.
 \label{eq:expanded-commutator-21}
\end{align}
The first divergence in each row is retained;
estimating it before integration by parts would lose the endpoint cancellation.

The singular contribution is defined by
\begin{align}
 E^s_{12}:=&
 (\partial_\tau-c\partial_y)\left(\frac{\phi_1}{\phi_2}\right)G^s_{12}
 +2d\partial_y\left(\frac{\phi_1}{\phi_2}\right)
 \partial_y(G^s_{11}+G^s_{12})+d\partial_y^2\left(\frac{\phi_1}{\phi_2}\right)
 (G^s_{11}+G^s_{12}),
 \notag\\
 E^s_{21}:=&
 (\partial_\tau+c\partial_y)\left(\frac{\phi_2}{\phi_1}\right)G^s_{21}
 +2d\partial_y\left(\frac{\phi_2}{\phi_1}\right)
 \partial_y(G^s_{21}+G^s_{22})+d\partial_y^2\left(\frac{\phi_2}{\phi_1}\right)
 (G^s_{21}+G^s_{22}).
 \notag
\end{align}
Since
\[
 P_-\begin{pmatrix}1&1\\1&1\end{pmatrix}=0,
\]
its Dirac part is simply
\begin{align}
 E^s_{12}\big|_\delta
 &=-\frac12e^{-\frac{c^2}{2d}(t-\tau)}
 (\partial_\tau-c\partial_y)\left(\frac{\phi_1}{\phi_2}\right)\delta(x-y),
 \notag\\
 E^s_{21}\big|_\delta
 &=-\frac12e^{-\frac{c^2}{2d}(t-\tau)}
 (\partial_\tau+c\partial_y)\left(\frac{\phi_2}{\phi_1}\right)\delta(x-y).
 \notag
\end{align}
No term proportional to \((\theta_1-\theta_2)\delta_y\) remains.  Finally,
\begin{equation*}
 E_{11}=E_{22}=0,
 \qquad E_{12}=E^r_{12}+E^s_{12},
 \qquad E_{21}=E^r_{21}+E^s_{21}.
\end{equation*}
The superscripts here split the single exact commutator according to
$G^0=G^r+G^s$.  They do not mean that $E^r$ and $E^s$ separately equal the
backward operator applied to the two conjugated kernels: each separate action
also contains a constant-coefficient defect, and those two defects cancel only
after $G^r+G^s=G^0$ is used.

\section{Duhamel formula and pointwise estimates}\label{sec:pointwise}

The cross--diffusion terms are already contained in the principal Green matrix.
Writing \(Z=(Z_1,Z_2)^\top\) and \(R=(R_1,R_2)^\top\), the pointwise argument has
the following structure:
\[
 \begin{gathered}
  Z=\text{initial response}+\text{leading rows}
     +\text{commutator rows}+\text{remaining source rows},\\
  Z_i=a_i(t)\Gamma_i(\cdot,1+t)+Z_i^{\rm rem},\quad
  R_i=\phi_i^{-1}\left(Z_{i,x}+\frac{\theta_i}{2d}Z_i\right),
 \end{gathered}
\]
where $a_i$ is defined in \eqref{eq:ai-definition}. From the identity
\[
 Z_{i,x}=\phi_iR_i-\frac{\theta_i}{2d}Z_i,
\]
we obtain
\begin{gather}
 \theta _2Z_{1,x}+\frac{\phi _1}{\phi _2}\theta _1Z_{2,x}
   +\frac{\theta _1\theta _2}{2d}Z_1
   +\frac{\phi _1}{\phi _2}
       \left(\frac12\theta _{2,x}-\frac{\theta _2^2}{4d}
             +\frac{\theta _1\theta _2}{2d}\right)Z_2 \notag\\*
 =
   \phi _1\theta _2R_1+\phi _1\theta _1R_2
   +\frac{\phi _1}{\phi _2}
       \left(\frac12\theta _{2,x}-\frac{\theta _2^2}{4d}\right)Z_2,
                                                        \label{eq:source-row-one}\\
 \frac{\phi _2}{\phi _1}\theta _2Z_{1,x}+\theta _1Z_{2,x}
   +\frac{\phi _2}{\phi _1}
       \left(\frac12\theta _{1,x}-\frac{\theta _1^2}{4d}
             +\frac{\theta _1\theta _2}{2d}\right)Z_1
   +\frac{\theta _1\theta _2}{2d}Z_2 \notag\\*
 =
   \phi _2\theta _2R_1+\phi _2\theta _1R_2
   +\frac{\phi _2}{\phi _1}
       \left(\frac12\theta _{1,x}-\frac{\theta _1^2}{4d}\right)Z_1.
                                                        \label{eq:source-row-two}
\end{gather}
For the Duhamel variables \((y,\tau)\), set
\begin{equation*}
 F_{\mathrm{lead}}(y,\tau):=
 \binom{\displaystyle
   \phi _1\theta _2R_1+\phi _1\theta _1R_2
   +\frac{\phi _1}{\phi _2}
       \left(\frac12\theta _{2,y}-\frac{\theta _2^2}{4d}\right)Z_2}
 {\displaystyle
   \phi _2\theta _2R_1+\phi _2\theta _1R_2
   +\frac{\phi _2}{\phi _1}
       \left(\frac12\theta _{1,y}-\frac{\theta _1^2}{4d}\right)Z_1}.
\end{equation*}
Thus the terms containing \(\theta_1\theta_2Z_i\) cancel before any estimate is
taken.

The remaining source column is
\begin{align}
 F_{\mathrm{src}}(y,\tau)&:=
 \binom{\displaystyle
 \phi_1\left[\eta(R_2-R_1)
 +B(u_2^{\mathrm{app}}-u_1^{\mathrm{app}})(R_{1,y}+R_{2,y})
 +N(R)-\mathcal E_1^{\mathrm{app}}\right]}
 {\displaystyle
 \phi_2\left[\eta(R_2-R_1)
 +B(u_2^{\mathrm{app}}-u_1^{\mathrm{app}})(R_{1,y}+R_{2,y})
 +N(R)-\mathcal E_2^{\mathrm{app}}\right]},\notag\\
 &:=F_{\mathrm{src}}^{\text{coef}}+F_{\mathrm{src}}^{\text{visc}}+F_{\mathrm{src}}^{\text{nl}}+F_{\mathrm{src}}^{\text{bg}}.
 \label{eq:remaining-source-column}
\end{align}
All entries in \eqref{eq:remaining-source-column} are evaluated at
\((y,\tau)\).  The Green pairing satisfies
\begin{align}
 &\frac{d}{d\tau}\int_{\mathbb R}\widetilde G(y,\tau;x,t)Z(y,\tau)\,dy \label{eq:duhamel-pairing-derivative}\\
 =&\int_{\mathbb R}E(y,\tau;x,t)Z(y,\tau)\,dy+\int_{\mathbb R}\widetilde G(y,\tau;x,t)
 F_{\mathrm{lead}}(y,\tau)\,dy+\int_{\mathbb R}\widetilde G(y,\tau;x,t)
 F_{\mathrm{src}}(y,\tau)\,dy.\notag
\end{align}
Since the terminal pairing is \(Z(x,t)\), integration in \(\tau\) gives
\begin{align}
 Z(x,t)
 =&\int_{\mathbb R}\widetilde G(y,0;x,t)Z(y,0)\,dy  
  +\int_0^t\int_{\mathbb R}\widetilde G(y,\tau;x,t)
 F_{\mathrm{lead}}(y,\tau)\,dy\,d\tau                             \notag\\
 &+\int_0^t\int_{\mathbb R}E(y,\tau;x,t)Z(y,\tau)\,dy\,d\tau+\int_0^t\int_{\mathbb R}\widetilde G(y,\tau;x,t)
 F_{\mathrm{src}}(y,\tau)\,dy\,d\tau .
  \label{eq:complete-duhamel}
\end{align}
The time interval is split as
\[
 [0,t]=[0,(t-1)_+]\cup[(t-1)_+,t].
\]
On the first interval we use \(G^r+G^s\); on the last time unit we keep the
source undecomposed and use \(G^0\). By
\eqref{eq:short-time-green-measure}--\eqref{eq:short-time-green-derivative}, if
\(0<t-\tau\leq1\), the exact constant-state kernel satisfies
\begin{align}
 \left\|\partial_x^kG^0(t-\tau)*\binom{f_1}{f_2}\right\|_\infty
 &\leq C\left\|\binom{f_1}{f_2}\right\|_{W^{k,\infty}},
 &&0\leq k\leq2,\notag\\
 \left\|G^0(t-\tau)*\binom f f\right\|_\infty
 &\leq C\|f\|_\infty,\notag\\
 \left\|\partial_x^kG^0(t-\tau)*\binom f f\right\|_\infty
 &\leq C(t-\tau)^{-1/2}\|\partial_x^{k-1}f\|_\infty,
 &&k=1,2,3,\notag\\
 \left\|\partial_x^kG^0(t-\tau)*\binom{\partial_xf}{\partial_xf}\right\|_\infty
 &\leq C(t-\tau)^{-1/2}\|\partial_x^kf\|_\infty,
 &&0\leq k\leq2.
\label{eq:last-unit-general}
\end{align}
By \eqref{eq:conjugated-kernel}, at a fixed source time the conjugated action is
exactly
\begin{align}
 \int_{\mathbb R}(\widetilde G_{11}f_1+\widetilde G_{12}f_2)\,dy
 &=G^0_{11}*f_1+G^0_{12}*\left(\frac{\phi_1}{\phi_2}f_2\right),\notag\\
 \int_{\mathbb R}(\widetilde G_{21}f_1+\widetilde G_{22}f_2)\,dy
 &=G^0_{21}*\left(\frac{\phi_2}{\phi_1}f_1\right)+G^0_{22}*f_2.
\notag
\end{align}
By \eqref{eq:ratio12-derivatives} and \eqref{eq:ratio21-derivatives}, the
product formulas required for \(0\leq k\leq2\) are
\begin{align}
 \partial_y\left(\frac{\phi_1}{\phi_2}f_2\right)
 &=\frac{\phi_1}{\phi_2}f_{2,y}
 +\partial_y\left(\frac{\phi_1}{\phi_2}\right)f_2,\notag\\
 \partial_y^2\left(\frac{\phi_1}{\phi_2}f_2\right)
 &=\frac{\phi_1}{\phi_2}f_{2,yy}
 +2\partial_y\left(\frac{\phi_1}{\phi_2}\right)f_{2,y}
 +\partial_y^2\left(\frac{\phi_1}{\phi_2}\right)f_2,\notag\\
 \partial_y\left(\frac{\phi_2}{\phi_1}f_1\right)
 &=\frac{\phi_2}{\phi_1}f_{1,y}
 +\partial_y\left(\frac{\phi_2}{\phi_1}\right)f_1,\notag\\
 \partial_y^2\left(\frac{\phi_2}{\phi_1}f_1\right)
 &=\frac{\phi_2}{\phi_1}f_{1,yy}
 +2\partial_y\left(\frac{\phi_2}{\phi_1}\right)f_{1,y}
 +\partial_y^2\left(\frac{\phi_2}{\phi_1}\right)f_1.
 \notag
\end{align}
For a common source column \((\phi_1f,\phi_2f)^\top\), the two row
actions reduce instead to
\begin{align}
 \int_{\mathbb R}(\widetilde G_{11}\phi_1f+\widetilde G_{12}\phi_2f)\,dy
 &=(G^0_{11}+G^0_{12})*(\phi_1f),\notag\\
 \int_{\mathbb R}(\widetilde G_{21}\phi_1f+\widetilde G_{22}\phi_2f)\,dy
 &=(G^0_{21}+G^0_{22})*(\phi_2f).
                                                        \label{eq:last-unit-common}
\end{align}
By \eqref{eq:phi-identities},
\begin{align}
 (\phi_if)_y
 &=\phi_i\left(f_y-\frac{\theta_i}{2d}f\right),\qquad
 (\phi_if)_{yy}
 =\phi_i\left[f_{yy}-\frac{\theta_i}{d}f_y
 +\left(\frac{\theta_i^2}{4d^2}-\frac{\theta_{i,y}}{2d}\right)f\right].
                                                        \notag
\end{align}
Here
\[
 \int_{(t-1)_+}^{t}(t-\tau)^{-1/2}\,d\tau\leq2,
 \qquad P_-(1,1)^\top=0.
\]
Together with \eqref{eq:app-last-gaussian-weight}--
\eqref{eq:app-last-algebraic-weight}, these identities control the last time
unit.

\subsection{The endpoint convolution estimates}

The five-region calculations are given in
Appendix~\ref{app:five-regions}. The unit-mass Gaussian \(\Gamma_i\) is defined
in \eqref{eq:receiving-gaussian}. Fix
\[
 \chi_{\mathrm{time}}\in C^\infty([0,\infty);[0,1]),\qquad
 \chi_{\mathrm{time}}=1\ \hbox{on }[0,1],\qquad
 \chi_{\mathrm{time}}=0\ \hbox{on }[2,\infty).
\]

\begin{lemma}\label{lem:endpoint-convolution}
Let \(i,\alpha\in\{1,2\}\).  In this lemma and
Lemma~\ref{lem:gaussian-extraction}, \(C_f\geq0\)
denotes the size of the displayed source; it is unrelated to the viscosity
coefficient \(B(\cdot)\).  If
\[
 |f_i(y,\tau)|\leq C_f
 \bigl((y-\lambda_i(1+\tau))^2+1+\tau\bigr)^{-3/2},
\]
then
\begin{equation}
 \left|\partial_x\int_0^{(t-1)_+}g_i(t-\tau)*f_i(\tau)\,d\tau\right|
 \leq C\,C_f\log(2+t)\Psi_i(x,t).                 \label{eq:same-cone-map}
\end{equation}
If \(\alpha\neq i\) and
\[
 |f_\alpha(y,\tau)|
 \leq C_f\log(2+\tau)(1+\tau)^{-1/2}
 \bigl((y-\lambda_\alpha(1+\tau))^2+1+\tau\bigr)^{-3/2},
\]
then
\begin{equation}
 \left|\partial_x\int_0^{(t-1)_+}g_i(t-\tau)*f_\alpha(\tau)\,d\tau\right|
 \leq C\,C_f\log(2+t)\Psi_i(x,t).             \label{eq:separated-cone-map}
\end{equation}
The same conclusion holds if this bound is replaced by
\[
 |f_\alpha(y,\tau)|\leq
 C_f\{\log(2+\tau)(1+\tau)^{-1/2}
       +\log^2(2+\tau)(1+\tau)^{-1/2}\}
 \bigl((y-\lambda_\alpha(1+\tau))^2+1+\tau\bigr)^{-3/2}.
\]
In the enlarged case the second logarithm appears only in the secondary term of
\(\Psi_i\) at the \(\alpha\)-cone.
The cited five-region bounds are \eqref{eq:app-same-early}--
\eqref{eq:app-same-late} and \eqref{eq:app-receiving-tube}--
\eqref{eq:app-source-exterior}.
\end{lemma}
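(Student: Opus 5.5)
The plan is to reduce both estimates to explicit Gaussian convolution integrals and then evaluate them in the five space--time regions that define $\Psi_i$.

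\emph{Setup.} Moving the derivative onto the kernel, $\partial_x g_i(t-\tau)$ satisfies
\[
|\partial_xg_i(x-y,t-\tau)|\le C(t-\tau)^{-1}\exp\Bigl[-\frac{(x-y-\lambda_i(t-\tau))^2}{C(t-\tau)}\Bigr].
\]
The source $f_\alpha(\tau)$ has $L^1$ mass $\lesssim C_f(1+\tau)^{-1}$ in the same-cone case. In the separated case the mass is $\lesssim C_f\log(2+\tau)(1+\tau)^{-3/2}$. In both cases it is concentrated, with algebraic tails, in a $\sqrt{1+\tau}$-neighbourhood of $y=\lambda_\alpha(1+\tau)$. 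I split the time interval into $[0,t/2]$ and $[t/2,(t-1)_+]$, and the $y$-integral into $|y-\lambda_\alpha(1+\tau)|\le\frac12|x-\lambda_\alpha(1+t)|$-type core and tail pieces. The five regions in $x$ are: the receiving tube, the opposite tube, the region between the tubes (where $\chi_K$ matters), and the two exterior regions.

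\emph{Same-cone map.} For $\tau\le t/2$ the kernel is essentially $t^{-1}$ times a Gaussian centred at $\lambda_i(1+t)$, because the source sits on the same characteristic. Integrating the masses $(1+\tau)^{-1}$ then gives
\[
t^{-1}\log(2+t)\,e^{-(x-\lambda_i(1+t))^2/Ct}
\]
inside the tube. Outside the tube, the algebraic tail $|y-\lambda_i(1+\tau)|^{-3}$ of the source, transported along the same speed, dominates. This yields the decay $\log(2+t)\,[(x-\lambda_i(1+t))^2+1+t]^{-1}$, i.e. the first term of $\Psi_i$.

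For $\tau\ge t/2$ I use instead
\[
|f_i|\le C_f\,t^{-3/2}\,\bigl(1+|y-\lambda_i(1+\tau)|^2/t\bigr)^{-3/2}
\]
and integrate $(t-\tau)^{-1}$ against a source of width $\sqrt t$. The Gaussian in the kernel is convolved with a width-$\sqrt t$ profile, which removes the $(t-\tau)^{-1/2}$ singularity. The result is $t^{-1}$ times the same profile with no logarithm. The log is therefore generated only by the early-time piece, which is consistent with the single $\log(2+t)$ in the statement.

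\emph{Separated map; main obstacle.} For $\alpha\ne i$, a source emitted on the $\alpha$-cone at time $\tau$ is received at position
\[
x\approx\lambda_\alpha(1+\tau)+\lambda_i(t-\tau).
\]
As $\tau$ ranges over $[0,t]$, this sweeps the entire region between the two cones. The time integral is therefore a line integral along the characteristic. Changing variables $s=\tau$ with $x$ fixed, the point of emission is $\tau_*$ with
\[
x-\lambda_i(1+t)=(\lambda_\alpha-\lambda_i)(1+\tau_*).
\]
The contribution is then of size $(t-\tau_*)^{-1/2}\cdot\log(2+\tau_*)(1+\tau_*)^{-3/2}\cdot\sqrt{1+\tau_*}$, divided by the speed difference $2c$. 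Rewriting $1+\tau_*\sim|x-\lambda_i(1+t)|$ and $t-\tau_*\sim|x-\lambda_\alpha(1+t)|$ gives exactly the interior $\chi_K$ term of $\Psi_i$, with exponents $1/2$ at the receiving cone and $1/4$ at the opposite cone after one derivative.

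This is the step I expect to be delicate. Near the opposite tube ($\tau_*\approx t$) the kernel singularity $(t-\tau)^{-1}$ interacts with the source concentration. I plan to handle it by estimating in the opposite tube directly: the source has mass $\log t\cdot t^{-3/2}$ and the kernel derivative is integrated over a time window of length $\sim\sqrt t$. Together with the accumulated tail, this should give the second term $(1+\log t)\,[(x-\lambda_{3-i}(1+t))^2+(1+t)^{3/2}]^{-1}$, which is the term allowed to carry the extra logarithm.

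Near the receiving tube ($\tau_*\approx0$), the early-time sources deposit a Gaussian at the $i$-cone of amplitude $t^{-1}\int\log(2+\tau)(1+\tau)^{-3/2}\,d\tau\lesssim t^{-1}$. This is covered by the first term of $\Psi_i$. In the exterior regions, both the Gaussian factors and the $|y|^{-3}$ tails give decay at least like the last line of $\Psi_i$.

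\emph{Enlarged case.} For the $\log^2$ source, the same computation goes through with the extra $\log(2+\tau_*)$ factor. Only the opposite-tube region has $\tau_*\sim t$ with an unbalanced logarithm, and there it is absorbed by the secondary term's $(1+\log(2+t))$ multiplied by the overall $\log(2+t)$. Elsewhere the extra logarithm is attached to integrable weights $(1+\tau)^{-3/2}$ and disappears.
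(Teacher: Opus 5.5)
Your route matches the paper's: derivative kept on $g_i$, time split at $t/2$, three-piece spatial split, five terminal regions. It is enough for the same-cone map and for the basic separated map. The gap is in how you spend logarithms near the emitting cone, and it breaks the enlarged assertion.

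Write $s=t-\tau$, $T=1+\tau$, $X=x-\lambda_is-\lambda_\alpha T$ and $r_\ell=x-\lambda_\ell(1+t)$. In the $\alpha$-tube you pair the source mass $\log(2+t)(1+t)^{-3/2}$ with the pointwise bound $|\partial_xg_i|\lesssim s^{-1}$ over $1\le s\lesssim\sqrt t$. This costs $\int_1^{\sqrt t}s^{-1}\,ds\sim\log t$, and you pay for it with the factor $1+\log(2+t)$ in the secondary term of $\Psi_i$. For the basic source that is admissible. For the $\log^2(2+\tau)$ source, however, the same computation gives $C_f\log^3(2+t)(1+t)^{-3/2}$ in the $\alpha$-tube, whereas $C\,C_f\log(2+t)\Psi_i(x,t)$ is only of order $C_f\log^2(2+t)(1+t)^{-3/2}$ there. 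Your remark that the extra logarithm is ``absorbed by the secondary term's $(1+\log(2+t))$'' spends that factor a second time.

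What is missing is the late slice bound, valid for $T\ge s\ge1$:
\[
\int_{\mathbb R}|\partial_xg_i(x-y,s)|\bigl((y-\lambda_\alpha T)^2+T\bigr)^{-3/2}\,dy\le C s^{-1/2}\bigl(X^2+T\bigr)^{-3/2}.
\]
That is, pair $\|\partial_xg_i(s)\|_{L^1}$ with the pointwise size of the source, not $\|\partial_xg_i(s)\|_{L^\infty}$ with its mass. Then $s^{-1/2}\le1$ and integrating the affine phase over $\mathbb R$ give $C\,C_f\log(2+t)(1+t)^{-3/2}$ in the basic case. This leaves the factor $1+\log(2+t)$ free for the enlarged source, which is how the paper proceeds.

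Your middle-region heuristic has the same defect. The resonant window in $\tau$ has length $\sim\sqrt{1+t}$, the combined width, not $\sqrt{1+\tau_*}$. The slice amplitude at the resonance is $s^{-1}T^{-1}$ when $s\ge T$ and $s^{-1/2}T^{-3/2}$ when $T\ge s$, in each case times the prefactor $\log(2+\tau)T^{-1/2}$. Either way the contribution is $\log(2+\tau_*)(t-\tau_*)^{-1/2}(1+\tau_*)^{-3/2}$, without your extra factor $\sqrt{1+\tau_*}$. Your overestimate $\log(2+\tau_*)|r_i|^{-1}|r_\alpha|^{-1/2}$ coincides exactly with the $\chi_K$ term of $\Psi_i$. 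So for the enlarged source with $\tau_*$ comparable to $t$, near the $\alpha$-tube, an unabsorbed factor $\log(2+t)$ remains. With the correct size there is a spare factor $(1+\tau_*)^{-1/2}\sim|r_i|^{-1/2}$, and $\log^2(2+\tau_*)(1+\tau_*)^{-1/2}$ is bounded, which closes the enlarged case between the cones.

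A minor point: in the receiving exterior next to the tube, the Gaussian piece has size $(1+t)^{-1}$. It is not bounded by the last line of $\Psi_i$, which is $O((1+t)^{-5/4})$ there, but it is bounded by the first term of $\Psi_i$, so that conclusion still holds.
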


\begin{lemma}\label{lem:gaussian-extraction}
Assume
\begin{equation}
 f_\alpha(y,\tau)=\frac{b_\alpha(\tau)}{1+\tau}
       \Gamma_\alpha(y,1+\tau)+\partial_yQ_\alpha(y,\tau),       \label{eq:mass-primitive-form}
\end{equation}
where
\begin{align} \label{eq:primitive-bounds}
 |b_\alpha(\tau)|&\leq C_f,&
 \|Q_\alpha(\tau)\|_1&\leq C_f(1+\tau)^{-1/2},\notag\\
 \|\partial_y^kQ_\alpha(\tau)\|_\infty
 &\leq C_f(1+\tau)^{-1-k/2},&&0\leq k\leq2.         
\end{align}
For \(\alpha=i\), the Gaussian part of its long-time response is exactly
\[
 \Gamma_i(x,1+t)\int_0^{(t-1)_+}\frac{b_i(\tau)}{1+\tau}\,d\tau.
\]
For \(\alpha\neq i\), subtract
\[
 \Gamma_i(x,1+t)\int_0^{(t-1)_+}
 \chi_{\mathrm{time}}\left(\frac{1+\tau}{\sqrt{1+t}}\right)
 \frac{b_\alpha(\tau)}{1+\tau}\,d\tau.
\]
For \(\alpha\neq i\) and \(0\leq k\leq2\), the corrected response satisfies
\begin{multline*}
 \left\|\partial_x^k\left\{
 \int_0^{(t-1)_+}g_i(t-\tau)*
 \frac{b_\alpha(\tau)}{1+\tau}\Gamma_\alpha(\cdot,1+\tau)\,d\tau
 \right.\right.\\[-1mm]
 \left.\left.\qquad-
 \Gamma_i(\cdot,1+t)\int_0^{(t-1)_+}
 \chi_{\mathrm{time}}\left(\frac{1+\tau}{\sqrt{1+t}}\right)
 \frac{b_\alpha(\tau)}{1+\tau}\,d\tau\right\}\right\|_\infty
 \leq C\,C_f(1+t)^{-(k+1)/2}.
\end{multline*}
The constant in this global estimate uses only
\(\|b_\alpha\|_\infty\leq C_f\); no time derivative of \(b_\alpha\) is
assumed.

For every \(i,\alpha\), the primitive bounds in
\eqref{eq:primitive-bounds} also imply the long-time estimate
\[
 \left\|\partial_x^k\int_0^{(t-1)_+}
 g_i(t-\tau)*\partial_yQ_\alpha(\tau)\,d\tau\right\|_\infty
 \leq CC_f(1+t)^{-(k+1)/2},\qquad 0\leq k\leq2.
\]

For the following cone-resolved conclusion, let \(\alpha\neq i\) and assume
in addition that
\begin{equation*}
 (1+\tau)|b_\alpha'(\tau)|\leq C_f.
\end{equation*}
Suppose first that
\begin{align}
 |Q_\alpha(y,\tau)|&\leq
 C_f\bigl((y-\lambda_\alpha(1+\tau))^2+1+\tau\bigr)^{-1},\notag\\
 |Q_{\alpha,y}(y,\tau)|&\leq
 C_f\bigl((y-\lambda_\alpha(1+\tau))^2+1+\tau\bigr)^{-3/2},\notag\\
 |(\partial_\tau+\lambda_\alpha\partial_y-d\partial_y^2)Q_\alpha(y,\tau)|
 &\leq C_f(1+\tau)^{-1/2}
 \bigl((y-\lambda_\alpha(1+\tau))^2+1+\tau\bigr)^{-3/2}\notag\\[-1mm]
 &\quad+C_f(1+\tau)^{-3}
 \exp\left[-\frac{(y-\lambda_\alpha(1+\tau))^2}{C(1+\tau)}\right]\notag\\[-1mm]
 &\quad+C_f(1+\tau)^{-3}
 \mathbf 1_{\{|y-\lambda_\alpha(1+\tau)|\leq c(1+\tau)/2\}}.
 \label{eq:material-primitive-bounds}
\end{align}
Then
\begin{align}
 &\left|\partial_x\left\{
 \int_0^{(t-1)_+}g_i(t-\tau)*f_\alpha(\tau)\,d\tau
 -\Gamma_i(x,1+t)\int_0^{(t-1)_+}
 \chi_{\mathrm{time}}\left(\frac{1+\tau}{\sqrt{1+t}}\right)
 \frac{b_\alpha(\tau)}{1+\tau}\,d\tau
 \right\}\right|\notag\\[-1mm]
 &\leq C\,C_f\Psi_i(x,t).
 \label{eq:material-extraction-pointwise}
\end{align}

There is a second admissible material hypothesis, used below for the explicit
profile sources.  Keep \eqref{eq:primitive-bounds} and the first two pointwise
bounds above, put
\[
 T=1+\tau,\qquad r_\alpha=y-\lambda_\alpha T,
 \qquad A_\alpha=r_\alpha^2+T,
 \qquad P_\alpha=(\partial_\tau+\lambda_\alpha\partial_y
                  -d\partial_y^2)Q_\alpha,
\]
and assume that \(P_\alpha(\cdot,\tau)\) is absolutely continuous, tends to
zero at both spatial infinities, and satisfies
\begin{align}
 |P_\alpha(y,\tau)|&\leq C_fT^{-1}A_\alpha^{-1},\qquad \|P_\alpha(\tau)\|_1\leq C_fT^{-3/2},\notag\\
 |P_{\alpha,y}(y,\tau)|&\leq C_f\left\{
 T^{-1}A_\alpha^{-3/2}
 +T^{-5/2}\exp\left(-\frac{r_\alpha^2}{CT}\right)\right\}.\label{222}
\end{align}
Under this mesoscopic alternative,
\eqref{eq:material-extraction-pointwise} remains valid.
\end{lemma}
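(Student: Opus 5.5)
The lemma has three parts, and I would prove them in order.

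\emph{Step 1: same-family Gaussian.} For \(\alpha=i\) the heat semigroup property gives \(g_i(t-\tau)*\Gamma_i(\cdot,1+\tau)=\Gamma_i(\cdot,1+t)\) exactly. The same-family claim is then the identity
\[
\int_0^{(t-1)_+}\frac{b_i(\tau)}{1+\tau}\,d\tau\,\Gamma_i(x,1+t),
\]
which needs no estimate.

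\emph{Step 2: opposite-family Gaussian.} For \(\alpha\neq i\), convolving two Gaussians gives a Gaussian of variance \(2d(1+t)\). It is centred at \(\lambda_\alpha(1+\tau)+\lambda_i(t-\tau)\), which differs from the receiving centre \(\lambda_i(1+t)\) by \((\lambda_\alpha-\lambda_i)(1+\tau)=\pm2c(1+\tau)\). So the response is exactly \(\Gamma_i(x\mp2c(1+\tau),1+t)\).

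I would split \(\tau\) at \(1+\tau\sim\sqrt{1+t}\), which is where \(\chi_{\mathrm{time}}\) acts. For \(1+\tau\lesssim\sqrt{1+t}\), I write \(\Gamma_i(x\mp2c(1+\tau),1+t)-\Gamma_i(x,1+t)\) by the mean value theorem. Its \(\partial_x^k\) is bounded by \((1+\tau)(1+t)^{-(k+2)/2}\). Multiplying by \(|b_\alpha|/(1+\tau)\) and integrating over \(1+\tau\le2\sqrt{1+t}\) gives \((1+t)^{1/2}(1+t)^{-(k+2)/2}=(1+t)^{-(k+1)/2}\).

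For \(1+\tau\ge\sqrt{1+t}\), I bound each term directly by \((1+t)^{-(k+1)/2}\) times a Gaussian in \(x\mp2c(1+\tau)\) on scale \(\sqrt{1+t}\). At fixed \(x\), the \(\tau\)-integral of this Gaussian is \(O(\sqrt{1+t})\) in length. The weight \(1/(1+\tau)\le(1+t)^{-1/2}\) compensates that length, which gives the \(L^\infty\) bound without logarithm and without using \(b_\alpha'\).

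\emph{Step 3: primitive part.} For \(\partial_yQ_\alpha\), I move the derivative onto the kernel to get \(\partial_x^{k+1}g_i(t-\tau)*Q_\alpha\). On \(\tau\le t/2\) I use \(\|\partial_x^{k+1}g_i\|_\infty\lesssim(t-\tau)^{-(k+2)/2}\) together with \(\|Q_\alpha\|_1\lesssim(1+\tau)^{-1/2}\). This gives \(t^{-(k+2)/2}\,t^{1/2}\). On \(\tau\ge t/2\) (up to \(t-1\)) I keep one derivative on the kernel, with \(\|\partial_x g_i\|_1\lesssim(t-\tau)^{-1/2}\), and put the rest on \(Q_\alpha\) via \(\|\partial_y^{k}Q_\alpha\|_\infty\lesssim(1+\tau)^{-1-k/2}\). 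This gives \(t^{-1-k/2}\,t^{1/2}\). The case \(k=2\) uses the \(k\le2\) bounds.

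\emph{Step 4: cone-resolved estimate.} This step is the main obstacle, and I would handle it by integration by parts along the \(\alpha\)-characteristic.

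\begin{itemize}
\item \emph{Gaussian part.} I redo Step 2 pointwise. In the early window the mean-value remainder is \((1+\tau)(1+t)^{-3/2}\) times a Gaussian at the \(i\)-cone, which is bounded by the first term of \(\Psi_i\). In the late window, the sum over \(\tau\) of shifted Gaussians traces a curve sweeping from the \(i\)-cone toward the \(\alpha\)-cone.
\item \emph{Controlling the sweep.} To show this sweep is dominated by the \(\chi_K\)-interior term and the \(\alpha\)-tube term of \(\Psi_i\) without a logarithm, I would exploit the slowly varying coefficient, \((1+\tau)|b_\alpha'|\le C_f\). Writing \(\tfrac{d}{d\tau}\) of the shift, I integrate by parts in \(\tau\). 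This converts the sum into boundary terms at \(1+\tau\sim\sqrt{1+t}\) and at \(\tau=t-1\), the latter landing in the \(\alpha\)-tube, plus an integrable remainder.
\item \emph{Primitive part.} Using the identity \((\partial_\tau+\lambda_i\partial_y+d\partial_y^2)g_i=0\), I write
\[
g_i*\partial_yQ_\alpha=\tfrac{1}{\lambda_\alpha-\lambda_i}\Bigl[-\partial_\tau(g_i*Q_\alpha)+g_i*(\partial_\tau+\lambda_\alpha\partial_y-d\partial_y^2)Q_\alpha\Bigr].
\]
Integrating in \(\tau\) leaves boundary terms and a source given by the material derivative \(P_\alpha\). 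The boundary term at \(\tau=t-1\) is essentially \(Q_\alpha(x,t)\) smoothed, so its \(x\)-derivative is bounded by \(A_\alpha^{-3/2}\) at the \(\alpha\)-cone. This fits the second and fourth terms of \(\Psi_i\). The term at \(\tau=0\) decays like \(\partial_x g_i(t)\).
\item \emph{Source term.} The \(P_\alpha\) contribution is handled by the five-region bounds from Lemma~\ref{lem:endpoint-convolution} (the separated-cone map, \eqref{eq:app-receiving-tube}--\eqref{eq:app-source-exterior}). The Gaussian and indicator pieces of \eqref{eq:material-primitive-bounds} are integrable with rate \((1+\tau)^{-3}\) and give the first term of \(\Psi_i\) directly.
\item \emph{Mesoscopic alternative.} Under \eqref{222} I would instead integrate by parts once more in \(y\) to use \(P_{\alpha,y}\). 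I would then rely on \(\|P_\alpha\|_1\le C_fT^{-3/2}\) for the early times, where the \(\chi_K\) interior term absorbs the \(T^{-1}A_\alpha^{-1}\) profile.
\end{itemize}

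The delicate point throughout is to keep track of which logarithm-free region each boundary term lands in.
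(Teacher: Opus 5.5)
The gap is in the mesoscopic alternative \eqref{222}, specifically the early part $\int_0^{(t-1)/2}\partial_xg_i(t-\tau)*P_\alpha(\tau)\,d\tau$ of the material integral. The rest of your plan follows the paper's proof:
\begin{itemize}
\item Steps 1--3 match: the semigroup identity, the mean-value/tail argument in $u=(1+\tau)/\sqrt{1+t}$, and the early/late split for the primitive.
\item Step 4 under the first hypothesis matches: integration by parts in $u$ using $(1+\tau)|b_\alpha'|\le C_f$, then the material identity with its two boundary terms and the five-region bounds.
\end{itemize}
One caveat there: Lemma~\ref{lem:endpoint-convolution} as stated outputs $\log(2+t)\Psi_i$. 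You must observe that, since the source in \eqref{eq:material-primitive-bounds} has no factor $\log(2+\tau)$, the same five-region computation is log-free.

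Your plan uses $\|P_\alpha(\tau)\|_1\le C_fT^{-3/2}$ for this early part. That only gives the flat bound
\[
\int_0^{t/2}\|\partial_xg_i(t-\tau)\|_\infty\|P_\alpha(\tau)\|_1\,d\tau\le CC_f(1+t)^{-1},
\]
which is not dominated by $\Psi_i$:
\begin{itemize}
\item On $|x-\lambda_\alpha(1+t)|\le K\sqrt{1+t}$ one has $\chi_K=0$ and $\Psi_i\le C\log(2+t)(1+t)^{-3/2}$.
\item In both exteriors $\Psi_i$ decays algebraically in $x$.
\end{itemize}
The $\chi_K$ term cannot absorb anything in these regions. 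You also cannot fall back on \eqref{eq:separated-cone-map}, because $T^{-1}A_\alpha^{-1}$ is not dominated by $T^{-1/2}A_\alpha^{-3/2}$ away from the $\alpha$-cone.

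What is missing is a pointwise early-time estimate built from $|P_\alpha|\le C_fT^{-1}A_\alpha^{-1}$. Put $H=1+t$, $s=t-\tau\ge H/2$, $a=\lambda_\alpha-\lambda_i$, $z=y-\lambda_\alpha T$ and $R=r_i-aT$, and use $|\partial_xg_i(x-y,s)|\le Cs^{-1}e^{-(R-z)^2/(M_0s)}$. Splitting the $z$-line into $|z|\le|R|/2$ and its complement bounds the early part by
\[
 H^{-1}\int_1^{H/2}T^{-3/2}e^{-(r_i-aT)^2/(CH)}\,dT
 +H^{-1/2}\int_1^{H/2}\frac{T^{-1}\,dT}{(r_i-aT)^2+H}.
\]
Then run a phase-zero analysis in $q=\mathrm{sgn}(a)\,r_i$:
\begin{itemize}
\item For $q\le C_0\sqrt H$ the bound is direct; when $q\le0$, keep $|r_i-aT|\ge|r_i|+|a|T$.
\item For $q>C_0\sqrt H$, split at $T_0/2$ with $T_0=q/|a|$, and use $q\le|a|H$ whenever the upper piece is nonempty.
\end{itemize}
This yields $CC_f(r_i^2+H)^{-1}$, which is the receiving-cone term of $\Psi_i$, not the $\chi_K$ term.

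Your late-time integration by parts onto $P_{\alpha,y}$ is the right move. With $T\simeq H$ and \eqref{eq:app-late-convolution-zero} it gives $CC_fH^{-2}$ for $|r_i|\le K_0H$. It gives $CC_fr_i^{-2}$ for $|r_i|>K_0H$, keeping $|r_i-aT|\ge|r_i|/2$. Both are again $\le CC_f(r_i^2+H)^{-1}$, so with the early-time estimate above the mesoscopic alternative closes without a logarithm.
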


\begin{proof}
The same-family identity and the exact normalized cross-family Gaussian
calculation are \eqref{eq:app-semigroup}--\eqref{eq:app-cross-gaussian}.
The estimates immediately following those formulas prove both Gaussian
assertions, using only \(\|b_\alpha\|_\infty\leq C_f\).

We next prove the global primitive estimate.  With
\(H=1+t\), \(T=1+\tau\), and \(s=H-T\), split at \(T=H/2\).
On the early interval, keep all \(k+1\) derivatives on the heat kernel and use
the \(L^1\)-bound for \(Q_\alpha\):
\[
 CC_fH^{-(k+2)/2}\int_1^{H/2}T^{-1/2}\,dT
 \leq CC_fH^{-(k+1)/2}.
\]
On the late interval, transfer derivatives to \(Q_\alpha\).  For
\(k=0,1\), this gives respectively
\[
 CC_f\int_1^{H/2}H^{-3/2}\,ds\leq CC_fH^{-1/2},
 \qquad
 CC_f\int_1^{H/2}H^{-2}\,ds\leq CC_fH^{-1}.
\]
For \(k=2\), leave one derivative on the kernel and transfer the other two:
\[
 CC_f\int_1^{H/2}s^{-1/2}H^{-2}\,ds
 \leq CC_fH^{-3/2}.
\]
Thus no third spatial derivative of \(Q_\alpha\) is required.

The material-derivative identity underlying
Lemma~\ref{lem:gaussian-extraction} is
\begin{align}
 &(\lambda_\alpha-\lambda_i)\partial_x
    [g_i(t-\tau)*Q_\alpha(\tau)]\notag\\
=&g_i(t-\tau)*
   (\partial_\tau+\lambda_\alpha\partial_y-d\partial_y^2)Q_\alpha(\tau)
   -\frac{d}{d\tau}[g_i(t-\tau)*Q_\alpha(\tau)].       \label{eq:material-identity}
\end{align}
After one further derivative and integration in \(\tau\), it gives
\begin{align}
 &(\lambda_\alpha-\lambda_i)\partial_x
 \int_0^{(t-1)_+}g_i(t-\tau)*\partial_yQ_\alpha(\tau)\,d\tau\notag\\
 =&\int_0^{(t-1)_+}\partial_xg_i(t-\tau)*
 (\partial_\tau+\lambda_\alpha\partial_y-d\partial_y^2)Q_\alpha(\tau)\,d\tau-
 [\partial_xg_i(t-\tau)*Q_\alpha(\tau)]_{\tau=0}^{\tau=(t-1)_+}.  \label{eq:integrated-material-identity}
\end{align}
Consequently, the opposite-speed primitive can be estimated without an endpoint logarithmic loss.
The identity is the material-derivative formula of
\cite[Lemma~A.1]{Koike2023}, written with the present characteristic speeds
and heat-kernel normalization.

For the first cone-resolved hypothesis, the material integral in
\eqref{eq:integrated-material-identity} is controlled by
\eqref{eq:app-cutoff-material-map}, while its two boundary terms are
controlled by \eqref{eq:app-upper-material-boundary} and
\eqref{eq:app-lower-material-boundary}.  Combining these estimates with the
Gaussian calculation above proves \eqref{eq:material-extraction-pointwise}.

It remains to prove the mesoscopic alternative.  The boundary terms in
\eqref{eq:integrated-material-identity} are already controlled by
\eqref{eq:app-upper-material-boundary}--
\eqref{eq:app-lower-material-boundary}, so it remains to estimate the material
integral.  
Split the material integral as
\begin{align*}
    I_{E}+I_{L}:=\left(\int_0^{\frac{t-1}{2}}+\int_{\frac{t-1}{2}}^{(t-1)_+}\right)\partial_xg_i(t-\tau)*
 (\partial_\tau+\lambda_\alpha\partial_y-d\partial_y^2)Q_\alpha(\tau)\,d\tau,
\end{align*}
and set
\[
 H=1+t,\qquad T=1+\tau,\qquad s=H-T,
 \qquad r_i=x-\lambda_iH,\qquad a=\lambda_\alpha-\lambda_i.
\]
It suffices to consider \(H\geq4\).

For \(I_E\), since \(\alpha\ne i\), \(|a|=2c>0\). With
\(z=y-\lambda_\alpha T\) and \(R=r_i-aT\), the heat-kernel bound gives $
 |\partial_xg_i(x-y,s)|\leq Cs^{-1}
   \exp\left[-\frac{(R-z)^2}{M_0s}\right],
$
with a fixed \(M_0>0\). By \eqref{222} and the heat-kernel bound, one has
\begin{align*}
    I_E&\lesssim \int_1^{\frac H2} s^{-1}T^{-1}\int_{\mathbb R}
 e^{-(R-z)^2/(M_0s)}\frac{dz}{z^2+T} dT \notag \\
      &= \int_1^{\frac H2} s^{-1}T^{-1}\left(\int_{\mathbb |z|\le|R|/2}+\int_{\mathbb |z|\ge|R|/2}\right)
 e^{-(R-z)^2/(M_0s)}\frac{dz}{z^2+T} dT \notag \\
 &\lesssim\int_1^{\frac H2} \left(s^{-1}T^{-3/2}e^{-R^2/(M_1s)}
   +s^{-1/2}\frac{T^{-1}}{R^2+T}
      \mathbf 1_{\{|R|\geq K\sqrt s\}} \right)dT,
\end{align*}
where \(K\) and \(M_1>M_0\) are fixed. For $1\le T\le\frac{H}{2}$, one has $\frac H2\leq s=H-T\leq H$. Moreover, when $|R|\geq K\sqrt s$, for a fixed constant \(C_K\geq1\), it holds that $C_K^{-1}(R^2+H)\leq R^2+T\leq R^2+H$. Thus, we have \begin{align*}
 I_E \lesssim H^{-1}\int_1^{H/2}T^{-3/2}
 e^{-(r_i-aT)^2/(C H)}\,dT+H^{-1/2}\int_1^{H/2}
 \frac{T^{-1}}{(r_i-aT)^2+H}\,dT.
\end{align*}
Set \(q=\operatorname{sgn}(a)r_i\).  If \(q\leq0\), then
\(|r_i-aT|\geq |r_i|+|a|T\), and retaining this phase bounds both terms by
\(CC_f(r_i^2+H)^{-1}\).  If \(0<q\leq C_0\sqrt H\), then
\begin{align*}
 H^{-1}\int_1^{H/2}T^{-3/2}\,dT
 &\leq CH^{-1},\\
 H^{-1/2}\int_1^{H/2}\frac{T^{-1}}{(r_i-aT)^2+H}\,dT
 &\leq CH^{-3/2}\log H\leq CH^{-1}.
\end{align*}
It remains to consider \(q>C_0\sqrt H\).  Put \(T_0=q/|a|\) and split at
\(T=T_0/2\), intersecting the two pieces with \([1,H/2]\). For $T\in[1,T_0/2]$, \(|r_i-aT|\geq q/2\), so its two contributions are bounded by
\[
 CH^{-1}e^{-cq^2/H}
 +C\frac{H^{-1/2}\log H}{q^2+H}
 \leq\frac{C}{q^2+H}.
\]
For $T\in[T_0/2,H/2]$, \(T^{-3/2}\leq Cq^{-3/2}\) and
\(T^{-1}\leq Cq^{-1}\), while
\[
 \int_{\mathbb R}e^{-(r_i-aT)^2/(C H)}\,dT\leq C\sqrt H,
 \qquad
 \int_{\mathbb R}\frac{dT}{(r_i-aT)^2+H}\leq C H^{-1/2},
\]
so the two contributions are at most
\[
 CH^{-1/2}q^{-3/2},\qquad C(qH)^{-1}.
\]
If this second piece is nonempty, then \(q\leq |a|H\), and both quantities
are bounded by \(Cq^{-2}\).  If it is empty, the phase estimate on the first
piece applies throughout.  Hence, we have
\[
 I_E\leq \frac{CC_f}{r_i^2+H}.
\]

For \(I_L\), integration by parts and \eqref{222} give
\begin{align*}
    I_L=\int_{\frac H2}^{(H-1)_+} g_i *P_{\alpha,y} dT &\lesssim C_f \int_{\frac H2}^{(H-1)_+} g_i * \left\{
 T^{-1}A_\alpha^{-3/2}
 +T^{-5/2}\exp\left(-\frac{r_\alpha^2}{CT}\right)\right\} dT := I_{L,1}+I_{L,2}.
\end{align*}
Write \(\rho=r_i-aH\), so that \(R=\rho+as\). For $I_{L,1}$, since \(H/2\leq T\leq H-1\) and $ |g_i(x-y,s)|\leq Cs^{-1/2}
   \exp\left[-\frac{(R-z)^2}{M_0s}\right]$, with  fixed constant \(M_1>M_0>0\) and $K$, one has
   \begin{align*}
       I_{L,1}&\lesssim \int_{\frac H2}^{(H-1)_+}s^{-1/2}T^{-1}\int_{\mathbb R}
 e^{-(R-z)^2/(M_0s)}\frac{dz}{(z^2+T)^{3/2}} dT\\
 &=\int_{\frac H2}^{(H-1)_+}s^{-1/2}T^{-1}\left(\int_{\mathbb |z|\le|R|/2}+\int_{\mathbb |z|\ge|R|/2}\right)
 e^{-(R-z)^2/(M_0s)}\frac{dz}{(z^2+T)^{3/2}} dT\\
 &\lesssim\int_{\frac H2}^{(H-1)_+} T^{-2}s^{-1/2}e^{-R^2/(M_1s)}
   +T^{-1}(R^2+T)^{-3/2}
      \mathbf 1_{\{|R|\geq K\sqrt s\}}\\
      &\lesssim H^{-2}\int_1^{H/2}s^{-1/2}
 e^{-(\rho+as)^2/(Cs)}\,ds+H^{-1}\int_1^{H/2}
 \bigl((\rho+as)^2+H\bigr)^{-3/2}\,ds.
   \end{align*}
    The first integral is uniformly bounded: if its center
\(s_0=-\rho/a\) lies in the interval, split at
\(s_0/2\) and \(2s_0\), and on the middle piece use
\(s_0^{-1/2}\int e^{-c(s-s_0)^2/s_0}\,ds\leq C\); the two exterior pieces
are exponentially summable.  The second integral is \(O(H^{-1})\), because
\(\int_{\mathbb R}(u^2+H)^{-3/2}du=2H^{-1}\).  Thus
\(I_{L,1}\leq CC_fH^{-2}\) when \(|r_i|\leq K_0H\), where \(K_0\) is fixed.
Choose \(K_0>2|a|\).  When \(|r_i|>K_0H\), retain
\(|r_i-aT|\geq |r_i|/2\) in the same estimates to get
\(I_{L,1}\leq CC_fr_i^{-2}\).

% For \(H/2\leq T\leq H-1\), integration by parts in space is legitimate by the
% assumptions in \(P_\alpha\), and
% \[
%  \partial_xg_i(s)*P_\alpha(\tau)=g_i(s)*P_{\alpha,y}(\tau).
% \]
% Write \(\rho=r_i-aH\), so that \(R=\rho+as\).  Since
% \(H/2\leq T\leq H-1\), the second spatial estimate bounds the algebraic
% part by
% \begin{align*}
%   I_{L,1}\leq{}&CC_fH^{-2}\int_1^{H/2}s^{-1/2}
%  e^{-(\rho+as)^2/(Cs)}\,ds+CC_fH^{-1}\int_1^{H/2}
%  \bigl((\rho+as)^2+H\bigr)^{-3/2}\,ds.
% \end{align*}
% The first integral is uniformly bounded: if its center
% \(s_0=-\rho/a\) lies in the interval, split at
% \(s_0/2\) and \(2s_0\), and on the middle piece use
% \(s_0^{-1/2}\int e^{-c(s-s_0)^2/s_0}\,ds\leq C\); the two exterior pieces
% are exponentially summable.  The second integral is \(O(H^{-1})\), because
% \(\int_{\mathbb R}(u^2+H)^{-3/2}du=2H^{-1}\).  Thus
% \(I_{L,1}\leq CC_fH^{-2}\) when \(|r_i|\leq K_0H\), where \(K_0\) is fixed.
% Choose \(K_0>2|a|\).  When \(|r_i|>K_0H\), retain
% \(|r_i-aT|\geq |r_i|/2\) in the same estimates to get
% \(I_{L,1}\leq CC_fr_i^{-2}\).

For \(I_{L,2}\), since \(R=\rho+as\),
\begin{align*}
    I_{L,2}&\lesssim \int_{\frac H2}^{(H-1)_+}s^{-1/2}T^{-5/2}\int_{\mathbb R}
 e^{-(R-z)^2/(Cs)}e^{-z^2/(CT)}\,dz dT \\
 &\lesssim \int_{\frac H2}^{(H-1)_+} T^{-2} H^{-\frac12}e^{-R^2/(CH)} dT \lesssim H^{-\frac52} \int_{1}^{\frac H2}e^{-(\rho+as)^2/(CH)} ds.
\end{align*}
We choose a fixed constant $K_0$, which satisfies $K_0>2|a|$. For $|r_i|\le K_0 H$, one has 
\begin{align*}
    I_{L,2}\lesssim C_f H^{-2} \lesssim \frac{C_f}{r_i^2+H}.
\end{align*}
For $|r_i|\ge K_0 H$, then we have $|R|\ge \frac{r_i}{2}$ and we obtain
\begin{align*}
    I_{L,2}\lesssim C_f r_i^{-2} \lesssim \frac{C_f}{r_i^2+H}.
\end{align*}
% Finally, the Gaussian part of \(P_{\alpha,y}\) satisfies
% \[
%  s^{-1/2}T^{-5/2}\int_{\mathbb R}
%  e^{-(R-z)^2/(Cs)}e^{-z^2/(CT)}\,dz
%  \leq CT^{-5/2}\sqrt{\frac{T}{H}}e^{-R^2/(CH)}.
% \]
% Its late-time integral is \(O(C_fH^{-2})\), since the phase in \(s\) has
% width \(O(\sqrt H)\); if \(|r_i|>K_0H\), retaining the phase gives
% \(O(C_fr_i^{-2})\).  

Combining the estimates for \(I_E\), \(I_{L,1}\), and \(I_{L,2}\) yields
\[
 \left|\int_0^{(t-1)_+}\partial_xg_i(t-\tau)*P_\alpha(\tau)\,d\tau\right|
 \leq\frac{CC_f}{r_i^2+H}
 \leq CC_f\Psi_i(x,t).
\]
Together with the boundary estimates in
\eqref{eq:integrated-material-identity}, this proves the asserted
mesoscopic alternative without a time logarithm.
\end{proof}

\begin{lemma}
\label{lem:remaining-source-maps}
Fix \(i\in\{1,2\}\).  The constant \(C_f\geq0\) below is the size of the
particular source to which the assertion is applied.

\emph{(i) Divergence source.} Suppose \(h,h_0\) satisfy
\begin{align*}
 |h(y,\tau)|
 &\leq C_f\log(2+\tau)\Bigg[
 \sum_{\alpha=1}^2
 \frac1{([y-\lambda_\alpha(1+\tau)]^2+1+\tau)^{3/2}}\\[-1mm]
 &\hspace{16mm}+(1+\tau)^{-3/4}\sum_{\alpha=1}^2
 \frac1{([y-\lambda_\alpha(1+\tau)]^2+1+\tau)^{3/4}}
 \\[-1mm]
 &\hspace{16mm}+(1+\tau)^{-3/4}\sum_{\alpha=1}^2
 \frac1{|y-\lambda_\alpha(1+\tau)|^{3/2}+1+\tau}\Bigg],\\
 |h_y(y,\tau)|+|h_0(y,\tau)|
 &\leq C_f\log(2+\tau)\Bigg[
 (1+\tau)^{-1/2}\sum_{\alpha=1}^2
 \frac1{([y-\lambda_\alpha(1+\tau)]^2+1+\tau)^{3/2}}\\[-1mm]
 &\hspace{16mm}+(1+\tau)^{-5/4}\sum_{\alpha=1}^2
 \frac1{([y-\lambda_\alpha(1+\tau)]^2+1+\tau)^{3/4}}
 \\[-1mm]
 &\hspace{16mm}+(1+\tau)^{-5/4}\sum_{\alpha=1}^2
 \frac1{|y-\lambda_\alpha(1+\tau)|^{3/2}+1+\tau}\Bigg].
\end{align*}
Then
\begin{equation}
 \left|\partial_x\int_0^{(t-1)_+}
 g_i(t-\tau)*(\partial_yh-h_0)(\tau)\,d\tau\right|
 \leq C\,C_f\log(2+t)\Psi_i(x,t).                 \label{eq:remaining-divergence-map}
\end{equation}

\emph{(ii) Regular remainder.} Fix an entry
\(\mathcal R^r_{i\beta}\) in the receiving row and write
\(e=e^{(0)}+e^{(1)}\).  Suppose
\begin{align}
 |e^{(0)}(y,\tau)|
 &\leq C_f\bigl((y-\lambda_i(1+\tau))^2+1+\tau\bigr)^{-1},\notag\\
 |\partial_ye^{(0)}(y,\tau)|
 &\leq C_f\bigl((y-\lambda_i(1+\tau))^2+1+\tau\bigr)^{-3/2},\notag\\
 |e^{(1)}(y,\tau)|+|\partial_yh(y,\tau)|+|h_0(y,\tau)|
 &\leq C_f\log(2+\tau)\sum_{\alpha=1}^2
 \bigl((y-\lambda_\alpha(1+\tau))^2+1+\tau\bigr)^{-3/2}.
 \label{eq:remaining-structured-hypotheses}
\end{align}
Then
\begin{align}
 &\left|\partial_x\int_0^{(t-1)_+}\int_{\mathbb R}\Bigl[
 \mathcal R^r_{i\beta}(y,\tau;x,t)e(y,\tau)
 +\partial_y\mathcal R^r_{i\beta}(y,\tau;x,t)h(y,\tau)\right.\notag\\[-1mm]
 &\hspace{40mm}\left.
 +\mathcal R^r_{i\beta}(y,\tau;x,t)h_0(y,\tau)
 \Bigr]\,dy\,d\tau\right|
 \leq C\,C_f\log(2+t)\Psi_i(x,t).
 \label{eq:remaining-structured-map}
\end{align}
The same conclusion holds for a fixed finite sum of entries in the receiving
row.  A term bounded only by
\(C_f((y-\lambda_{3-i}(1+\tau))^2+1+\tau)^{-1}\) is excluded from
\(e^{(0)}\).

\emph{(iii) Two-cone profile.} Suppose \(f\) satisfies
\begin{align}
 |f(y,\tau)|
 &\leq C_f\log(2+\tau){(1+\tau)^{-1/2}}
 \sum_{\alpha=1}^2
 \bigl((y-\lambda_\alpha(1+\tau))^2+1+\tau\bigr)^{{-5/4}},\notag\\
 |\partial_yf(y,\tau)|
 &\leq C_f\log(2+\tau)\Bigg[{(1+\tau)^{-1}}
 \sum_{\alpha=1}^2
 \bigl((y-\lambda_\alpha(1+\tau))^2+1+\tau\bigr)^{{-5/4}}\notag\\[-1mm]
 &\hspace{31mm}+(1+\tau)^{-3/2}
 \sum_{\alpha=1}^2
 \bigl((y-\lambda_\alpha(1+\tau))^2+1+\tau\bigr)^{-3/4}\Bigg].
 \label{eq:eta-quarter-hypotheses}
\end{align}
Then
\begin{equation}
 \left|\partial_x\int_0^{(t-1)_+}
 g_i(t-\tau)*f(\tau)\,d\tau\right|
 \leq C C_f\log(2+t)\Psi_i(x,t).
 \label{eq:eta-quarter-map}
\end{equation}

\emph{(iv) Nonlinear source.} If
\[
 |n(y,\tau)|\leq C_f\log^2(2+\tau)(1+\tau)^{-1}
 \{\Psi_1(y,\tau)+\Psi_2(y,\tau)\},
\]
{and, strictly between the two tubes,}
{
\[
 |n(y,\tau)|\leq C_f\log^2(2+\tau)(1+\tau)^{-1}
 \sum_{\alpha=1}^2
 \bigl((y-\lambda_\alpha(1+\tau))^2+1+\tau\bigr)^{-1},
\]}
then
\begin{equation}
 \left|\partial_x\int_0^{(t-1)_+}
 g_i(t-\tau)*n(\tau)\,d\tau\right|
 \leq C\,C_f\log(2+t)\Psi_i(x,t).                 \label{eq:remaining-nonlinear-map}
\end{equation}

\emph{(v) Nonleading Green entries.} If
\begin{align*}
 |f_{\rm rem}(y,\tau)|\leq C_f\Bigg[&
 \sum_{\alpha=1}^2
 \frac1{([y-\lambda_\alpha(1+\tau)]^2+1+\tau)^{3/2}}+(1+\tau)^{-3/4}\sum_{\alpha=1}^2
 \frac1{([y-\lambda_\alpha(1+\tau)]^2+1+\tau)^{3/4}}
 \\[-1mm]
 &+(1+\tau)^{-3/4}\sum_{\alpha=1}^2
 \frac1{|y-\lambda_\alpha(1+\tau)|^{3/2}+1+\tau}\Bigg],
\end{align*}
then every nonleading Gaussian entry in
\eqref{eq:explicit-gaussian-matrix} and every corresponding entry of
\(\mathcal R^r\) in \eqref{eq:regular-green-expansion}, with the output
derivative shown in the Duhamel formula, maps \(f_{\rm rem}\) into
\begin{equation}
 C\,C_f\log(2+t)\Psi_i(x,t).                       \label{eq:remaining-nonleading-map}
\end{equation}
The five assertions remain valid for a fixed finite sum of sources.
Their five-region outputs are \eqref{eq:app-remaining-five-region-output} and
\eqref{eq:app-nonleading-five-region-output-1}.
\end{lemma}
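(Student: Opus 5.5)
Each of the five assertions is reduced to the two preceding lemmas, or to direct five-region bounds for a single convolution. Throughout, $H=1+t$, $T=1+\tau$, $s=H-T$ and $r_\beta=x-\lambda_\beta H$. Every time integral is split at $T=H/2$. On the early piece, derivatives stay on the kernel $g_i$ or $\mathcal R^r$, and the source is used only through its $L^1$ norm and its cone location. On the late piece, derivatives are moved onto the source and the source is used pointwise. The output is then checked against the components of $\Psi_i$ in the receiving tube, the opposite tube, the region between the tubes (where $\chi_K$ is active) and the exterior.

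For (i), I integrate by parts in $y$ to write $\partial_x g_i*\partial_y h=\partial_x^2 g_i*h$ on the early interval. The first part of $h$ has $L^1$ norm $O(\log T\,T^{-1})$ and is concentrated at $O(\sqrt T)$ scale on both cones. It is therefore treated exactly as in Lemma~\ref{lem:endpoint-convolution}, gaining one extra derivative $s^{-1/2}\sim H^{-1/2}$, which compensates for the logarithm. The two Koike-type tails, of order $T^{-3/4}(\cdots)^{-3/4}$, have $L^1$ norm $O(T^{-1/2}\log T)$. Convolved with $\partial_x^2 g_i$, they give $H^{-3/2}\int_1^{H/2}T^{-1/2}\log T\,dT\lesssim H^{-1}\log H$ in sup. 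Their spatial shape reproduces the last bracket of $\Psi_i$. On the late interval I put $h_y$ and $h_0$ against $\partial_x g_i$, whose $L^1$ norm is $s^{-1/2}$. The bounds on $h_y,h_0$ carry an extra $T^{-1/2}$, so the $s$-integral $\int_1^{H/2}s^{-1/2}\,ds\sim H^{1/2}$ returns the same weights.

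Part (ii) uses \eqref{eq:Rr-pointwise}--\eqref{eq:Rr-norms}. The kernel $\mathcal R^r$ already decays one half-power faster than $g_i$. For $e^{(0)}$, which sits in the receiving cone with $L^1$ norm $O(T^{-1/2})$, the early integral is $\int s^{-3/2}T^{-1/2}\,dT$ with the $(1+|x-y-\lambda_\beta s|^2/s)^{-1}$ profile. This yields $(r_i^2+H)^{-1}$ without a logarithm. Entries with $\beta\neq i$ are handled by the same shift argument as $I_E$ in Lemma~\ref{lem:gaussian-extraction}. This is why a term bounded only at the opposite cone has to be excluded: there it would produce an $H^{-1}$ plateau over the whole region between the tubes, which is not in $\Psi_i$. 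Sources of size $T^{-3/2}$ are easier. The exponential part $e^{-\kappa_0 s}(1+|x-y|)^{-2}$ contributes only on the last $O(\log H)$ time, where the source is pointwise $O(\Psi_i)$.

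For (iii), the source has $L^1$ norm $T^{-1/2}\log T\cdot T^{-1/2}\cdot T^{-3/4}T^{1/2}=T^{-5/4}\log T$, which is integrable in time. Both the early and the late maps are then bounded by $\log H\,(\cdot)$ times the $(\cdot)^{-3/4}$ and $(|\cdot|^{3/2}+H)^{-1}$ tails of the last line of $\Psi_i$. For (iv), I use $\|\Psi_\alpha(\tau)\|_{L^1}\lesssim T^{-1/2}\log T$, so $\|n\|_1\lesssim T^{-3/2}\log^3T$, which is integrable. On the early piece this gives $\partial_x g_i$ applied to a finite mass, a Gaussian of size $H^{-1}$ in the receiving tube. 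The bound between the tubes is needed on the late piece. There the $\chi_K$-component of $\Psi_\alpha$, convolved over times $s\le H/2$, would otherwise spread, whereas the assumed $(\cdot)^{-1}$ profile integrates to at most $(r_i^2+H)^{-1}+\chi_K(\cdots)$. Part (v) is (ii) applied with $e^{(1)}$, together with the observation that the nonleading Gaussian entries $(t-\tau)g_{yyy}$ and $g_{1,y}-g_{2,y}$ scale like one extra derivative of $g$. So (v) follows from (i) with $h=0$ and $h_0=f_{\rm rem}$, except that the logarithm now comes from the time integral rather than from the source.

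The main obstacle will be the region strictly between the tubes, in (iv) and in the $\beta\neq i$ entries of (ii). A naive convolution there produces $H^{-1}\log^2H$ and loses a logarithm. I would first try the moving-frame substitution $R=r_i-aT$ from the proof of Lemma~\ref{lem:gaussian-extraction}, which trades the $T$-integral for a spatial integral of the cone-profile. If that is not sharp enough, I would fall back on an explicit comparison with the weight $(1+r_i^2)^{-1/2}(1+r_{3-i}^2)^{-1/4}$ in $\Psi_i$.
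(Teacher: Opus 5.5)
Your overall architecture matches the paper's. You split at $1+\tau=(1+t)/2$, keep derivatives on the kernel early and move one onto the source late, and handle the region between the tubes by cutting the $\tau$-integral at the zero of the affine phase $x-\lambda_i(t-\tau)-\lambda_\alpha(1+\tau)$. Your moving-frame substitution is exactly the device behind \eqref{eq:app-interior-early}--\eqref{eq:app-interaction-ridge}, so you should carry it out rather than leave it as a first attempt. Two steps, however, fail as written.

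First, (v) is not a corollary of (i) or (ii). The envelope of $f_{\rm rem}$ exceeds the $h_0$-envelope of (i) by the factor $(1+\tau)^{1/2}/\log(2+\tau)$. Its Koike tails $(1+\tau)^{-3/4}(\cdots)^{-3/4}$ are not dominated by the $(\cdot)^{-3/2}$ envelope required of $e^{(1)}$ in (ii). Moreover, row $i$ of \eqref{eq:explicit-gaussian-matrix} contains $g_{3-i,y}$, and by \eqref{eq:Rr-pointwise} every $\mathcal R^r_{i\beta}$ carries both centres $\lambda_1,\lambda_2$, not only $\lambda_\beta$. Part (i) treats only the kernel $g_i$. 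Using it with $i$ replaced by $3-i$ would give a bound by $\Psi_{3-i}$, which is of size $(1+t)^{-1}$ on the $(3-i)$-tube, whereas $\Psi_i$ is only of size $\log(2+t)(1+t)^{-3/2}$ there. What is needed is a direct count. With the output derivative, every nonleading kernel has $L^1$ norm $O((t-\tau)^{-1})$ and sup norm $O((t-\tau)^{-3/2})$. Meanwhile $\|f_{\rm rem}(\tau)\|_1\leq CC_f(1+\tau)^{-1}$ and $\|f_{\rm rem}(\tau)\|_\infty\leq CC_f(1+\tau)^{-3/2}$. Hence the early and the late piece are each $O(C_f\log(2+t)(1+t)^{-3/2})$, as in \eqref{eq:remaining-early-late-coefficients}. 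The five-region split is then run separately for each centre, with the $g_{3-i}$-output absorbed by the opposite-cone term of $\Psi_i$; compare \eqref{eq:app-nonleading-five-region-output-1}.

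Second, in (iii) your claim that both pieces are bounded by $\log(2+t)$ times the last line of $\Psi_i$ is false, and no argument can prove it. The hypotheses constrain only $|f|$ and $|\partial_yf|$, so the sign of $f(\cdot,\tau)$ may be chosen, as a function of $\tau$, to match that of $\partial_xg_i$ at a given output point.
\begin{itemize}
\item The receiving-cone part has $\|f(\tau)\|_1\lesssim C_f\log(2+\tau)(1+\tau)^{-5/4}$, which is integrable. Its early response is therefore a derivative-of-Gaussian profile of height $\sim C_f(1+t)^{-1}$ in the receiving tube, where the last line of $\Psi_i$ is only $O((1+t)^{-5/4})$.
\item Between the tubes, take $2K\sqrt{1+t}\leq\rho=|x-\lambda_i(1+t)|\leq c(1+t)$. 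The opposite-cone part crosses its phase zero at $1+\tau\simeq\rho/(2c)$ and contributes $\sim C_f(1+t)^{-1/2}\rho^{-5/4}\log\rho$. This exceeds $\log(2+t)$ times the last line of $\Psi_i$, which is of order $\log(2+t)(1+t)^{-1/2}\rho^{-3/2}$ there, by a factor $\sim\rho^{1/4}$.
\end{itemize}
The conclusion \eqref{eq:eta-quarter-map} survives only because these pieces are absorbed by the first and the $\chi_K$ components of $\Psi_i$. That bookkeeping is the content of the five-region table \eqref{eq:app-eta-quarter-five-regions}, which you need to derive rather than replace by a single-component bound.
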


\subsection{Initial response and extraction of the critical Gaussian}

The initial response is organized by
\[
 \int_{\mathbb R}R_i(x,0)\,dx=0
 \ \Longrightarrow\ 
 W_i(x,0)=\int_{-\infty}^{x}R_i(y,0)\,dy
 \ \Longrightarrow\
 Z_i(x,0)=\phi_i(x,0)W_i(x,0).
\]
More precisely,
\[
 \int_{\mathbb R}R_i(x,0)\,dx=0,
 \qquad
\left\|\int_{-\infty}^{x}R_i(y,0)\,dy\right\|_1
 \leq\int_{\mathbb R}|x|\,|R_i(x,0)|\,dx.
\]
Hence, for \(0\leq k\leq2\),
\begin{align}
 &\left\|\partial_x^k\int_{\mathbb R}\widetilde G(y,0;\cdot,t)Z(y,0)\,dy
 \right\|_\infty
 \leq CE_N(1+t)^{-(k+1)/2},                             \label{eq:initial-global}\\
 &\left|\partial_x\int_{\mathbb R}\sum_{\beta=1}^2
 \widetilde G_{i\beta}(y,0;x,t)Z_\beta(y,0)\,dy\right|
 \notag\\[-1mm]
 &\quad+|\theta_i(x,t)|
 \left|\int_{\mathbb R}\sum_{\beta=1}^2
 \widetilde G_{i\beta}(y,0;x,t)Z_\beta(y,0)\,dy\right|
 \leq CE_N\Psi_i(x,t).                         \label{eq:initial-pointwise}
\end{align}
The input bounds are
\begin{equation}
 \|W_i(\cdot,0)\|_1
 \leq\int_{\mathbb R}|x|\,|R_i(x,0)|\,dx .
 \label{eq:initial-antiderivative-L1}
\end{equation}
\[
 W_{i,x}(\cdot,0)=R_i(\cdot,0),\qquad
 W_{i,xx}(\cdot,0)=R_{i,x}(\cdot,0),
\]
and \eqref{eq:initial-size}, \eqref{eq:initial-antiderivative-L1},
\eqref{eq:phi-explicit-identities} yield
\begin{equation}
 \sum_{i,j=1}^2
 \|\phi_i(\cdot,0)W_j(\cdot,0)\|_{W^{2,1}\cap W^{2,\infty}}
 \leq CE_N .
 \label{eq:initial-weighted-W-bound}
\end{equation}
The two conjugated rows are
\begin{align}
 \int_{\mathbb R}[\widetilde G_{11}Z_1+\widetilde G_{12}Z_2](y,0;x,t)\,dy
 &=G^0_{11}(t)*[\phi_1(0)W_1(0)]
 +G^0_{12}(t)*[\phi_1(0)W_2(0)],\notag\\
 \int_{\mathbb R}[\widetilde G_{21}Z_1+\widetilde G_{22}Z_2](y,0;x,t)\,dy
 &=G^0_{21}(t)*[\phi_2(0)W_1(0)]
 +G^0_{22}(t)*[\phi_2(0)W_2(0)].
 \notag
\end{align}
The regular--singular kernel bounds are
\begin{align}
 \|\partial_x^k(G^r(t)*f)\|_\infty
 &\leq C(1+t)^{-(k+1)/2}\|f\|_1,&&0\leq k\leq2,\notag\\
 \|\partial_x^k(G^s(t)*f)\|_\infty
 &\leq Ce^{-\kappa_0t}\|\partial_x^kf\|_\infty,&&0\leq k\leq2.
 \notag
\end{align}
For the cone-resolved estimate, zero mass gives
\begin{equation*}
 |W_i(x,0)|
 \leq\int_{|y|\geq|x|}|R_i(y,0)|\,dy
 \leq CE_N(1+|x|)^{-2}.
\end{equation*}
The corresponding derivative bounds for
\(\phi_j(\cdot,0)W_i(\cdot,0)\), the diagonal and off-diagonal Gaussian
rows, and \eqref{eq:short-time-green-measure}--
\eqref{eq:short-time-green-derivative} give
\eqref{eq:initial-global}--\eqref{eq:initial-pointwise}.

Choose once and for all $\chi_{\mathrm{cone}}\in C^\infty(\mathbb R;[0,1])$ such that
$\chi_{\mathrm{cone}}(z)=1$ for $|z|\leq c/4$ and $\chi_{\mathrm{cone}}(z)=0$ for
$|z|\geq c/2$, and put
\begin{equation}\label{eq:profile-cutoff}
 \chi_i^{\mathrm{cone}}(y,\tau)=
 \chi_{\mathrm{cone}}\left(\frac{y-\lambda_i(1+\tau)}{1+\tau}\right).
\end{equation}

Let \(j=3-i\). Define
\begin{align}
 a_i(t)=&\int_0^{(t-1)_+}
 \chi_{\mathrm{time}}\left(\frac{1+\tau}{\sqrt{1+t}}\right)
 \int_{\mathbb R}\phi_i\theta_jR_i\,dy\,d\tau
 +\frac32\int_0^{(t-1)_+}\int_{\mathbb R}\phi_i\theta_iR_j\,dy\,d\tau\notag\\*
 &-\int_0^{(t-1)_+}
 \chi_{\mathrm{time}}\left(\frac{1+\tau}{\sqrt{1+t}}\right)
 \int_{\mathbb R}\phi_i\theta_jR_j\,dy\,d\tau\notag\\*
 &+\int_0^{(t-1)_+}\int_{\mathbb R}(-1)^{i+1}
 \left(\frac{cp'''(1)}{3p_2^2}+\frac{3}{16c}+\frac{c}{2p_2}\right)
 \phi_i\theta_i^3\,dy\,d\tau\notag\\*
 &+\int_0^{(t-1)_+}
 \chi_{\mathrm{time}}\left(\frac{1+\tau}{\sqrt{1+t}}\right)
 \int_{\mathbb R}(-1)^i
 \left(\frac{cp'''(1)}{3p_2^2}+\frac1{4c}\right)
 \phi_i\theta_j^3\,dy\,d\tau\notag\\*
 &-\frac12\int_0^{(t-1)_+}\int_{\mathbb R}\phi_i\chi_i^{\mathrm{cone}}\Xi_i^2\,dy\,d\tau
 -\frac12\int_0^{(t-1)_+}
 \chi_{\mathrm{time}}\left(\frac{1+\tau}{\sqrt{1+t}}\right)
 \int_{\mathbb R}\phi_i\chi_j^{\mathrm{cone}}\Xi_j^2\,dy\,d\tau .       \label{eq:ai-definition}
\end{align}
The unlocalized tails
$-\phi_i(1-\chi_\alpha^{\mathrm{cone}})\Xi_\alpha^2/2$ remain in $Z_i^{\mathrm{rem}}$.
Since the upper limit is $(t-1)_+$, $a_i$ is locally Lipschitz, and
suprema involving $a_i'$ are understood as essential suprema. Set
\begin{equation}
 Z_i(x,t)=a_i(t)\Gamma_i(x,1+t)+Z_i^{\mathrm{rem}}(x,t).        \label{eq:Z-decomposition}
\end{equation}
In general \(a_i\neq\int_{\mathbb R}Z_i\,dx\). By
construction, \(a_i\) contains the same-family Gaussian coefficients and the
cutoff opposite-family coefficients extracted from the ordinary,
exceptional, and critical profile sources. Thus the logarithmic mode is
contained in \(a_i\Gamma_i\), whereas \(Z_i^{\mathrm{rem}}\) has no
logarithmic loss.
The complete estimate, including the remaining source and Green-kernel
components, is proved in Lemma~\ref{lem:consolidated-source-ledger}.

The Cole--Hopf identity
\[
 \phi_i\theta_i=2d\bigl(1-e^{-M_i/(2d)}\bigr)\Gamma_i
\]
and \eqref{eq:Z-decomposition} give the exact recovery formula
\begin{equation}
 R_i=\frac{a_i(t)}{2d(1-e^{-M_i/(2d)})}\theta_{i,x}
 +\phi_i^{-1}\left(\partial_x Z_i^{\mathrm{rem}}+\frac{\theta_i}{2d}Z_i^{\mathrm{rem}}\right).  \label{eq:modal-recovery}
\end{equation}
When \(M_i=0\), the quotient is understood by continuous extension through
\[
 \frac{\theta_{i,x}}{2d(1-e^{-M_i/(2d)})}
 =\phi_i^{-1}\left(\Gamma_{i,x}+\frac{\theta_i}{2d}\Gamma_i\right).
\]

For fixed \(t^*>0\) and \(N\geq5\), define
\begin{align}
 X_N(t^*)=\sup_{0\leq t\leq t^*}\max_{i=1,2}\Bigg\{&
 \frac1{\log(2+t)}\sup_{x\in\mathbb R}
       \frac{|R_i(x,t)|}{\Psi_i(x,t)}
 +\frac{|a_i(t)|}{\log(2+t)}+(1+t)|a_i'(t)|\notag\\
 &+\sum_{k=0}^2(1+t)^{(k+1)/2}\|\partial_x^k Z_i^{\mathrm{rem}}(t)\|_\infty
 +\|R(t)\|_{H^N}\Bigg\}.                                  \label{eq:pointwise-continuity-norm}
\end{align}
Below, \(X_N\) means \(X_N(t^*)\). In particular,
\begin{align}
 |a_i(t)|&\leq X_N\log(2+t),& |a_i'(t)|&\leq X_N(1+t)^{-1},\notag\\
 \|\partial_x^k Z_i^{\mathrm{rem}}(t)\|_\infty
 &\leq X_N(1+t)^{-(k+1)/2},&&0\leq k\leq2.              \label{eq:no-prelog-input}
\end{align}
Equations \eqref{eq:Z-decomposition}, \eqref{eq:modal-recovery}, and
\eqref{eq:no-prelog-input} give
\begin{align}
 \|\partial_x^kZ_i(t)\|_\infty
 &\leq CX_N\log(2+t)(1+t)^{-(k+1)/2},&&0\leq k\leq2,\notag\\
 \|\partial_x^kR_i(t)\|_\infty
 &\leq CX_N\log(2+t)(1+t)^{-1-k/2},&&0\leq k\leq1.       \label{eq:derived-low-rates}
\end{align}
Differentiating \(Z_j=\phi_jW_j\) gives
\begin{equation}
 \partial_x^{k+1}Z_j
 =\sum_{a=0}^k\binom{k}{a}(\partial_x^a\phi_j)
 \partial_x^{k-a}R_j
 -\frac1{2d}\sum_{a=0}^k\binom{k}{a}(\partial_x^a\theta_j)
 \partial_x^{k-a}Z_j .
 \label{eq:exact-recovery-derivative}
\end{equation}
Equations \eqref{eq:pointwise-continuity-norm}, \eqref{eq:modal-recovery},
\eqref{eq:no-prelog-input}, and \eqref{eq:burgers-pointwise-bound} imply
\begin{equation}
 \left\|\partial_x^N\left[\phi_j^{-1}
 \left(\partial_x Z_j^{\mathrm{rem}}+\frac{\theta_j}{2d}Z_j^{\mathrm{rem}}\right)\right](t)\right\|_2
 +\left\|\partial_x^{N-1}\left[\phi_j^{-1}
 \left(\partial_x Z_j^{\mathrm{rem}}+\frac{\theta_j}{2d}Z_j^{\mathrm{rem}}\right)\right](t)\right\|_\infty
 \leq CX_N .
 \label{eq:recovered-hat-top}
\end{equation}
Applying the one-dimensional Gagliardo--Nirenberg inequality to
\eqref{eq:derived-low-rates} and \eqref{eq:recovered-hat-top} gives, for
the indicated derivative ranges,
\begin{align}
 \|\partial_x^kR_j(t)\|_\infty
 \leq CX_N\log^{\frac{2N-2k-1}{2N-3}}(2+t)
       (1+t)^{-\frac32\frac{2N-2k-1}{2N-3}},
       &\qquad 2\leq k\leq N-1,\notag\\
 \|\partial_x^kZ_j(t)\|_\infty
 \leq CX_N\log^{\frac{2N-2k+1}{2N-3}}(2+t)
       (1+t)^{-\frac32\frac{2N-2k+1}{2N-3}},
       &\qquad 3\leq k\leq N.                            \label{eq:intermediate-rates}
\end{align}
The explicit modal bounds are
\begin{align*}
 \left\|\partial_x^k\left(
 \frac{a_j}{2d(1-e^{-M_j/(2d)})}\theta_{j,x}\right)\right\|_\infty
 &\leq CX_N\log(2+t)(1+t)^{-(k+2)/2},\\
 \|\partial_x^k(a_j\Gamma_j)\|_\infty
 &\leq CX_N\log(2+t)(1+t)^{-(k+1)/2}.
\end{align*}
For \(N\ge5\), define
\[
 \beta_N=\frac{2N-5}{2N-3},\qquad \frac32\beta_N>1.
\]
Then \eqref{eq:derived-low-rates}--\eqref{eq:intermediate-rates} imply
\begin{equation}
 \|R_{j,xx}(t)\|_\infty+\|Z_{j,xxx}(t)\|_\infty
 \leq CX_N\log^{\beta_N}(2+t)(1+t)^{-\frac32\beta_N}.
 \label{eq:second-third-derivative-rate}
\end{equation}

For \(i\ne j\), the exact modal decompositions are
\begin{align}
 \theta_iR_j
 &=\frac{a_j}{2d(1-e^{-M_j/(2d)})}\theta_i\theta_{j,y}
 +\theta_i\phi_j^{-1}\left(\partial_y Z_j^{\mathrm{rem}}
 +\frac{\theta_j}{2d}Z_j^{\mathrm{rem}}\right),\notag\\
 \theta_i Z_j&=a_j\theta_i\Gamma_j+\theta_i Z_j^{\mathrm{rem}} .
 \label{eq:theta-cross-modal-split}
\end{align}
By \eqref{eq:theta-cross-modal-split}, \eqref{eq:burgers-pointwise-bound},
\eqref{eq:no-prelog-input}, \eqref{eq:derived-low-rates}, and
\eqref{eq:phi-explicit-identities}, the modal terms are exponentially small,
while the terms containing \(Z_j^{\mathrm{rem}}\) give
\begin{align}
 \|\theta_iR_j(\tau)\|_1
 &\leq C\epsilon X_N(1+\tau)^{-1}
 +C\epsilon X_N\log(2+\tau)e^{-\kappa_0(1+\tau)},\notag\\
 \|\theta_iR_j(\tau)\|_\infty
 &\leq C\epsilon X_N(1+\tau)^{-3/2}
 +C\epsilon X_N\log(2+\tau)e^{-\kappa_0(1+\tau)},\notag\\
 \|\partial_y(\theta_iR_j)(\tau)\|_\infty
 &\leq C\epsilon X_N(1+\tau)^{-2}
 +C\epsilon X_N\log(2+\tau)e^{-\kappa_0(1+\tau)},\notag\\
 \|\theta_iZ_j(\tau)\|_1
 &\leq C\epsilon X_N(1+\tau)^{-1/2}
 +C\epsilon X_N\log(2+\tau)e^{-\kappa_0(1+\tau)},\notag\\
 \|\partial_y^k(\theta_iZ_j)(\tau)\|_\infty
 &\leq C\epsilon X_N(1+\tau)^{-1-k/2}
 +C\epsilon X_N\log(2+\tau)e^{-\kappa_0(1+\tau)},\quad0\leq k\leq2.
 \notag
\end{align}
The same split, with \eqref{eq:burgers-pointwise-bound} and
\eqref{eq:no-prelog-input}, gives, for \(0\leq q\leq2\),
\begin{equation*}
 \sum_{k=1}^2|\partial_y^q(\phi_k\theta_iZ_j)(y,\tau)|
 \leq C\epsilon X_N
 \begin{cases}
 ([y-\lambda_i(1+\tau)]^2+1+\tau)^{-1},&q=0,\\
 ([y-\lambda_i(1+\tau)]^2+1+\tau)^{-3/2},&q=1,\\
 (1+\tau)^{-1/2}([y-\lambda_i(1+\tau)]^2+1+\tau)^{-3/2},&q=2.
 \end{cases}
\end{equation*}

\subsection{The exceptional opposite-family feedback}\label{exceptional opposite-family feedback}

The terms requiring a separate endpoint argument are
\(\phi_i\theta_jR_j\), \(j=3-i\), and
\((\frac12\theta_{i,y}-\frac{\theta_i^2}{4d})Z_i\); see
\eqref{eq:expanded-commutator-12}--\eqref{eq:expanded-commutator-21},
\eqref{eq:complete-duhamel}, and
\eqref{eq:first-leading-row}--\eqref{eq:second-leading-row}.
The first is the unique exceptional opposite-family feedback.  Indeed,
\[
 \|\theta_jR_j(\tau)\|_1
 \leq C\epsilon X_N\log(2+\tau)(1+\tau)^{-1}
\]
would give
\[
 (1+t)^{-1}\int_1^{t/2}\frac{\log(2+\tau)}{1+\tau}\,d\tau
 \geq c_0(1+t)^{-1}\log^2(2+t)\qquad(t\geq4),
\]
and hence lose one logarithm.

Substituting \eqref{eq:modal-recovery} for \(R_j\), differentiating
\(\phi_i\), and then adding and subtracting the two spatial masses gives the exact
identity
\begin{align}
 \phi_i\theta_jR_j=&
 \partial_y\left\{
 \frac{a_j\phi_i\theta_j^2}{4d(1-e^{-M_j/(2d)})}\right\} +\left(\int_{\mathbb R}\phi_i\theta_jR_j\,dz\right)
       \Gamma_j(y,1+\tau)                   \notag\\
 &+\frac{a_j}{8d^2(1-e^{-M_j/(2d)})}
 \left\{\phi_i\theta_i\theta_j^2
 -\left(\int_{\mathbb R}\phi_i\theta_i\theta_j^2\,dz\right)
       \Gamma_j(y,1+\tau)\right\}                                          \notag\\
 &+\left\{
 \phi_i\theta_j\phi_j^{-1}
       \left(\partial_y Z_j^{\mathrm{rem}}+\frac{\theta_j}{2d}Z_j^{\mathrm{rem}}\right)-\Bigg[\int_{\mathbb R}\phi_i\theta_j\phi_j^{-1}
       \left(\partial_y Z_j^{\mathrm{rem}}
       +\frac{\theta_j}{2d}Z_j^{\mathrm{rem}}\right)dz\Bigg]
       \Gamma_j(y,1+\tau)\right\}\notag\\
       :=&I_1+I_2+I_3+I_4.   \label{eq:exceptional-decomposition}
\end{align}
Its four components satisfy
\begin{equation*}
 \begin{aligned}
 I_1&=\partial_y(\cdots),
 &&\text{opposite-speed divergence},\\
 I_2&=\left(\int_{\mathbb R}\phi_i\theta_jR_j\,dz\right)
       \Gamma_j(y,1+\tau),
 &&\text{Gaussian mass contained in \eqref{eq:ai-definition}},\\
 \int_{\mathbb R}I_3\,dy&=0,
 &\qquad \int_{\mathbb R}I_4\,dy&=0.
 \end{aligned}
\end{equation*}
Thus the bad feedback is reduced to a divergence, an extracted Gaussian, and
two zero-mass terms, without introducing a coupled Volterra system for the
modal coefficients.

Before applying Lemma~\ref{lem:gaussian-extraction}, we verify its coefficient
hypothesis from the complete equations.  Multiplying
\eqref{eq:modal-recovery} by \(\phi_i\theta_j\), integrating over
\(\mathbb R\), and integrating the term containing
\(\partial_yZ_j^{\mathrm{rem}}\) by parts give the exact identity
\begin{align}
 (1+\tau)\int_{\mathbb R}\phi_i\theta_jR_j\,dy&=\frac{(1+\tau)a_j}{8d^2(1-e^{-M_j/(2d)})}
       \int_{\mathbb R}\phi_i\theta_i\theta_j^2\,dy
  +(1+\tau)\int_{\mathbb R}
 \frac{\phi_i}{\phi_j}
 \left(\frac{\theta_i\theta_j}{2d}-\theta_{j,y}\right)
 Z_j^{\mathrm{rem}}\,dy .                                      \notag
\end{align}
Differentiating this formula and integrating the derivatives of \(Z_j^{\mathrm{rem}}\)
by parts yields
\begin{align}
 &\frac d{d\tau}\left[(1+\tau)
       \int_{\mathbb R}\phi_i\theta_jR_j\,dy\right]\notag\\*
=&\frac{a_j+(1+\tau)a_j'}{8d^2(1-e^{-M_j/(2d)})}
       \int_{\mathbb R}\phi_i\theta_i\theta_j^2\,dy\notag\\*
 &+\frac{(1+\tau)a_j}{8d^2(1-e^{-M_j/(2d)})}
       \frac d{d\tau}\int_{\mathbb R}\phi_i\theta_i\theta_j^2\,dy\notag+\int_{\mathbb R}\frac{\phi_i}{\phi_j}
       \left(\frac{\theta_i\theta_j}{2d}-\theta_{j,y}\right)
       Z_j^{\mathrm{rem}}\,dy\notag\\*
 &+(1+\tau)\int_{\mathbb R}\Bigg\{
 \left(\partial_\tau+\lambda_j\partial_y+d\partial_y^2\right)
 \left[\frac{\phi_i}{\phi_j}
       \left(\frac{\theta_i\theta_j}{2d}-\theta_{j,y}\right)\right]
       Z_j^{\mathrm{rem}}\notag\\*[-1mm]
 &\hspace{35mm}+\frac{\phi_i}{\phi_j}
       \left(\frac{\theta_i\theta_j}{2d}-\theta_{j,y}\right)
       (\partial_\tau+\lambda_j\partial_y-d\partial_y^2)
       Z_j^{\mathrm{rem}}\Bigg\}\,dy .                       \label{eq:exceptional-mass-derivative}
\end{align}
There is no estimate of \(\partial_\tau Z_j^{\mathrm{rem}}\) in this formula.
Instead the last factor is taken from the complete \(j\)-equation.  By the
preceding coefficient identity, the sum of the two zero-mass brackets in
\eqref{eq:exceptional-decomposition} has the primitive
\begin{align}
\int_{-\infty}^y(I_3+I_4)ds=&\frac{\phi_i}{\phi_j}\theta_j Z_j^{\mathrm{rem}}
+\int_{-\infty}^y\Bigg[
 \frac{a_j}{8d^2(1-e^{-M_j/(2d)})}\phi_i\theta_i\theta_j^2
 +\frac{\phi_i}{\phi_j}
 \left(\frac{\theta_i\theta_j}{2d}-\theta_{j,z}\right)Z_j^{\mathrm{rem}}\notag\\
&\qquad\qquad\qquad\qquad\quad -\left(\int_{\mathbb R}\phi_i\theta_jR_j\,dw\right)
 \Gamma_j(z,1+\tau)\Bigg]dz .
\label{111}
\end{align}
Applying \(\partial_\tau+\lambda_j\partial_y-d\partial_y^2\) for \eqref{111} directly gives
\begin{align}
&\left(\partial_\tau+\lambda_j\partial_y-d\partial_y^2\right)
 \left(\frac{\phi_i}{\phi_j}\theta_j Z_j^{\mathrm{rem}}\right)\notag\\
&+\int_{-\infty}^y\Bigg[
 \left(\partial_\tau+\lambda_j\partial_z-d\partial_z^2\right)
 \left(\frac{a_j\phi_i\theta_i\theta_j^2}
 {8d^2(1-e^{-M_j/(2d)})}\right) -\frac d{d\tau}\left(\int_{\mathbb R}\phi_i\theta_jR_j\,dw\right)
 \Gamma_j(z,1+\tau)\notag\\[-1mm]
&\hspace{20mm}
 +\left(\partial_\tau+\lambda_j\partial_z-d\partial_z^2\right)
 \left\{\frac{\phi_i}{\phi_j}
 \left(\frac{\theta_i\theta_j}{2d}-\theta_{j,z}\right)Z_j^{\mathrm{rem}}\right\}\Bigg]dz .
\label{eq:exceptional-primitive-material-exact}
\end{align}
The two products in \eqref{eq:exceptional-primitive-material-exact} expand exactly as
\begin{align}
&\left(\partial_\tau+\lambda_j\partial_y-d\partial_y^2\right)
 \left(\frac{\phi_i}{\phi_j}\theta_j Z_j^{\mathrm{rem}}\right)\notag\\
=&\frac{\phi_i}{\phi_j}\theta_j
 \left(\partial_\tau+\lambda_j\partial_y-d\partial_y^2\right)Z_j^{\mathrm{rem}}
 +\left(\partial_\tau+\lambda_j\partial_y-d\partial_y^2\right)
 \left(\frac{\phi_i}{\phi_j}\theta_j\right)Z_j^{\mathrm{rem}}-2d\,\partial_y\left(\frac{\phi_i}{\phi_j}\theta_j\right)
 \partial_y Z_j^{\mathrm{rem}},\notag\\
&\left(\partial_\tau+\lambda_j\partial_y-d\partial_y^2\right)
 \left\{\frac{\phi_i}{\phi_j}
 \left(\frac{\theta_i\theta_j}{2d}-\theta_{j,y}\right)Z_j^{\mathrm{rem}}\right\}
 \notag\\
=&\frac{\phi_i}{\phi_j}
 \left(\frac{\theta_i\theta_j}{2d}-\theta_{j,y}\right)
 \left(\partial_\tau+\lambda_j\partial_y-d\partial_y^2\right)Z_j^{\mathrm{rem}}-2d\,\partial_y\left\{\frac{\phi_i}{\phi_j}
 \left(\frac{\theta_i\theta_j}{2d}-\theta_{j,y}\right)\right\}
 \partial_y Z_j^{\mathrm{rem}}
 \notag\\[-1mm]
&\qquad+\left(\partial_\tau+\lambda_j\partial_y-d\partial_y^2\right)
 \left\{\frac{\phi_i}{\phi_j}
 \left(\frac{\theta_i\theta_j}{2d}-\theta_{j,y}\right)\right\}
 Z_j^{\mathrm{rem}}.
\label{eq:exceptional-primitive-product-rules}
\end{align}
No third derivative of \(Z_j^{\mathrm{rem}}\) occurs.  Put
\[
 \mathcal L_j:=\partial_\tau+\lambda_j\partial_y-d\partial_y^2.
\]
Together with \eqref{eq:def-phi}, \eqref{eq:explicit-burgers-wave}, and
\eqref{eq:no-prelog-input}, the complete equation reduces the coefficient
estimates to \(\mathcal L_jZ_j^{\mathrm{rem}}\).  Since
\[
 \mathcal L_jZ_j^{\mathrm{rem}}=\mathcal L_jZ_j-a_j'\Gamma_j,
\]
the term \(-a_j'\Gamma_j\) cancels the first occurrence of \(a_j'\) by
\begin{align}
 \int_{\mathbb R}\frac{\phi_i}{\phi_j}
 \left(\frac{\theta_i\theta_j}{2d}-\theta_{j,y}\right)\Gamma_j\,dy
 &=\frac1{8d^2(1-e^{-M_j/(2d)})}
   \int_{\mathbb R}\phi_i\theta_i\theta_j^2\,dy,\notag\\
 \partial_y\left\{\frac{\phi_i\theta_j^2}
 {4d(1-e^{-M_j/(2d)})}\right\}
 &=\frac{\phi_i\theta_i\theta_j^2}
 {8d^2(1-e^{-M_j/(2d)})}
 -\frac{\phi_i}{\phi_j}
 \left(\frac{\theta_i\theta_j}{2d}-\theta_{j,y}\right)\Gamma_j .
\label{eq:exceptional-modal-cancellation}
\end{align}
The cancellation in the mass derivative does not remove the complete
\(a_j'\)-contribution from the material derivative of the zero-mass primitive.
The remaining term is
\begin{equation}
 -\frac{a_j'\phi_i\theta_j^2}
 {4d(1-e^{-M_j/(2d)})}.
 \label{eq:exceptional-aj-prime-remainder}
\end{equation}

The cross-diffusion term
\(d(\phi_j/\phi_i)Z_{i,yy}\) in \(\mathcal L_jZ_j\), cf.\
\eqref{eq:complete-Z-system}, contributes through
\begin{align}
 \frac{\phi_i}{\phi_j}\theta_j
       \left(d\frac{\phi_j}{\phi_i}Z_{i,yy}\right)
 &=d\theta_jZ_{i,yy},\notag\\
 \frac{\phi_i}{\phi_j}
 \left(\frac{\theta_i\theta_j}{2d}-\theta_{j,y}\right)
       \left(d\frac{\phi_j}{\phi_i}Z_{i,yy}\right)
 &=\left(\frac12\theta_i\theta_j-d\theta_{j,y}\right)Z_{i,yy}.
 \label{eq:exceptional-cross-diffusion}
\end{align}
By \eqref{eq:exceptional-cross-diffusion},
\eqref{eq:burgers-pointwise-bound}, and \eqref{eq:no-prelog-input},
\begin{align}
 |d\theta_jZ_{i,yy}|
 &\leq C\epsilon X_N(1+\tau)^{-1/2}
 \bigl((y-\lambda_j(1+\tau))^2+1+\tau\bigr)^{-3/2},\notag\\
 \left|\left(\frac12\theta_i\theta_j-d\theta_{j,y}\right)Z_{i,yy}\right|
 &\leq C\epsilon X_N(1+\tau)^{-1}
 \bigl((y-\lambda_j(1+\tau))^2+1+\tau\bigr)^{-3/2}.        \label{eq:exceptional-cross-diffusion-bound}
\end{align}
The corresponding \(L^1\)-bounds and first centered moments are
\begin{equation*}
 \begin{aligned}
 \|d\theta_jZ_{i,yy}\|_1
 &\leq C\epsilon X_N(1+\tau)^{-3/2},\\
 \int_{\mathbb R}|y-\lambda_j(1+\tau)|\,|d\theta_jZ_{i,yy}|\,dy
 &\leq C\epsilon X_N(1+\tau)^{-1},\\
 \left\|\left(\tfrac12\theta_i\theta_j-d\theta_{j,y}\right)Z_{i,yy}\right\|_1
 &\leq C\epsilon X_N(1+\tau)^{-2},\\
 \int_{\mathbb R}|y-\lambda_j(1+\tau)|
 \left|\left(\tfrac12\theta_i\theta_j-d\theta_{j,y}\right)Z_{i,yy}\right|\,dy
 &\leq C\epsilon X_N(1+\tau)^{-3/2}.
 \end{aligned}
\end{equation*}
Consequently, these estimates show that the terms induced by the cross-diffusion term \(d(\phi_j/\phi_i)Z_{i,yy}\) in \(\mathcal L_j Z_j\) satisfy the material-derivative bound in \eqref{eq:material-primitive-bounds}.

The remaining linear coefficient sources in
\(\mathcal L_jZ_j\), cf.\ \eqref{eq:complete-Z-system}, are, in the first row,
\begin{align}
 &\theta_2Z_{1,y}+\frac{\phi_1}{\phi_2}\theta_1Z_{2,y}
 +\frac{\theta_1\theta_2}{2d}Z_1 
 +\frac{\phi_1}{\phi_2}
 \left(\frac12\theta_{2,y}-\frac{\theta_2^2}{4d}
 +\frac{\theta_1\theta_2}{2d}\right)Z_2,
 \label{eq:exceptional-linear-row-one}
\end{align}
and, in the second row,
\begin{align}
 &\frac{\phi_2}{\phi_1}\theta_2Z_{1,y}+\theta_1Z_{2,y}
 +\frac{\phi_2}{\phi_1}
 \left(\frac12\theta_{1,y}-\frac{\theta_1^2}{4d}
 +\frac{\theta_1\theta_2}{2d}\right)Z_1 
 +\frac{\theta_1\theta_2}{2d}Z_2.
 \label{eq:exceptional-linear-row-two}
\end{align}
Using \eqref{eq:exceptional-linear-row-one}--
\eqref{eq:exceptional-linear-row-two}, \eqref{eq:burgers-pointwise-bound},
\eqref{eq:no-prelog-input}, and \eqref{eq:phi-explicit-identities}, for each
of the four summands in the appropriate row, multiplication first by
\(\frac{\phi_i}{\phi_j}\theta_j\) and then by
\(\frac{\phi_i}{\phi_j}
(\frac{\theta_i\theta_j}{2d}-\theta_{j,y})\) gives, respectively,
\begin{align}
 &C(\epsilon+\epsilon^2)X_N(1+\tau)^{-1/2}
 \bigl((y-\lambda_j(1+\tau))^2+1+\tau\bigr)^{-3/2},\notag\\
 &C(\epsilon+\epsilon^2)X_N
(1+\tau)^{-\frac52}
\exp\left(-\frac{(y-\lambda_j(1+\tau))^2}{C(1+\tau)}\right).
 \label{eq:exceptional-linear-row-bound}
\end{align}
For the second estimation in \eqref{eq:exceptional-linear-row-bound}, we have
\begin{align*}
 &\int_{\mathbb R}C(\epsilon+\epsilon^2)X_N(1+\tau)^{-5/2}
 e^{-\frac{(y-\lambda_j(1+\tau))^2}{C(1+\tau)}}\,dy
 \leq C(\epsilon+\epsilon^2)X_N(1+\tau)^{-2},\\
 &\int_{\mathbb R}|y-\lambda_j(1+\tau)|
 C(\epsilon+\epsilon^2)X_N(1+\tau)^{-5/2}
 e^{-\frac{(y-\lambda_j(1+\tau))^2}{C(1+\tau)}}\,dy
 \leq C(\epsilon+\epsilon^2)X_N(1+\tau)^{-3/2}.
\end{align*}
Substitution in
\eqref{eq:exceptional-mass-derivative} and
\eqref{eq:exceptional-primitive-material-exact} therefore verifies
\eqref{eq:primitive-bounds} and \eqref{eq:material-primitive-bounds} for every
linear summand in Lemma~\ref{lem:gaussian-extraction}.

The remaining coefficient and nonlinear sources in
\(\mathcal L_jZ_j\) are paired before estimation through
\begin{equation*}
 \frac{\phi_i}{\phi_j}
 \left(\frac{\theta_i\theta_j}{2d}-\theta_{j,y}\right)\phi_j
 =-\partial_y(\phi_i\theta_j).
\end{equation*}
For every scalar source \(f\) in the complete \(j\)-row, the local contribution
and its zero-mass primitive must be combined before either is estimated.  The
combination is the exact identity
\begin{align}
 &\phi_i\theta_j f(y,\tau)
 +\int_{-\infty}^{y}\Bigg\{-\partial_z(\phi_i\theta_j)(z,\tau)f(z,\tau)
 \notag\\*[-1mm]
 &\qquad-\left[\int_{\mathbb R}
   -\partial_w(\phi_i\theta_j)(w,\tau)f(w,\tau)\,dw\right]
   \Gamma_j(z,1+\tau)\Bigg\}\,dz
 \notag\\*
 &=\int_{-\infty}^{y}\Bigg\{\phi_i\theta_j f_z(z,\tau)
 -\left[\int_{\mathbb R}\phi_i\theta_j f_w(w,\tau)\,dw\right]
   \Gamma_j(z,1+\tau)\Bigg\}\,dz .
 \notag
\end{align}
The complete background residual contains the linear term
\(d\gamma_j\phi_j\Xi_{j,yy}\).  Its contribution to the differentiated mass is
\begin{align}
 &(1+\tau)d\gamma_j\int_{\mathbb R}
 \frac{\phi_i}{\phi_j}
 \left(\frac{\theta_i\theta_j}{2d}-\theta_{j,y}\right)
 \phi_j\Xi_{j,yy}\,dy=(1+\tau)d\gamma_j\int_{\mathbb R}
 \phi_i\theta_j\Xi_{j,yyy}\,dy .
 \label{eq:exceptional-Xi-mass}
\end{align}
Its complete contribution to the material derivative of the zero-mass
primitive, including the local term, primitive term, and Gaussian mass, is
\begin{align}
&d\gamma_j\phi_i\theta_j\Xi_{j,yy}+d\gamma_j\int_{-\infty}^y\Bigg[
 \frac{\phi_i}{\phi_j}
 \left(\frac{\theta_i\theta_j}{2d}-\theta_{j,z}\right)
 \phi_j\Xi_{j,zz}\notag\\[-1mm]
&\hspace{28mm}
 -\left(\int_{\mathbb R}\frac{\phi_i}{\phi_j}
 \left(\frac{\theta_i\theta_j}{2d}-\theta_{j,w}\right)
 \phi_j\Xi_{j,ww}\,dw\right)\Gamma_j(z,1+\tau)\Bigg]dz\notag\\
&=d\gamma_j\int_{-\infty}^y\Bigg[
 \phi_i\theta_j\Xi_{j,zzz}
 -\left(\int_{\mathbb R}\phi_i\theta_j\Xi_{j,www}\,dw\right)
 \Gamma_j(z,1+\tau)\Bigg]dz .
 \label{eq:exceptional-Xi-cancellation}
\end{align}
In the last line the unlabelled factors in the first spatial integral are
evaluated at \((z,\tau)\), and those under \(dw\) at \((w,\tau)\).  Thus the
apparent local term cancels exactly with the boundary term.  The profile
estimates \eqref{eq:burgers-pointwise-bound},
\eqref{eq:resummed-pointwise-bound}, and \eqref{eq:phi-explicit-identities} give
\begin{align}
 |\phi_i\theta_j\Xi_{j,yyy}(y,\tau)|
 &\leq C\epsilon^3(1+\tau)^{-11/4}
 e^{-[y-\lambda_j(1+\tau)]^2/[C(1+\tau)]},\notag\\
 \left|\int_{\mathbb R}\phi_i\theta_j\Xi_{j,yyy}\,dy\right|
 &\leq C\epsilon^3(1+\tau)^{-9/4}.
 \label{eq:exceptional-Xi-profile-bounds}
\end{align}
Consequently \eqref{eq:exceptional-Xi-cancellation} and
\eqref{eq:exceptional-Xi-profile-bounds} give
\begin{equation}
 \eqref{eq:exceptional-Xi-cancellation} \le C\epsilon^3(1+\tau)^{-1/2}
 \bigl((y-\lambda_j(1+\tau))^2+1+\tau\bigr)^{-3/2},
 \label{eq:exceptional-Xi-bounds}
\end{equation}
and, by \eqref{eq:exceptional-Xi-mass} and
\eqref{eq:exceptional-Xi-profile-bounds}, \((1+\tau)\) times the corresponding
contribution to the differentiated mass is bounded by \(C\epsilon^3\).

Put
\[
 T=1+\tau,\qquad r_j=y-\lambda_jT,\qquad
 A_j=r_j^2+T,\qquad K_{ij}=\phi_i\theta_j.
\]
For a scalar source \(f\) in the complete \(j\)-row define its differentiated
mass and the local--primitive combination by
\begin{align}
 m_j[f]&:=-\int_{\mathbb R}K_{ij,y}f\,dy
          =\int_{\mathbb R}K_{ij}f_y\,dy,\notag\\
 \mathcal P_j[f](y)&:=K_{ij}f(y)
 +\int_{-\infty}^y
 \{-K_{ij,z}f(z)-m_j[f]\Gamma_j(z,T)\}\,dz.              \label{eq:exceptional-source-functional}
\end{align}
The second equality in the first line and the identity
\begin{equation}
 \mathcal P_j[f](y)=\int_{-\infty}^y
 \{K_{ij}f_z-m_j[f]\Gamma_j(z,T)\}\,dz                  \label{eq:exceptional-source-primitive}
\end{equation}
are integrations by parts; all boundary terms vanish by the displayed source
envelopes.  We repeatedly use the following centered-Gaussian fact.  If
\[
 |q(y)|\leq\Lambda T^{-5/2}e^{-r_j^2/(CT)},\qquad
 m=\int_{\mathbb R}q(y)\,dy,
\]
then one has
\begin{align}
 T^2|m|\leq C\Lambda,\qquad\quad
 T^{1/2}\sup_y A_j^{3/2}
 \left|\int_{-\infty}^y\{q(z)-m\Gamma_j(z,T)\}\,dz\right|
 &\leq C\Lambda .                                        \label{eq:exceptional-centered-gaussian}
\end{align}
Indeed, on \(|r_j|\leq\sqrt T\) this follows from the \(L^1\) bound, while
outside that set it follows from the two centered Gaussian tails.

For the coefficient source put
\[
 f_\eta=\eta(R_2-R_1).
\]
The exact product identity
\begin{equation*}
 K_{ij}(f_\eta)_y
 =\theta_j(\phi_i f_\eta)_y
  +\frac{\theta_i\theta_j}{2d}\phi_i f_\eta
\end{equation*}
and \eqref{eq:eta-envelope}, together with
\eqref{eq:burgers-pointwise-bound}, give
\begin{align}
 |K_{ij}(f_\eta)_y|
 &\leq C(\epsilon+\epsilon^2)X_N\log(2+\tau)T^{-11/4}
 e^{-r_j^2/(CT)}
 \leq C(\epsilon+\epsilon^2)X_N T^{-5/2}e^{-r_j^2/(CT)}. \notag
\end{align}
The terms centered on the opposite cone are exponentially separated, and the
last inequality uses \(\log(2+\tau)\leq CT^{1/4}\).  Thus
\eqref{eq:exceptional-source-primitive} and
\eqref{eq:exceptional-centered-gaussian} give
\begin{equation*}
 T^2|m_j[f_\eta]|+T^{1/2}\sup_yA_j^{3/2}
 |\mathcal P_j[f_\eta](y)|\leq C(\epsilon+\epsilon^2)X_N .
\end{equation*}

For the variable-viscosity source put
\[
 f_B=B(u_2^{\mathrm{app}}-u_1^{\mathrm{app}})(R_{1,y}+R_{2,y}).
\]
Here one must not estimate \((f_B)_y\), which would introduce the weak
\(R_{yy}\) rate.  Instead,
\eqref{eq:B-explicit-profile}, \eqref{eq:derived-low-rates}, and the Gaussian
bound for \(\theta_j\) give directly
\begin{align}
 |K_{ij}f_B|
 &\leq C(\epsilon+\epsilon^2)X_N\log(2+\tau)T^{-5/2}e^{-r_j^2/(CT)},\notag\\
 |K_{ij,y}f_B|
 &\leq C(\epsilon+\epsilon^2)X_N\log(2+\tau)T^{-3}e^{-r_j^2/(CT)}.
\label{eq:exceptional-viscosity-source-bound}
\end{align}
Using the first representation in
\eqref{eq:exceptional-source-functional},
\[
 \mathcal P_j[f_B]=K_{ij}f_B+
 \int_{-\infty}^y\{-K_{ij,z}f_B-m_j[f_B]\Gamma_j(z,T)\}\,dz.
\]
The Gaussian tails in \eqref{eq:exceptional-viscosity-source-bound}, with
\(\log(2+\tau)\leq CT^{1/2}\), imply
\begin{equation*}
 T^2|m_j[f_B]|+T^{1/2}\sup_yA_j^{3/2}
 |\mathcal P_j[f_B](y)|\leq C(\epsilon+\epsilon^2)X_N .
\end{equation*}
This is precisely the organization in
\eqref{eq:variable-viscosity-divergence}; no estimate of \(R_{yy}\) is used.

For \(f=N(R)\), retain the interpolation exponent \(\beta_N\) introduced
above.
The first, sharper line of
\eqref{eq:modal-nonlinear-derivative-bound}, obtained from
\eqref{eq:derived-low-rates} and
\eqref{eq:second-third-derivative-rate}, gives
\[
 \|N_y(\tau)\|_\infty\leq CX_N^2
 \{\log^2(2+\tau)T^{-5/2}
   +\log^{1+\beta_N}(2+\tau)T^{-1-\frac32\beta_N}\}.
\]
Consequently,
\begin{align}
 |K_{ij}N_y|
 &\leq C\epsilon X_N^2
 \{\log^2(2+\tau)T^{-3}
   +\log^{1+\beta_N}(2+\tau)T^{-\frac32-\frac32\beta_N}\}e^{-r_j^2/(CT)}
 \notag\\
 &\leq C\epsilon X_N^2T^{-5/2}e^{-r_j^2/(CT)}.            \notag
\end{align}
Since
\[
 \frac32\beta_N-1=\frac{2N-9}{2(2N-3)}>0
 \qquad (N\geq5),
\]
the logarithmic factor is absorbed, and
\eqref{eq:exceptional-source-primitive}--
\eqref{eq:exceptional-centered-gaussian} give the nonlinear bound recorded
in \eqref{eq:exceptional-common-source-ledger}.

Finally let
\[
 \phi_jf_{\mathrm{bg}}
 =-\phi_j\mathcal E_j^{\mathrm{app}}-d\gamma_j\phi_j\Xi_{j,yy}.
\]
Use \eqref{eq:exact-background-decomposition} and
\eqref{eq:complete-integrable-remainder} term by term, the mixed-profile
bound \eqref{eq:refined-product-bounds}, and
\eqref{eq:burgers-pointwise-bound}--\eqref{eq:resummed-pointwise-bound}.
The linear \(\Xi_{j,yy}\) contribution is first reduced by
\eqref{eq:exceptional-Xi-cancellation}--\eqref{eq:exceptional-Xi-bounds}.
These faster estimates yield
\begin{equation*}
 |K_{ij}(f_{\mathrm{bg}})_y|
 \leq C(\epsilon^2+\epsilon^3+\epsilon^4)
 T^{-5/2}e^{-r_j^2/(CT)}.
\end{equation*}
The \(\epsilon^2\) term in this line is necessary.  Indeed, the linear part
\[
 V_{j,\mathrm{lin}}^{\mathrm{bg}}
 =(-1)^j\frac{d^2}{2c}\phi_j\theta_{j,y}
\]
of \eqref{eq:V-bg} contributes
\[
 K_{ij}\partial_y\left[
 \phi_j^{-1}\partial_yV_{j,\mathrm{lin}}^{\mathrm{bg}}\right]
 =(-1)^j\frac{d^2}{2c}\phi_i\theta_j
 \partial_y\left[\phi_j^{-1}\partial_y(\phi_j\theta_{j,y})\right],
\]
which is generally of order
\(\epsilon^2T^{-5/2}e^{-r_j^2/(CT)}\); its centered primitive is of order
\(\epsilon^2T^{-2}\).  No cancellation removing this term occurs in the
present equation.

Collecting the four source bounds,
\begin{align}
 &T^2|m_j[f_\eta]|+T^{1/2}\sup_yA_j^{3/2}|\mathcal P_j[f_\eta]|
 \leq C(\epsilon+\epsilon^2)X_N,\notag\\
 &T^2|m_j[f_B]|+T^{1/2}\sup_yA_j^{3/2}|\mathcal P_j[f_B]|
 \leq C(\epsilon+\epsilon^2)X_N,\notag\\
 &T^2|m_j[N]|+T^{1/2}\sup_yA_j^{3/2}|\mathcal P_j[N]|
 \leq C\epsilon X_N^2,\notag\\
 &T^2|m_j[f_{\mathrm{bg}}]|+
 T^{1/2}\sup_yA_j^{3/2}|\mathcal P_j[f_{\mathrm{bg}}]|
 \leq C(\epsilon^2+\epsilon^3+\epsilon^4).
 \label{eq:exceptional-common-source-ledger}
\end{align}
Set
\begin{equation*}
 \mathcal S_N=(\epsilon+\epsilon^2)X_N+\epsilon X_N^2
 +\epsilon^2+\epsilon^3+\epsilon^4.
\end{equation*}
Combining \eqref{eq:exceptional-common-source-ledger} with
\eqref{eq:exceptional-cross-diffusion-bound},
\eqref{eq:exceptional-linear-row-bound},
\eqref{eq:exceptional-aj-prime-remainder}, and the exact modal cancellation
\eqref{eq:exceptional-modal-cancellation} gives
\begin{equation}
 \left|T\int_{\mathbb R}\phi_i\theta_jR_j\,dy\right|
 +T\left|\frac d{d\tau}\left[
 T\int_{\mathbb R}\phi_i\theta_jR_j\,dy\right]\right|
 \leq C\mathcal S_N.
 \label{eq:exceptional-mass-C1}
\end{equation}
For the primitive in \eqref{111}, substitution of the complete \(j\)-row into
\eqref{eq:exceptional-primitive-material-exact}, followed by
\eqref{eq:exceptional-primitive-product-rules}, gives
\begin{align}
 &\left|\int_{-\infty}^y(I_3+I_4)\,dz\right|
 \leq C\mathcal S_NA_j^{-1},\notag\\
 &\left|\partial_y\int_{-\infty}^y(I_3+I_4)\,dz\right|
 \leq C\mathcal S_NA_j^{-3/2},\notag\\
 &|(\partial_\tau+\lambda_j\partial_y-d\partial_y^2)
 \int_{-\infty}^y(I_3+I_4)\,dz|
 \leq C\mathcal S_NT^{-1/2}A_j^{-3/2}.
 \label{eq:exceptional-full-primitive}
\end{align}
For the material line the cross diffusion is
\eqref{eq:exceptional-cross-diffusion}, the four linear terms are bounded by
\eqref{eq:exceptional-linear-row-bound}, every common scalar source is reduced
by \eqref{eq:exceptional-source-functional}, and the modal contribution uses
\eqref{eq:exceptional-modal-cancellation}.  Thus all terms in the complete row
have been accounted for.  Neither \(\partial_\tau Z_j^{\mathrm{rem}}\) nor a
third spatial derivative of \(Z_j^{\mathrm{rem}}\) is used.

The term \(I_1\) in \eqref{eq:exceptional-decomposition} is estimated by the
material identity, rather than by its
absolute time integral.  Under \eqref{eq:no-prelog-input},
\eqref{eq:burgers-pointwise-bound}, and \eqref{eq:phi-explicit-identities},
\begin{align}
 &\left\|\frac{a_j\phi_i\theta_j^2}
 {4d(1-e^{-M_j/(2d)})}\right\|_1
 \leq C\epsilon |a_j|(1+\tau)^{-1/2},\notag\\
 &\left\|(\partial_\tau+\lambda_j\partial_y-d\partial_y^2)
 \frac{a_j\phi_i\theta_j^2}{4d(1-e^{-M_j/(2d)})}\right\|_1\notag\\[-1mm]
 \leq&C\epsilon\bigl\{|a_j'|(1+\tau)^{-1/2}
             +|a_j|(1+\tau)^{-3/2}\bigr\}
   +C\epsilon^2|a_j|e^{-(1+\tau)/C},\notag\\
 &\left\|\partial_y(\partial_\tau+\lambda_j\partial_y-d\partial_y^2)
 \frac{a_j\phi_i\theta_j^2}{4d(1-e^{-M_j/(2d)})}\right\|_1\notag\\[-1mm]
 \leq&C\epsilon\bigl\{|a_j'|(1+\tau)^{-1}
             +|a_j|(1+\tau)^{-2}\bigr\}
   +C\epsilon^2|a_j|e^{-(1+\tau)/C}.
   \label{eq:exceptional-material}
\end{align}
Applying \eqref{eq:material-identity} with \(\alpha=j\) therefore yields, for
\(0\leq k\leq2\),
\begin{equation}
 \left\|\partial_x^k\int_0^{(t-1)_+}g_i(t-\tau)*
 \partial_y\left\{\frac{a_j\phi_i\theta_j^2}
 {4d(1-e^{-M_j/(2d)})}\right\}d\tau\right\|_\infty
 \leq C\epsilon X_N(1+t)^{-(k+1)/2}.                    \label{eq:exceptional-divergence-map}
\end{equation}
Estimate \eqref{eq:exceptional-divergence-map} controls the unweighted
derivatives of \(Z_i^{\mathrm{rem}}\).  Its early- and late-time
contributions, including the two material boundary terms, are estimated
separately in \eqref{eq:app-exceptional-early-material}--
\eqref{eq:app-exceptional-late-material}.  It does not by itself imply a
cone-resolved bound.  For the weighted recovery of \(R_i\), with \(j=3-i\),
we also use the
following spatially resolved estimate:
\begin{equation}
 \left|\partial_x\int_0^{(t-1)_+}g_i(t-\tau)*
 \partial_y\left\{\frac{a_j\phi_i\theta_j^2}
 {4d(1-e^{-M_j/(2d)})}\right\}d\tau\right|
 \leq C\epsilon X_N\log(2+t)\Psi_i(x,t).                                      \label{eq:exceptional-divergence-pointwise}
\end{equation}
Its proof uses the exact material expansion and the spatial estimates
\eqref{eq:app-exceptional-material-spatial} in
Appendix~\ref{app:five-regions}.  Multiplying the \(k=0\) case of
\eqref{eq:exceptional-divergence-map} by \(|\theta_i(x,t)|\), and using
\eqref{eq:burgers-pointwise-bound}, gives the remaining weighted
contribution:
\begin{equation*}
 \left|\theta_i(x,t)\int_0^{(t-1)_+}g_i(t-\tau)*
 \partial_y\left\{\frac{a_j\phi_i\theta_j^2}
 {4d(1-e^{-M_j/(2d)})}\right\}d\tau\right|
 \leq C\epsilon^2X_N\Psi_i(x,t).
\end{equation*}
For \(k=2\), one derivative is moved to the material derivative in the late
integral.  The only borderline factor is
\[
 (1+t)^{-2}\log(2+t)\int_1^{t/2}\frac{d\tau}{1+\tau}
 \leq C(1+t)^{-3/2};
\]
it does not create a logarithm in \(Z_i^{\mathrm{rem}}\).

For the zero-mass part \(I_4\), the primitive in
\eqref{eq:exceptional-decomposition} satisfies.
\begin{equation}
\|\int_{-\infty}^y I_4 dz \|_{1} \le C\epsilon X_N(1+\tau)^{-1/2}.
 \label{eq:exceptional-zero-mass-L1}
\end{equation}
For \(0\leq k\leq2\), the \(L^\infty(\mathbb R_y)\)-norm of its \(k\)-th
spatial derivative is at most
\begin{equation}
\| \partial_y^k \int_{-\infty}^y I_4 dz\|_{\infty} \le C\epsilon X_N(1+\tau)^{-1-k/2}.
 \label{eq:exceptional-zero-mass-Linf}
\end{equation}
The corresponding \(I_3\)-bracket containing
\(\theta_i\theta_j^2\) is exponentially small. Thus, $\int_{-\infty}^y (I_3+I_4)dz$ satisfies \eqref{eq:primitive-bounds}. Hence
\eqref{eq:exceptional-mass-C1}, \eqref{eq:exceptional-full-primitive},
\eqref{eq:exceptional-zero-mass-L1}--\eqref{eq:exceptional-zero-mass-Linf},
Lemma~\ref{lem:gaussian-extraction}, together with
\eqref{eq:exceptional-divergence-map} and
\eqref{eq:exceptional-divergence-pointwise}, gives
\begin{align*}
 &\Bigl\|\partial_x^k\Biggl[
 -\int_0^{(t-1)_+}g_i(t-\tau)*(\phi_i\theta_jR_j)(\tau)\,d\tau+\Gamma_i(\cdot,1+t)\int_0^{(t-1)_+}
 \chi_{\mathrm{time}}\left(\frac{1+\tau}{\sqrt{1+t}}\right)
 \int_{\mathbb R}\phi_i\theta_jR_j\,dy\,d\tau
 \Biggr]\Bigr\|_\infty\\
 \leq& C\mathcal S_N(1+t)^{-(k+1)/2},
       \qquad\qquad\qquad\qquad\qquad\qquad\qquad\qquad\qquad\qquad\qquad\qquad\qquad\qquad\quad 0\leq k\leq2,
\end{align*}
\begin{align}
& \Bigl|\partial_x\Biggl[
 -\int_0^{(t-1)_+}g_i(t-\tau)*(\phi_i\theta_jR_j)(\tau)\,d\tau+\Gamma_i(x,1+t)\int_0^{(t-1)_+}
 \chi_{\mathrm{time}}\left(\frac{1+\tau}{\sqrt{1+t}}\right)
 \int_{\mathbb R}\phi_i\theta_jR_j\,dy\,d\tau
 \Biggr]\Bigr|\notag\\
 \leq& C\mathcal S_N\log(2+t)
          \Psi_i(x,t),                         \label{eq:exceptional-pointwise}
\end{align}
The Gaussian subtracted in these two estimates is precisely the \(I_2\)-mass
already included in \(a_i\) by \eqref{eq:ai-definition}.
The time cutoff in that definition extracts only the opposite-family mass
feeding the \(i\)-th receiving Gaussian, while \(I_1\), \(I_3\), and \(I_4\)
remain in the cone-resolved maps.  Thus the \(j\)-family emitting cone and the
\(i\)-family receiving cone remain distinct throughout the cancellation.
On the last time unit the original product is used.  By
\eqref{eq:burgers-pointwise-bound}, \eqref{eq:derived-low-rates},
\eqref{eq:second-third-derivative-rate}, and \eqref{eq:last-unit-general}, its
first two derivatives contain at most \(R_{j,yy}\), and give the same bounds.
% No third derivative of
% \(Z_j^{\mathrm{rem}}\) is required.

It remains to treat the secondary derivative mode generated by
\[
 \int_0^{(t-1)_+}\int_{\mathbb R}(g_{1,y}-g_{2,y})
 \left(\frac12\theta_{j,y}-\frac{\theta_j^2}{4d}\right)Z_j\,dy\,d\tau .
\]
Direct integration by parts gives
\begin{equation*}
 \int_{\mathbb R}\left(\frac12\theta_{j,y}-\frac{\theta_j^2}{4d}\right)
       \Gamma_j(y,1+\tau)\,dy
 =-\frac{1}{16d^2(1-e^{-M_j/(2d)})}
       \int_{\mathbb R}\phi_j\theta_j^3\,dy.
\end{equation*}
By \eqref{eq:burgers-pointwise-bound} and
\eqref{eq:phi-explicit-identities}, the right-hand side equals a constant of
size \(C\epsilon^2\), divided by \(1+\tau\).  Its same-family heat response is
controlled by \eqref{eq:app-semigroup}.
In the opposite output, the five-region calculation of
Appendix~\ref{app:five-regions} gives exactly the secondary
\(\log(2+t)(|x-\lambda_{3-j}(1+t)|^2+(1+t)^{3/2})^{-1}\) term already present in
\(\Psi_j\).  Since it is a derivative mode, it is not included in \(a_i\).

By \eqref{eq:Z-decomposition}, \eqref{eq:no-prelog-input}, and
\eqref{eq:burgers-pointwise-bound}, the two same-family coefficients occurring
only below an off-diagonal Green derivative satisfy
\begin{align}
 \left\|\left(\frac12\theta_{j,y}-\frac{\theta_j^2}{4d}\right)Z_j\right\|_1
 &\leq C\epsilon X_N\log(2+\tau)(1+\tau)^{-1},\notag\\
 \left\|\left(\frac12\theta_{j,y}-\frac{\theta_j^2}{4d}\right)Z_j\right\|_\infty
 &\leq C\epsilon X_N\log(2+\tau)(1+\tau)^{-3/2}.
 \notag
\end{align}
One source derivative adds \((1+\tau)^{-1/2}\) to the displayed
\(L^\infty\) power.  The zero-mass primitive of the Gaussian modal product is
\begin{multline}
 \int_{-\infty}^{y}\Bigg\{
 \left(\frac12\theta_{j,z}-\frac{\theta_j^2}{4d}\right)(z,\tau)
 \Gamma_j(z,1+\tau)\\[-1mm]
 -\Gamma_j(z,1+\tau)\int_{\mathbb R}
 \left(\frac12\theta_{j,w}-\frac{\theta_j^2}{4d}\right)(w,\tau)
 \Gamma_j(w,1+\tau)\,dw\Bigg\}dz .
 \label{eq:secondary-mode-primitive}
\end{multline}
By \eqref{eq:burgers-pointwise-bound}, the \(L^1(\mathbb R_y)\)-norm of the primitive in
\eqref{eq:secondary-mode-primitive} is bounded by
\(C\epsilon(1+\tau)^{-1/2}\).  Moreover, for \(0\leq q\leq2\), the
absolute value of its \(q\)-th spatial derivative is bounded by
\begin{equation}
 \eqref{eq:secondary-mode-primitive}\leq C\epsilon(1+\tau)^{-1-q/2}
 e^{-[y-\lambda_j(1+\tau)]^2/[C(1+\tau)]}.
 \label{eq:secondary-mode-primitive-bounds}
\end{equation}
Consequently, by \eqref{eq:Z-decomposition} and
\eqref{eq:secondary-mode-primitive}, the exact modal decomposition is
\begin{align}
 \left(\frac12\theta_{j,y}-\frac{\theta_j^2}{4d}\right)Z_j
 =&a_j(\tau)\left[\int_{\mathbb R}
 \left(\frac12\theta_{j,y}-\frac{\theta_j^2}{4d}\right)
 \Gamma_j(y,1+\tau)\,dy\right]\times\Gamma_j(\cdot,1+\tau)+\left(\frac12\theta_{j,y}-\frac{\theta_j^2}{4d}\right)Z_j^{\mathrm{rem}}\notag\\
 &+\partial_y\Bigg\{a_j(\tau)\int_{-\infty}^{y}\Bigg[
 \left(\frac12\theta_{j,z}-\frac{\theta_j^2}{4d}\right)(z,\tau)
 \Gamma_j(z,1+\tau)\notag\\[-1mm]
 &\hspace{12mm}-\Gamma_j(z,1+\tau)\int_{\mathbb R}
 \left(\frac12\theta_{j,w}-\frac{\theta_j^2}{4d}\right)(w,\tau)
 \Gamma_j(w,1+\tau)\,dw\Bigg]dz\Bigg\}.
 \label{eq:secondary-mode-decomposition}
\end{align}
Substitute \eqref{eq:secondary-mode-decomposition} into
\[
 (-1)^{j+1}\frac d{2c}(g_{j,y}-g_{3-j,y})
 \left[\left(\frac12\theta_{j,y}-\frac{\theta_j^2}{4d}\right)Z_j\right],
\]
whose same-family part satisfies
\begin{align*}
 &\left|\partial_x\int_0^{(t-1)_+}g_{j,y}(t-\tau)*
 \left\{a_j(\tau)\left[\int_{\mathbb R}
 \left(\frac12\theta_{j,y}-\frac{\theta_j^2}{4d}\right)
 \Gamma_j\,dy\right]\Gamma_j\right\}d\tau\right|\\
 &\leq C\epsilon^2X_N\log^2(2+t)(1+t)^{-3/2}
 e^{-[x-\lambda_j(1+t)]^2/[C(1+t)]}
 \leq C\epsilon^2X_N\log(2+t)\Psi_j(x,t).
\end{align*}
After the final output derivative, the opposite-family part, with
\(S=1+\tau\), is
\begin{multline*}
 (-1)^j\frac d{2c}\partial_x^2\int_1^{1+t}\frac{a_j(S-1)}S
 \Bigg[S\int_{\mathbb R}
 \left(\frac12\theta_{j,y}-\frac{\theta_j^2}{4d}\right)(y,S-1)
 \Gamma_j(y,S)\,dy\Bigg]\\*[-1mm]
 \times\frac{1}{\sqrt{4\pi d(1+t)}}
 \exp\left[-\frac{[x-\lambda_{3-j}(1+t)
 -(\lambda_j-\lambda_{3-j})S]^2}{4d(1+t)}\right]dS .
\end{multline*}
Changing the upper endpoint by one produces only a last-unit term.  By
\eqref{eq:no-prelog-input}, in the
\(K\sqrt{1+t}\)-tube about the \(3-j\) cone,
\begin{equation}
 \int_1^{\sqrt{1+t}}\frac{|a_j(S-1)|}{S}\,dS
 \leq CX_N\log^2(2+t).
 \label{eq:secondary-mode-source-tube}
\end{equation}
In the \(j\)-tube the Gaussian localizes \(S\) to an interval of length
\(C\sqrt{1+t}\) next to \(1+t\).  Strictly between the two tubes,
\begin{align*}
 &(1+t)^{-3/2}\int_1^{1+t}\frac{\log(2+S)}S
 \exp\left[-\frac{c[S-(x-\lambda_{3-j}(1+t))/
 (\lambda_j-\lambda_{3-j})]^2}{1+t}\right]dS\\
 \leq& C\log(2+t)(1+t)^{-1}
 \left|\frac{x-\lambda_{3-j}(1+t)}{\lambda_j-\lambda_{3-j}}\right|^{-1}
 \leq \frac{C\log(2+t)}
 {\langle x-\lambda_j(1+t)\rangle^{5/4}
  \langle x-\lambda_{3-j}(1+t)\rangle^{1/2}} .
\end{align*}
On the two exterior half-lines the phase is monotone and gives, respectively,
\begin{equation}
 \frac{C\epsilon^2X_N\log(2+t)}{[x-\lambda_j(1+t)]^2+1+t},
 \qquad
 \frac{C\epsilon^2X_N\log^2(2+t)}
 {[x-\lambda_{3-j}(1+t)]^2+(1+t)^{3/2}} .
 \label{eq:secondary-mode-exteriors}
\end{equation}
Thus \eqref{eq:secondary-mode-source-tube}--
\eqref{eq:secondary-mode-exteriors} give
\begin{align}
 &\left|\partial_x\int_0^{(t-1)_+}(g_{j,y}-g_{3-j,y})(t-\tau)*
 \left\{a_j(\tau)\left[\int_{\mathbb R}
 \left(\frac12\theta_{j,y}-\frac{\theta_j^2}{4d}\right)
 \Gamma_j\,dy\right]\Gamma_j\right\}d\tau\right|\notag\\
 \leq& C\epsilon^2X_N\log(2+t)\Psi_j(x,t).
 \label{eq:secondary-mode-five-region}
\end{align}
For the primitive in \eqref{eq:secondary-mode-decomposition}, keep three
Gaussian derivatives on the early kernel and transfer two derivatives to the
displayed primitive on the late interval.  The last term uses the no-prelog
bounds for \(Z_j^{\mathrm{rem}}\).  Combining
\eqref{eq:secondary-mode-primitive-bounds},
\eqref{eq:secondary-mode-five-region}, and \eqref{eq:no-prelog-input} gives
\begin{align}
 &\left|\partial_x\int_0^{(t-1)_+}(g_{j,y}-g_{3-j,y})(t-\tau)*
 \left[\left(\frac12\theta_{j,y}-\frac{\theta_j^2}{4d}\right)Z_j\right](\tau)
 d\tau\right|\notag\\
 \leq &C(\epsilon+\epsilon^2)X_N\log(2+t)\Psi_j(x,t).
 \label{eq:secondary-mode-complete}
\end{align}

The off-diagonal Gaussian carries the same-speed product
\(\phi_i\theta_jR_j\) into the opposite output.  Its mass--primitive
decomposition is
\begin{align}
 \phi_i\theta_jR_j=&
 \left[\int_{\mathbb R}\phi_i\theta_jR_j\,dy\right]
 \Gamma_j(\cdot,1+\tau)\notag\\
 &+\partial_y\int_{-\infty}^{y}\Bigg\{
 \phi_i\theta_jR_j(z,\tau)
 -\Gamma_j(z,1+\tau)\int_{\mathbb R}\phi_i\theta_jR_j(w,\tau)\,dw
 \Bigg\}dz .
 \label{eq:offdiagonal-same-speed-decomposition}
\end{align}
By 
\eqref{eq:exceptional-mass-C1},  its coefficient and primitive satisfy
\begin{equation}
 \left|(1+\tau)\int_{\mathbb R}\phi_i\theta_jR_j\,dy\right|
 \leq C(\epsilon+\epsilon^2)X_N.
 \label{eq:offdiagonal-same-speed-bounds}
\end{equation}
If \(Q_j^{\mathrm{off}}\) denotes the primitive displayed in
\eqref{eq:offdiagonal-same-speed-decomposition}, then
\begin{equation*}
 \|Q_j^{\mathrm{off}}(\tau)\|_1
 \leq C(\epsilon+\epsilon^2)X_N\log(2+\tau)(1+\tau)^{-1/2},
 \qquad
 \|\partial_y^2Q_j^{\mathrm{off}}(\tau)\|_\infty
 \leq C(\epsilon+\epsilon^2)X_N\log(2+\tau)(1+\tau)^{-2}.
\end{equation*}
Moreover,
\begin{equation}
 C(\epsilon+\epsilon^2)X_N\log(2+\tau)(1+\tau)^{-1}
 e^{-[y-\lambda_j(1+\tau)]^2/[C(1+\tau)]}.
 \label{eq:offdiagonal-same-speed-pointwise}
\end{equation}
Thus, \eqref{eq:offdiagonal-same-speed-bounds} and
\eqref{eq:app-semigroup} give for the Gaussian part
\begin{equation}
\begin{split}
& \left|\partial_x\int_0^{(t-1)_+}g_{j,y}(t-\tau)*
 \left\{\left[\int_{\mathbb R}\phi_i\theta_jR_j\,dy\right]
 \Gamma_j(\cdot,1+\tau)\right\}d\tau\right|\\
& \leq C(\epsilon+\epsilon^2)X_N\log(2+t)(1+t)^{-3/2}
 e^{-[x-\lambda_j(1+t)]^2/[C(1+t)]}.
 \label{eq:offdiagonal-same-speed-gaussian}
 \end{split}
\end{equation}
For the primitive, \eqref{eq:offdiagonal-same-speed-pointwise} and
\eqref{eq:app-early-convolution}--\eqref{eq:app-late-convolution-one}, with
all three Green derivatives on the early kernel and transferring two to the
primitive on the late interval, give
\begin{align}
 &C(\epsilon+\epsilon^2)X_N(1+t)^{-2}
 \int_0^{t/2}\log(2+\tau)(1+\tau)^{-1/2}\,d\tau\notag\\
 &\quad+C(\epsilon+\epsilon^2)X_N\log(2+t)(1+t)^{-2}
 \int_1^{(1+t)/2}\sigma^{-1/2}\,d\sigma\notag\\
 &\leq C(\epsilon+\epsilon^2)X_N\log(2+t)(1+t)^{-3/2}.
 \label{eq:offdiagonal-same-speed-primitive}
\end{align}
Therefore \eqref{eq:offdiagonal-same-speed-gaussian}--
\eqref{eq:offdiagonal-same-speed-primitive} give
\begin{align}
 &\left|\partial_x\int_0^{(t-1)_+}g_{j,y}(t-\tau)*
 [\phi_i\theta_jR_j](\tau)\,d\tau\right|\notag\\
 \leq& C(\epsilon+\epsilon^2)X_N\log(2+t)(1+t)^{-3/2}
 e^{-[x-\lambda_j(1+t)]^2/[C(1+t)]}\notag\\
 \leq& C(\epsilon+\epsilon^2)X_N\log(2+t)\Psi_i(x,t).
 \label{eq:offdiagonal-same-speed-complete}
\end{align}

\subsection{The explicit profile sources}\label{section 5.4}
Up to constant coefficients, the critical spatial profiles in
\eqref{eq:ai-definition} and \eqref{eq:remaining-source-column} are
\begin{align}
 f=\phi_i\theta_\alpha^3
 \quad\hbox{or}\quad
 f=\phi_i\chi_\alpha^{\mathrm{cone}}\Xi_\alpha^2,
 \qquad \alpha\in\{1,2\},\label{cvb}
\end{align}
where the cone cutoff is fixed by \eqref{eq:profile-cutoff}.  Set
\[
 T=1+\tau,\qquad r_\alpha=y-\lambda_\alpha T,
 \qquad A_\alpha=r_\alpha^2+T,
 \qquad L_\alpha=\partial_\tau+\lambda_\alpha\partial_y-d\partial_y^2.
\]
By \eqref{eq:phi-identities} and \eqref{eq:burgers-wave},
\begin{align*}
 L_\alpha\phi_i&=(\lambda_\alpha-\lambda_i)\phi_{i,y}, \qquad
 L_\alpha\theta_\alpha=-\theta_\alpha\theta_{\alpha,y}.
\end{align*}
Hence, we have
\begin{align}
 &L_\alpha(\phi_i\theta_\alpha^3)=(\lambda_\alpha-\lambda_i)\phi_{i,y}\theta_\alpha^3
       -3\phi_i\theta_\alpha^3\theta_{\alpha,y}
       -6d\phi_i\theta_\alpha\theta_{\alpha,y}^2-6d\phi_{i,y}\theta_\alpha^2\theta_{\alpha,y}.
\label{eq:critical-cubic-product}
\end{align}
For the cutoff profile, it holds
\begin{align}
 &L_\alpha(\phi_i\chi_\alpha^{\mathrm{cone}}\Xi_\alpha^2)\notag\\
 =&\chi_\alpha^{\mathrm{cone}}\Xi_\alpha^2
       L_\alpha\phi_i
       +\phi_i\Xi_\alpha^2
       L_\alpha\chi_\alpha^{\mathrm{cone}}
       -2d\phi_{i,y}\partial_y\chi_\alpha^{\mathrm{cone}}\Xi_\alpha^2\notag\\
&-2d\phi_i\chi_\alpha^{\mathrm{cone}}\Xi_{\alpha,y}^2-4d\phi_{i,y}\chi_\alpha^{\mathrm{cone}}\Xi_\alpha\Xi_{\alpha,y}
-4d\phi_i\partial_y\chi_\alpha^{\mathrm{cone}}\Xi_\alpha\Xi_{\alpha,y}+2\phi_i\chi_\alpha^{\mathrm{cone}}\Xi_\alpha L_\alpha\Xi_\alpha.
\label{eq:critical-cutoff-product}
\end{align}
On \(\operatorname{supp}\chi_\alpha^{\mathrm{cone}}\),
\eqref{eq:resummed-pointwise-bound} and \eqref{eq:profile-cutoff} give
\begin{gather*}
 |\partial_y^k\Xi_\alpha(y,\tau)|
 \leq C_k\epsilon^2T^{-k/2}
 \bigl(A_\alpha^{-3/4}+T^{-3/2}\bigr),
 \qquad 0\leq k\leq2,\\
 |\partial_y^k\chi_\alpha^{\mathrm{cone}}|\leq C_kT^{-k},\qquad
 |L_\alpha\chi_\alpha^{\mathrm{cone}}|
 \leq CT^{-1}\mathbf 1_{\{c/4\leq |r_\alpha|/T\leq c/2\}}.
\end{gather*}
The only term in \eqref{eq:critical-cutoff-product} without a purely
Gaussian derivative gain satisfies
\[
 |\chi_\alpha^{\mathrm{cone}}\Xi_{\alpha,y}^2|
 \leq C\epsilon^4T^{-1}A_\alpha^{-3/2}.
\]
The material identity \eqref{eq:resummed-profile} reads
\[
 L_\alpha\Xi_\alpha
 =-\partial_y(\theta_\alpha\Xi_\alpha)
  -\partial_y\left[
    \theta_{3-\alpha}\left(\frac12\theta_{3-\alpha}
   +\Xi_{3-\alpha}\right)\right].
\]
Cone separation, \eqref{eq:burgers-pointwise-bound}, and
\eqref{eq:resummed-pointwise-bound} yield
\[
 \mathbf 1_{\operatorname{supp}\chi_\alpha^{\mathrm{cone}}}
 |L_\alpha\Xi_\alpha|
 \leq C\epsilon^2A_\alpha^{-7/4}.
\]
\begin{align*}
 &|\phi_i\chi_\alpha^{\mathrm{cone}}\Xi_\alpha L_\alpha\Xi_\alpha|
 +|\phi_{i,y}\chi_\alpha^{\mathrm{cone}}\Xi_\alpha\Xi_{\alpha,y}|
 +|\chi_\alpha^{\mathrm{cone}}\Xi_\alpha^2L_\alpha\phi_i|\leq C\epsilon^4A_\alpha^{-5/2}.
\end{align*}
Indeed, \(L_\alpha\phi_i=0\) for \(i=\alpha\), whereas
\(\phi_{i,y}\) is exponentially small on the \(\alpha\)-cone for
\(i\ne\alpha\).  On the cutoff annulus, it holds
\[
 \frac{c^2}{16}T^2\leq A_\alpha=r_\alpha^2+T
 \leq\left(1+\frac{c^2}{4}\right)T^2.
\]
Thus the cutoff-material and mixed cutoff-gradient terms satisfy
\begin{align*}
 &|\phi_i\Xi_\alpha^2L_\alpha\chi_\alpha^{\mathrm{cone}}|
 +|\phi_i(\partial_y\chi_\alpha^{\mathrm{cone}})
       \Xi_\alpha\Xi_{\alpha,y}|\leq C\epsilon^4T^{-1}A_\alpha^{-3/2}.
\end{align*}
Combining these bounds with
\eqref{eq:critical-cubic-product}--\eqref{eq:critical-cutoff-product}
gives, for either choice of \(f\) \eqref{cvb}
\begin{align}
 |\partial_y^k f(y,\tau)|
 &\leq C_k(\epsilon^3+\epsilon^4)T^{-k/2}A_\alpha^{-3/2},
 \qquad 0\leq k\leq2,\notag\\
 |L_\alpha f(y,\tau)|
 &\leq C(\epsilon^3+\epsilon^4)
 \left\{A_\alpha^{-5/2}+T^{-1}A_\alpha^{-3/2}\right\}.
 \label{eq:critical-profile-material-bound}
\end{align}
The mass--primitive decomposition \eqref{eq:mass-primitive-form} is
\begin{align}
 b_\alpha(\tau)
 &=T\int_{\mathbb R}f(y,\tau)\,dy,\notag\\
 Q_\alpha(y,\tau)
 &=\int_{-\infty}^y\left\{
 f(z,\tau)-\frac{b_\alpha(\tau)}T
 \Gamma_\alpha(z,T)\right\}dz.                       \label{eq:critical-mass-and-primitive}
\end{align}
Equivalently, $f$ can be written as 
\[
 f=\frac{b_\alpha}T\Gamma_\alpha+Q_{\alpha,y},
 \qquad \int_{\mathbb R}Q_{\alpha,y}(y,\tau)\,dy=0.
\]
Since the spatial derivative terms have zero mass, it is easy to see that
\begin{align}
 b_\alpha'(\tau)
 &=\int_{\mathbb R}f(y,\tau)\,dy
   +T\int_{\mathbb R}L_\alpha f(y,\tau)\,dy.             \label{eq:critical-mass-derivative}
\end{align}
\[
 \int_{\mathbb R}A_\alpha^{-3/2}\,dy=CT^{-1},\qquad
 \int_{\mathbb R}\left(A_\alpha^{-5/2}
       +T^{-1}A_\alpha^{-3/2}\right)dy=CT^{-2}.
\]
Therefore \eqref{eq:critical-profile-material-bound}--
\eqref{eq:critical-mass-derivative} imply
\[
 |b_\alpha(\tau)|+T|b_\alpha'(\tau)|
 \leq C(\epsilon^3+\epsilon^4).
\]
Moreover, we have
\[
 Q_{\alpha,y}=f-\frac{b_\alpha}{T}\Gamma_\alpha,\qquad
 Q_{\alpha,yy}=f_y-\frac{b_\alpha}{T}\Gamma_{\alpha,y}.
\]
For \(P_\alpha=L_\alpha Q_\alpha\), the identity
\(L_\alpha\Gamma_\alpha(\cdot,T)=0\) gives
\[
 P_\alpha(y,\tau)=\int_{-\infty}^y\left[
 L_\alpha f(z,\tau)
 -\left(\frac{b_\alpha'}T-\frac{b_\alpha}{T^2}\right)
 \Gamma_\alpha(z,T)\right]dz,
\]
and 
\[
 P_{\alpha,y}
 =L_\alpha f-\left(\frac{b_\alpha'}T-\frac{b_\alpha}{T^2}\right)
 \Gamma_\alpha(y,T).
\]
By \eqref{eq:critical-mass-derivative}, we have
\[
 \int_{\mathbb R}P_{\alpha,y}(y,\tau)\,dy=0,
 \qquad P_\alpha(\pm\infty,\tau)=0.
\]
Integrating \(Q_{\alpha,y}\) and \(P_{\alpha,y}\) from the nearer
spatial infinity yields
\begin{align} \label{eq:critical-primitive-material}
 &|b_\alpha|+T|b_\alpha'|\leq C(\epsilon^3+\epsilon^4),\qquad
 \|Q_\alpha(\tau)\|_1\leq C(\epsilon^3+\epsilon^4)T^{-1/2},\notag\\
 &|Q_\alpha(y,\tau)|\leq C(\epsilon^3+\epsilon^4)A_\alpha^{-1},\qquad
 |Q_{\alpha,y}(y,\tau)|
 \leq C(\epsilon^3+\epsilon^4)A_\alpha^{-3/2},\notag\\
 &|Q_{\alpha,yy}(y,\tau)|
 \leq C(\epsilon^3+\epsilon^4)T^{-1/2}A_\alpha^{-3/2},\qquad
 \|P_\alpha(\tau)\|_1
 \leq C(\epsilon^3+\epsilon^4)T^{-3/2},\\
 &|P_\alpha(y,\tau)|
 \leq C(\epsilon^3+\epsilon^4)T^{-1}A_\alpha^{-1},\qquad
 |P_{\alpha,y}(y,\tau)|
 \leq C(\epsilon^3+\epsilon^4)\left\{
 T^{-1}A_\alpha^{-3/2}
 +T^{-5/2}\exp\left(-\frac{r_\alpha^2}{CT}\right)\right\}.  \notag 
\end{align}
Thus \eqref{eq:critical-profile-material-bound} and
\eqref{eq:critical-primitive-material} verify the hypotheses of
Lemma~\ref{lem:gaussian-extraction}, including the last-unit bounds
\eqref{eq:last-unit-general}.  Let \(Z_i^{\mathrm{prof}}\) denote the
complete responses after removal of the Gaussian masses in
\eqref{eq:ai-definition}.  Lemmas~\ref{lem:endpoint-convolution} and
\ref{lem:gaussian-extraction}, together with \eqref{eq:Rr-norms} and
\eqref{eq:singular-action}, yield
\begin{equation}
\begin{gathered}
 \sum_{k=0}^2(1+t)^{(k+1)/2}
 \|\partial_x^k Z_i^{\mathrm{prof}}(t)\|_\infty
 \leq C(\epsilon^3+\epsilon^4),\\
 |\partial_x Z_i^{\mathrm{prof}}(x,t)|
 +|\theta_i(x,t)Z_i^{\mathrm{prof}}(x,t)|
 \leq C(\epsilon^3+\epsilon^4)\log(2+t)\Psi_i(x,t).
\end{gathered}
\label{eq:critical-profile-response}
\end{equation}

\subsection{The remaining terms in the Duhamel formula}

For \(0\leq\tau\leq(t-1)_+\), the four \(\theta\)-terms and the
commutator reduce to the following two rows:
\begin{align}
 \int_0^{(t-1)_+}\int_{\mathbb R}\Bigg\{&
 g_1\phi_1\theta_2R_1
 +g_1\phi_1\left(\frac32\theta_1-\theta_2\right)R_2\notag\\
 &+\frac d{2c}(g_{1,y}-g_{2,y})\phi_1
       \{\theta_2R_1+(2\theta_1-\theta_2)R_2\}\notag\\
 &+\frac d{2c}(g_{1,y}-g_{2,y})
       \left(\frac12\theta_{1,y}-\frac{\theta_1^2}{4d}\right)Z_1
 -\frac12\frac{\phi_1}{\phi_2}\theta_1g_{2,y}Z_2
 \Bigg\}\,dy\,d\tau,                                      \label{eq:first-leading-row}
\end{align}
and
\begin{align}
 \int_0^{(t-1)_+}\int_{\mathbb R}\Bigg\{&
 g_2\phi_2\theta_1R_2
 +g_2\phi_2\left(\frac32\theta_2-\theta_1\right)R_1\notag\\
 &+\frac d{2c}(g_{1,y}-g_{2,y})\phi_2
       \{(2\theta_2-\theta_1)R_1+\theta_1R_2\}\notag\\
 &+\frac d{2c}(g_{1,y}-g_{2,y})
       \left(\frac12\theta_{2,y}-\frac{\theta_2^2}{4d}\right)Z_2
 -\frac12\frac{\phi_2}{\phi_1}\theta_2g_{1,y}Z_1
 \Bigg\}\,dy\,d\tau.                                      \label{eq:second-leading-row}
\end{align}
The identities \eqref{eq:expanded-commutator-12}--
\eqref{eq:expanded-commutator-21} and
\eqref{eq:source-row-one}--\eqref{eq:source-row-two} cancel the two
undifferentiated cross terms
\[
 \frac{\phi_1}{\phi_2}
 \left(\frac12\theta_{2,y}-\frac{\theta_2^2}{4d}\right)g_1Z_2,
 \qquad
 \frac{\phi_2}{\phi_1}
 \left(\frac12\theta_{1,y}-\frac{\theta_1^2}{4d}\right)g_2Z_1.
\]
Thus
\[
 \mathcal F_i^{\mathrm{exc}}=-\phi_i\theta_{3-i}R_{3-i},
 \qquad i=1,2,
\]
are the only nonordinary terms without an additional Green derivative; they
were treated in Subsection~\ref{exceptional opposite-family feedback} by
\eqref{eq:exceptional-decomposition}.  All other long-time components remain
in \eqref{eq:complete-duhamel}.  On \(t-1\leq\tau\leq t\), the source rows act
through \(G^0\), and the full commutator \(E\) acts on the opposite component.

Let \(Z_i^{\mathrm{ord}}\) denote the sum of the ordinary profile and
coefficient responses.  By \eqref{eq:modal-recovery},
\eqref{eq:no-prelog-input}, and \eqref{eq:burgers-pointwise-bound}, their source
products are
\[
 \phi_1\theta_2R_1,\qquad \frac32\phi_1\theta_1R_2,
 \qquad \phi_2\theta_1R_2,\qquad \frac32\phi_2\theta_2R_1.
\]
If \(\alpha\) is the profile family in \(f\), define \(b_\alpha,Q_\alpha\)
as in \eqref{eq:critical-mass-and-primitive}.  Then
\[
 f=\frac{b_\alpha(\tau)}{1+\tau}\Gamma_\alpha(\cdot,1+\tau)
   +\partial_yQ_\alpha,
\]
and the modal decomposition gives
\begin{align}
 &|b_\alpha(\tau)|\leq C\epsilon X_N,\qquad\qquad
 \|Q_\alpha(\tau)\|_1\leq C\epsilon X_N(1+\tau)^{-1/2},\notag\\
 &\|\partial_y^kQ_\alpha(\tau)\|_\infty
 \leq C\epsilon X_N(1+\tau)^{-1-k/2},
 \qquad 0\leq k\leq2.
 \label{eq:ordinary-cross-primitive-bounds}
\end{align}
The corresponding same-family envelope is
\begin{equation}
 |\phi_i\theta_iR_{3-i}|(y,\tau)
 \leq C\epsilon X_N
 \bigl((y-\lambda_i(1+\tau))^2+1+\tau\bigr)^{-3/2}.
 \label{eq:ordinary-same-family-pointwise}
\end{equation}
For \(\alpha=3-i\), the opposite-family envelope is
\begin{align}
 |\phi_i\theta_\alpha R_i|(y,\tau)
 \leq& C\epsilon X_N\log(2+\tau)(1+\tau)^{-1/2}
 [1
   +\log(2+\tau)]\times
 \bigl((y-\lambda_\alpha(1+\tau))^2+1+\tau\bigr)^{-3/2}\notag\\
 &+C\epsilon X_N\log(2+\tau)(1+\tau)^{-7/4}
\exp\!\left(
-\frac{(y-\lambda_\alpha (1+\tau))^2}{C(1+\tau)}
\right).
 \label{eq:ordinary-opposite-family-pointwise}
\end{align}
The last Gaussian term satisfies the cone-preserving bound
\[
 \int_0^{(t-1)_+}
 \left|\partial_xg_i(\cdot,t-\tau)\right|*
 \left[(1+\tau)^{-7/4}
 e^{-\frac{[\,\cdot-\lambda_\alpha(1+\tau)\,]^2}{C(1+\tau)}}\right](x)
 \,d\tau
 \leq C\Psi_i(x,t).
\]
Hence \eqref{eq:ordinary-cross-primitive-bounds}--
\eqref{eq:ordinary-opposite-family-pointwise} meet the hypotheses of
Lemmas~\ref{lem:endpoint-convolution} and \ref{lem:gaussian-extraction}; the
latter is used in its global nonmaterial form, so no bound for
\(b_\alpha'\) is needed.

For the last time unit, \eqref{eq:first-leading-row}--
\eqref{eq:second-leading-row} are combined before absolute values are taken.
The non-common terms require at most \(R_{yy}\) and \(Z_{yy}\), while every
term containing \(Z_{j,y}\) belongs to a common vector.  Thus
\eqref{eq:last-unit-general}--\eqref{eq:last-unit-common} stop the derivative
count at \(Z_{j,yy}\), with no derivative loss.  Together with
\eqref{eq:secondary-mode-complete},
\eqref{eq:offdiagonal-same-speed-complete}, \eqref{eq:Rr-norms}, and
\eqref{eq:singular-action}, this yields
\begin{equation}
 |\partial_x Z_i^{\mathrm{ord}}|
 \leq C\epsilon X_N\log(2+t)\Psi_i(x,t).       \label{eq:ordinary-profile-coefficient}
\end{equation}

For the remaining source column in \eqref{eq:remaining-source-column} and
\eqref{eq:complete-duhamel}, write
\[
 Z^{\#}(x,t)
 :=\int_0^t\int_{\mathbb R}
 \widetilde G(y,\tau;x,t)F_{\mathrm{src}}^{\#}(y,\tau)\,dy\,d\tau,
 \qquad
 \#\in\{\mathrm{bg},\mathrm{coef},\mathrm{visc},\mathrm{nl}\},
\]
with \(Z^{\#}=(Z_1^{\#},Z_2^{\#})^\top\).

For \(F_{\mathrm{src}}^{\mathrm{coef}}\),
\eqref{eq:eta-explicit-profile},
\eqref{eq:burgers-pointwise-bound}, \eqref{eq:resummed-pointwise-bound},
\eqref{eq:phi-explicit-identities}, \eqref{eq:pointwise-continuity-norm},
\eqref{eq:derived-low-rates}, and \eqref{eq:endpoint-weight} give
\begin{align}
 &|\phi_i\eta(R_2-R_1)|(y,\tau)
 \leq C(\epsilon+\epsilon^2)X_N\log(2+\tau)(1+\tau)^{-1/2}\times
 \sum_{\alpha=1}^2
 \bigl((y-\lambda_\alpha(1+\tau))^2+1+\tau\bigr)^{-5/4},\notag\\
 &|\partial_y[\phi_i\eta(R_2-R_1)]|(y,\tau)
 \notag\\
 &\leq C(\epsilon+\epsilon^2)X_N\log(2+\tau)\times\Bigg\{(1+\tau)^{-1}
 \sum_{\alpha=1}^2
 \bigl((y-\lambda_\alpha(1+\tau))^2+1+\tau\bigr)^{-5/4}\notag\\[-1mm]
 &\hspace{55mm}+(1+\tau)^{-3/2}
 \sum_{\alpha=1}^2
 \bigl((y-\lambda_\alpha(1+\tau))^2+1+\tau\bigr)^{-3/4}
 \Bigg\}.
\label{eq:eta-envelope}
\end{align}
In particular, the algebraic derivative tail in \eqref{eq:eta-envelope} is
retained, and \eqref{eq:eta-quarter-hypotheses} holds.  Hence
\eqref{eq:eta-quarter-map}, \eqref{eq:remaining-structured-map},
\eqref{eq:remaining-nonleading-map}, and \eqref{eq:last-unit-common} yield
\begin{equation}
 |\partial_x Z_i^{\mathrm{coef}}|
 \leq C(\epsilon+\epsilon^2)X_N\log(2+t)\Psi_i(x,t).
                                                               \label{eq:eta-response}
\end{equation}

For \(F_{\mathrm{src}}^{\mathrm{visc}}\), use
\eqref{eq:variable-viscosity-divergence} in the form
\[
 F_{\mathrm{src}}^{\mathrm{visc}}:\quad
 \begin{cases}
 \partial_yh+h_0,
   &0\leq\tau\leq(t-1)_+,\\
 \text{the original common-vector source},
   &t-1\leq\tau\leq t.
 \end{cases}
\]
The divergence is transferred only to the long-time regular heat kernel.  By
\eqref{eq:B-explicit-profile},
\eqref{eq:burgers-pointwise-bound}, \eqref{eq:resummed-pointwise-bound},
\eqref{eq:phi-explicit-identities}, \eqref{eq:pointwise-continuity-norm},
\eqref{eq:derived-low-rates}, and \eqref{eq:endpoint-weight}, the two parts in
\eqref{eq:variable-viscosity-divergence} satisfy the \(h\)- and
\(h_0\)-bounds in
Lemma~\ref{lem:remaining-source-maps}, with
\(C_f=C(\epsilon+\epsilon^2)X_N\).  Therefore
\begin{equation}
 |\partial_x Z_i^{\mathrm{visc}}|
 \leq C(\epsilon+\epsilon^2)X_N\log(2+t)\Psi_i(x,t).  \label{eq:viscosity-response}
\end{equation}

For \(F_{\mathrm{src}}^{\mathrm{nl}}\), Taylor's formula
\eqref{eq:exact-N} is applied to the common vector
\[
 F_{\mathrm{src}}^{\mathrm{nl}}
 =\binom{\phi_1N(R)}{\phi_2N(R)}.
\]
Together, \eqref{eq:pointwise-continuity-norm},
\eqref{eq:derived-low-rates}, \eqref{eq:phi-explicit-identities}, and
\eqref{eq:endpoint-weight} give
\begin{equation}
 |\phi_iN(R)|(y,\tau)
 \leq CX_N^2\log^2(2+\tau)(1+\tau)^{-1}
       \{\Psi_1(y,\tau)+\Psi_2(y,\tau)\}.       \label{eq:nonlinear-envelope}
\end{equation}
Strictly between the two tubes,
\begin{align*}
 &\{\Psi_1(y,\tau)+\Psi_2(y,\tau)\}^2
 \leq C(1+\tau)^{-1}\sum_{\alpha=1}^2
 \bigl((y-\lambda_\alpha(1+\tau))^2+1+\tau\bigr)^{-1},\\
& |\phi_iN(R)|(y,\tau)
 \leq CX_N^2\log^2(2+\tau)(1+\tau)^{-1}
 \sum_{\alpha=1}^2
 \bigl((y-\lambda_\alpha(1+\tau))^2+1+\tau\bigr)^{-1}.
\end{align*}
By \eqref{eq:exact-N}, \eqref{eq:derived-low-rates}, and
\eqref{eq:second-third-derivative-rate}, the derivative required on the last
time unit satisfies
\begin{align}
 \|\partial_yN(R)(\tau)\|_\infty
 &\leq C(1+\epsilon)X_N^2\Bigl[
 \log^2(2+\tau)(1+\tau)^{-5/2}
 +\log^{1+\beta_N}(2+\tau)(1+\tau)^{-1-\frac32\beta_N}\Bigr]\notag\\
 &\leq C(1+\epsilon)X_N^2
 \log^2(2+\tau)(1+\tau)^{-2}.
 \label{eq:modal-nonlinear-derivative-bound}
\end{align}
The weak contribution to \(N_y\) is
\(\|R\|_\infty\|R_{xx}\|_\infty\).  Since the two transformed rows form a
common vector, their principal damped Dirac projection is
\begin{equation}
 \begin{pmatrix}\phi_1&0\\0&\phi_2\end{pmatrix}
 P_-
 \begin{pmatrix}\phi_1^{-1}&0\\0&\phi_2^{-1}\end{pmatrix}
 \binom{\phi_1N(R)}{\phi_2N(R)}
 =\begin{pmatrix}\phi_1&0\\0&\phi_2\end{pmatrix}
 P_-\binom{N(R)}{N(R)}=0.
 \label{eq:nonlinear-dirac-cancellation}
\end{equation}
Thus \eqref{eq:nonlinear-envelope},
\eqref{eq:nonlinear-dirac-cancellation},
\eqref{eq:modal-nonlinear-derivative-bound}, and
\eqref{eq:last-unit-common}, together with
Lemma~\ref{lem:remaining-source-maps}, yield
\begin{equation}
 |\partial_x Z_i^{\mathrm{nl}}|
 \leq CX_N^2\log(2+t)\Psi_i(x,t).              \label{eq:nonlinear-response}
\end{equation}

For \(F_{\mathrm{src}}^{\mathrm{bg}}\),
\eqref{eq:exact-background-decomposition} gives the cancellations
\begin{align*}
 &(\theta_2-\theta_1)(\theta_{1,y}+\theta_{2,y})
       +\theta_1\theta_{1,y}-\theta_2\theta_{2,y}
   =\theta_2\theta_{1,y}-\theta_1\theta_{2,y},\\
 &(\theta_2-\theta_1)^3+\theta_1^3-\theta_2^3
   =3\theta_1\theta_2(\theta_1-\theta_2).
\end{align*}
The cubic terms and
\(\phi_i\chi_{\alpha}^{\mathrm{cone}}\Xi_\alpha^2\) were estimated in
Subsection~\ref{section 5.4}; the terms \eqref{eq:V-bg} and
\(d\gamma_i\phi_i\Xi_{i,yy}\) are covered by
Subsection~\ref{appendix 4}.  The remaining pieces are
\(\phi_i(1-\chi_{\alpha}^{\mathrm{cone}})\Xi_\alpha^2\) and
\begin{align}
 S_i^{\mathrm{rem}}:=&-\phi_i\mathcal E_i^{\mathrm{app}}
 -(-1)^{i+1}\mu_{\mathrm{cub}}\phi_i\theta_i^3
 -(-1)^i\left(\frac{cp'''(1)}{3p_2^2}+\frac1{4c}\right)
       \phi_i\theta_{3-i}^3
 -\partial_yV_i^{\mathrm{bg}}-d\gamma_i\phi_i\Xi_{i,yy}\notag\\
 &+\frac12\phi_i\sum_{\alpha=1}^2
 \left\{\chi_\alpha^{\mathrm{cone}}\Xi_\alpha^2
 +(1-\chi_\alpha^{\mathrm{cone}})\Xi_\alpha^2\right\}\notag\\
 =&\phi_i\left[
 \theta_1\left(\Xi_2-\frac{\theta_1^2}{4c}\right)
 +\theta_2\left(\Xi_1+\frac{\theta_2^2}{4c}\right)
 \right]
 +\mathcal R_i^{\mathrm{int}}-d\gamma_i\phi_i\Xi_{i,yy}.
 \notag
\end{align}
By \eqref{eq:burgers-pointwise-bound},
\eqref{eq:resummed-pointwise-bound}, \eqref{eq:profile-cutoff}, and
\eqref{eq:complete-integrable-remainder}, for \(0\leq k\leq2\),
\begin{align}
 \left|\partial_y^kS_i^{\mathrm{rem}}(y,\tau)\right|
 \leq &C_k(\epsilon^2+\epsilon^3+\epsilon^4)
 \Bigg[(1+\tau)^{-1/4-k/2}
 \sum_{\alpha=1}^2
 \frac1{([y-\lambda_\alpha(1+\tau)]^2+1+\tau)^{3/2}}\notag\\
 &+(1+\tau)^{-k/2}
 \prod_{\alpha=1}^2\left(
 \frac1{([y-\lambda_\alpha(1+\tau)]^2+1+\tau)^{3/4}}+\frac1{|y-\lambda_{3-\alpha}(1+\tau)|^{3/2}+1+\tau}\right)
\Bigg].                            \notag
\end{align}
The complementary higher-profile squares satisfy, for \(0\leq k\leq2\),
\begin{align}
 &|\partial_y^k[\phi_i(1-\chi_\alpha^{\mathrm{cone}})\Xi_\alpha^2](y,\tau)|\notag\\
 &\leq C_k\epsilon^4(1+\tau)^{-k/2}
 \left\{
 \frac1{([y-\lambda_\alpha(1+\tau)]^2+1+\tau)^{3/4}}
 +\frac1{|y-\lambda_{3-\alpha}(1+\tau)|^{3/2}+1+\tau}
 \right\}^2.                                             \notag
\end{align}
For \(k=2\), these estimates follow term by term from the fixed fourth-order
profile bounds \eqref{eq:burgers-pointwise-bound}--
\eqref{eq:resummed-pointwise-bound}.  Since
\((S_1^{\mathrm{rem}},S_2^{\mathrm{rem}})^\top\) is not a common vector,
because \(\mathcal R_i^{\mathrm{int}}\) contains \(i\)-dependent terms, no
\(P_-\) cancellation is used on the last time unit.  Hence
\eqref{eq:app-Xi-tail},
\eqref{eq:remaining-divergence-map}--\eqref{eq:remaining-nonleading-map}, and
\eqref{eq:last-unit-general} give
\begin{equation}
 |\partial_x Z_i^{\mathrm{bg}}|
 \leq C(\epsilon+\epsilon^2)\log(2+t)\Psi_i(x,t). \label{eq:background-response}
\end{equation}

\subsection{Consolidated modal and pointwise response}

The source ledger has the four canonical forms
\[
 f_\alpha
 =\frac{b_\alpha(\tau)}{1+\tau}\Gamma_\alpha(\cdot,1+\tau)
   +\partial_yQ_\alpha,
 \qquad
 F_{\mathrm{src}}=\partial_yh+h_0,
 \qquad
 \int_{\mathbb R}\partial_yQ_\alpha\,dy=0,
\]
corresponding, respectively, to a Gaussian mass, a zero-mass primitive, a
divergence, and an integrable remainder.

\begin{lemma}\label{lem:consolidated-source-ledger}
Let \(N\geq5\), let \(0<t\leq t^*\), and assume the bootstrap quantity
\(X_N(t^*)\) \eqref{eq:pointwise-continuity-norm} is finite.  Then
\begin{align}
 &\frac{|a_i(t)|}{\log(2+t)}+(1+t)|a_i'(t)|
 +\sum_{k=0}^2(1+t)^{(k+1)/2}
       \|\partial_x^k Z_i^{\mathrm{rem}}(t)\|_\infty\notag\\
 &\quad\leq C\{E_N+(\epsilon+\epsilon^2)X_N(t^*)+X_N(t^*)^2\},
 \label{eq:modal-output}
\end{align}
where \(a_i'\) \eqref{eq:ai-definition} is understood almost everywhere.  Moreover,
\begin{equation}
 |Z_{i,x}(x,t)|+|\theta_i(x,t)Z_i(x,t)|
 \leq C\{E_N+(\epsilon+\epsilon^2)X_N(t^*)+X_N(t^*)^2\}
       \log(2+t)\Psi_i(x,t).
 \label{eq:weighted-Z-output}
\end{equation}
\end{lemma}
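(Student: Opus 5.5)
We assemble \eqref{eq:modal-output} and \eqref{eq:weighted-Z-output} from the Duhamel representation \eqref{eq:complete-duhamel}, grouping its terms by the source ledger. Fix \(i\), put \(j=3-i\), and split \(Z_i\) into six pieces: the initial response; the ordinary response \(Z_i^{\mathrm{ord}}\) from \eqref{eq:first-leading-row}--\eqref{eq:second-leading-row} together with the commutator rows; the exceptional feedback \(-\phi_i\theta_jR_j\); the profile response \(Z_i^{\mathrm{prof}}\); the four source responses \(Z_i^{\#}\); and the last time unit.

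The first step defines the decomposition \eqref{eq:Z-decomposition} and fixes \(a_i\). By construction, \eqref{eq:ai-definition} is the sum of the Gaussian coefficients that are subtracted in each piece. These come from three places. The same-family Gaussian masses are handled by Lemma~\ref{lem:gaussian-extraction} with \(\alpha=i\); their full coefficient \(\int_0^{(t-1)_+}b_i/(1+\tau)\,d\tau\) enters \(a_i\). The \(\chi_{\mathrm{time}}\)-truncated opposite masses come from \(\phi_i\theta_jR_i\), \(\phi_i\theta_jR_j\) (namely \(I_2\)), \(\phi_i\theta_j^3\), and \(\phi_i\chi_j^{\mathrm{cone}}\Xi_j^2\). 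The same-family profiles are \(\phi_i\theta_i^3\) and \(\phi_i\chi_i^{\mathrm{cone}}\Xi_i^2\).

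The coefficients are bounded as follows.
\begin{itemize}
\item \(|b_\alpha|\lesssim \epsilon X_N\) by \eqref{eq:ordinary-cross-primitive-bounds}.
\item \(|b_\alpha|\lesssim \mathcal S_N\) by \eqref{eq:exceptional-mass-C1}.
\item \(|b_\alpha|\lesssim\epsilon^3+\epsilon^4\) by \eqref{eq:critical-primitive-material}.
\end{itemize}
Integrating \((1+\tau)^{-1}\) over \([0,(t-1)_+]\) then gives \(|a_i|\le C\{\epsilon X_N+\mathcal S_N+\epsilon^3\}\log(2+t)\).

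For \(a_i'\), differentiate \eqref{eq:ai-definition} in \(t\). The endpoint term contributes \(b(t-1)/t=O((1+t)^{-1})\). The derivative of the cutoff \(\chi_{\mathrm{time}}((1+\tau)/\sqrt{1+t})\) is supported where \(1+\tau\sim\sqrt{1+t}\) and has size \((1+t)^{-1}\). Integrating it against \(|b|/(1+\tau)\) over that range also gives \(O((1+t)^{-1})\), so no logarithm appears. Finally, \(\mathcal S_N\le C\{(\epsilon+\epsilon^2)X_N+X_N^2+E_N\}\), since \(\epsilon\le E_N\).

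The second step collects the log-free global bounds for \(Z_i^{\mathrm{rem}}=Z_i-a_i\Gamma_i\).
\begin{itemize}
\item The initial response is bounded by \eqref{eq:initial-global}.
\item The opposite-family extractions are bounded by the global part of Lemma~\ref{lem:gaussian-extraction}. The same-family ones are bounded by the exact semigroup identity, which gives \(g_i(t-\tau)*\Gamma_i(\cdot,1+\tau)=\Gamma_i(\cdot,1+t)\), so nothing remains after extraction.
\item The zero-mass primitives are bounded by the primitive estimate of the same lemma.
\item The exceptional divergence \(I_1\) is bounded by \eqref{eq:exceptional-divergence-map}.
\item \(Z^{\mathrm{prof}}\) is bounded by \eqref{eq:critical-profile-response}.
\item Off-diagonal and higher Green entries gain a derivative; they are controlled with \eqref{eq:Rr-norms}, \eqref{eq:singular-action}, and the secondary-mode computation \eqref{eq:secondary-mode-complete}.
\item The last time unit is handled by \eqref{eq:last-unit-general}--\eqref{eq:last-unit-common}.
\end{itemize}
For the remaining sources (coefficient, viscosity, nonlinear, background), the required \(L^\infty\) rates follow from their pointwise envelopes \eqref{eq:eta-envelope}, \eqref{eq:nonlinear-envelope}, \eqref{eq:modal-nonlinear-derivative-bound}. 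Each has time-integrable \(L^1\) mass after at most one transferred derivative, so the early/late split used in the proof of Lemma~\ref{lem:gaussian-extraction} applies verbatim.

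\textbf{The main obstacle.} We must check that no logarithm survives in \(Z_i^{\mathrm{rem}}\) from the unlocalized tails \(\phi_i(1-\chi^{\mathrm{cone}})\Xi^2\) or from \(S_i^{\mathrm{rem}}\). The plan is to use their \(L^1\) decay \((1+\tau)^{-5/4}\), which is integrable. If that fails, we would split them into divergence form as in Lemma~\ref{lem:remaining-source-maps}(i).

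The third step proves the weighted estimate \eqref{eq:weighted-Z-output}. Add the cone-resolved bounds
\[
\eqref{eq:initial-pointwise},\ \eqref{eq:exceptional-pointwise},\ \eqref{eq:secondary-mode-complete},\ \eqref{eq:offdiagonal-same-speed-complete},\ \eqref{eq:critical-profile-response},\ \eqref{eq:ordinary-profile-coefficient},\ \eqref{eq:eta-response},\ \eqref{eq:viscosity-response},\ \eqref{eq:nonlinear-response},\ \eqref{eq:background-response}
\]
for \(Z_{i,x}\) minus the extracted modes. The explicit mode is then restored. Since \(|a_i\Gamma_{i,x}|\le CX_N\log(2+t)(1+t)^{-1}\) times a Gaussian in the \(i\)-tube, and the first term of \(\Psi_i\) dominates such a Gaussian, the mode is within the claimed bound.

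For \(\theta_iZ_i\), use \eqref{eq:burgers-pointwise-bound}. The factor \(\theta_i\) localizes to the \(i\)-tube with an extra \((1+t)^{-1/2}\). On that tube, \(\Psi_i\gtrsim(1+t)^{-1}\), so the global bound \(|Z_i|\lesssim\log(2+t)(1+t)^{-1/2}\) from the second step suffices.
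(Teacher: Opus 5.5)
Your plan is essentially the paper's own proof: the same source ledger, $|a_i|$ and $(1+t)|a_i'|$ from $\sup|b_\alpha|$ together with the support $\sqrt{1+t}\le 1+\tau\le 2\sqrt{1+t}$ of $\partial_t\chi_{\mathrm{time}}$ and \eqref{eq:exceptional-mass-C1}, the early/late integrals for $Z_i^{\mathrm{rem}}$, and summation of the cone-resolved responses plus the explicit mode; bounding $\theta_iZ_i$ by $|\theta_i|\,\|Z_i\|_\infty$, using the Gaussian decay of $\theta_i$, is also fine. One correction is needed: when you restore $a_i\Gamma_{i,x}$ in the third step, use your first-step bound $|a_i|\le C\{E_N+(\epsilon+\epsilon^2)X_N+X_N^2\}\log(2+t)$ rather than $|a_i|\le X_N\log(2+t)$, because a term $CX_N\log(2+t)\Psi_i$ with an $O(1)$ coefficient is not of the form allowed in \eqref{eq:weighted-Z-output} and would break the bootstrap.
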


\begin{proof}
For every source in \eqref{eq:mass-primitive-form},
\[
 \frac{b_\alpha(\tau)}{1+\tau}
 =\int_{\mathbb R}f_\alpha(y,\tau)\,dy.
\]
With the signs in \eqref{eq:complete-duhamel}, the Gaussian coefficients
removed by Lemma~\ref{lem:gaussian-extraction} agree term by term with
\eqref{eq:ai-definition}; hence, we have
\[
 Z_i=a_i(t)\Gamma_i(\cdot,1+t)+Z_i^{\mathrm{rem}},
 \qquad
 Z_i^{\mathrm{rem}}\ \text{contains no principal Gaussian mass}.
\]
The source-by-source sizes are
\begingroup
\small
\renewcommand{\arraystretch}{1.25}
\[
\begin{array}{@{}l@{\;}c@{\qquad}l@{\;}c@{\qquad}l@{\;}c@{}}
\text{response} & \text{size}
 & \text{response} & \text{size}
 & \text{response} & \text{size}\\ \hline
\text{initial} & E_N
 & \text{ordinary} & \epsilon X_N
 & \text{exceptional} & \mathcal S_N\\
\text{secondary and off-diagonal} & (\epsilon+\epsilon^2)X_N
 & \text{explicit profile} & \epsilon^3+\epsilon^4
 & \text{coefficient} & (\epsilon+\epsilon^2)X_N\\
\text{variable viscosity} & (\epsilon+\epsilon^2)X_N
 & \text{nonlinear} & X_N^2
 & \text{background} & \epsilon+\epsilon^2
\end{array}
\]
\endgroup
The first four entries follow from \eqref{eq:initial-global}--\eqref{eq:initial-pointwise},
\eqref{eq:ordinary-cross-primitive-bounds}--\eqref{eq:ordinary-profile-coefficient},
\eqref{eq:exceptional-mass-C1}--\eqref{eq:exceptional-pointwise}, and
\eqref{eq:secondary-mode-primitive-bounds}--\eqref{eq:secondary-mode-complete}.
The remaining rows follow from \eqref{eq:critical-mass-and-primitive}--
\eqref{eq:critical-profile-response}, \eqref{eq:eta-envelope}--
\eqref{eq:eta-response}, \eqref{eq:variable-viscosity-divergence}--
\eqref{eq:viscosity-response}, \eqref{eq:nonlinear-envelope}--
\eqref{eq:nonlinear-response}, and \eqref{eq:background-response}.
All propagations are covered by Lemmas~\ref{lem:endpoint-convolution}--
\ref{lem:remaining-source-maps}, including the mesoscopic alternative after
\eqref{eq:integrated-material-identity}, together with
\eqref{eq:singular-action}, \eqref{eq:last-unit-general}, and
\eqref{eq:last-unit-common}.  Thus, with \(X_N=X_N(t^*)\) \eqref{eq:pointwise-continuity-norm}, the total size is
\[
 E_N+\mathcal S_N+(\epsilon+\epsilon^2)
 +C(\epsilon+\epsilon^2)X_N+X_N^2.
\]

{We first estimate \(Z_i^{\mathrm{rem}}\). Using the source sizes listed in the table above, Young’s  inequality, and the kernel and source estimates employed in Lemmas \ref{lem:gaussian-extraction}–\ref{lem:remaining-source-maps}, we reduce the \(L^\infty\) estimates of \(\partial_x^k Z_i^{\mathrm{rem}}\) to the following early- and late-time integrals. Let \(I_k^{\mathrm{early}}\) and \(I_k^{\mathrm{late}}\) denote the corresponding contributions, respectively. For \(0\leq k\leq2\), we have}
\[
\begin{aligned}
I_0^{\rm late}(t)
&\leq
C\int_{t/2}^{(t-1)_+}
(t-\tau)^{-1/2}\log(2+\tau)(1+\tau)^{-3/2}\,d\tau 
\leq C(1+t)^{-1/2},\\
I_1^{\rm late}(t)
&\leq C\log^2(2+t)(1+t)^{-3/2}
\leq C(1+t)^{-1},\\
I_2^{\rm late}(t)
&\leq C\log^2(2+t)(1+t)^{-2}
\leq C(1+t)^{-3/2},
\end{aligned}
\]
while
\[
I_k^{\rm early}(t)
\leq
C(1+t)^{-(k+2)/2}
\int_0^{t/2}(1+\tau)^{-1/2}\,d\tau
\leq C(1+t)^{-(k+1)/2}.
\]
Consequently, it holds
\[
I_k^{\rm early}(t)+I_k^{\rm late}(t)
\leq C(1+t)^{-(k+1)/2},
\qquad 0\leq k\leq2.
\]
Thus the \(Z_i^{\mathrm{rem}}\)-part of \eqref{eq:modal-output} follows, since
\[
 \mathcal S_N
 \leq C\{(\epsilon+\epsilon^2)X_N+X_N^2+E_N\},
\]
\[
 \epsilon X_N^2\leq X_N^2,\qquad
 \epsilon\leq E_N\leq1,\qquad
 \epsilon^2+\epsilon^3+\epsilon^4\leq CE_N.
\]

% The late and early time integrals needed for
% \(0\leq k\leq2\) satisfy, with \(I_k^{\mathrm{late}}\) denoting the
% corresponding \(k\)-th derivative integral,
% \begin{align*}
%  \int_{t/2}^{(t-1)_+}(t-\tau)^{-1/2}
%  \log(2+\tau)(1+\tau)^{-3/2}\,d\tau
%  &\leq C\frac{\log(2+t)}{1+t}
%  \leq C(1+t)^{-1/2},\\
%  I_1^{\mathrm{late}}(t)
%  &\leq C\log^2(2+t)(1+t)^{-3/2}
%  \leq C(1+t)^{-1},\\
%  I_2^{\mathrm{late}}(t)
%  &\leq C\log^2(2+t)(1+t)^{-2}
%  \leq C(1+t)^{-3/2},\\
%  (1+t)^{-(k+2)/2}\int_0^{t/2}(1+\tau)^{-1/2}\,d\tau
%  &\leq C(1+t)^{-(k+1)/2}.
% \end{align*}
% Thus the \(Z_i^{\mathrm{rem}}\)-part of \eqref{eq:modal-output} follows, since
% \[
%  \mathcal S_N
%  \leq C\{(\epsilon+\epsilon^2)X_N+X_N^2+E_N\},
% \]
% \[
%  \epsilon X_N^2\leq X_N^2,\qquad
%  \epsilon\leq E_N\leq1,\qquad
%  \epsilon^2+\epsilon^3+\epsilon^4\leq CE_N.
% \]

For the coefficient \(a_i\), the same-family boundary term from
\eqref{eq:ai-definition} is \(O((1+t)^{-1})\).  For a cutoff cross-family
mass, one has
\begin{align*}
 &\frac{d}{dt}\int_0^{t-1}
 \chi_{\mathrm{time}}\left(\frac{1+\tau}{\sqrt{1+t}}\right)
 \frac{b(\tau)}{1+\tau}\,d\tau
 =O((1+t)^{-1}),\\
 &\operatorname{supp}_{\tau}\partial_t
 \chi_{\mathrm{time}}\left(\frac{1+\tau}{\sqrt{1+t}}\right)
 \subset
 \left\{\sqrt{1+t}\leq1+\tau\leq2\sqrt{1+t}\right\}.
\end{align*}
Together with \eqref{eq:exceptional-mass-C1}, this gives
\[
 (1+t)|a_i'(t)|+\frac{|a_i(t)|}{\log(2+t)}
 \leq C\{E_N+(\epsilon+\epsilon^2)X_N+X_N^2\},
\]
which is the \(a_i\)-part of \eqref{eq:modal-output}.  Finally,
\eqref{eq:receiving-gaussian} and
\eqref{eq:burgers-pointwise-bound} imply
\[
 |\Gamma_{i,x}(x,1+t)|+|\theta_i(x,t)\Gamma_i(x,1+t)|
 \leq C(1+t)^{-1}
 e^{-[x-\lambda_i(1+t)]^2/[C(1+t)]}
 \leq C\Psi_i(x,t).
\]
Therefore
\[
 |\partial_x[a_i(t)\Gamma_i(x,1+t)]|
 +|\theta_i(x,t)a_i(t)\Gamma_i(x,1+t)|
 \leq C\{E_N+(\epsilon+\epsilon^2)X_N+X_N^2\}
 \log(2+t)\Psi_i(x,t).
\]

Summing \eqref{eq:ordinary-profile-coefficient},
\eqref{eq:exceptional-pointwise}, \eqref{eq:secondary-mode-complete},
\eqref{eq:critical-profile-response}, \eqref{eq:background-response},
\eqref{eq:eta-response},
\eqref{eq:viscosity-response}, \eqref{eq:nonlinear-response}, and the
coefficient--Gaussian estimate above
gives \eqref{eq:weighted-Z-output}.
\end{proof}

\subsection{Pointwise mapping}

\begin{proposition}\label{prop:pointwise-mapping}
Let \(N\geq5\), and let a smooth solution on \([0,t^*]\) remain in the fixed
compact state neighborhood.  If \(\epsilon+X_N(t^*)\) is sufficiently small,
then, with every supremum involving \(a_i'\) interpreted as an essential
supremum,
\begin{align}
 &\sup_{0\leq t\leq t^*}\max_{i=1,2}\Bigg\{
 \frac1{\log(2+t)}\sup_{x\in\mathbb R}
       \frac{|Z_{i,x}(x,t)|+|\theta_i(x,t)Z_i(x,t)|}{\Psi_i(x,t)}\notag\\
 &\hspace{25mm}+\frac{|a_i(t)|}{\log(2+t)}+(1+t)|a_i'(t)|
 +\sum_{k=0}^2(1+t)^{(k+1)/2}\|\partial_x^k Z_i^{\mathrm{rem}}(t)\|_\infty\Bigg\}\notag\\
 &\leq
 C_0E_N+C_1(\epsilon+\epsilon^2)X_N(t^*)+C_2X_N(t^*)^2,               \label{eq:pointwise-map-Z}\\
 &\sup_{0\leq t\leq t^*}\max_{i=1,2}
 \frac1{\log(2+t)}\sup_{x\in\mathbb R}
       \frac{|R_i(x,t)|}{\Psi_i(x,t)}
 \leq C_0E_N+C_1(\epsilon+\epsilon^2)X_N(t^*)+C_2X_N(t^*)^2.          \label{eq:pointwise-map-R}
\end{align}
By \eqref{eq:initial-size}, \(\epsilon\leq E_N\), so the right-hand side involves only the initial size
and the linear and quadratic bootstrap terms.
\end{proposition}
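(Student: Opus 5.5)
The first estimate \eqref{eq:pointwise-map-Z} is essentially a restatement of Lemma~\ref{lem:consolidated-source-ledger}. For each fixed \(t\in(0,t^*]\), the bound \eqref{eq:modal-output} controls the modal coefficient, its derivative, and the three unweighted norms of \(Z_i^{\mathrm{rem}}\). The bound \eqref{eq:weighted-Z-output} controls the weighted quantity \(|Z_{i,x}|+|\theta_iZ_i|\) divided by \(\log(2+t)\Psi_i\). Both right-hand sides have the form \(C\{E_N+(\epsilon+\epsilon^2)X_N+X_N^2\}\), with \(X_N=X_N(t^*)\), and the constant does not depend on \(t\). I would therefore take the supremum over \(0<t\leq t^*\) and the maximum over \(i\), and rename the constants as \(C_0,C_1,C_2\). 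The essential suprema in \(a_i'\) cause no difficulty, because \eqref{eq:modal-output} holds almost everywhere.

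The time \(t=0\), and more generally \(0\leq t\leq1\), needs a separate check, since there \((t-1)_+=0\). In this range \(a_i(t)=0\), and the Duhamel formula \eqref{eq:complete-duhamel} involves only the initial response and the last time unit. The initial response is controlled by \eqref{eq:initial-global}--\eqref{eq:initial-pointwise}. The last-unit contributions are controlled by \eqref{eq:last-unit-general}--\eqref{eq:last-unit-common}. On this range \(\Psi_i\) is comparable to \((1+x^2)^{-1}\), up to the interior and exterior terms. The weighted initial norm in \eqref{eq:initial-size} gives exactly this decay for \(R_i(\cdot,0)\) and \(W_i(\cdot,0)\). Short-time propagation by the finite-measure kernel with fourth moments \eqref{eq:short-time-green-measure} preserves the \(\langle x\rangle^{-2}\) decay. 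So I expect the small-time case to be already contained in the lemma; if it is not, it follows from this moment estimate.

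For \eqref{eq:pointwise-map-R} I would use the exact recovery formula \eqref{eq:recover-R},
\[
 R_i=\phi_i^{-1}\Bigl(Z_{i,x}+\frac{\theta_i}{2d}Z_i\Bigr).
\]
The bound \(\|\phi_i^{-1}\|_\infty\leq 2e^{|M_i|/(2d)}\leq C\) holds for small mass. It gives
\[
 |R_i(x,t)|\leq C\bigl(|Z_{i,x}(x,t)|+|\theta_i(x,t)Z_i(x,t)|\bigr).
\]
Dividing by \(\log(2+t)\Psi_i(x,t)\) and invoking \eqref{eq:pointwise-map-Z} gives \eqref{eq:pointwise-map-R}, after enlarging \(C_0,C_1,C_2\) by the factor \(C\).

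The main point to watch is that the weighted output of the lemma controls the combination \(Z_{i,x}+\theta_iZ_i/(2d)\) term by term. It must not do so only through the modal split, since \(a_i\Gamma_i\) alone carries the logarithm. The final paragraph of the proof of Lemma~\ref{lem:consolidated-source-ledger} already bounds \(\partial_x(a_i\Gamma_i)\) and \(\theta_ia_i\Gamma_i\) by \(\log(2+t)\Psi_i\). Hence a single logarithm appears and no \(\log^2\) loss arises. The hypothesis that \(\epsilon+X_N\) is small is used only to stay within the range where the lemma and the profile bounds (small mass, \(\phi_i\) close to \(1\)) apply. It is also used to keep the state in the compact neighborhood \(I_0\), so that the Taylor remainders in \eqref{eq:exact-N} are uniformly bounded.
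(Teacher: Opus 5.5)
Your proposal is correct and follows the paper's own proof. There, \eqref{eq:pointwise-map-Z} is read off directly from \eqref{eq:modal-output} and \eqref{eq:weighted-Z-output} in Lemma~\ref{lem:consolidated-source-ledger}, and \eqref{eq:pointwise-map-R} follows from the recovery identity \eqref{eq:recover-R} together with $\|\phi_i^{-1}\|_\infty\leq C$. Your extra check of the range $0\leq t\leq 1$, where $a_i\equiv0$ and only the initial response and the last time unit contribute, is not spelled out in the paper's proof; it is consistent with the initial bound $|R_i(x,0)|\leq CE_N\langle x\rangle^{-2}\leq CE_N\Psi_i(x,0)$ that the paper records in the bootstrap step.
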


\begin{proof}
Estimates \eqref{eq:weighted-Z-output} and \eqref{eq:modal-output} prove
\eqref{eq:pointwise-map-Z}.  The exact recovery identity gives
\[
 R_i=\phi_i^{-1}\left(Z_{i,x}+\frac{\theta_i}{2d}Z_i\right),
 \qquad
 |R_i|
 \leq C\bigl(|Z_{i,x}|+|\theta_iZ_i|\bigr),
 \qquad
 \|\phi_i^{-1}\|_\infty\leq C.
\]
Combining this with \eqref{eq:weighted-Z-output} yields
\eqref{eq:pointwise-map-R}.
\end{proof}

\section{Uniform energy estimate and completion of the proof}
\label{sec:energy}
The uniform top-order norm follows from the original remainder equations.  Set
\begin{align}
 A_R=&c+2F'(u_2^{\mathrm{app}}-u_1^{\mathrm{app}})
 +2B'(u_2^{\mathrm{app}}-u_1^{\mathrm{app}})(u_{1,x}^{\mathrm{app}}+u_{2,x}^{\mathrm{app}}),
 \notag\\
 D_R=&d+B\big((u_2^{\mathrm{app}}-u_1^{\mathrm{app}})+(R_2-R_1)\big)=\frac{d}{v},
 \label{eq:energy-coefficients}
\end{align}
and
\begin{align}
 f=&2\big\{
 F\big((u_2^{\mathrm{app}}-u_1^{\mathrm{app}})+(R_2-R_1)\big)
 -F(u_2^{\mathrm{app}}-u_1^{\mathrm{app}})-F'(u_2^{\mathrm{app}}-u_1^{\mathrm{app}})(R_2-R_1)
 \notag\\
 &\quad+\big[B\big((u_2^{\mathrm{app}}-u_1^{\mathrm{app}})+(R_2-R_1)\big)-B(u_2^{\mathrm{app}}-u_1^{\mathrm{app}})
 \notag\\
 &\hspace{24mm}-B'(u_2^{\mathrm{app}}-u_1^{\mathrm{app}})(R_2-R_1)\big]
 (u_{1,x}^{\mathrm{app}}+u_{2,x}^{\mathrm{app}})\big\}
 -(\mathcal E_1^{\mathrm{app}}+\mathcal E_2^{\mathrm{app}}),
 \notag\\
 g=&\mathcal E_1^{\mathrm{app}}-\mathcal E_2^{\mathrm{app}}.
 \label{eq:energy-sources}
\end{align}
By \eqref{eq:exact-R-equation},
\begin{equation}
 \left\{
 \begin{aligned}
 (R_1+R_2)_t-\big[A_R(R_2-R_1)\big]_x
 -2\big[D_R(R_1+R_2)_x\big]_x&=f_x,\\
 (R_2-R_1)_t-c(R_1+R_2)_x&=g_x.
 \end{aligned}
 \right.
 \label{eq:energy-system}
\end{equation}
The coefficient $D_R=d/v$ is positive as long as $v$ stays in a compact
 subset of $(0,\infty)$.
\subsection{The differential energy inequality}
For \(0\leq k\leq N\), set
\begin{equation*}
 K_k(t)=\left\langle
 (1-\partial_x^2)^{-1}\partial_x^{k+1}(R_2-R_1),
 \partial_x^k(R_1+R_2)
 \right\rangle_{L^2},
\end{equation*}
and, with \(\delta\) chosen in \eqref{eq:delta-choice},
\begin{align}
 \mathcal E_N(t)=&\frac12\sum_{k=0}^N\int_{\mathbb R}
 \left\{c\big[\partial_x^k(R_1+R_2)\big]^2
 +A_R\big[\partial_x^k(R_2-R_1)\big]^2\right\}dx
 -\delta\sum_{k=0}^NK_k(t),\notag\\
 \mathcal D_N(t)=&\sum_{k=0}^N
 \big\|\partial_x^{k+1}(R_1+R_2)(t)\big\|_2^2
 +\sum_{k=0}^N
 \big\|(1-\partial_x^2)^{-1/2}
 \partial_x^{k+1}(R_2-R_1)(t)\big\|_2^2.\notag
\end{align}
For convenience, we denote
\[
 S=R_1+R_2,\qquad D=R_2-R_1,\qquad L=1-\partial_x^2.
\]
Due to $\|L^{-1/2}\partial_x^{k+1}D\|_2\leq\|\partial_x^kD\|_2$, one has $
 \sum_{k=0}^N|K_k(t)|
 \leq C_K\|(R_1,R_2)(t)\|_{H^N}^2$,                        
thus we have
\begin{equation}
 c_E\|(R_1,R_2)(t)\|_{H^N}^2
 \leq\mathcal E_N(t)
 \leq C_E\|(R_1,R_2)(t)\|_{H^N}^2,
 \qquad
 \sum_{k=1}^N\|\partial_x^k(R_2-R_1)(t)\|_2^2
 \leq C\mathcal D_N(t).
 \label{eq:energy-equivalence}
\end{equation}
Choose \(C_G\) larger than all profile Gaussian widths and satisfying
\(16d/C_G\leq1/8\), and set
\begin{equation*}
 \mathcal G_0(t)=\frac1{1+t}\sum_{i=1}^2\int_{\mathbb R}
 \big[(R_1+R_2)^2+(R_2-R_1)^2\big](x,t)
 \exp\left[-\frac{[x-\lambda_i(1+t)]^2}{C_G(1+t)}\right]dx.
\end{equation*}

\begin{lemma}\label{qwe}
Suppose \(N\geq5\), \(v(\mathbb R\times[0,t^*])\subset I_0\), and
\(\epsilon+X_N(t^*)\) is sufficiently small.  Then there exists
\(c_{\mathrm{diss}}>0\), depending only on the fixed coefficients and
\(I_0\), such that
\begin{align}
 \frac{d}{dt}\mathcal E_N+\frac{c_{\mathrm{diss}}}{2}\mathcal D_N
 \leq&C\epsilon\mathcal G_0
 +C[\epsilon+X_N(t^*)]\mathcal D_N
 +C\sum_{i=1}^2\|\mathcal E_i^{\mathrm{app}}(t)\|_{H^{N+1}}^2\notag
 \\
 &+C\left[\epsilon(1+t)^{-3/2}
 +\epsilon^2(1+t)^{-5/4}
 +X_N(t^*)\log(2+t)(1+t)^{-3/2}
 \right.\notag\\[-2mm]
 &\hspace{42mm}\left.
 +X_N(t^*)^2\log^2(2+t)(1+t)^{-2}\right]\mathcal E_N.
 \label{eq:closed-differential-energy}
\end{align}
\end{lemma}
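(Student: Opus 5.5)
The plan is a standard Kawashima-type energy argument for the symmetrizable hyperbolic--parabolic system \eqref{eq:energy-system}. For $0\le k\le N$ I will apply $\partial_x^k$ to both equations and multiply the $S$-equation by $c\,\partial_x^kS$ and the $D$-equation by $A_R\partial_x^kD$. The transport cross terms are $-c\,\partial_x^kS\,\partial_x^{k+1}(A_RD)$ and $-cA_R\partial_x^kD\,\partial_x^{k+1}S$. After one integration by parts they cancel up to commutators $[\partial_x^k,A_R]$ and the term $\tfrac12\partial_tA_R(\partial_x^kD)^2$. The viscous term yields $2c\int D_R(\partial_x^{k+1}S)^2\ge c_0\|\partial_x^{k+1}S\|_2^2$, since $D_R=d/v\ge d/\sup I_0$.

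The missing dissipation for $D$ comes from the compensator $K_k$. I will differentiate $K_k$ in time using both equations. The $D$-equation gives $\langle L^{-1}\partial_x^{k+1}(cS_x+g_x),\partial_x^kS\rangle$, which is bounded by $\|\partial_x^{k+1}S\|_2^2+\|L^{-1/2}\partial_x^{k+1}g\|_2\|\partial_x^kS\|_2$-type terms. The $S$-equation produces the good term $-\langle L^{-1}\partial_x^{k+1}D,\partial_x^{k+1}(A_RD)\rangle$, which contains $-A_R(1)\|L^{-1/2}\partial_x^{k+1}D\|_2^2$ with $A_R\approx c+O(\epsilon)$ since $F'(0)=0$ and $B'$ multiplies a derivative. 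The remaining pieces, $2\partial_x^{k+1}(D_RS_x)$ and $f_x$, are controlled by $\|\partial_x^{k+1}S\|_2$ (using $\|L^{-1}\partial_x^{2}\|\le1$) and by source terms. Choosing $\delta$ small, as in \eqref{eq:delta-choice}, gives $\frac{d}{dt}\mathcal E_N+c_{\rm diss}\mathcal D_N\le$ (error terms), with $\mathcal E_N\simeq\|R\|_{H^N}^2$ by \eqref{eq:energy-equivalence}.

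The error terms are of three kinds.

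1. Terms in which a profile coefficient multiplies $R$ quadratically: $\partial_tA_R$, $\partial_xA_R$, and $\partial_x D_R$ through its background part.
   - Coefficients involving $\theta_i$ are Gaussian of size $\epsilon(1+t)^{-1}$ for $\partial_x\theta$ and $\partial_t\theta$. Their zero-order part is bounded by $\epsilon(1+t)^{-1}\int R^2e^{-[x-\lambda_i(1+t)]^2/(C(1+t))}\le C\epsilon\mathcal G_0$. Here $C_G$ must exceed the profile widths.
   - Terms with derivatives are absorbed into $\epsilon\mathcal D_N$, using $\|\partial_xS\|$ and $\|\partial_x D\|\lesssim\mathcal D_N^{1/2}$ at levels $k\ge1$.
   - The $\Xi$ parts are bounded in $L^\infty$ by $\epsilon^2(1+t)^{-5/4}$ after one derivative, giving the $\epsilon^2(1+t)^{-5/4}\mathcal E_N$ term. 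Higher $\theta$ derivatives give $\epsilon(1+t)^{-3/2}\mathcal E_N$.
2. Genuinely nonlinear terms coming from $f$ and from $D_R$ depending on $R$. By Moser estimates these are bounded by $\|R\|_{W^{1,\infty}}(\mathcal D_N+\ldots)$ plus $\|R\|_{W^{1,\infty}}^2\mathcal E_N$. Then \eqref{eq:derived-low-rates} gives $X_N\log(2+t)(1+t)^{-1}$. The top-order term must be placed so that at most one factor sits in $L^\infty$ without a dissipative partner. I will use $\|R\|_\infty\le CX_N\log(2+t)(1+t)^{-1}$ times $\mathcal D_N$, giving $CX_N\mathcal D_N$, and the non-dissipative undifferentiated remainder pieces yield the $X_N\log(1+t)^{-3/2}$ and $X_N^2\log^2(1+t)^{-2}$ factors.
3. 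The residual $\mathcal E^{\rm app}$, handled by Cauchy--Schwarz against $\partial_x^{k+1}S$ or $L^{-1/2}\partial_x^{k+1}D$, which gives $\|\mathcal E^{\rm app}\|_{H^{N+1}}^2$.

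The derivative count at top order $k=N$ is the main obstacle. The term $\partial_x^N(A_RD)$ paired with the hyperbolic cancellation requires $D$-regularity, and $\partial_x^{N+1}(D_R S_x)$ contains $\partial_x^{N+1}R$ multiplied by $B'$. I handle this by the usual trick. The principal part $2\int D_R\partial_x^{N+1}S\,\partial_x^{N+1}S$ is kept, and the term $\partial_x^{N+1}D_R\,S_x$ is controlled using $\|\partial_x^{N+1}D\|\lesssim$ via the compensator dissipation, since $\|L^{-1/2}\partial_x^{N+1}D\|$ controls $\|\partial_x^{N}D\|$ only. Where this fails, I will rewrite $\partial_x^{N+1}v$ through the $D$-equation as $\partial_x^N$ of $S$ and $g$, that is, use $v_t=u_x$. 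This keeps every term within $\mathcal D_N$ plus $\|\mathcal E^{\rm app}\|_{H^{N+1}}$, and the smallness $\epsilon+X_N$ then absorbs these contributions into half the dissipation.
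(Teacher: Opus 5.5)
Your route is the paper's: the symmetrizer $\mathrm{diag}(c,A_R)$ applied to $\partial_x^k$ of \eqref{eq:energy-system}, and the compensator $K_k$ subtracted with a small $\delta$. The error terms are sorted the same way. Gaussian coefficient terms go to $\epsilon\mathcal G_0$. Koike tails and higher $\theta$-derivatives go to the $[\epsilon(1+t)^{-3/2}+\epsilon^2(1+t)^{-5/4}]\mathcal E_N$ bracket. The $R$-dependence of $D_R$ and $f$ is handled by Moser estimates together with \eqref{eq:derived-low-rates}, and the residual is measured in $H^{N+1}$. This main line works as in the paper.

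Your last paragraph should be discarded: the top-order obstacle it describes never occurs, and the proposed cure would not work if it did. In both \eqref{eq:basic-energy-identity} and \eqref{eq:kawashima-identity} the viscous term is integrated by parts once, which you already do implicitly in your first two paragraphs. It therefore enters only as $-2c\langle\partial_x^{k+1}S,\partial_x^k(D_RS_x)\rangle$ and $-2\langle\partial_xL^{-1}\partial_x^{k+1}D,\partial_x^k(D_RS_x)\rangle$ with $k\le N$; these are the paper's $I_{2,k}$ and $I_{7,k}$.

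The worst coefficient factor is then $\partial_x^ND_R$. Up to lower-order products, its $R$-part is a multiple of $\partial_x^N(R_2-R_1)$. That is bounded in $L^2$ by $\mathcal E_N^{1/2}$, and even by $C\mathcal D_N^{1/2}$ since $N\geq1$, cf. \eqref{eq:energy-equivalence} and \eqref{eq:viscosity-commutator}. Likewise, the top hyperbolic commutator is $(\partial_x^{N+1}A_R)(R_2-R_1)$. Since $A_R$ depends only on the background profile, it costs no derivative of $R$.

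Your fallback of rewriting $\partial_x^{N+1}v$ through $v_t=u_x$ does not do what you claim. That relation trades a time derivative for a spatial one, and it gives no expression of $\partial_x^{N+1}v$ in terms of $\partial_x^N$ of $S$ and $g$. Had $\partial_x^{N+1}D_R\,S_x$ genuinely appeared, the argument would have broken at that step. Replace the paragraph with the one-line observation that, after this integration by parts, no more than $N$ derivatives ever fall on $D_R$.
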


\subsection{Proof of the differential energy inequality.}
In this subsection, we prove  Lemma \ref{qwe}.
\begin{proof}
Applying \(\partial_x^k\) to \eqref{eq:energy-system}, testing the two
equations by \(c\partial_x^kS\) and \(A_R\partial_x^kD\), and integrating
by parts yields
\begin{align}
 &\frac12\frac{d}{dt}\int_{\mathbb R}
 \left\{c\big[\partial_x^k(R_1+R_2)\big]^2
 +A_R\big[\partial_x^k(R_2-R_1)\big]^2\right\}dx
 +2c\int_{\mathbb R}D_R\big[\partial_x^{k+1}(R_1+R_2)\big]^2dx
 \notag\\
 &=I_{1,k}+I_{2,k}+I_{3,k}+I_{4,k},
 \label{eq:basic-energy-identity}
\end{align}
where
\begin{align*}
 I_{1,k}:=&\frac12\int_{\mathbb R}(A_R)_t(\partial_x^kD)^2\,dx
 +kc\int_{\mathbb R}(A_R)_x\partial_x^kS\,\partial_x^kD\,dx+c\sum_{\ell=2}^{k+1}\binom{k+1}{\ell}
 \int_{\mathbb R}(\partial_x^\ell A_R)
 \partial_x^kS\,\partial_x^{k+1-\ell}D\,dx,\\
 I_{2,k}:=&-2c\sum_{\ell=1}^{k}\binom{k}{\ell}
 \int_{\mathbb R}(\partial_x^\ell D_R)
 \partial_x^{k+1-\ell}S\,\partial_x^{k+1}S\,dx,\qquad
 I_{3,k}:=-c\langle\partial_x^{k+1}S,\partial_x^kf\rangle,\\
 I_{4,k}:=&\langle A_R\partial_x^kD,\partial_x^{k+1}g\rangle.
\end{align*}
Differentiating \(K_k\) and using \eqref{eq:energy-system} yields
\begin{align}
 K_k'=I_{5,k}+I_{6,k}+I_{7,k}+I_{8,k},
 \label{eq:kawashima-identity}
\end{align}
where
\begin{align*}
 I_{5,k}:=&-c\|L^{-1/2}\partial_x^{k+1}S\|_2^2
 +\langle L^{-1}\partial_x^{k+1}D,
 A_R\partial_x^{k+1}D\rangle,\\
 I_{6,k}:=&\sum_{\ell=1}^{k+1}\binom{k+1}{\ell}
 \left\langle L^{-1}\partial_x^{k+1}D,
 (\partial_x^\ell A_R)\partial_x^{k+1-\ell}D\right\rangle,\\
 I_{7,k}:=&-2\sum_{\ell=0}^{k}\binom{k}{\ell}
 \left\langle\partial_xL^{-1}\partial_x^{k+1}D,
 (\partial_x^\ell D_R)\partial_x^{k+1-\ell}S\right\rangle,\\
 I_{8,k}:=&-\langle L^{-1}\partial_x^{k+1}g,
 \partial_x^{k+1}S\rangle
 +\langle L^{-1}\partial_x^{k+1}D,\partial_x^{k+1}f\rangle.
\end{align*}
The second term of \(I_{5,k}\) decomposes as
\begin{align}
 &\left\langle (1-\partial_x^2)^{-1}\partial_x^{k+1}(R_2-R_1),
 A_R\partial_x^{k+1}(R_2-R_1)\right\rangle
 \notag\\
=&c\big\|(1-\partial_x^2)^{-1/2}
 \partial_x^{k+1}(R_2-R_1)\big\|_2^2
 +\left\langle (1-\partial_x^2)^{-1}
 \partial_x^{k+1}(R_2-R_1),
 (A_R-c)\partial_x^{k+1}(R_2-R_1)\right\rangle.
 \label{eq:kawashima-positive-term}
\end{align}
\begin{align}
 &\big\langle (1-\partial_x^2)^{-1}\partial_x^{k+1}(R_2-R_1),
 (A_R-c)\partial_x^{k+1}(R_2-R_1)\big\rangle\notag\\
 =&\int_{\mathbb R}(A_R-c)
 \left[(1-\partial_x^2)^{-1}\partial_x^{k+1}(R_2-R_1)\right]^2dx+\int_{\mathbb R}(A_R-c)
 \left[\partial_x(1-\partial_x^2)^{-1}
 \partial_x^{k+1}(R_2-R_1)\right]^2dx\notag\\
 &\quad+\int_{\mathbb R}(A_R)_x
 (1-\partial_x^2)^{-1}\partial_x^{k+1}(R_2-R_1)\times\partial_x(1-\partial_x^2)^{-1}
 \partial_x^{k+1}(R_2-R_1)\,dx.
 \label{eq:kawashima-variable-coefficient}
\end{align}
Summing \eqref{eq:basic-energy-identity} minus  \(\delta\) times \eqref{eq:kawashima-identity} over $k=0,\cdots,N$, we obtain
\begin{align*}
 \frac{d}{dt}\mathcal E_N
&+2c\sum_{k=0}^N\int_{\mathbb R}
 D_R(\partial_x^{k+1}S)^2\,dx
 +\delta c\sum_{k=0}^N
 \|L^{-1/2}\partial_x^{k+1}D\|_2^2\\
&=J_1(t)+I_{\mathrm{coeff}}(t)+I_f(t)+I_{\mathrm{bg}}(t),
\end{align*}
where we split the contributions of $I_{3,k}$ and $I_{8,k}$ between $I_f$ and $I_{\mathrm{bg}}$, and that of $I_{7,k}$ between $J_1$ and $I_{\mathrm{coef}}$. As for $I_{5,k}$, one part contributes to the dissipation term $D_N$, while the other is incorporated into $J_1$, thus we have
\begin{align*}
    &J_1(t):=\delta c\sum_{k=0}^N
 \|L^{-1/2}\partial_x^{k+1}S\|_2^2-\delta\sum_{k=0}^N
 \left\langle L^{-1}\partial_x^{k+1}D,
 (A_R-c)\partial_x^{k+1}D\right\rangle\\
 &\qquad\qquad\;+2\delta\sum_{k=0}^N
 \left\langle\partial_xL^{-1}\partial_x^{k+1}D,
 D_R\partial_x^{k+1}S\right\rangle,\\
 &I_{\mathrm{coeff}}(t):=
 \sum_{k=0}^N\big(I_{1,k}+I_{2,k}-\delta I_{6,k}\big)+2\delta\sum_{k=0}^N\sum_{\ell=1}^{k}\binom{k}{\ell}
 \left\langle\partial_xL^{-1}\partial_x^{k+1}D,
 (\partial_x^\ell D_R)\partial_x^{k+1-\ell}S\right\rangle,\\
  &I_f(t):=-c\sum_{k=0}^N
 \left\langle\partial_x^{k+1}S,
 \partial_x^k(f+\mathcal E_1^{\mathrm{app}}
 +\mathcal E_2^{\mathrm{app}})\right\rangle-\delta\sum_{k=0}^N
 \left\langle L^{-1}\partial_x^{k+1}D,
 \partial_x^{k+1}(f+\mathcal E_1^{\mathrm{app}}
 +\mathcal E_2^{\mathrm{app}})\right\rangle ,\\
  &I_{\mathrm{bg}}(t):=\sum_{k=0}^N I_{4,k}+c\sum_{k=0}^N
 \left\langle\partial_x^{k+1}S,
 \partial_x^k(\mathcal E_1^{\mathrm{app}}
 +\mathcal E_2^{\mathrm{app}})\right\rangle+\delta\sum_{k=0}^N
 \left\langle L^{-1}\partial_x^{k+1}
 (\mathcal E_1^{\mathrm{app}}-\mathcal E_2^{\mathrm{app}}),
 \partial_x^{k+1}S\right\rangle\\
 &\qquad\qquad\;+\delta\sum_{k=0}^N
 \left\langle L^{-1}\partial_x^{k+1}D,
 \partial_x^{k+1}(\mathcal E_1^{\mathrm{app}}
 +\mathcal E_2^{\mathrm{app}})\right\rangle,
\end{align*}
The profile bounds give
\begin{equation}
 \|A_R(t)-c\|_\infty+\|(A_R)_x(t)\|_\infty
 \leq C\left\{\epsilon(1+t)^{-1/2}
       +\epsilon^2(1+t)^{-3/4}\right\}\leq\frac c4.  \label{eq:AR-positivity}
\end{equation}
Let \(I_0=[\underline v,\overline v]\) and
\begin{equation*}
 D_{\min}:=\frac d{\overline v},\qquad D_{\max}:=\frac d{\underline v}.
\end{equation*}
Choose
\begin{equation}
 \delta\leq\min\left\{1,\frac{c}{8C_K},
 \frac{D_{\min}}{4},\frac{c^2D_{\min}}{16(D_{\max})^2}\right\}.             \label{eq:delta-choice}
\end{equation}
By \eqref{eq:AR-positivity}--\eqref{eq:delta-choice},
\begin{align*}
 J_1(t)\leq\frac12\Bigg[
 2cD_{\min}\sum_{k=0}^N\|\partial_x^{k+1}S\|_2^2
 +\delta c\sum_{k=0}^N
 \|L^{-1/2}\partial_x^{k+1}D\|_2^2\Bigg].
\end{align*}
Hence, for some fixed \(c_{\mathrm{diss}}>0\),
\begin{align}
 &2c\sum_{k=0}^N\int_{\mathbb R}
 D_R(\partial_x^{k+1}S)^2\,dx
 +\delta c\sum_{k=0}^N
 \|L^{-1/2}\partial_x^{k+1}D\|_2^2-J_1(t)
 \notag\\
 &\hspace{45mm}\geq c_{\mathrm{diss}}\mathcal D_N .
 \label{eq:fixed-principal-dissipation}
\end{align}
Thus
\begin{equation}\label{qqq}
 \frac{d}{dt}\mathcal E_N+c_{\mathrm{diss}}\mathcal D_N
 \leq |I_{\mathrm{coeff}}(t)|+|I_f(t)|+|I_{\mathrm{bg}}(t)|.
\end{equation}

\subsubsection{Estimate of $I_{\mathrm{coeff}}$} {\bf The first step is to consider
the derivatives of \(A_R\): separation of the Gaussian core and the integrable tail.}
The \(A_R\)-terms of \(I_{\mathrm{coeff}}\) are
\(\sum_{k=0}^N(I_{1,k}-\delta I_{6,k})\).
For \(0\leq k\leq N\), put
\begin{align*}
 \mathcal A_k(x,t):=&\epsilon(1+t)^{-1-k/2}
 \sum_{\alpha=1}^2
 \exp\left[-\frac{[x-\lambda_\alpha(1+t)]^2}{C(1+t)}\right]\\
 &+\epsilon^2(1+t)^{-(k+1)/2}
 \sum_{\alpha=1}^2
 \frac1{([x-\lambda_\alpha(1+t)]^2+1+t)^{3/4}}\\
 &+\epsilon^2(1+t)^{-(k+1)/2}
 \sum_{\alpha=1}^2
 \frac1{|x-\lambda_\alpha(1+t)|^{3/2}+1+t}.
\end{align*}
Then
\begin{equation}
 |\partial_x^k(A_R)_x(x,t)|\leq C_k\mathcal A_k(x,t),
 \qquad 0\leq k\leq N.                                  \label{eq:AR-derivative-bound}
\end{equation}
The coefficient in the first term of \(I_{1,k}\) satisfies
\begin{align}
 (A_R)_t=&2F''(u_2^{\mathrm{app}}-u_1^{\mathrm{app}})(u_{2,t}^{\mathrm{app}}-u_{1,t}^{\mathrm{app}})\notag\\
 &+2B''(u_2^{\mathrm{app}}-u_1^{\mathrm{app}})(u_{2,t}^{\mathrm{app}}-u_{1,t}^{\mathrm{app}})
       (u_{1,x}^{\mathrm{app}}+u_{2,x}^{\mathrm{app}})\notag\\
 &+2B'(u_2^{\mathrm{app}}-u_1^{\mathrm{app}})(u_{1,xt}^{\mathrm{app}}+u_{2,xt}^{\mathrm{app}}).       \label{eq:AR-time-chain}
\end{align}
The profile bounds imply
\begin{equation*}
 |(A_R)_t(x,t)|\leq C\mathcal A_0(x,t).
\end{equation*}
For \(k=0\), the corresponding term is
\(I_{1,0}=\frac12\int_{\mathbb R}(A_R)_tD^2\,dx\), which satisfies
\begin{equation*}
 |I_{1,0}|
 \leq C\epsilon\mathcal G_0
 +C\epsilon^2(1+t)^{-5/4}\mathcal E_N.
\end{equation*}
The endpoint terms are the \(\ell=N+1\) parts of \(I_{1,N}\) and
\(-\delta I_{6,N}\):
\begin{align*}
 &\left|\int_{\mathbb R}(\partial_x^{N+1}A_R)
       \partial_x^N(R_1+R_2)(R_2-R_1)\,dx\right|
 \leq \|\partial_x^{N+1}A_R\|_\infty
       \|\partial_x^N(R_1+R_2)\|_2\|R_2-R_1\|_2,\\
 &\left|
 \left\langle (1-\partial_x^2)^{-1}\partial_x^{N+1}(R_2-R_1),(\partial_x^{N+1}A_R)(R_2-R_1)\right\rangle
\right| \\
 &\leq \|(1-\partial_x^2)^{-1/2}\partial_x^{N+1}(R_2-R_1)\|_2\|(\partial_x^{N+1}A_R)(R_2-R_1)\|_2.
\end{align*}
All other \(A_R\)-terms in \(\sum_{k=0}^N(I_{1,k}-\delta I_{6,k})\) follow from \eqref{eq:AR-derivative-bound} and
\eqref{eq:energy-equivalence} and require at most \(N\)
derivatives of \(S\) and \(D\).

{\bf The second step is to consider the derivatives of \(D_R\) and the viscous compensator.}
The \(D_R\)-part of \(I_{\mathrm{coeff}}\) consists of
\(\sum_{k=0}^NI_{2,k}\) and the
\(\ell\geq1\) part of \(-\delta\sum_{k=0}^NI_{7,k}\). To estimate these terms in $I_{\mathrm{coeff}}$, we first establish the necessary estimates for $D_R$, together with the commutator estimates involving $R_1$ and $R_2$.
By $D_R=\frac d v$, it holds
\begin{equation*}
 D_R-d=d\left(\frac1v-1\right),\qquad
 (D_R)_x=-d\frac{v_x}{v^2},\qquad
 (D_R)_{xx}=2d\frac{v_x^2}{v^3}-d\frac{v_{xx}}{v^2},
\end{equation*}
then we have
\begin{align}
 \|D_R-d\|_\infty
 &\leq C\left[\epsilon(1+t)^{-1/2}
 +\epsilon^2(1+t)^{-3/4}
 +X_N(t^*)\log(2+t)(1+t)^{-1}\right],
 \notag\\
 \|(D_R)_x\|_\infty
 &\leq C\left[\epsilon(1+t)^{-1}
 +\epsilon^2(1+t)^{-5/4}
 +X_N(t^*)\log(2+t)(1+t)^{-3/2}\right],
 \label{eq:viscosity-coefficient-bounds}
\end{align}
and
\begin{align*}
 \|(D_R)_{xx}\|_\infty
 \leq C\Big[&
 \epsilon(1+t)^{-3/2}
 +\epsilon^2(1+t)^{-7/4}\\
 &+X_N(t^*)\log^{\beta_N}(2+t)(1+t)^{-\frac32\beta_N}
 +X_N(t^*)^2\log^2(2+t)(1+t)^{-3}\Big].
\end{align*}
Moreover,
\begin{align}
 &\|D_R-d\|_{H^N}
 \leq C_N\{\epsilon+\|R_2-R_1\|_{H^N}\},                   \label{eq:viscosity-HN}\\
 &[\partial_x^k,D_R](R_1+R_2)_x
 =\sum_{\ell=1}^k\binom{k}{\ell}
 (\partial_x^\ell D_R)\partial_x^{k+1-\ell}(R_1+R_2),
 \notag\\
 &\|[\partial_x^k,D_R](R_1+R_2)_x\|_2
 \leq C_N\Big\{\|(D_R)_x\|_\infty\|R_1+R_2\|_{H^k}
 +\|D_R-d\|_{H^k}\|(R_1+R_2)_x\|_\infty\Big\}.
\label{eq:viscosity-commutator}
\end{align}
For convenience, we fix
$
 \eta_0:=\frac{c_{\mathrm{diss}}}{64}
$. The \(\ell=1\) term of \(I_{2,k}\) satisfies
\begin{equation*}
 \left|2ck\int_{\mathbb R}(D_R)_x
 \partial_x^kS\,\partial_x^{k+1}S\,dx\right|
 \leq\eta_0\mathcal D_N
 +C\|(D_R)_x\|_\infty^2\mathcal E_N.
\end{equation*}
Next, we estimate the terms in $I_{7,k}$ corresponding to $l\geq 1$. It follows from \eqref{eq:viscosity-commutator} that
\begin{align}\label{eq:Kawashima-viscosity-bound}
 &\left|2\delta\sum_{k=0}^N\left\langle
 \partial_xL^{-1}\partial_x^{k+1}D,
 [\partial_x^k,D_R]S_x\right\rangle\right| \notag\\
 \leq&\eta_0\mathcal D_N
 +C\Big[\epsilon^2(1+t)^{-2}
 +\epsilon^4(1+t)^{-5/2}
 +X_N(t^*)^2\log^2(2+t)(1+t)^{-3}\Big]\mathcal E_N .
\end{align}
The remaining \(D_R\)-terms, the \(2\leq\ell\leq k\) parts of
\(I_{2,k}\), follow from
\eqref{eq:viscosity-coefficient-bounds}--\eqref{eq:viscosity-commutator}. Thus, we obtain
\begin{align}
 |I_{\mathrm{coeff}}(t)|&\leq
 8\eta_0\mathcal D_N+C\epsilon\mathcal G_0
 +C[\epsilon+X_N(t^*)]\mathcal D_N+CX_N(t^*)^2\log^2(2+t)(1+t)^{-2}\mathcal E_N\notag\\
 &\qquad+C\Big[\epsilon(1+t)^{-3/2}+\epsilon^2(1+t)^{-5/4}
 +X_N(t^*)\log(2+t)(1+t)^{-3/2}\Big]\mathcal E_N
  .
 \label{eq:coefficient-energy-action}
\end{align}

\subsubsection{Nonlinear sources and background residuals}
We first consider  the estimate of $I_f$.  The quadratic source satisfies
\begin{equation}
 \|f+\mathcal E_1^{\mathrm{app}}+\mathcal E_2^{\mathrm{app}}\|_{H^N}
 \leq C X_N(t^*)\log(2+t)(1+t)^{-1}\mathcal E_N(t)^{1/2}.
 \label{eq:energy-Moser-bound}
\end{equation}
Since
\begin{equation*}
 \|L^{-1/2}\partial_x^{k+1}(f+\mathcal E_1^{\mathrm{app}}
 +\mathcal E_2^{\mathrm{app}})\|_2
 \leq\|\partial_x^k(f+\mathcal E_1^{\mathrm{app}}
 +\mathcal E_2^{\mathrm{app}})\|_2,
\end{equation*}
the compensated source term loses no derivative.  Hence, we have
\begin{align}
 |I_f(t)|=&\Bigg|-c\sum_{k=0}^N\left\langle\partial_x^{k+1}(R_1+R_2),
 \partial_x^k(f+\mathcal E_1^{\mathrm{app}}+\mathcal E_2^{\mathrm{app}})\right\rangle\notag\\
 &\quad-\delta\sum_{k=0}^N\left\langle
 (1-\partial_x^2)^{-1}\partial_x^{k+1}(R_2-R_1),
 \partial_x^{k+1}(f+\mathcal E_1^{\mathrm{app}}
 +\mathcal E_2^{\mathrm{app}})\right\rangle\Bigg|\notag\\
 \leq&\eta_0\mathcal D_N
 +CX_N(t^*)^2\log^2(2+t)(1+t)^{-2}\mathcal E_N.
 \label{eq:full-quadratic-action}
\end{align}

Now, we consider the estimate \(I_{\mathrm{bg}}\).   Only the $I_{4,k}$-part of \(I_{\mathrm{bg}}\) requires a redistribution of derivatives
\begin{align}
 &\sum_{k=0}^N I_{4,k}=-\sum_{k=0}^{N-1}\Big\{
 \left\langle A_R\partial_x^{k+1}(R_2-R_1),
 \partial_x^k(\mathcal E_1^{\mathrm{app}}-\mathcal E_2^{\mathrm{app}})\right\rangle+\left\langle(A_R)_x\partial_x^k(R_2-R_1),
 \partial_x^k(\mathcal E_1^{\mathrm{app}}-\mathcal E_2^{\mathrm{app}})\right\rangle\Big\}\notag\\
 &\qquad\qquad\qquad+\left\langle A_R\partial_x^N(R_2-R_1),
 \partial_x^{N+1}(\mathcal E_1^{\mathrm{app}}-\mathcal E_2^{\mathrm{app}})\right\rangle.
 \label{eq:top-background-pair}
\end{align}
The other three terms follow from Young's inequality and
\eqref{eq:energy-equivalence}.  Hence
\begin{align}
 |I_{\mathrm{bg}}(t)|\leq&
 \eta_0\mathcal D_N+C\epsilon\mathcal G_0
 +C\sum_{i=1}^2\|\mathcal E_i^{\mathrm{app}}\|_{H^{N+1}}^2 \notag\\
& +C\Big[
 \epsilon(1+t)^{-3/2}+\epsilon^2(1+t)^{-5/4}
 +X_N(t^*)\log(2+t)(1+t)^{-3/2}\Big]\mathcal E_N
  .
 \label{eq:background-energy-action}
\end{align}

\subsubsection{Conclusion}
  Combining \eqref{qqq},
\eqref{eq:coefficient-energy-action}, \eqref{eq:full-quadratic-action}, and
\eqref{eq:background-energy-action}, we prove  Lemma \ref{qwe}.
\end{proof}

\subsection{The cone Gaussian estimate}  In this subsection, we deal with $\mathcal G_0(\tau)$ in \eqref{eq:closed-differential-energy}.

\begin{lemma}
Under the assumptions of Lemma \ref{qwe}, it holds
\begin{align}
 \int_0^t\mathcal G_0(\tau)\,d\tau
 \leq&C\mathcal E_N(0)+C\int_0^t\mathcal D_N(\tau)\,d\tau
 +C\int_0^t\sum_{i=1}^2
 \|\mathcal E_i^{\mathrm{app}}(\tau)\|_{H^{N+1}}^2d\tau
 \notag\\
 &+C\int_0^t\left[\epsilon(1+\tau)^{-3/2}
 +\epsilon^2(1+\tau)^{-5/4}
 +X_N(t^*)\log(2+\tau)(1+\tau)^{-3/2}
 \right.\notag\\[-2mm]
 &\hspace{37mm}\left.
 +X_N(t^*)^2\log^2(2+\tau)(1+\tau)^{-2}\right]
 \mathcal E_N(\tau)\,d\tau.
\label{eq:gaussian-energy-bound}
\end{align}
\end{lemma}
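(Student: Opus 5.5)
The plan is to prove \eqref{eq:gaussian-energy-bound} by a Gaussian-weighted zeroth-order energy identity of the type used for nonlinear diffusion waves. The cone Gaussian is generated by time-differentiating a bounded, slowly varying weight. For each family $i$ set
\[
 w_i(x,t)=\int_{-\infty}^{x}(1+t)^{-1/2}\exp\left[-\frac{[y-\lambda_i(1+t)]^2}{C_G(1+t)}\right]dy,
\]
so that $0\leq w_i\leq C\sqrt{C_G}$ and $w_{i,x}=(1+t)^{-1/2}e^{-[\cdot]^2/(C_G(1+t))}$. The exponent of $(1+t)$ in this normalization may be adjusted so that the term produced below carries exactly the weight $(1+t)^{-1}$ defining $\mathcal G_0$. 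I would test the zeroth-order case of \eqref{eq:energy-system} against a weighted combination of the form $(1+\kappa w_i)\,(cS,A_RD)$, possibly together with the analogous weighted Kawashima corrector $\langle w_iL^{-1}\partial_xD,S\rangle$, and then integrate over $[0,t]$.

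The key computation is the one producing the positive Gaussian term. For a characteristic combination moving with speed $\lambda_i$, the weight satisfies $w_{i,t}+\lambda_iw_{i,x}=-\frac{y-\lambda_i(1+t)}{2(1+t)}w_{i,x}$ up to the $t$-derivative of the prefactor. Choosing the sign of $\kappa$ and the diagonalization, i.e.\ working with $R_i$ directly rather than with $S,D$ so that the transport speed $\lambda_i$ is visible, the flux terms give
\[
 -\frac{\kappa}{2}\int(\lambda_i\,\cdot\,)\,w_{i,x}\,|R_i|^2
\]
with a definite sign. The same-family transport then produces $\int w_{i,x}R_i^2$, while the opposite family $R_{3-i}$ moves at relative speed $2c$ through the $i$-Gaussian and also yields a definite-signed term, namely $c\int w_{i,x}R_{3-i}^2$. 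Because $w_{i,x}\sim(1+t)^{-1/2}$ times the Gaussian, one must instead take $w_i$ with prefactor $(1+t)^{-1}$ integrated over a scale $\sqrt{1+t}$. This is the normalization choice I would fix first.

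All remaining terms should be absorbable. The diffusive term $2\int D_Rw_iS_x^2$ is harmless since $w_i\geq0$ is bounded. The cross term $\int D_Rw_{i,x}S\,S_x$ is bounded by $\eta\,\mathcal G_0+C\|S_x\|_2^2$, using $|w_{i,x}|^2/w_{i,x}\leq(1+t)^{-1}$-type bounds and $16d/C_G\leq1/8$, which is where that choice of $C_G$ enters. The coupling terms with $(A_R-c)$ and $(A_R)_x$ are controlled by \eqref{eq:AR-positivity} and \eqref{eq:AR-derivative-bound}, which give exactly the listed $\epsilon(1+\tau)^{-3/2}$ and $\epsilon^2(1+\tau)^{-5/4}$ multiples of $\mathcal E_N$. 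The nonlinear source $f$ is handled via \eqref{eq:energy-Moser-bound}, giving the $X_N$ terms. The residuals $\mathcal E^{\mathrm{app}}$ are handled by Young's inequality, producing $\|\mathcal E_i^{\mathrm{app}}\|_{H^{N+1}}^2$. The undamped component $D$ appears at zeroth order only through the Kawashima part, and $\|L^{-1/2}D_x\|_2^2\leq\mathcal D_N$. Integrating in time, the boundary terms are bounded by $C\mathcal E_N(0)+C\mathcal E_N(t)$. The last term is then controlled via Lemma~\ref{qwe} integrated in time, which bounds $\mathcal E_N(t)$ by the same right-hand side.

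I expect the main obstacle to be obtaining a definite sign for both families simultaneously within a single weight. The transport of $S$ and $D$ is not diagonal, and the cross term $\int w_{i,x}SD$ carries no sign. My first attempt is to rewrite the zeroth-order system in the variables $R_1,R_2$, using the diagonal transport of \eqref{eq:exact-R-equation}, and to use weights $\pm w_i$ chosen per family. The diffusion couples $R_1$ and $R_2$ only through $S_{xx}$, which is absorbed into $\mathcal D_N$.
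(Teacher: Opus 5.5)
There is a genuine gap in what you call the key computation: transport against your weight never produces the same-family Gaussian term.

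\textbf{Why the linear weight fails.} Since $w_i$ depends only on $\xi_i=(x-\lambda_i(1+t))/\sqrt{1+t}$, one checks
\[
 w_{i,t}+\lambda_iw_{i,x}=-\frac{x-\lambda_i(1+t)}{2(1+t)}\,w_{i,x}=\frac{C_G}{4}\,w_{i,xx},
\]
so $w_i$ itself solves a convected heat equation. Test the $S$- and $D$-equations with the common weight. The weight-derivative and flux terms then combine into
\[
 -\frac c2\int_{\mathbb R}w_{i,x}\bigl[\lambda_i(S^2+D^2)+2cSD\bigr]\,dx
 +\frac{cC_G}{8}\int_{\mathbb R}w_{i,xx}(S^2+D^2)\,dx .
\]
The first term equals $-2c^2\int w_{1,x}R_2^2$ for $i=1$ and $2c^2\int w_{2,x}R_1^2$ for $i=2$. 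It has a definite sign, so the indefinite $\int w_{i,x}SD$ you flag as the main obstacle is not actually an obstacle. But it involves only the \emph{opposite} family, whose relative speed is $\mp2c$.

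The second term is the pure cross term $-\frac{cC_G}{4}\int w_{i,x}(SS_x+DD_x)$ and has no sign. There is no term $\int w_{i,x}R_i^2$, because the same family has relative speed zero. Since $\mathcal G_0$ contains $S^2+D^2=2(R_1^2+R_2^2)$, the same-family half ($R_i^2$ in the $i$-cone) stays uncontrolled.

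No renormalization of $w_i$ fixes this. In the variables $\xi_i$ and $s=\log(1+t)$, a weight with $w_t+\lambda_iw_x\leq-c_0(1+t)^{-1}e^{-\xi_i^2/C_G}$ must satisfy $\partial_sW(0,s)\leq-c_0$ at the cone centre, so it is unbounded. The logarithmic divergence of $\int(1+t)^{-1}dt$ has to be paid for by dissipation.

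\textbf{The missing idea.} Use the Huang--Li--Matsumura inequality \eqref{eq:HLM-inequality}, as the paper does: \emph{square} a heat-solving weight. Take $\omega_i$ as in the paper, so that $\frac2{C_G}(\partial_t+\lambda_i\partial_x)\omega_i=\omega_{i,xx}$ and $\omega_{i,x}^2=(1+t)^{-1}e^{-[x-\lambda_i(1+t)]^2/(C_G(1+t))}$. Then for any $h$,
\[
 \frac d{dt}\int_{\mathbb R}h^2\omega_i^2\,dx+C_G\int_{\mathbb R}h^2\omega_{i,x}^2\,dx
 =2\bigl\langle(\partial_t+\lambda_i\partial_x)h,\,h\omega_i^2\bigr\rangle
 -2C_G\int_{\mathbb R}hh_x\omega_i\omega_{i,x}\,dx,
\]
and the last term is at most $\frac{C_G}2\int h^2\omega_{i,x}^2+2C_G\|\omega_i\|_\infty^2\|h_x\|_2^2$. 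The positive Gaussian term now comes from the heat equation of the weight itself, for each scalar $h$ regardless of its transport speed. The price is the unweighted $\|h_x\|_2^2$.

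Apply this with $h=S$ and $h=D$. Both $\|S_x\|_2^2$ and $\|D_x\|_2^2$ are bounded by $C\mathcal D_N$ by \eqref{eq:energy-equivalence}, so no weighted Kawashima corrector is needed. The transport part of the duality term is exactly the signed square \eqref{eq:signed-transport-two-families} (with $\omega_2\leq0$) and is discarded. The remaining pieces are then handled much as you describe: the constant viscosity is absorbed via $16d/C_G\leq1/8$, and $A_R-c$, $D_R-d$, $f$, $g$ are treated by Young's inequality. Finally, the terminal term $\int h^2\omega_i^2(t)\,dx\geq0$ is simply dropped, so you do not need Lemma~\ref{qwe} to control $\mathcal E_N(t)$.
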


\begin{proof}
With the \(C_G\) fixed above, define
\begin{align*}
 \omega_1(x,t)&=(1+t)^{-1/2}
 \int_{-\infty}^{x-c(1+t)}
 \exp\left[-\frac{y^2}{2C_G(1+t)}\right]dy,\\
 \omega_2(x,t)&=-(1+t)^{-1/2}
 \int_{x+c(1+t)}^\infty
 \exp\left[-\frac{y^2}{2C_G(1+t)}\right]dy.
\end{align*}
Then
\begin{equation*}
 (\omega_i)_x=(1+t)^{-1/2}
 \exp\left[-\frac{[x-\lambda_i(1+t)]^2}{2C_G(1+t)}\right],
 \qquad
 \frac{2}{C_G}(\partial_t+\lambda_i\partial_x)\omega_i=(\omega_i)_{xx}.
\end{equation*}
The Gaussian-weight inequality of Huang-Li-Matsumura \cite{HuangLiMatsumura2010} gives
\begin{align}
 &\int_0^t\frac1{1+\tau}\int_{\mathbb R}h(x,\tau)^2
 \exp\left[-\frac{[x-\lambda_i(1+\tau)]^2}{C_G(1+\tau)}\right]dx\,d\tau\notag
 \\
 &\leq4\pi\|h(0)\|_2^2
 +8\pi C_G\int_0^t\|h_x(\tau)\|_2^2\,d\tau
 +\frac4{C_G}\int_0^t
 \left\langle(\partial_\tau+\lambda_i\partial_x)h,
 h(\omega_i)^2\right\rangle_{H^{-1}\times H^1}d\tau.
\label{eq:HLM-inequality}
\end{align}
By \eqref{eq:energy-system}, one has
\begin{align}
 (\partial_t+\lambda_i\partial_x)(R_1+R_2)
 &=\partial_x\big[A_R(R_2-R_1)+2D_R(R_1+R_2)_x
 +f+\lambda_i(R_1+R_2)\big],\label{aaaa}\\
 (\partial_t+\lambda_i\partial_x)(R_2-R_1)
 &=\partial_x\big[c(R_1+R_2)+g+\lambda_i(R_2-R_1)\big].\label{zzzz}
\end{align}
For \(h=R_1+R_2\) and \(h=R_2-R_1\), taking \eqref{aaaa} and \eqref{zzzz} into \eqref{eq:HLM-inequality}, the duality term 
$$\left\langle(\partial_\tau+\lambda_i\partial_x)h,
 h(\omega_i)^2\right\rangle_{H^{-1}\times H^1}$$ equals
\begin{align}
 -\int_{\mathbb R}&
 \big[A_R(R_2-R_1)+2D_R(R_1+R_2)_x+f+\lambda_i(R_1+R_2)\big]\notag\\[-1mm]
 &\quad\times
 \big[(R_1+R_2)_x(\omega_i)^2
      +2(R_1+R_2)\omega_i(\omega_i)_x\big]\,dx\label{ppp}\\
 -\int_{\mathbb R}&
 \big[c(R_1+R_2)+g+\lambda_i(R_2-R_1)\big]\notag\\[-1mm]
 &\quad\times
 \big[(R_2-R_1)_x(\omega_i)^2
      +2(R_2-R_1)\omega_i(\omega_i)_x\big]\,dx.
\label{eq:HLM-complete-duality}
\end{align}
Writing \(A_R=c+(A_R-c)\) and \(D_R=d+(D_R-d)\), then \eqref{ppp}+\eqref{eq:HLM-complete-duality} equals
\begin{align}
 &-\int_{\mathbb R}\{2c(R_1+R_2)(R_2-R_1)
 +\lambda_i[(R_1+R_2)^2+(R_2-R_1)^2]\}\omega_i(\omega_i)_x\,dx\notag\\
 &-2d\int_{\mathbb R}(R_1+R_2)_x
 [(R_1+R_2)_x(\omega_i)^2+2(R_1+R_2)\omega_i(\omega_i)_x]dx\notag\\
 &-\int_{\mathbb R}(A_R-c)(R_2-R_1)
 [(R_1+R_2)_x(\omega_i)^2+2(R_1+R_2)\omega_i(\omega_i)_x]dx\notag\\
 &-2\int_{\mathbb R}(D_R-d)(R_1+R_2)_x
 [(R_1+R_2)_x(\omega_i)^2+2(R_1+R_2)\omega_i(\omega_i)_x]dx\notag\\
 &-\int_{\mathbb R}f[(R_1+R_2)_x(\omega_i)^2
 +2(R_1+R_2)\omega_i(\omega_i)_x]dx\notag\\
 &-\int_{\mathbb R}g[(R_2-R_1)_x(\omega_i)^2
 +2(R_2-R_1)\omega_i(\omega_i)_x]dx.
\label{eq:HLM-full-split}
\end{align}
For \(i=1,2\), the first line is
\begin{equation}
 -c\int_{\mathbb R}[2R_2]^2\omega_1(\omega_1)_x\,dx\leq0,
 \qquad
 c\int_{\mathbb R}[2R_1]^2\omega_2(\omega_2)_x\,dx\leq0.
\label{eq:signed-transport-two-families}
\end{equation}
The constant-viscosity term satisfies
\begin{align}
 &-2d\int_{\mathbb R}(R_1+R_2)_x
 \big\{(R_1+R_2)_x(\omega_i)^2
 +2(R_1+R_2)\omega_i(\omega_i)_x\big\}dx \notag
 \\
 &\leq-d\int_{\mathbb R}(R_1+R_2)_x^2(\omega_i)^2dx
 +\frac{4d}{1+t}\int_{\mathbb R}(R_1+R_2)^2
 \exp\left[-\frac{[x-\lambda_i(1+t)]^2}{C_G(1+t)}\right]dx.
\label{eq:HLM-viscosity}
\end{align}
Recall the definition of $\eta$ \eqref{eq:def-eta}, since \(A_R-c=2(\theta_1-\theta_2+\eta)\), and 
\begin{equation*}
 |\theta_\alpha(x,t)|\leq C\epsilon(1+t)^{-1/2}
 \exp\left[-\frac{[x-\lambda_\alpha(1+t)]^2}{C_G(1+t)}\right],
 \qquad \alpha=1,2,
\end{equation*}
then the two \(\theta_\alpha\)-products satisfy
\begin{align}
 &2\sum_{i,\alpha=1}^2\left|\int_{\mathbb R}
 \theta_\alpha(R_2-R_1)(R_1+R_2)_x(\omega_i)^2dx\right|
 \leq\frac d{16}\sum_{i=1}^2\int_{\mathbb R}
 (R_1+R_2)_x^2(\omega_i)^2dx+C\epsilon^2\mathcal G_0,\notag\\
 &4\sum_{i,\alpha=1}^2\left|\int_{\mathbb R}
 \theta_\alpha(R_2-R_1)(R_1+R_2)\omega_i(\omega_i)_xdx\right|
 \leq C\epsilon\mathcal G_0.
 \label{eq:HLM-theta-products}
\end{align}
The two \(\eta\)-products satisfy
\begin{align}
 &2\sum_{i=1}^2\left|\int_{\mathbb R}
 \eta(R_2-R_1)(R_1+R_2)_x(\omega_i)^2dx\right|
 \leq\frac d{16}\sum_{i=1}^2\int_{\mathbb R}
 (R_1+R_2)_x^2(\omega_i)^2dx+C\|\eta\|_\infty^2\mathcal E_N,\notag\\
 &4\sum_{i=1}^2\left|\int_{\mathbb R}
 \eta(R_2-R_1)(R_1+R_2)\omega_i(\omega_i)_xdx\right|
 \leq C(1+t)^{-1/2}\|\eta\|_\infty\mathcal E_N,\notag\\
 &C\{\|\eta\|_\infty^2+(1+t)^{-1/2}\|\eta\|_\infty\}\mathcal E_N
 \leq C\{\epsilon(1+t)^{-3/2}
 +\epsilon^2(1+t)^{-5/4}\}\mathcal E_N.
 \label{eq:HLM-eta-products}
\end{align}
Consequently, one has
\begin{align}
 &\sum_{i=1}^2\left|\int_{\mathbb R}(A_R-c)(R_2-R_1)
 \big[(R_1+R_2)_x(\omega_i)^2
 +2(R_1+R_2)\omega_i(\omega_i)_x\big]dx\right|\notag\\
 &\leq\frac d8\sum_{i=1}^2
       \int_{\mathbb R}(R_1+R_2)_x^2(\omega_i)^2dx
 +C\epsilon\mathcal G_0+C\{\epsilon(1+t)^{-3/2}
          +\epsilon^2(1+t)^{-5/4}\}\mathcal E_N .
 \label{eq:HLM-AR-remainder}
\end{align}
By \eqref{eq:viscosity-HN}, the variable-viscosity terms obey
\begin{align}
 &\sum_{i=1}^2\left|\int_{\mathbb R}2(D_R-d)(R_1+R_2)_x
 \big[(R_1+R_2)_x(\omega_i)^2
 +2(R_1+R_2)\omega_i(\omega_i)_x\big]dx\right|\notag\\
 &\leq C[\epsilon+X_N(t^*)]\mathcal D_N
 +\frac1{32}\mathcal G_0,
 \label{eq:HLM-viscosity-remainder}
\end{align}
where we have used
\begin{align}
 &\sum_{i=1}^2\left|\int_{\mathbb R}2(D_R-d)(R_1+R_2)_x
 \big[(R_1+R_2)_x(\omega_i)^2
 +2(R_1+R_2)\omega_i(\omega_i)_x\big]dx\right|\notag\\
 &\quad\leq C\|D_R-d\|_\infty\mathcal D_N
 +\frac1{32}\mathcal G_0
 +C\|D_R-d\|_\infty^2\mathcal D_N.
 \label{eq:HLM-viscosity-two-products}
\end{align}
For the two forcing rows,
\begin{align}
 &\sum_{i=1}^2\left|\int_{\mathbb R}f
 \big[(R_1+R_2)_x(\omega_i)^2
 +2(R_1+R_2)\omega_i(\omega_i)_x\big]dx\right|\notag\\
 &\leq\frac1{32}\{\mathcal D_N+\mathcal G_0\}
 +C X_N(t^*)^2\log^2(2+t)(1+t)^{-2}\mathcal E_N+C\|\mathcal E_1^{\mathrm{app}}+\mathcal E_2^{\mathrm{app}}\|_{H^{N+1}}^2,\notag\\
 &\sum_{i=1}^2\left|\int_{\mathbb R}g
 \big[(R_2-R_1)_x(\omega_i)^2
 +2(R_2-R_1)\omega_i(\omega_i)_x\big]dx\right|\leq\frac1{32}\{\mathcal D_N+\mathcal G_0\}
 +C\|\mathcal E_1^{\mathrm{app}}-\mathcal E_2^{\mathrm{app}}\|_{H^{N+1}}^2.
 \label{eq:HLM-g-remainder}
\end{align}
The two products in the \(f\)-row are
\begin{align}
 &\left|\int_{\mathbb R} f(R_1+R_2)_x(\omega_i)^2dx\right|
 \leq\frac1{128}\|(R_1+R_2)_x\|_2^2+C\|f\|_2^2,\notag\\
& 2\left|\int_{\mathbb R}f(R_1+R_2)\omega_i(\omega_i)_x\,dx\right|
 \leq\frac1{128(1+t)}\int_{\mathbb R}(R_1+R_2)^2\times
 \exp\left[-\frac{[x-\lambda_i(1+t)]^2}{C_G(1+t)}\right]dx
 +C\|f\|_2^2,\notag\\
 &\|f\|_2^2\leq CX_N(t^*)^2\log^2(2+t)(1+t)^{-2}\mathcal E_N
 +C\|\mathcal E_1^{\mathrm{app}}+\mathcal E_2^{\mathrm{app}}\|_2^2.
 \label{eq:HLM-f-two-products}
\end{align}
The two products in the \(g\)-row are
\begin{align}
 \left|\int_{\mathbb R}g(R_2-R_1)_x(\omega_i)^2dx\right|
 &\leq\frac1{128}\|(R_2-R_1)_x\|_2^2+C\|g\|_2^2,\notag\\
 2\left|\int_{\mathbb R}g(R_2-R_1)\omega_i(\omega_i)_x\,dx\right|
 &\leq\frac1{128(1+t)}\int_{\mathbb R}(R_2-R_1)^2\times
 \exp\left[-\frac{[x-\lambda_i(1+t)]^2}{C_G(1+t)}\right]dx
 +C\|g\|_2^2.
 \label{eq:HLM-g-two-products}
\end{align}
Fix the Young constants in
\eqref{eq:HLM-viscosity-remainder}--\eqref{eq:HLM-g-remainder}, and then
choose \(\epsilon+X_N(t^*)\) sufficiently small.  The terms in
\eqref{eq:HLM-viscosity} and the \(1/32\)-terms are absorbed.  Summing the
two families in \eqref{eq:HLM-inequality} and using
\eqref{eq:signed-transport-two-families}--\eqref{eq:HLM-g-remainder} proves
\eqref{eq:gaussian-energy-bound}.
\end{proof}

\subsection{The background residual} Finally, we need to deal with $\sum_{i=1}^2 \|\mathcal E_i^{\text{app}} \|^2_{H^{N+1}}$ in \eqref{eq:closed-differential-energy}

\begin{lemma}
Under the assumptions of Lemma \ref{qwe}, it holds
\begin{align}
 &\sum_{i=1}^2\left\{
 \int_0^\infty\|\mathcal E_i^{\mathrm{app}}(\tau)\|_{H^{N+1}}d\tau
 +\left(\int_0^\infty
 \|\mathcal E_i^{\mathrm{app}}(\tau)\|_{H^{N+1}}^2d\tau\right)^{1/2}\right\}
 \leq C_N\epsilon,\notag\\
 &\int_0^\infty\sum_{i=1}^2
 \|\mathcal E_i^{\mathrm{app}}(\tau)\|_{H^{N+1}}^2\,d\tau
 \leq C_N\epsilon^2.
\label{eq:background-HN1-integral}
\end{align}
\end{lemma}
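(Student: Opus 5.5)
The plan is to bound $\mathcal E_i^{\mathrm{app}}$ directly from the exact identity \eqref{eq:exact-background-decomposition}, which is pointwise and requires no further integration. Dividing by $\phi_i$, which is legitimate since $\|\phi_i^{-1}\|_\infty\le C$ and every derivative of $\phi_i$ is a polynomial in $\theta_i$ and its derivatives by \eqref{eq:phi-identities}, expresses $\mathcal E_i^{\mathrm{app}}$ as a finite sum of explicit profile terms:
\[
 \mathcal E_i^{\mathrm{app}}
 =-\phi_i^{-1}\Big[(-1)^{i+1}\mu_{\mathrm{cub}}\phi_i\theta_i^3+\cdots
 -\tfrac12\phi_i(\Xi_1^2+\Xi_2^2)+\partial_xV_i^{\mathrm{bg}}+\mathcal R_i^{\mathrm{int}}\Big].
\]
The product rule and \eqref{eq:phi-explicit-identities} then reduce $\|\partial_x^k\mathcal E_i^{\mathrm{app}}(t)\|_2$, for $0\le k\le N+1$, to $L^2$ norms of $\partial_x^m$ of each bracketed term with $m\le N+1$. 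These in turn involve at most $N+3$ derivatives of $\theta_\alpha$ and $\Xi_\alpha$, all of which are controlled by \eqref{eq:burgers-pointwise-bound}, \eqref{eq:resummed-pointwise-bound}, and \eqref{eq:resummed-Lq-bound}, since those hold for every fixed $k$. Note that $\phi_i^{-1}$ carries the factor $e^{|M_i|/(2d)}$, so it does not affect the powers of $\epsilon$.

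The target is the decay bound
\[
 \|\mathcal E_i^{\mathrm{app}}(t)\|_{H^{N+1}}\le C_N\epsilon(1+t)^{-5/4}.
\]
Both assertions of \eqref{eq:background-HN1-integral} follow from it at once, because $(1+t)^{-5/4}$ and $(1+t)^{-5/2}$ are both integrable. I will check the terms one at a time.

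First, the linear part of $\partial_xV_i^{\mathrm{bg}}$, namely $\frac{d^2}{2c}\partial_x(\phi_i\theta_{i,x})$, is the only term that is linear in the profile. By \eqref{eq:burgers-pointwise-bound}, $\|\partial_x^m\theta_{i,x}\|_2\le C\epsilon(1+t)^{-5/4-m/2}$. This term is the source of the rate $(1+t)^{-5/4}$, and it is also why the bound is $C\epsilon$ rather than $C\epsilon^2$.

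Second, I treat the cubic terms $\phi_i\theta_\alpha^3$ and the quadratic terms $\phi_i\theta_\alpha^2$ of $V_i^{\mathrm{bg}}$ after differentiation. Their $L^2$ norms are at most $C\epsilon^2(1+t)^{-5/4}$.

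Third, the products $\phi_i\theta_1\theta_2$, together with every mixed product containing both $\theta_1$ and $\theta_2$ in $\mathcal R_i^{\mathrm{int}}$, are exponentially small in $t$ by cone separation.

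Fourth, the mixed profile terms are handled by \eqref{eq:refined-product-bounds}. Interpolating between the $L^1$ and $L^\infty$ bounds gives
\[
 \|\partial_x^k[\cdots]\|_2\le C\epsilon^3(1+t)^{-3/2-k/2}.
\]

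Fifth, the quadratic terms $\Xi_\alpha^2$ and $\Xi_1\Xi_2$ are estimated by $\|\Xi\|_{L^4}^2$. Using \eqref{eq:resummed-Lq-bound} with $q=4$ gives $C\epsilon^4(1+t)^{-5/4}$, and each derivative gains an additional $(1+t)^{-1/2}$.

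Sixth, I handle the remaining terms of $\mathcal R_i^{\mathrm{int}}$. The linear term $\Xi_{i,xx}$ satisfies $\|\Xi_{i,xx}\|_2\le C\epsilon^2(1+t)^{-3/2}$. The terms $\theta\,\Xi_x$ and $(\partial_x\,\mathrm{profile})^2$ are of quadratic order or higher with at least one derivative, and the same $L^\infty\times L^2$ splitting gives rates at least $(1+t)^{-5/4}$. For $F_4$, $B_2$, and the cubic remainder, the factor $u_2^{\mathrm{app}}-u_1^{\mathrm{app}}$ stays in $I_{\mathrm{in}}$, so these are smooth nonlinearities. A Moser estimate, with $\|u^{\mathrm{app}}\|_{W^{N+1,\infty}}\le C\epsilon(1+t)^{-1/2}$, bounds them by higher powers of the leading profile norms.

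The step I expect to be the main obstacle is this bookkeeping of $\mathcal R_i^{\mathrm{int}}$ at top order $N+1$. The difficulty is confirming that no term decays only like $(1+t)^{-1}$ in $L^2$. The candidates are the products that carry the algebraic tails of $\Xi_\alpha$ along the opposite cone in \eqref{eq:resummed-pointwise-bound}. For these I would use the $L^4$ bound from \eqref{eq:resummed-Lq-bound}, rather than pointwise envelopes, so that the tails are integrated properly.
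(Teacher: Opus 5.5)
Your proposal is correct and is essentially the paper's argument: divide the exact identity \eqref{eq:exact-background-decomposition} by $\phi_i$, then bound each piece in $H^{N+1}$, namely $\partial_xV_i^{\mathrm{bg}}$ (at rate $(\epsilon+\epsilon^2)(1+t)^{-5/4}$), the explicit cubic and $\Xi_\alpha^2$ products, the mixed product via \eqref{eq:refined-product-bounds}, and $\mathcal R_i^{\mathrm{int}}$, and integrate the resulting $(1+t)^{-5/4}$ decay in time. One harmless slip: $\|\partial_x^m\theta_{i,x}\|_2$ is only $C\epsilon(1+t)^{-3/4-m/2}$, and the $(1+t)^{-5/4}$ rate of the linear term actually comes from the outer derivative in $\partial_xV_i^{\mathrm{bg}}$, i.e.\ from $\phi_i\theta_{i,xx}$ (the companion term $\phi_{i,x}\theta_{i,x}$ is quadratic), so your conclusion stands.
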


\begin{proof}
From \eqref{eq:V-bg}, \eqref{eq:exact-background-decomposition}, and
the profile estimates give
\begin{align}
 &\|\phi_i\theta_i^3\|_{H^{N+1}}
 +\sum_{\alpha\ne i}\|\phi_i\theta_\alpha^3\|_{H^{N+1}}
 +\sum_{\alpha=1}^2\|\phi_i\Xi_\alpha^2\|_{H^{N+1}}
 \leq C_N\epsilon^3(1+t)^{-5/4},\notag\\
 &\left\|\phi_i\left[
 \theta_1\left(\Xi_2-\frac{\theta_1^2}{4c}\right)
 +\theta_2\left(\Xi_1+\frac{\theta_2^2}{4c}\right)
 \right]\right\|_{H^{N+1}}
 \leq C_N\epsilon^3(1+t)^{-3/2}.
 \label{eq:background-explicit-products}
\end{align}
The derivative range used in the Sobolev--Moser estimates is
\begin{align}
& \|\partial_x^j\theta_\alpha(t)\|_\infty
 \leq C_j\epsilon(1+t)^{-(j+1)/2},\qquad\qquad
 \|\partial_x^j\theta_\alpha(t)\|_2
 \leq C_j\epsilon(1+t)^{-j/2-1/4},\notag\\
& \|\partial_x^j\Xi_\alpha(t)\|_\infty
 \leq C_j\epsilon^2(1+t)^{-j/2-3/4},\notag\\
 &\|\partial_x^j\Xi_\alpha(t)\|_2
 \leq C_j\epsilon^2(1+t)^{-j/2-1/2},\hspace{15mm}0\leq j\leq N+3,\notag\\
 &\|\phi_i(t)\|_\infty\leq C,
\qquad\qquad \|\partial_x^j\phi_i(t)\|_\infty
 \leq C_j\epsilon(1+t)^{-j/2},\qquad1\leq j\leq N+3.
 \label{eq:background-profile-derivative-range}
\end{align}
Recall the definition of $V_i^{\mathrm{bg}}$ \eqref{eq:V-bg} and $\mathcal R_i^{\mathrm{int}}$ \eqref{eq:complete-integrable-remainder}, the termwise application gives
\begin{align}
 &\|\partial_xV_i^{\mathrm{bg}}(t)\|_{H^{N+1}}
 \leq C_N(\epsilon+\epsilon^2)(1+t)^{-5/4},
 \label{eq:Vbg-high-order}\\
 &\|\mathcal R_i^{\mathrm{int}}(t)\|_{H^{N+1}}
 \leq C_N\epsilon^2(1+t)^{-3/2}
 +C_N(\epsilon^2+\epsilon^4)(1+t)^{-7/4}
 +C_N\epsilon^2e^{-\kappa_0(1+t)}.
 \label{eq:Rint-high-order}
\end{align}
Thus
\begin{align*}
 \|\mathcal E_i^{\mathrm{app}}(t)\|_{H^{N+1}}
 \leq C_N\Big[&
 (\epsilon+\epsilon^2)(1+t)^{-5/4}
 +\epsilon^2(1+t)^{-3/2}+(\epsilon^2+\epsilon^4)(1+t)^{-7/4}
 +\epsilon^2e^{-\kappa_0(1+t)}\Big].
\end{align*}
Integration proves \eqref{eq:background-HN1-integral}.
\end{proof}

\subsection{Uniform energy bound and completion of the proof}

\subsubsection{Uniform energy estimate}

\begin{proposition}
\label{prop:uniform-energy}
Suppose \(N\geq5\),
\(v(\mathbb R\times[0,t^*])\subset I_0\), and
\(\epsilon+X_N(t^*)\leq\rho_E\).  Then, for \(0\leq t\leq t^*\),
\begin{equation}
 \sup_{0\leq\tau\leq t}\mathcal E_N(\tau)
 +c_E\int_0^t\big[\mathcal D_N(\tau)+\mathcal G_0(\tau)\big]d\tau
 \leq C_E\left[\mathcal E_N(0)
 +\int_0^t\sum_{i=1}^2
 \|\mathcal E_i^{\mathrm{app}}(\tau)\|_{H^{N+1}}^2d\tau\right].
 \label{eq:uniform-energy-bound}
\end{equation}
Moreover,
\begin{equation}
 \mathcal E_N(0)\leq C E_N^2,
 \label{eq:initial-energy-input}
\end{equation}
\begin{equation}
 \int_0^\infty\sum_{i=1}^2
 \|\mathcal E_i^{\mathrm{app}}(\tau)\|_{H^{N+1}}^2\,d\tau
 \leq C\epsilon^2\leq CE_N^2,
 \label{eq:residual-energy-input}
\end{equation}
and
\begin{equation}
 \sup_{0\leq t\leq t^*}\|(R_1,R_2)(t)\|_{H^N}\leq C E_N.
 \label{eq:uniform-HN-bound}
\end{equation}
\end{proposition}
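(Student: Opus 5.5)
The plan is to integrate the differential inequality \eqref{eq:closed-differential-energy} of Lemma~\ref{qwe} in time. We then add a small multiple of the cone Gaussian estimate \eqref{eq:gaussian-energy-bound} to absorb the $C\epsilon\mathcal G_0$ term, and close with Gronwall's inequality.

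First, integrate \eqref{eq:closed-differential-energy} over $[0,t]$. This gives
\[
 \mathcal E_N(t)+\frac{c_{\mathrm{diss}}}{2}\int_0^t\mathcal D_N
 \le \mathcal E_N(0)+C\epsilon\int_0^t\mathcal G_0+C[\epsilon+X_N]\int_0^t\mathcal D_N
 +C\int_0^t\sum_i\|\mathcal E_i^{\mathrm{app}}\|_{H^{N+1}}^2+\int_0^t\omega(\tau)\mathcal E_N(\tau)\,d\tau,
\]
where $\omega(\tau)$ denotes the bracketed rate in \eqref{eq:closed-differential-energy}. Next, add $\mu$ times \eqref{eq:gaussian-energy-bound}, with $\mu>0$ chosen small relative to $c_{\mathrm{diss}}$. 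The term $\mu C\int\mathcal D_N$ is then absorbed by $\tfrac14 c_{\mathrm{diss}}\int\mathcal D_N$. After that, require $\rho_E$ small enough that
\[
 C\epsilon\le\mu/2,\qquad C[\epsilon+X_N]\le c_{\mathrm{diss}}/8 .
\]
This absorbs the $\mathcal G_0$ and $\mathcal D_N$ terms on the right. The result is
\[
 \mathcal E_N(t)+c_E\int_0^t(\mathcal D_N+\mathcal G_0)\le C\Big[\mathcal E_N(0)+\int_0^t\sum_i\|\mathcal E_i^{\mathrm{app}}\|_{H^{N+1}}^2\Big]+C\int_0^t\omega\,\mathcal E_N .
\]
The key observation is that $\omega$ is integrable on $(0,\infty)$ with $\int_0^\infty\omega\le C(\epsilon+\epsilon^2+X_N+X_N^2)\le C$. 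All powers in $\omega$ exceed one: $(1+t)^{-3/2}$, $(1+t)^{-5/4}$, $\log(2+t)(1+t)^{-3/2}$ and $\log^2(2+t)(1+t)^{-2}$. Applying Gronwall's inequality to $\sup_{\tau\le t}\mathcal E_N(\tau)$ therefore yields \eqref{eq:uniform-energy-bound}, with $C_E$ including the factor $e^{C\int_0^\infty\omega}$. The constants are uniform in $t^*$.

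For \eqref{eq:initial-energy-input}, use \eqref{eq:energy-equivalence}, which gives $\mathcal E_N(0)\le C_E\|R(0)\|_{H^N}^2$. Then use $\|R_i(0)\|_{H^N}\le\|\langle x\rangle^2R_i(0)\|_{H^{N+1}}\le E_N$ from \eqref{eq:initial-size}. The bound \eqref{eq:residual-energy-input} is exactly the second line of \eqref{eq:background-HN1-integral}, combined with $\epsilon\le E_N$. Finally, \eqref{eq:uniform-HN-bound} follows by inserting both inputs into \eqref{eq:uniform-energy-bound} and using the lower bound $c_E\|R\|_{H^N}^2\le\mathcal E_N$ from \eqref{eq:energy-equivalence}.

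The main point requiring care is the order in which the small constants are chosen. The hypotheses must be fixed first: $\delta$ as in \eqref{eq:delta-choice}, and $c_{\mathrm{diss}}$ and $\mu$ depending only on the fixed coefficients and $I_0$. Only then is $\rho_E$ chosen, so that the $C\epsilon\mathcal G_0$ and $C[\epsilon+X_N]\mathcal D_N$ terms are absorbed without circularity. One must also confirm that $X_N(t^*)$ enters $\omega$ only through integrable rates. This is what makes the logarithmic factors harmless: they sit only in the Gronwall exponent, never in the size of the bound. Consequently $C$ in \eqref{eq:uniform-HN-bound} is independent of $X_N$ once $X_N\le\rho_E$.
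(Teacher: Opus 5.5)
Your proposal is correct and follows essentially the same route as the paper: integrate Lemma~\ref{qwe}, add a small multiple (the paper's $\kappa$) of \eqref{eq:gaussian-energy-bound} to absorb the $C\epsilon\int\mathcal G_0$ and $C[\epsilon+X_N(t^*)]\int\mathcal D_N$ terms, and close by Gronwall, using that the coefficient of $\mathcal E_N$ has $L^1(0,\infty)$ norm at most $C(\epsilon+\epsilon^2+X_N+X_N^2)$. The three remaining bounds are then read off from \eqref{eq:energy-equivalence}, \eqref{eq:initial-size} and \eqref{eq:background-HN1-integral} as you describe; note that the inequality $\|R_i(0)\|_{H^N}\le\|\langle x\rangle^2R_i(0)\|_{H^{N+1}}$ holds only up to a harmless constant.
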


\begin{proof}
Let \(b_{t^*}(t)\) be the coefficient multiplying \(\mathcal E_N\) in
\eqref{eq:closed-differential-energy}.  Then
\begin{equation}
 \|b_{t^*}\|_{L^1(0,t^*)}
 \leq C\{\epsilon+\epsilon^2+X_N(t^*)+X_N(t^*)^2\}.
 \label{eq:energy-Gronwall-coefficient}
\end{equation}
Integrating \eqref{eq:closed-differential-energy} gives
\begin{align}
 \mathcal E_N(t)+c_0\int_0^t\mathcal D_N(\tau)\,d\tau
 \leq&\mathcal E_N(0)+C_0\epsilon\int_0^t\mathcal G_0(\tau)\,d\tau
 +C_0[\epsilon+X_N(t^*)]\int_0^t\mathcal D_N(\tau)\,d\tau\notag\\
 &+C_0\int_0^tb_{t^*}(\tau)\mathcal E_N(\tau)\,d\tau
 +C_0\int_0^t\sum_i
       \|\mathcal E_i^{\mathrm{app}}(\tau)\|_{H^{N+1}}^2\,d\tau.
 \label{eq:integrated-energy-for-absorption}
\end{align}
Likewise, \eqref{eq:gaussian-energy-bound} gives
\begin{align}
 \int_0^t\mathcal G_0(\tau)\,d\tau
 \leq C_H\Bigg[&\mathcal E_N(0)+\int_0^t\mathcal D_N(\tau)\,d\tau
 +\int_0^tb_{t^*}(\tau)\mathcal E_N(\tau)\,d\tau+\int_0^t\sum_i
       \|\mathcal E_i^{\mathrm{app}}(\tau)\|_{H^{N+1}}^2\,d\tau\Bigg].
 \label{eq:gaussian-bound-for-absorption}
\end{align}
Choose \(\kappa>0\) and \(\rho_E>0\) so that
\begin{equation*}
 \kappa C_H\leq\frac{c_0}{4},\qquad
 C_0[\epsilon+X_N(t^*)]\leq\frac{c_0}{4},\qquad
 C_0\epsilon\leq\frac{\kappa}{2}.
\end{equation*}
Move the \(C_H\int_0^t\mathcal D_N\)-term in
\eqref{eq:gaussian-bound-for-absorption} to the left, multiply the
resulting inequality by \(\kappa\), and add it to
\eqref{eq:integrated-energy-for-absorption}.  This yields
\begin{align*}
& \mathcal E_N(t)+\frac{c_0}{2}\int_0^t\mathcal D_N(\tau)\,d\tau
 +\frac{\kappa}{2}\int_0^t\mathcal G_0(\tau)\,d\tau
 \\
 \leq & C\mathcal E_N(0)
 +C\int_0^tb_{t^*}(\tau)\mathcal E_N(\tau)\,d\tau+C\int_0^t\sum_i\|\mathcal E_i^{\mathrm{app}}(\tau)\|_{H^{N+1}}^2\,d\tau.
\end{align*}
Gronwall's inequality proves \eqref{eq:uniform-energy-bound}.
Thus, \eqref{eq:initial-remainder-explicit},
\eqref{eq:AR-positivity}, and \eqref{eq:energy-equivalence} give
\eqref{eq:initial-energy-input}; \eqref{eq:background-HN1-integral} gives
\eqref{eq:residual-energy-input}.  Finally,
\eqref{eq:energy-equivalence} proves \eqref{eq:uniform-HN-bound}.
\end{proof}

\subsubsection{Bootstrap and continuation,}

At \(t=0\), \eqref{eq:initial-remainder-explicit} and
\(\int_{\mathbb R}R_i(x,0)\,dx=0\) give
\begin{equation*}
 W_i(x,0)=\int_{-\infty}^xR_i(y,0)\,dy
          =-\int_x^\infty R_i(y,0)\,dy .
\end{equation*}
The weighted initial norm gives
\begin{align*}
& |R_i(x,0)|\leq C E_N\langle x\rangle^{-2}
             \leq C E_N\Psi_i(x,0),\\
 &\sum_{k=0}^2\|\partial_x^k[\phi_i(\cdot,0)W_i(\cdot,0)]\|_\infty
 +\|(R_1,R_2)(0)\|_{H^N}\leq C E_N.
\end{align*}
Hence local persistence supplies \(0<T_0\leq1\) such that
\begin{equation}
 v(\mathbb R\times[0,T_0])\subset\operatorname{int}I_0,
 \qquad X_N(T_0)\leq2C_{\mathrm{in}}E_N.
 \label{eq:bootstrap-local-start}
\end{equation}
Propositions \ref{prop:pointwise-mapping} and
\ref{prop:uniform-energy}, together with
\eqref{eq:recover-R}--\eqref{eq:recover-Rx}, give
\begin{equation}
 X_N(T)\leq C_{\mathrm{boot}}\left\{E_N+(\epsilon+\epsilon^2)X_N(T)
                    +X_N(T)^2\right\}
 \label{eq:final-continuity-inequality}
\end{equation}
whenever \(T<T_{\max}\),
\(v(\mathbb R\times[0,T])\subset I_0\), and
\(\epsilon+X_N(T)\leq\rho_{\mathrm{boot}}\).  Choose the data so that
\begin{equation}
 \epsilon+2C_{\mathrm{boot}}E_N<\rho_{\mathrm{boot}},
 \qquad
 2C_{\mathrm{boot}}(\epsilon+\epsilon^2)+4C_{\mathrm{boot}}^2E_N\leq\frac12,
 \qquad
 C_vE_N<\rho.
 \label{eq:bootstrap-data-smallness}
\end{equation}
Define
\begin{equation}
 T_b:=\sup\left\{T<T_{\max}:
 v(\mathbb R\times[0,T])\subset\operatorname{int}I_0,\quad
 X_N(T)\leq2C_{\mathrm{boot}}E_N\right\}.
 \label{eq:bootstrap-first-exit}
\end{equation}
For every \(T<T_b\), \eqref{eq:final-continuity-inequality} and
\eqref{eq:bootstrap-data-smallness} yield
\begin{equation}
 X_N(T)\leq\frac32C_{\mathrm{boot}}E_N.
 \label{eq:strict-continuity-improvement}
\end{equation}
Moreover,
\begin{align}
 \sup_{0\leq t\leq T}\|v(t)-1\|_\infty
 &\leq C_v\sup_{0\leq t\leq T}\left[
 \epsilon(1+t)^{-1/2}+\epsilon^2(1+t)^{-3/4}
 +X_N(T)\frac{\log(2+t)}{1+t}\right]\notag\\
 &\leq C_vE_N<\rho.
 \label{eq:bootstrap-strict-range}
\end{align}
Thus \(T_b=T_{\max}\).  Finally,
\eqref{eq:uniform-HN-bound} and the inverse transformation give
\begin{equation}
 \sup_{0\leq t<T_{\max}}\|(v-1,u)(t)\|_{H^N}\leq CE_N,
 \qquad
 \int_0^{T_{\max}}\|u_x(t)\|_{H^N}^2\,dt\leq CE_N^2.
 \label{eq:continuation-controlled-inputs}
\end{equation}
The strict range estimate and the last bound exclude both continuation
obstructions, so \(T_{\max}=\infty\).  Proposition
\ref{prop:pointwise-mapping} and
\eqref{eq:recover-R}--\eqref{eq:recover-Rx} prove
\eqref{eq:main-pointwise-estimate}--\eqref{eq:main-uniform-estimates}, while
\eqref{eq:uniform-HN-bound} proves \eqref{eq:main-energy-estimate}.  This
completes the proof of Theorem~\ref{thm:main-endpoint}.

\appendix
\section{The five-region endpoint convolutions}\label{app:five-regions}

This appendix proves the pointwise convolution estimates used
in Section~\ref{sec:pointwise}.  Here $C_f$ denotes the source size in
Lemma~\ref{lem:endpoint-convolution}.

\subsection{Elementary Gaussian--algebraic convolutions}\label{mmm}

For \(t-\tau\geq1\), differentiation of the convected heat kernel gives
\begin{align}
 |\partial_xg_i(x-y,t-\tau)|
 &\leq C(t-\tau)^{-1}
 \exp\left[-\frac{(x-y-\lambda_i(t-\tau))^2}{C(t-\tau)}\right],\notag\\
 \|\partial_xg_i(t-\tau)\|_1&\leq C(t-\tau)^{-1/2}.          \notag
\end{align}
The following slice estimates imply the time-integrated
convolution bounds in \cite[Appendix~A]{Koike2023}.  When
\(t-\tau\geq1+\tau\),
\begin{align}
 &\int_{\mathbb R}|\partial_xg_i(x-y,t-\tau)|
 \bigl((y-\lambda_\alpha(1+\tau))^2+1+\tau\bigr)^{-3/2}\,dy\notag\\
 &\quad\leq C(t-\tau)^{-1}(1+\tau)^{-1}
 \exp\left[-\frac{(x-\lambda_i(t-\tau)-\lambda_\alpha(1+\tau))^2}
 {C(t-\tau)}\right]\notag\\
 &\qquad+C(t-\tau)^{-1/2}
 \bigl((x-\lambda_i(t-\tau)-\lambda_\alpha(1+\tau))^2+1+t\bigr)^{-3/2}.       \label{eq:app-early-convolution}
\end{align}
When \(1+\tau\geq t-\tau\geq1\), one has
\begin{align}
 &\int_{\mathbb R}g_i(x-y,t-\tau)
 \bigl((y-\lambda_\alpha(1+\tau))^2+1+\tau\bigr)^{-3/2}\,dy\notag\\
 &\leq C
 \bigl((x-\lambda_i(t-\tau)-\lambda_\alpha(1+\tau))^2+1+\tau\bigr)^{-3/2},     \label{eq:app-late-convolution-zero}\\
 &\int_{\mathbb R}|\partial_xg_i(x-y,t-\tau)|
 \bigl((y-\lambda_\alpha(1+\tau))^2+1+\tau\bigr)^{-3/2}\,dy\notag\\
 &\leq C(t-\tau)^{-1/2}
 \bigl((x-\lambda_i(t-\tau)-\lambda_\alpha(1+\tau))^2+1+\tau\bigr)^{-3/2}.     \label{eq:app-late-convolution-one}
\end{align}
Indeed, put
\[
 s=t-\tau,\qquad T=1+\tau,\qquad
 X=x-\lambda_i s-\lambda_\alpha T,\qquad
 u=y-\lambda_\alpha T.
\]
Split the \(u\)-line into
\[
 |u|\leq |X|/2,\qquad |X-u|\leq |X|/2,
\]
and their complement.  On the first region integrate the algebraic factor
and retain the Gaussian phase; on the second integrate the Gaussian and
evaluate the algebraic factor at \(X\); on the complement both lower bounds
hold.  Using
\[
 \int_{\mathbb R}(u^2+T)^{-3/2}\,du=CT^{-1}
\]
gives \eqref{eq:app-early-convolution} when \(s\geq T\).  Interchanging the
roles of the two scales when \(T\geq s\), and using respectively the
\(L^1\) norms of \(g_i(s)\) and \(\partial_xg_i(s)\), gives
\eqref{eq:app-late-convolution-zero} and
\eqref{eq:app-late-convolution-one}.
There are two harmless endpoint conventions which will be used below.  First,
\eqref{eq:app-early-convolution} is applied only on
\[
 I_{\mathrm{early}}(t):=\left[0,\frac{t-1}{2}\right],
\]
where \(t-\tau\geq1+\tau\).  If
\[
 I_{\mathrm{transition}}(t):=\left[\frac{t-1}{2},\frac t2\right],
\]
then \(|I_{\mathrm{transition}}(t)|=1/2\), \(1+\tau\geq t-\tau\), and,
for \(t\geq1\),
\[
 \frac{1+t}{4}\leq t-\tau\leq1+t,
 \qquad
 \frac{1+t}{2}\leq1+\tau\leq1+t.
\]
For \(t\) large enough that
\(t-\tau\geq1\), the contribution of \(I_{\mathrm{transition}}(t)\) is estimated by
\eqref{eq:app-late-convolution-one} and is bounded by the same right-hand
side as the adjacent \(I_{\mathrm{early}}(t)\)-integral.
Below, an integral with upper limit $t/2$ denotes the
$I_{\mathrm{early}}(t)$-estimate together with this half-unit correction.

Second, set
\[
 T_K:=\max\left\{4,\frac{64K^2}{|\lambda_1-\lambda_2|^2}\right\}.
\]
When \(1+t\leq T_K\), the terminal centers remain in a fixed bounded set.
The same three-piece spatial split used above, followed by the last-unit
measure estimate, gives \(CC_f\langle x\rangle^{-2}\), which is bounded by
\(CC_f\Psi_i(x,t)\) because the first summand of
\eqref{eq:endpoint-weight} is bounded below by
\(C_K^{-1}\langle x\rangle^{-2}\).  Hence the remaining regional proof may
assume \(1+t\geq T_K\).  In that range the two tubes of radius
\(2K\sqrt{1+t}\) are disjoint.  Boundary points are assigned in the order in
which the five cases are listed.

\subsection{The same characteristic family}

If \(\alpha=i\), then
\[
 x-\lambda_i(t-\tau)-\lambda_i(1+\tau)=x-\lambda_i(1+t),
\]
so there is no moving resonance.  On \(0\leq\tau\leq t/2\),
\eqref{eq:app-early-convolution} yields
\begin{align}
 &\int_0^{t/2}\int_{\mathbb R}|\partial_xg_i(x-y,t-\tau)|
 \bigl((y-\lambda_i(1+\tau))^2+1+\tau\bigr)^{-3/2}\,dy\,d\tau\notag\\
\leq& C(1+t)^{-1}\log(2+t)
 \exp\left[-\frac{(x-\lambda_i(1+t))^2}{C(1+t)}\right]+C(1+t)^{1/2}
 \bigl((x-\lambda_i(1+t))^2+1+t\bigr)^{-3/2}.              \label{eq:app-same-early}
\end{align}
On \(t/2\leq\tau\leq(t-1)_+\), keep the derivative on the kernel and use
\eqref{eq:app-late-convolution-one}; then
\begin{align}
 &\int_{t/2}^{(t-1)_+}\int_{\mathbb R}|\partial_xg_i(x-y,t-\tau)|
 \bigl((y-\lambda_i(1+\tau))^2+1+\tau\bigr)^{-3/2}\,dy\,d\tau\notag\\
 &\qquad\leq C(1+t)^{1/2}
 \bigl((x-\lambda_i(1+t))^2+1+t\bigr)^{-3/2}.              \label{eq:app-same-late}
\end{align}
Since
\[
 (1+t)^{1/2}\bigl((x-\lambda_i(1+t))^2+1+t\bigr)^{-3/2}
 \leq\bigl((x-\lambda_i(1+t))^2+1+t\bigr)^{-1},
\]
 \eqref{eq:app-same-early}--\eqref{eq:app-same-late} prove
\eqref{eq:same-cone-map} at every spatial point.

\subsection{Opposite characteristic speeds}\label{nnn}

Assume \(\alpha\neq i\).  Multiplying \eqref{eq:app-early-convolution} by the
factor \(\log(2+\tau)(1+\tau)^{-1/2}\) in
Lemma~\ref{lem:endpoint-convolution}, and using
\((1+t)/3\leq t-\tau\leq1+t\) on \(0\leq\tau\leq t/2\) in the present
range \(t\geq3\), gives
\begin{align}
 C\,C_f\log(2+t)\Bigg[&
 (1+t)^{-1}\int_0^{t/2}(1+\tau)^{-3/2}
 \exp\left(-\frac{\left[\substack{x-\lambda_i(t-\tau)-\lambda_\alpha(1+\tau)}\right]^2}{C(1+t)}\right)d\tau\notag\\
 &+(1+t)^{-1/2}\int_0^{t/2}(1+\tau)^{-1/2}
 \left(\left[\substack{x-\lambda_i(t-\tau)-\lambda_\alpha(1+\tau)}\right]^2
       +1+t\right)^{-3/2}
 d\tau\Bigg].                                                \label{eq:app-opposite-early}
\end{align}
On the whole late interval keep the derivative on the kernel and use
\eqref{eq:app-late-convolution-one}.  Put \(s=t-\tau\).  Since
\((1+t)/2\leq1+\tau\leq1+t\) there and
\[
 x-\lambda_i(t-\tau)-\lambda_\alpha(1+\tau)
 =x-\lambda_\alpha(1+t)+(\lambda_\alpha-\lambda_i)s,
\]
the late contribution is bounded by
\begin{equation}
 C\,C_f\log(2+t)(1+t)^{-1/2}
 \int_1^{t/2}s^{-1/2}
 \bigl([x-\lambda_\alpha(1+t)
       +(\lambda_\alpha-\lambda_i)s]^2+1+t\bigr)^{-3/2}\,ds .
 \label{eq:app-opposite-late}
\end{equation}
An integral with its lower endpoint larger than its upper endpoint is understood to
be zero.  We now estimate \eqref{eq:app-opposite-early}--
\eqref{eq:app-opposite-late} in the five spatial regions.

\paragraph{The receiving-cone tube.}
If
\(|x-\lambda_i(1+t)|\leq2K\sqrt{1+t}\), the first integral in
\eqref{eq:app-opposite-early} is bounded by a constant and the second by
\(C(1+t)^{-1}\).  In the late integral the phase has size comparable with
\(1+t\).  Hence
\begin{equation}
 \eqref{eq:app-opposite-early}+\eqref{eq:app-opposite-late}
 \leq\frac{C\,C_f\log(2+t)}{(x-\lambda_i(1+t))^2+1+t}.          \label{eq:app-receiving-tube}
\end{equation}

\paragraph{The source-cone tube.}
If
\(|x-\lambda_\alpha(1+t)|\leq2K\sqrt{1+t}\), the early phase has size
comparable with \(1+t\).  In the late integral use \(s^{-1/2}\leq1\)
and extend the affine phase to the whole real line.  Thus
\[
 \begin{aligned}
 &(1+t)^{-1/2}\int_1^{t/2}s^{-1/2}
 \bigl([x-\lambda_\alpha(1+t)
       +(\lambda_\alpha-\lambda_i)s]^2+1+t\bigr)^{-3/2}\,ds\\
 &\qquad\leq
 (1+t)^{-1/2}\int_{\mathbb R}
 \bigl([x-\lambda_\alpha(1+t)
       +(\lambda_\alpha-\lambda_i)s]^2+1+t\bigr)^{-3/2}\,ds\\
 &\qquad\leq C(1+t)^{-3/2}.
 \end{aligned}
\]
Therefore
\begin{equation}
 \eqref{eq:app-opposite-early}+\eqref{eq:app-opposite-late}
 \leq\frac{C\,C_f\log(2+t)}{|x-\lambda_\alpha(1+t)|^2+(1+t)^{3/2}}. \label{eq:app-source-tube}
\end{equation}

\paragraph{Strictly between the two tubes.}
Here both terminal distances are at least \(K\sqrt{1+t}\), and their sum is
comparable with \(1+t\).  Split the first integral in
\eqref{eq:app-opposite-early} at the possible zero of
\(x-\lambda_i(t-\tau)-\lambda_\alpha(1+\tau)\).  In an interval of length
\(C\sqrt{1+t}\) about that zero, change variables by dividing the phase by
\(\sqrt{1+t}\); outside it use the algebraic denominator. The calculation of the second integral in  \eqref{eq:app-opposite-early} is similar.  Thus, we have 
\begin{align}
 \eqref{eq:app-opposite-early}\le C C_f\log(2+t)\left\{(1+t)^{-1/2}|x-\lambda_i(1+t)|^{-3/2}
 +C(1+t)^{-3/2}|x-\lambda_i(1+t)|^{-1/2}\right\}.             \label{eq:app-interior-early}
\end{align}
Split the integral in \eqref{eq:app-opposite-late} at \(s=\sqrt{1+t}\).  On
\(1\leq s\leq\sqrt{1+t}\), the source-cone distance keeps the possible zero
outside the interval.  Since
\(\int_1^{\sqrt{1+t}}s^{-1/2}\,ds\leq C(1+t)^{1/4}\), this part is bounded by
\[
 C\bigl(|x-\lambda_\alpha(1+t)|^2+1+t\bigr)^{-3/2}.
\]
On \(\sqrt{1+t}\leq s\leq t/2\), split at the possible zero of the phase and
use \(s^{-1/2}\leq C|x-\lambda_\alpha(1+t)|^{-1/2}\) in its
\(C\sqrt{1+t}\)-neighborhood.  The complementary monotone pieces are
smaller, and this part is bounded by
\[
 C(1+t)^{-3/2}|x-\lambda_\alpha(1+t)|^{-1/2}.
\]
Because the two terminal distances are at least \(K\sqrt{1+t}\) and their sum is
comparable with \(1+t\), all four bounds are at most
\begin{equation}
 \eqref{eq:app-opposite-early}+\eqref{eq:app-opposite-late}\le\frac{C\,C_f\log(2+t)}
 {(1+[x-\lambda_i(1+t)]^2)^{1/2}
  (1+[x-\lambda_\alpha(1+t)]^2)^{1/4}}.                  \label{eq:app-interaction-ridge}
\end{equation}

\paragraph{The exterior beyond the receiving cone.}
On this half-line the moving phase has no zero and increases away from its first
endpoint.  Integration of \eqref{eq:app-opposite-early}--
\eqref{eq:app-opposite-late} gives
\begin{equation}
 C\,C_f\log(2+t)\left\{
 \frac1{(x-\lambda_i(1+t))^2+1+t}+(1+t)^{-1/2}
 \bigl((x-\lambda_i(1+t))^2+1+t\bigr)^{-3/4}
 \right\}.
 \label{eq:app-receiving-exterior}
\end{equation}

\paragraph{The exterior beyond the source cone.}
The phase is again monotone, now from the opposite endpoint.  Direct integration for \eqref{eq:app-opposite-early}--
\eqref{eq:app-opposite-late}
gives
\begin{equation}
 C\,C_f\log(2+t)\left\{
\frac1{|x-\lambda_\alpha(1+t)|^2+(1+t)^{3/2}}
 +(1+t)^{-1/2}
 \frac1{|x-\lambda_\alpha(1+t)|^{3/2}+1+t}
 \right\}.
 \label{eq:app-source-exterior}
\end{equation}
Here is the tail calculation suppressed in the preceding two displays.  On either
exterior half-line the derivative of the moving phase has the fixed nonzero size
\(|\lambda_i-\lambda_\alpha|=2c\).  After changing from \(\tau\) to the
absolute value of that phase, the endpoint integrations use
\begin{align}
 \int_r^\infty e^{-u^2/[C(1+t)]}\,du
 &\leq \frac{C(1+t)}{r+\sqrt{1+t}}
       e^{-r^2/[C'(1+t)]},\notag\\
 \int_r^\infty (u^2+1+t)^{-3/2}\,du
 &\leq \frac{C}{r^2+1+t}.                                  \label{eq:app-exterior-tail-integrals}
\end{align}
For the two algebraic Koike tails the same conclusion follows by dividing
\([r,\infty)\) into \([2^mr,2^{m+1}r]\), with the first interval replaced by
\([0,\sqrt{1+t}]\) when \(r<\sqrt{1+t}\).  The resulting geometric sums are
\begin{align}
 \sum_{m\geq0}\frac{2^mr}
 {((2^mr)^2+1+t)^{3/2}}
 &\leq \frac{C}{r^2+1+t},\notag\\
 \sum_{m\geq0}\frac{(2^mr)^{1/2}}
 {(2^mr)^{3/2}+1+t}
 \min\left\{1,\frac{\sqrt{1+t}}{2^mr}\right\}
 &\leq C(1+t)^{1/2}
       \frac1{r^{3/2}+1+t}.                                \label{eq:app-exterior-dyadic-tails}
\end{align}
In the receiving exterior take
\(r=|x-\lambda_i(1+t)|\); in the source exterior take
\(r=|x-\lambda_\alpha(1+t)|\).  Substitution of
\eqref{eq:app-exterior-tail-integrals}--
\eqref{eq:app-exterior-dyadic-tails}, together with the prefactors in
\eqref{eq:app-opposite-early}--\eqref{eq:app-opposite-late}, gives exactly
\eqref{eq:app-receiving-exterior} and \eqref{eq:app-source-exterior}.  Thus the
exterior estimates do not use the middle-region cutoff, which vanishes there.
The five estimates \eqref{eq:app-receiving-tube}--
\eqref{eq:app-source-tube} and \eqref{eq:app-interaction-ridge}--
\eqref{eq:app-source-exterior} are bounded by the corresponding components of
\(C\,C_f\log(2+t)\Psi_i\), and prove
\eqref{eq:separated-cone-map}.

For the enlarged source in Lemma~\ref{lem:endpoint-convolution}, the term with
\(\log^2(2+\tau)\) gives the same early and late integrals with that factor inserted.  In the
receiving tube,
\[
 \int_0^\infty\log^2(2+\tau)(1+\tau)^{-3/2}\,d\tau<\infty,
\]
so its contribution is still bounded by \(C\,C_f(1+t)^{-1}\).  In the source
tube, \(s^{-1/2}\leq1\) and the whole-line estimate above give
 \begin{align*}
 &C\,C_f\log^2(2+t)(1+t)^{-1/2}\times\int_{\mathbb R}
 \bigl([x-\lambda_\alpha(1+t)
       +(\lambda_\alpha-\lambda_i)s]^2+1+t\bigr)^{-3/2}\,ds\notag\\
 &\leq C\,C_f\log^2(2+t)(1+t)^{-3/2}.
 \end{align*}
This is bounded by the secondary source-cone term of
\(C\,C_f\log(2+t)\Psi_i\).  Between the cones, evaluate the logarithm at the
possible zero of the phase; every positive power of either terminal distance
 absorbs the extra logarithm.  On the two exterior half-lines the phase is monotone,
 and the endpoint integrations give respectively \eqref{eq:app-receiving-exterior}
 and the secondary version of \eqref{eq:app-source-exterior}.  This proves the
enlarged assertion in all five regions.

\subsection{Two output derivatives and the algebraic profile tail}\label{appendix 4}

For a source on the receiving cone satisfying
\begin{gather*}
 |f_i|\leq C_f\log(2+\tau)
 \bigl((y-\lambda_i(1+\tau))^2+1+\tau\bigr)^{-1},\\
 |f_{i,y}|\leq C_f\log(2+\tau)
 \bigl((y-\lambda_i(1+\tau))^2+1+\tau\bigr)^{-3/2},
\end{gather*}
keep two derivatives on the early heat kernel and transfer one to the source on the
late interval.   \eqref{eq:app-same-early}--
\eqref{eq:app-same-late} then give
\[
 \left|\partial_x^2\int_0^{(t-1)_+}g_i(t-\tau)*f_i(\tau)\,d\tau\right|
 \leq C\,C_f\log(2+t)\Psi_i(x,t).
\]
For an opposite-cone source, assume in addition that
\begin{equation}
 |f_{\alpha,yy}(y,\tau)|
 \leq C_f\log(2+\tau)(1+\tau)^{-1/2}
 \bigl((y-\lambda_\alpha(1+\tau))^2+1+\tau\bigr)^{-3/2}.
 \label{eq:app-two-derivative-material}
\end{equation}
Both output derivatives stay on the heat kernel for
\(0\leq\tau\leq t/2\).  On \(t/2\leq\tau\leq(t-1)_+\), put both derivatives
on the source when \(1\leq t-\tau\leq\sqrt{1+t}\), using
\eqref{eq:app-two-derivative-material}, and leave one derivative on the kernel
when \(\sqrt{1+t}\leq t-\tau\leq t/2\).
{In the strict middle region the critical contribution is}
\begin{align*}
 &C C_f\log(2+t)(1+t)^{-1}
 \langle x-\lambda_i(1+t)\rangle^{-1/2}\leq
 \frac{C C_f\log(2+t)}
 {\langle x-\lambda_i(1+t)\rangle
  \langle x-\lambda_\alpha(1+t)\rangle^{1/2}},
\end{align*}
{because the two terminal distances have sum comparable with
\(1+t\).  The other four regions are unchanged, and hence}
\begin{equation*}
 \left|\partial_x^2\int_0^{(t-1)_+}
 g_i(t-\tau)*f_\alpha(\tau)\,d\tau\right|
 \leq C\,C_f\log(2+t)\Psi_i(x,t).
\end{equation*}

For the linear higher-profile term use the exact identity
\[
 \phi_i\Xi_{i,yy}=\partial_y(\phi_i\Xi_{i,y})
       +\frac{\theta_i}{2d}\phi_i\Xi_{i,y}.
\]
The Koike estimate \cite{Koike2023}, \eqref{eq:resummed-pointwise-bound}, and
\eqref{eq:phi-explicit-identities} give the two pointwise envelopes actually
used here:
\begin{align}
 &|\phi_i\Xi_{i,y}|(y,\tau)
 \leq C\epsilon^2(1+\tau)^{-1/2}
 \left[
 \frac1{([y-\lambda_i(1+\tau)]^2+1+\tau)^{3/4}}+\frac1{|y-\lambda_{3-i}(1+\tau)|^{3/2}+1+\tau}\right],\notag\\
 &|\partial_y(\phi_i\Xi_{i,y})|(y,\tau)
 \leq C\epsilon^2(1+\tau)^{-1}
 \left[\frac1{([y-\lambda_i(1+\tau)]^2+1+\tau)^{3/4}}+\frac1{|y-\lambda_{3-i}(1+\tau)|^{3/2}+1+\tau}\right].            \label{eq:app-Xi-tail-source-envelopes}
\end{align}
The factor produced by \(\phi_{i,y}\) is smaller, since it also contains the
Gaussian \(\theta_i\).  For the divergence in the displayed identity, keep both
output derivatives on the early kernel and transfer one derivative to
\(\partial_y(\phi_i\Xi_{i,y})\) on the late interval.  The time factors are
\begin{align}
 \int_0^{t/2}(t-\tau)^{-1}(1+\tau)^{-1/2}\,d\tau
 &\leq C(1+t)^{-1/2},\notag\\
 \int_{t/2}^{(t-1)_+}(t-\tau)^{-1/2}(1+\tau)^{-1}\,d\tau
 &\leq C(1+t)^{-1/2}.                                    \notag
\end{align}
Applying the three-piece spatial split of
\eqref{eq:app-early-convolution} separately to the two braces in
\eqref{eq:app-Xi-tail-source-envelopes} preserves their two terminal algebraic
profiles.  The nondivergence term has the extra Gaussian factor \(\theta_i\) and
is smaller by the same calculation.  Thus, in each of the five regions, the
complete linear higher-profile action is
\begin{equation}
 C\epsilon^2\left\{
 (1+t)^{-1/2}\bigl((x-\lambda_i(1+t))^2+1+t\bigr)^{-3/4}+(1+t)^{-1/2}\frac1{|x-\lambda_{3-i}(1+t)|^{3/2}+1+t}
 \right\}.
 \label{eq:app-Xi-tail}
\end{equation}
This is the algebraic last line of \(\Psi_i\).  Treating
\(\Xi_{i,yy}\) as an undifferentiated source would miss the factor
\((1+t)^{-1/2}\).

\subsection{Weighted maps for the remaining sources}

We prove Lemma~\ref{lem:remaining-source-maps} from the same spatial splits,
rather than from an \(L^1\) output estimate.  The elementary spatial integrals
needed to check the time coefficients are
\begin{align}
 &\int_{\mathbb R}
 \frac{dy}{([y-\lambda_\alpha(1+\tau)]^2+1+\tau)^{3/2}}
 =C(1+\tau)^{-1},\notag\\
 &\int_{\mathbb R}
 \frac{dy}{([y-\lambda_\alpha(1+\tau)]^2+1+\tau)^{3/4}}
 =C(1+\tau)^{-1/4},\notag\\
 &\int_{\mathbb R}
 \frac{dy}{|y-\lambda_\alpha(1+\tau)|^{3/2}+1+\tau}
 =C(1+\tau)^{-1/3}
 \leq C(1+\tau)^{-1/4}.                                  \label{eq:app-remaining-profile-norms}
\end{align}
Put \(\sigma=t-\tau\geq1\) and \(T_\tau=1+\tau\).  After the displayed
output derivative, the Gaussian modes are direct instances of
\eqref{eq:app-early-convolution}--\eqref{eq:app-late-convolution-one} with one
additional heat derivative.  The regular remainder is treated by
\eqref{eq:Rr-pointwise}--\eqref{eq:Rr-norms}
with \(k=1\).  Retaining both characteristic centers in the same three-piece
source split gives
\begin{align}
 &\left|\partial_x\mathcal R^r_{i\beta}(\sigma)*
 \frac1{([\,\cdot-\lambda_\alpha T_\tau]^2+T_\tau)^{3/2}}\right|(x)\notag\\
 &\quad\leq C\sum_{\ell=1}^2\Bigg\{
 \frac{\sigma^{-1}T_\tau^{-1}}
 {[x-\lambda_\ell\sigma-\lambda_\alpha T_\tau]^2+\sigma}
 +\frac{\sigma^{-3/2}}
 {([x-\lambda_\ell\sigma-\lambda_\alpha T_\tau]^2+T_\tau)^{3/2}}
 \Bigg\},\notag\\
 &\left|\partial_x\mathcal R^r_{i\beta}(\sigma)*
 \frac{T_\tau^{-3/4}}
 {([\,\cdot-\lambda_\alpha T_\tau]^2+T_\tau)^{3/4}}\right|(x)\notag\\
 &\quad\leq C\sum_{\ell=1}^2\Bigg\{
 \frac{\sigma^{-1}T_\tau^{-1}}
 {[x-\lambda_\ell\sigma-\lambda_\alpha T_\tau]^2+\sigma}
 +\frac{\sigma^{-3/2}T_\tau^{-3/4}}
 {([x-\lambda_\ell\sigma-\lambda_\alpha T_\tau]^2+T_\tau)^{3/4}}
 \Bigg\},\notag\\
 &\left|\partial_x\mathcal R^r_{i\beta}(\sigma)*
 \frac{T_\tau^{-3/4}}
 {|\,\cdot-\lambda_\alpha T_\tau|^{3/2}+T_\tau}\right|(x)\notag\\*
 &\quad\leq C\sum_{\ell=1}^2\Bigg\{
 \frac{\sigma^{-1}T_\tau^{-13/12}}
 {[x-\lambda_\ell\sigma-\lambda_\alpha T_\tau]^2+\sigma}
 +\frac{\sigma^{-3/2}T_\tau^{-3/4}}
 {|x-\lambda_\ell\sigma-\lambda_\alpha T_\tau|^{3/2}+T_\tau}
 \Bigg\}.
 \label{eq:remaining-regular-convolutions}
\end{align}
The exponentially decaying term in
\eqref{eq:Rr-pointwise} obeys the same right-hand sides by its
weighted \(L^1\) bound.  The five-region argument is applied separately to
the two characteristic centers, so the finite sum only changes the constant.
The early--late coefficients used in all five regions obey
\begin{align}
 &(1+t)^{-3/2}\int_0^{t/2}(1+\tau)^{-1}\,d\tau
 +(1+t)^{-3/2}\int_1^{(1+t)/2}\sigma^{-1}\,d\sigma\leq C\log(2+t)(1+t)^{-3/2},\notag\\
 &(1+t)^{-2}\int_0^{t/2}(1+\tau)^{-1}\,d\tau
 +(1+t)^{-3/2}\int_1^{(1+t)/2}\sigma^{-3/2}\,d\sigma\leq C[(1+t)^{-3/2}+\log(2+t)(1+t)^{-2}].
 \label{eq:remaining-early-late-coefficients}
\end{align}
Consequently \eqref{eq:app-remaining-profile-norms} and the first set of hypotheses in
Lemma~\ref{lem:remaining-source-maps} imply
\begin{align*}
 \|h(\tau)\|_1
 &\leq C\,C_f\log(2+\tau)(1+\tau)^{-1},\\
 \|h_y(\tau)\|_1+\|h_0(\tau)\|_1
 &\leq C\,C_f\log(2+\tau)(1+\tau)^{-3/2},\\
 \|h_y(\tau)\|_\infty+\|h_0(\tau)\|_\infty
 &\leq C\,C_f\log(2+\tau)(1+\tau)^{-2}.
\end{align*}
Keeping both derivatives on the early heat kernel and transferring one
derivative from the kernel to \(h\) on the late interval gives
\begin{align}
 &C\,C_f\int_0^{t/2}\left\{
 (t-\tau)^{-3/2}\log(2+\tau)(1+\tau)^{-1}
 +(t-\tau)^{-1}\log(2+\tau)(1+\tau)^{-3/2}\right\}d\tau
 \notag\\
 &\quad+C\,C_f\int_{t/2}^{(t-1)_+}
 (t-\tau)^{-1/2}\log(2+\tau)(1+\tau)^{-2}\,d\tau\leq C\,C_f\left\{(1+t)^{-1}
 +\log^2(2+t)(1+t)^{-3/2}\right\}.                       \notag
\end{align}
This is only the uniform check.  Pointwise, insert each of the explicitly
displayed Gaussian and algebraic summands into
\eqref{eq:app-early-convolution}--\eqref{eq:app-late-convolution-one}.
The three-piece source-line split retains the phase in every summand, and the
five terminal cases give the full display below.

For the structured regular-remainder assertion, the receiving-family term
\(e^{(0)}\) in \eqref{eq:remaining-structured-hypotheses} gives
\begin{align}
 &C\,C_f(1+t)^{-2}
 \int_0^{t/2}(1+\tau)^{-1/2}\,d\tau
 +C\,C_f(1+t)^{-3/2}
 \int_1^{(1+t)/2}\sigma^{-1}\,d\sigma\notag\\
 &\leq C\,C_f\log(2+t)(1+t)^{-3/2}.
 \notag
\end{align}
The early actions of \(e^{(1)}\), \(\partial_yh\), and \(h_0\) are bounded by
\begin{align}
 &C\,C_f(1+t)^{-2}
 \int_0^{t/2}\log(2+\tau)(1+\tau)^{-1}\,d\tau+C\,C_f(1+t)^{-5/2}
 \int_0^{t/2}\log(2+\tau)(1+\tau)^{-1/2}\,d\tau\notag\\
 &\leq C\,C_f\log^2(2+t)(1+t)^{-2}
 \leq C\,C_f\log(2+t)(1+t)^{-3/2},
 \notag
\end{align}
whereas their late action is
\begin{equation*}
 C\,C_f\log(2+t)(1+t)^{-3/2}
 \int_1^{(1+t)/2}
 [\sigma^{-3/2}+\sigma^{-3/2}+\sigma^{-3/2}]\,d\sigma
 \leq C\,C_f\log(2+t)(1+t)^{-3/2}.
\end{equation*}
For the term containing \(\partial_y\mathcal R^r_{i\beta}\), the output
derivative is transferred to \(h\); for the other two terms it is kept on the
regular kernel.

For the coefficient source in \eqref{eq:eta-quarter-hypotheses}, a global
norm estimate alone would not imply the required cone-resolved conclusion.
Put
\[
 T=1+t,\qquad T_\tau=1+\tau,\qquad
 r_\ell=x-\lambda_\ell T\quad(\ell=1,2),\qquad
 \Phi_\alpha=x-\lambda_i(t-\tau)-\lambda_\alpha T_\tau .
\]
For a fixed \(\alpha\neq i\), the early contribution on \(I_{\mathrm{early}}(t)\), followed by
the two late subintervals in the variable \(s=t-\tau\), is bounded by
\begin{align}
 C C_f\Bigg\{&
 T^{-1}\int_{I_{\mathrm{early}}(t)}\log(2+\tau)T_\tau^{-5/4}
 e^{-\Phi_\alpha^2/(CT)}\,d\tau\notag\\
 &+T^{-1/2}\int_{I_{\mathrm{early}}(t)}\log(2+\tau)T_\tau^{{-1/2}}
 (\Phi_\alpha^2+T)^{{-5/4}}\,d\tau\notag\\
 &+\log(2+t)T^{{-1}}\int_1^{\sqrt T}
 \bigl([r_\alpha+(\lambda_\alpha-\lambda_i)s]^2+T\bigr)^{{-5/4}}\,ds\notag\\
 &+\log(2+t)T^{-3/2}\int_1^{\sqrt T}
 \bigl([r_\alpha+(\lambda_\alpha-\lambda_i)s]^2+T\bigr)^{-3/4}\,ds\notag\\
 &+\log(2+t)T^{{-1/2}}\int_{\sqrt T}^{t/2}s^{-1/2}
 \bigl([r_\alpha+(\lambda_\alpha-\lambda_i)s]^2+T\bigr)^{{-5/4}}\,ds
 \Bigg\}.
 \notag
\end{align}
The half-unit interval \(I_{\mathrm{transition}}(t)\) is the correction already described after
\eqref{eq:app-late-convolution-one}.  Splitting at the possible zero of the
affine phase, and using
\[
 \int_1^\infty u^{-5/4}\,du<\infty,\qquad
 {\int_{\mathbb R}(v^2+T)^{-5/4}\,dv=CT^{-3/4}},\qquad
 \int_{\mathbb R}(v^2+T)^{-3/4}\,dv=CT^{-1/4},
\]
gives the following five explicit regional bounds:
\begin{equation}
 C C_f\log(2+t)
 \begin{cases}
 \displaystyle (r_i^2+T)^{-1},
   & |r_i|\leq2K\sqrt T,\\[1mm]
 \displaystyle \dfrac{1+\log(2+t)}{r_\alpha^2+T^{3/2}},
   & |r_\alpha|\leq2K\sqrt T,\\[3mm]
 \displaystyle
 \dfrac{\chi_K(x,t)}
 {\langle r_i\rangle\langle r_\alpha\rangle^{1/2}},
   & \text{strictly between the two tubes},\\[4mm]
 \displaystyle
 \dfrac1{r_i^2+T}
 +\dfrac{T^{-1/2}}{(r_i^2+T)^{3/4}},
   & \text{in the receiving exterior},\\[4mm]
 \displaystyle
 \dfrac1{r_\alpha^2+T^{3/2}}
 +\dfrac{T^{-1/2}}{|r_\alpha|^{3/2}+T},
   & \text{in the source exterior}.
 \end{cases}
 \label{eq:app-eta-quarter-five-regions}
\end{equation}
For the middle region one repeats the phase-zero split leading to
\eqref{eq:app-interior-early}; for the exterior regions the monotone changes
of variables are exactly
\eqref{eq:app-exterior-tail-integrals}--
\eqref{eq:app-exterior-dyadic-tails}.  The same-family summand
\(\alpha=i\) is bounded by the first line.  Every line of
\eqref{eq:app-eta-quarter-five-regions} is {bounded by the corresponding component} of
\(\Psi_i\), so \eqref{eq:eta-quarter-map} follows.

{For the nonlinear source \(n\), use the second hypothesis in the
strict middle region and the first hypothesis in the other four regions.  Direct
spatial integration gives}
\[
 \|n(\tau)\|_1\leq
 C\,C_f\log^2(2+\tau)(1+\tau)^{-3/2},
 \qquad
 \|n(\tau)\|_\infty\leq
 C\,C_f\log^2(2+\tau)(1+\tau)^{-2}.
\]
The time check is
\begin{align}
 &C\,C_f(1+t)^{-1}
 \int_0^{t/2}\log^2(2+\tau)(1+\tau)^{-3/2}\,d\tau\notag\\
 &\quad+C\,C_f\log^2(2+t)(1+t)^{-2}
 \int_{t/2}^{(t-1)_+}(t-\tau)^{-1/2}\,d\tau
 \leq C\,C_f\log(2+t)(1+t)^{-1}.                         \label{eq:app-remaining-nonlinear-time}
\end{align}
{For the spatial conclusion, use the strict-middle bound there and
convolve each remaining component separately with the phase in
\eqref{eq:app-early-convolution}.}

Finally, the last hypothesis of Lemma~\ref{lem:remaining-source-maps} and
\eqref{eq:app-remaining-profile-norms} give
\(\|f_{\rm rem}(\tau)\|_1\leq C C_f(1+\tau)^{-1}\) and
\(\|f_{\rm rem}(\tau)\|_\infty\leq C C_f(1+\tau)^{-3/2}\).
The extra heat derivative in the nonleading Gaussian entries and the
half-power gain in \eqref{eq:Rr-pointwise}--\eqref{eq:Rr-norms}, combined
with \eqref{eq:remaining-regular-convolutions} and
\eqref{eq:remaining-early-late-coefficients}, give the same five regional
outputs.  We record them once below.

Let \(I_i(x,t)\) denote, only in the next display, in turn the complete action
on the left of \eqref{eq:remaining-divergence-map}, the action on the left of
\eqref{eq:remaining-nonlinear-map}, or the action on the left of
\eqref{eq:remaining-structured-map}.
Applying the phase split separately in the receiving tube, source tube, strict
middle, receiving exterior, and source exterior gives
\begin{equation}
\begin{aligned}
 |I_i(x,t)|&\leq C\,C_f\log(2+t)\Bigg[
 \frac1{[x-\lambda_i(1+t)]^2+1+t}+\frac{1+\log(2+t)}
 {[x-\lambda_{3-i}(1+t)]^2+(1+t)^{3/2}}\\
 &\qquad\qquad\qquad\qquad+\frac{\chi_K(x,t)}
 {\bigl(1+[x-\lambda_i(1+t)]^2\bigr)^{5/8}
  \bigl(1+[x-\lambda_{3-i}(1+t)]^2\bigr)^{1/4}}\\
 &\qquad\qquad\qquad\qquad+\frac{(1+t)^{-1/2}}
 {([x-\lambda_i(1+t)]^2+1+t)^{3/4}}+\frac{(1+t)^{-1/2}}
 {|x-\lambda_{3-i}(1+t)|^{3/2}+1+t}\Bigg].
\end{aligned}
\label{eq:app-remaining-five-region-output}
\end{equation}
In the two exteriors use
\eqref{eq:app-exterior-tail-integrals}--\eqref{eq:app-exterior-dyadic-tails};
in the middle use \eqref{eq:app-interaction-ridge}; {for the nonlinear
action use its strict-middle bound and the componentwise split in the other four
regions.  The \(5/8\) middle term is bounded by the corresponding \(1/2\) term
in \(\Psi_i\).}  This proves the first three assertions of
Lemma~\ref{lem:remaining-source-maps}.

For the fourth assertion, let \(I_i(x,t)\) denote one of the nonleading actions.
The polynomial bound in \eqref{eq:Rr-pointwise}, or the corresponding Gaussian
bound in \eqref{eq:explicit-gaussian-matrix}, gives in the receiving tube,
source tube, strict middle, receiving exterior, and source exterior, respectively,
\begin{align}\label{eq:app-nonleading-five-region-output-1}
 |I_i(x,t)|&\leq
 \frac{C C_f\log(2+t)}{[x-\lambda_i(1+t)]^2+1+t},\notag\\
 |I_i(x,t)|&\leq
 \frac{C C_f\log(2+t)}{[x-\lambda_{3-i}(1+t)]^2+(1+t)^{3/2}},\notag\\
 |I_i(x,t)|&\leq C C_f\log(2+t)
 \frac{\chi_K(x,t)}
 {(1+[x-\lambda_i(1+t)]^2)^{1/2}
  (1+[x-\lambda_{3-i}(1+t)]^2)^{1/4}}\notag\\
 &\quad+C C_f\log(2+t)(1+t)^{-1/2}
 \frac1{([x-\lambda_i(1+t)]^2+1+t)^{3/4}}\notag\\
 &\quad+C C_f\log(2+t)(1+t)^{-1/2}
 \frac1{|x-\lambda_{3-i}(1+t)|^{3/2}+1+t},\\
 |I_i(x,t)|&\leq C C_f\log(2+t)
 \frac1{[x-\lambda_i(1+t)]^2+1+t}+C C_f\log(2+t)
 \frac{(1+t)^{-1/2}}
 {([x-\lambda_i(1+t)]^2+1+t)^{3/4}},\notag\\
 |I_i(x,t)|&\leq C C_f\log(2+t)
 \frac1{[x-\lambda_{3-i}(1+t)]^2+(1+t)^{3/2}}+C C_f\log(2+t)
 \frac{(1+t)^{-1/2}}
 {|x-\lambda_{3-i}(1+t)|^{3/2}+1+t}.\notag
\end{align}
The middle-region split and the two monotone exterior integrations are the
same as in \eqref{eq:app-interaction-ridge} and
\eqref{eq:app-exterior-tail-integrals}--\eqref{eq:app-exterior-dyadic-tails}.
Together with \eqref{eq:remaining-early-late-coefficients}, these five bounds are
{bounded by the corresponding components of \(\Psi_i\)}
and prove \eqref{eq:remaining-nonleading-map}.

\subsection{Gaussian extraction and the material boundary terms}

For \(\alpha=i\), the semigroup identity is exact:
\begin{equation}
 g_i(t-\tau)*\Gamma_i(\cdot,1+\tau)=\Gamma_i(\cdot,1+t).       \label{eq:app-semigroup}
\end{equation}
This proves the same-family part of Lemma~\ref{lem:gaussian-extraction}.

Let \(\alpha\neq i\).  The convolution of the two Gaussians has variance
\(1+t\) and center
\[
 \lambda_i(t-\tau)+\lambda_\alpha(1+\tau)
 =\lambda_i(1+t)+(\lambda_\alpha-\lambda_i)(1+\tau).
\]
Put \(H=1+t\), \(\zeta=(x-\lambda_iH)/\sqrt H\), and
\[
 F_k(z)=\partial_z^k\frac{e^{-z^2/(4d)}}{\sqrt{4\pi d}}.
\]
After subtracting the cutoff multiple of \(\Gamma_i(x,H)\), the \(k\)-th
derivative is a fixed constant times
\begin{equation}
 H^{-(k+1)/2}
 \int_{H^{-1/2}}^{[1+(t-1)_+]H^{-1/2}}
 \frac{b_\alpha(u\sqrt H-1)}{u}
 \bigl[F_k(\zeta-(\lambda_\alpha-\lambda_i)u)
       -\chi_{\mathrm{time}}(u)F_k(\zeta)\bigr]\,du.
 \label{eq:app-cross-gaussian}
\end{equation}
For the global estimate, no derivative of \(b_\alpha\) is needed.  On \(0<u\leq1\),
\(\chi_{\mathrm{time}}(u)=1\), and the mean-value theorem gives
\[
 |F_k(\zeta-(\lambda_\alpha-\lambda_i)u)-F_k(\zeta)|
 \leq C u\sup_{0\leq v\leq u}
 |F_{k+1}(\zeta-(\lambda_\alpha-\lambda_i)v)|.
\]
This cancels the factor \(u^{-1}\).  On \(u\geq1\),
\[
 \sup_{\zeta\in\mathbb R}\int_1^\infty
 \frac{|F_k(\zeta-(\lambda_\alpha-\lambda_i)u)|}{u}\,du\leq C,
\]
and the cutoff term is supported in \(1\leq u\leq2\).  Hence, for
\(0\leq k\leq2\), \eqref{eq:app-cross-gaussian} is bounded in
\(L^\infty_x\) by \(C C_f(1+t)^{-(k+1)/2}\), using only
\(\|b_\alpha\|_\infty\leq C_f\).

For the cone-resolved differentiated estimate, use in addition
\((1+\tau)|b_\alpha'(\tau)|\leq C_f\).  Since
\(1+\tau=u\sqrt{1+t}\),
\[
 \left|\frac d{du}\frac{b_\alpha(u\sqrt{1+t}-1)}u\right|
 \leq\frac{\sqrt{1+t}\,|b_\alpha'(u\sqrt{1+t}-1)|}{u}
      +\frac{|b_\alpha(u\sqrt{1+t}-1)|}{u^2}
 \leq\frac{C C_f}{u^2}.
\]
Writing the first Gaussian derivative as a \(u\)-derivative and integrating
by parts, the two endpoint terms give the receiving- and source-cone
contributions, while the \(u^{-2}\) integral gives the intermediate and
exterior contributions.  This is the Gaussian part of
\eqref{eq:material-extraction-pointwise}.

For the zero-mass primitive, the exact identity
\eqref{eq:integrated-material-identity} produces an upper boundary, a lower
boundary, and a material integral.  At \(\tau=(t-1)_+\), move the derivative to
\(Q_{\alpha,y}\); because the two cone centers change by only a bounded distance
on one time unit,
\begin{equation}
 |\partial_xg_i(t-(t-1)_+)*Q_\alpha((t-1)_+)|(x)
 \leq\frac{C\,C_f}{|x-\lambda_\alpha(1+t)|^2+(1+t)^{3/2}}.       \label{eq:app-upper-material-boundary}
\end{equation}

At the lower boundary, \(\|Q_\alpha(0)\|_1\leq C_f\) gives
\begin{equation}
 \begin{aligned}
 |\partial_xg_i(t)*Q_\alpha(0)|(x)
 \leq\frac{C\,C_f}{[x-\lambda_i(1+t)]^2+1+t}.
 \end{aligned}
 \label{eq:app-lower-material-boundary}
\end{equation}
For the additional term in
\eqref{eq:material-primitive-bounds}, the same early--late split gives, when
\(\alpha\neq i\),
\begin{align}
 &\int_0^{(t-1)_+}(1+\tau)^{-3}\Bigg\{
 \int_{|y-\lambda_\alpha(1+\tau)|\leq c(1+\tau)/2}
 |\partial_xg_i(x-y,t-\tau)|\,dy\notag\\[-1mm]
 &\hspace{32mm}+\int_{\mathbb R}|\partial_xg_i(x-y,t-\tau)|
 e^{-\frac{(y-\lambda_\alpha(1+\tau))^2}{C(1+\tau)}}\,dy\Bigg\}d\tau
 \leq
 \frac{C}{[x-\lambda_i(1+t)]^2+1+t}.
 \label{eq:app-cutoff-material-map}
\end{align}
In the first spatial integral,
\(3c/2\leq|\lambda_\alpha-\lambda_i+
(y-\lambda_\alpha(1+\tau))/(1+\tau)|\leq5c/2\).  Splitting at
\(1+\tau=(1+t)/2\), the phase-zero calculation in
\eqref{eq:app-opposite-early}--\eqref{eq:app-opposite-late} gives its bound;
the other integral follows from the same two-Gaussian convolution.
The five-region bounds \eqref{eq:app-receiving-tube}--
\eqref{eq:app-source-exterior}, applied to
\eqref{eq:material-primitive-bounds}, together with
\eqref{eq:app-cutoff-material-map}, bound the material integral in
\eqref{eq:integrated-material-identity}.  Together with
\eqref{eq:app-upper-material-boundary} and
\eqref{eq:app-lower-material-boundary}, this proves the pointwise assertion.

The material estimate for the exceptional divergence in
\eqref{eq:exceptional-material} is obtained in exactly the same identity, but its
time factors can be checked directly.  On \(0\leq\tau\leq t/2\), the early heat
bound gives, for \(0\leq k\leq2\),
\begin{align}
 &(1+t)^{-(k+1)/2}\int_0^{t/2}\epsilon\left\{
 |a_j'(\tau)|(1+\tau)^{-1/2}
 +|a_j(\tau)|(1+\tau)^{-3/2}\right\}d\tau\leq C\epsilon X_N(1+t)^{-(k+1)/2}.              \label{eq:app-exceptional-early-material}
\end{align}
On \(t/2\leq\tau\leq(t-1)_+\), transfer one derivative to the material
derivative when \(k=2\).  Using \eqref{eq:no-prelog-input},
\(|a_j|\leq X_N\log(2+\tau)\) and
\(|a_j'|\leq X_N(1+\tau)^{-1}\), the only borderline integral is
\begin{equation}
 C\epsilon X_N(1+t)^{-2}\log(2+t)\log(2+t)
 \leq C\epsilon X_N(1+t)^{-3/2}.                         \label{eq:app-exceptional-late-material}
\end{equation}
The upper and lower material boundaries are smaller than the right-hand sides of
\eqref{eq:app-exceptional-early-material}.  Equations
\eqref{eq:app-exceptional-early-material}--
\eqref{eq:app-exceptional-late-material} prove
\eqref{eq:exceptional-divergence-map}, including its two-derivative endpoint.

We now prove the spatially resolved companion
\eqref{eq:exceptional-divergence-pointwise}.  Put
\[
 T=1+\tau,\qquad r_j=y-\lambda_jT,\qquad A_j=r_j^2+T,
 \qquad q_j=1-e^{-M_j/(2d)},
\]
and abbreviate
\[
 P_j=\frac{a_j\phi_i\theta_j^2}{4dq_j},
 \qquad
 \mathcal L_j=\partial_\tau+\lambda_j\partial_y-d\partial_y^2,
 \qquad j=3-i.
\]
As everywhere above, quotients containing \(q_j\) are understood by continuous
extension at \(M_j=0\).  The Burgers and Cole--Hopf identities give
\[
 \mathcal L_j\theta_j=-\theta_j\theta_{j,y},\qquad
 \mathcal L_j\phi_i=(\lambda_j-\lambda_i)\phi_{i,y},
 \qquad \phi_{i,y}=-\frac{\theta_i}{2d}\phi_i.
\]
Consequently the material derivative has the exact expansion
\begin{align*}
 \mathcal L_jP_j=\frac1{4dq_j}\Big\{&
 a_j'\phi_i\theta_j^2
 +a_j\big[-2\phi_i\theta_j^2\theta_{j,y}
           -2d\phi_i\theta_{j,y}^2+(\lambda_j-\lambda_i)\phi_{i,y}\theta_j^2
           -4d\phi_{i,y}\theta_j\theta_{j,y}\big]\Big\}.
\end{align*}
For \(0\leq m\leq3\), the identity
\(\theta_j/q_j=2d\Gamma_j/\phi_j\),
\eqref{eq:burgers-pointwise-bound}, and
\eqref{eq:phi-explicit-identities} give
\[
 |\partial_y^m\theta_j|
 \leq C\epsilon T^{-(m+1)/2}e^{-r_j^2/(CT)},
 \qquad
 \left|\partial_y^m\left(\frac{\theta_j}{q_j}\right)\right|
 \leq CT^{-(m+1)/2}e^{-r_j^2/(CT)}.
\]
Also, for \(1\leq m\leq2\),
\[
 |\partial_y^m\phi_i|
 \leq C\epsilon T^{-m/2}
 e^{-[y-\lambda_iT]^2/(CT)}.
\]
Combining these bounds with \eqref{eq:no-prelog-input} gives
\begin{align}
 |P_{j,y}(y,\tau)|
 &\leq C\epsilon X_N\log(2+\tau)T^{-3/2}
       e^{-r_j^2/(CT)},\notag\\
 |\mathcal L_jP_j(y,\tau)|
 +T^{1/2}|\partial_y\mathcal L_jP_j(y,\tau)|
 &\leq C\epsilon X_N\log(2+\tau)T^{-2}
       e^{-r_j^2/(CT)}.                                  \label{eq:app-exceptional-material-spatial}
\end{align}
Terms containing a derivative of \(\phi_i\) are smaller: they contain the
opposite Gaussian \(\theta_i\), and the two characteristic centers are
separated by \(2cT\).  In particular,
\begin{equation}
 |\mathcal L_jP_j|+|\partial_y\mathcal L_jP_j|
 \leq C\epsilon X_N\log(2+\tau)T^{-1/2}A_j^{-3/2}.
 \label{eq:app-exceptional-material-separated-envelope}
\end{equation}
Thus \eqref{eq:separated-cone-map}, with
\(f_j=\mathcal L_jP_j\), bounds the material integral on the right of
\eqref{eq:integrated-material-identity} by
\(C\epsilon X_N\log(2+t)\Psi_i(x,t)\).

It remains to check the two material boundary terms.  The lower one vanishes
because \(a_j(0)=0\) by \eqref{eq:ai-definition}.  If \(t\geq1\), integration
by parts in space at the upper boundary and
\eqref{eq:app-exceptional-material-spatial} give
\begin{align*}
 |\partial_xg_i(1)*P_j(t-1)|(x)
 &=|g_i(1)*P_{j,y}(t-1)|(x)\\
 &\leq C\epsilon X_N\log(2+t)(1+t)^{-3/2}
 e^{-[x-\lambda_j(1+t)]^2/[C(1+t)]}\\
 &\leq C\epsilon X_N\log(2+t)\Psi_i(x,t).
\end{align*}
For \(0\leq t<1\) the long-time integral is empty.  Since
\(|\lambda_j-\lambda_i|=2c\), the integrated material identity now proves
\eqref{eq:exceptional-divergence-pointwise}.

\subsection{The last time unit}

Let \((t-1)_+\leq\tau\leq t\).  The cone centers move by a bounded amount and
\(1+\tau\) is comparable with \(1+t\).  The elementary translation inequalities
needed for the exact finite measures are
\begin{align}
 &\bigl((x-z-\lambda_\alpha(1+\tau))^2+1+\tau\bigr)^{-3/2}\leq C(1+|z|)^3
 \bigl((x-\lambda_\alpha(1+t))^2+1+t\bigr)^{-3/2},           \label{eq:app-last-gaussian-weight}\\
 &\bigl((x-z-\lambda_\alpha(1+\tau))^2+1+\tau\bigr)^{-3/4}
 +\bigl(|x-z-\lambda_\alpha(1+\tau)|^{3/2}+1+\tau\bigr)^{-1}\notag\\
 &\leq C(1+|z|)^2\left\{
 \bigl((x-\lambda_\alpha(1+t))^2+1+t\bigr)^{-3/4}
 +\bigl(|x-\lambda_\alpha(1+t)|^{3/2}+1+t\bigr)^{-1}
 \right\}.
 \label{eq:app-last-algebraic-weight}
\end{align}
They follow separately from the triangle inequality after the distances are divided
by \(\sqrt{1+t}\) and \((1+t)^{2/3}\), respectively.

By \eqref{eq:short-time-green-measure}--
\eqref{eq:short-time-green-derivative}, the exact kernel measures have a
bounded fourth weighted moment.  Integrating
\eqref{eq:app-last-gaussian-weight}--\eqref{eq:app-last-algebraic-weight} against
those measures therefore preserves every component of \(\Psi_i\).  For a
general column, commute the output derivative to the complete source and use
\eqref{eq:last-unit-general}.  For a common vector, use
\eqref{eq:last-unit-common} and
\[
 \int_0^1s^{-1/2}\,ds=2.
\]
Thus the whole last-unit integral is bounded by
\(C\,C_f\log(2+t)\Psi_i(x,t)\) whenever the source derivative has the
corresponding two-cone and algebraic envelopes.  This proves the finite-measure
last-unit assertions used in \eqref{eq:complete-duhamel}; no pointwise density is
assigned to the singular signed measure.

\section*{Data availability}

Data sharing is not applicable to this article as no datasets were generated
or analysed during the current study.

\section*{Declaration on the use of generative AI}

Generative AI tools were used during preparation of this manuscript to assist
with language editing, \LaTeX{} formatting, and the
organization of the presentation.  They were not treated as an independent
mathematical source.  The authors take full responsibility for verifying all
mathematical statements, computations, proofs, and references, and for the
final content of the manuscript.

\providecommand{\bysame}{\leavevmode\hbox to3em{\hrulefill}\thinspace}
\providecommand{\MR}{\relax\ifhmode\unskip\space\fi MR }
% \MRhref is called by the amsart/book/proc definition of \MR.
\providecommand{\MRhref}[2]{%
  \href{http://www.ams.org/mathscinet-getitem?mr=#1}{#2}
}
\enlargethispage{2\baselineskip}

\end{document}